\theoremstyle{plain}
\newtheorem{theorem}{Theorem}[section]
\newtheorem{lemma}[theorem]{Lemma}
\newtheorem{corollary}[theorem]{Corollary}
\newtheorem{proposition}[theorem]{Proposition}
\newtheorem{conjecture}[theorem]{Conjecture}
\theoremstyle{definition}
\newtheorem{definition}[theorem]{Definition}
\newtheorem{examples}[theorem]{Examples}
\newtheorem*{notation}{Notation}
\newtheorem{remark}[theorem]{Remark}
\DeclareMathOperator{\Gal}{Gal}
\DeclareMathOperator{\Hom}{Hom}
\DeclareMathOperator{\Pic}{Pic}
\DeclareMathOperator{\Spec}{Spec}
\DeclareMathOperator{\N}{N}
\DeclareMathOperator{\im}{im}
\DeclareMathOperator{\coker}{coker}
\DeclareMathOperator{\Fitt}{Fitt}
\newcommand{\CC}{\mathbb{C}}
\newcommand{\GG}{\mathbb{G}}
\newcommand{\QQ}{\mathbb{Q}}
\newcommand{\RR}{\mathbb{R}}
\newcommand{\ZZ}{\mathbb{Z}}
\newcommand{\G}{\mathcal{G}}
\newcommand{\cN}{\mathcal{N}}
\newcommand{\cO}{\mathcal{O}}
\newcommand{\cY}{\mathcal{Y}}
\newcommand{\cX}{\mathcal{X}}
\newcommand{\frp}{\mathfrak{p}}
\newcommand{\Ann}{\mathrm{Ann}}
\newcommand{\bz}{\mathbb{Z}}
\newcommand{\La}{\Lambda}
\newcommand{\Log}{\mathrm{Log}}
\newcommand{\ord}{\mathrm{ord}}
\newcommand{\Ord}{\mathrm{Ord}}
\newcommand{\Fr}{\mathrm{Fr}}
\begin{document}

%%%%%%%%%%%%%%%%%%%%%%%%%%%%%%%
%%% TITLE, AUTHOR, ABSTRACT %%%
%%%%%%%%%%%%%%%%%%%%%%%%%%%%%%%
\title[]{Iwasawa theory and zeta elements for $\GG_m$}
%\\
%-- Tate motives in the abelian case}

\author{David Burns, Masato Kurihara and Takamichi Sano}
%\thanks{Preliminary version of June, 2015}

\begin{abstract}
We describe an explicit `higher rank' Iwasawa theory for zeta elements associated to the multiplicative group over abelian extensions of general number fields. We then show that this theory leads to a concrete new strategy for proving special cases of the equivariant Tamagawa number conjecture. As a first application of this approach, we use it to prove new cases of the conjecture for Tate motives over natural families of abelian CM-extensions of totally real fields for which the relevant $p$-adic $L$-functions possess trivial zeroes.
\end{abstract}

\address{King's College London,
Department of Mathematics,
London WC2R 2LS,
U.K.}
\email{david.burns@kcl.ac.uk}

\address{Keio University,
Department of Mathematics,
3-14-1 Hiyoshi\\Kohoku-ku\\Yokohama\\223-8522,
Japan}
\email{kurihara@math.keio.ac.jp}

\address{Keio University,
Department of Mathematics,
3-14-1 Hiyoshi\\Kohoku-ku\\Yokohama\\223-8522,
Japan}
\email{tkmc310@a2.keio.jp}

\maketitle

%\section{Introduction} \label{Intro}%This is an updated version of the submitted file dburns.tex
\tableofcontents
\section{Introduction}

In our previous article \cite{bks1}, we showed that a natural theory of
zeta elements
associated to the multiplicative group $\mathbb{G}_m$ over finite abelian
extensions of number fields shed light on the equivariant Tamagawa
number conjecture (or eTNC for short in the remainder of this introduction)
in the setting of untwisted Tate motives.

In particular, in this way we derived a wide range of explicit results
and predictions concerning, amongst other things, families of fine
integral congruence relations between Rubin-Stark elements of different ranks
and also several aspects of the detailed Galois module structures of
ideal class groups and of the natural Selmer groups
(and their homotopy-theoretic transposes) that are associated to
$\mathbb{G}_m$. For details see \cite{bks1}.

The main aims of the present article are now to develop
an explicit Iwasawa theory for these zeta elements, to use this theory to describe a new approach to proving
some important special cases of the eTNC and to describe some initial concrete applications of this approach.

In the next two subsections we discuss briefly the main results that we shall obtain in this direction.

\vspace{-3mm}

\subsection{Iwasawa main conjectures for general number fields}\label{iwc intro}
The first key aspect of our approach is the formulation of
an explicit main conjecture of Iwasawa theory for abelian extensions of {\it general} number fields (we refer to this conjecture as a
`higher rank main conjecture' since the rank of any associated
 Euler system
%(in the sense of Mazur and Rubin in \cite{MR2})
would in most cases be greater than one).

%and also
%an Iwasawa-theoretic version of a strengthening of
%the `refined class number formula for $\mathbb{G}_m$'
%that was recently formulated by the third author in \cite{sano}
%and also independently by Mazur and Rubin in \cite{MRGm}.

%Our strategy for proving the eTNC for untwisted Tate motives is then obtained by combining these ideas with an observation concerning a conjecture of Gross from \cite{Gp} (as generalized by Jaulent in \cite{jaulent} and interpreted by Kolster in \cite{kolster}) and can be seen to provide a natural extension to general number fields of the strategy used successfully by  Greither and the first author in \cite{bg}, by Flach in \cite{fg} and by Bley in \cite{bley} to prove the same case of the eTNC in the restricted context of abelian extensions of either $\QQ$ or of suitable imaginary quadratic fields.

%As a first concrete application of this strategy, we shall then combine it with recent work of Darmon, Dasgupta and Pollack \cite{DDP} and of Ventullo \cite{ventullo} concerning the $p$-adic Gross-Stark Conjecture to derive verifications of the eTNC in a family of interesting new cases.

%Our first subject in this paper is to formulate the Iwasawa main conjecture
%for {\it general} number fields.
%and its $\ZZ_{p}$-extension.

To give a little more detail we fix a finite abelian extension $K/k$ of general number fields and a $\ZZ_{p}$-extension $k_{\infty}$ of $k$ and set  $K_{\infty}=Kk_{\infty}$. In this introduction, we suppose that $k_{\infty}/k$ is the cyclotomic $\ZZ_{p}$-extension but this is purely for simplicity.

Our {\it higher rank main conjecture} is first stated (as Conjecture \ref{IMC}, which for pedagogical reasons we refer to as (hIMC) in this introduction)
in terms of the existence of an Iwasawa-theoretic zeta element
 which effectively plays the role of $p$-adic $L$-functions
for general number fields and has precise prescribed interpolation
properties in terms of the values at zero of the higher derivatives
of abelian $L$-series.

We then subsequently reinterpret this conjecture, occasionally under
suitable hypotheses, in several more explicit ways: firstly,
in terms of the properties of Iwasawa-theoretic `Rubin-Stark elements'
(see Conjecture \ref{IMC rubinstark}), then in terms of the existence
of natural Iwasawa-theoretic measures (see Conjecture \ref{measure conjecture}),
then in terms of the explicit generation, after localization
at height one prime ideals, of the higher exterior powers of
Iwasawa-theoretic unit groups (see Conjecture \ref{IMCexplicit})
and finally in a very classical way in terms of the characteristic ideals
of suitable ideal class groups and concrete torsion modules constructed
from Rubin-Stark elements (see Conjecture \ref{char IMC};
in fact, readers who are familiar with the classical
formulation of main conjectures may wish to look at this formulation of the conjecture first).
%See also Proposition \ref{measureIMC} for its interpretation by measures,
%and Conjecture \ref{IMCexplicit}, Propositions \ref{equiv prop}, \ref{IMC4}
%for the formulation using usual characteristic ideals.

In particular, for the minus part of a CM-abelian extension of a totally real field, we show that (hIMC) is equivalent to the usual main conjecture involving the $p$-adic $L$-function of Deligne-Ribet (see the proof of Theorem \ref{CM theorem}(i)) and that for a real abelian field over $\QQ$ it is equivalent to the standard formulation of a main conjecture involving cyclotomic units.

In general, the conjecture (hIMC) is implied by the validity of the relevant case of the eTNC for extensions $K'/k$ as $K'$ runs over all (sufficiently large) finite extensions of $K$ in $K_{\infty}$ but is usually very much weaker than the eTNC (and hence also than the main conjecture formulated by Fukaya and Kato in \cite{FukayaKato}).

For example, if any $p$-adic prime of $k$ splits completely in $K$,
then our conjectured zeta element encodes no information
concerning the $L$-values of characters of $\Gal(K/k)$.
More precisely, if for any character $\chi$ of $\Gal(K/k)$
there exists a $p$-adic prime of $k$ whose decomposition subgroup
in $\Gal(K/k)$ is contained in the kernel of $\chi$
(in which case one says that the $p$-adic $L$-function
for $\chi$ has a trivial zero at $s=0$), then the zeta element that
we predict to exist has no conjectured interpolation property
involving the leading term of $L_{k,S,T}(\chi,s)$ at $s=0$.

%The Rubin-Stark conjecture predicts the existence of the Rubin-Stark element
%which is a generalization of Stark units, and
%which is an algebraic element related to the $r$-th derivatives of $L$-values (see \cite{R}).
%We interpret the Rubin-Stark element as the canonical image of our zeta element as in
%our previous paper \cite{bks1},
%and reformulate the main conjecture in a very classical way, using the characteristic ideal of
%certain class groups and Rubin-Stark elements (see Conjecture \ref{char IMC}).
%\vspace{5mm}
%We next explain our main theorem in this paper which
%gives {\it the first theoretical result that the eTNC holds even
%when the trivial zero occurs}. Our main theorem is regarded as a generalization of the main results by Greither and the first author \cite{bg} and %Flach \cite{fg}, and of Bley \cite{bley}, in which the eTNC is proved for abelian extensions over $\QQ$ and imaginary quadratic fields respectively.

\vspace{-2mm}

\subsection{eTNC and congruences between Rubin-Stark elements}\label{rs intro}
We now turn to discuss how (hIMC) leads to a concrete strategy to prove some interesting new cases of the eTNC.

Here, a key role is played by
a detailed Iwasawa-theoretic study of the fine congruence relations
between Rubin-Stark elements of differing ranks that were independently formulated in the context of finite abelian extensions by Mazur and Rubin in \cite{MRGm} (where the congruences are referred to as a `refined class number formula for $\mathbb{G}_m$') and by the third author in \cite{sano}. In particular, working in the setting of the extension $K_{\infty}/k$
we formulate an explicit conjecture, denoted for convenience (MRS) here, which, roughly speaking, describes the precise relation between
the natural Rubin-Stark elements for $K_{\infty}/k$ and for $K/k$. (For full details see Conjectures \ref{mrs1} and \ref{mrs2}).

To better understand the context of this conjecture we prove in Theorem \ref{GS thm} that it constitutes a natural generalization of the so-called `Gross-Stark conjecture' formulated by Gross in \cite{Gp}.

It is easy to see that, as already observed above, the eTNC implies
the validity of (hIMC) and, in addition, one of the main results of
our previous work \cite{bks1} allows us to prove in a straightforward way that it also implies the validity of (MRS).

One of the key observations of the present article is that, much more significantly, one can prove under certain natural hypotheses a powerful converse to these implications. To be a little more precise, we shall prove a result of the following sort (for a detailed statement of which see Theorem \ref{mainthm}).

\begin{theorem}\label{IntroTh}
If the Galois coinvariants of a certain natural Iwasawa module is finite
(as has been conjectured to be the case by Gross), then the validity of both
(hIMC) and (MRS) for the extension $K_{\infty}/k$ combine to imply
the validity of the $p$-component of the eTNC for every finite subextension
$F/k$ of $K_\infty/k$.
% (concerning more precise conditions,
%see Theorem \ref{mainthm}).
\end{theorem}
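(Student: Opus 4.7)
The plan is to reformulate the $p$-component of eTNC at each finite layer $F/k$ inside $K_\infty/k$ as a generation statement: a suitable Rubin-Stark element $\epsilon_F$ generates, up to predicted correction factors, a certain higher exterior power built from the $S$-units and class group of $F$. Such a reformulation is available from our earlier article \cite{bks1}. The overall strategy is then an Iwasawa-theoretic descent, starting from (hIMC) in its explicit form (Conjecture \ref{IMCexplicit}, which describes the generation of higher exterior powers of Iwasawa-theoretic unit groups after localization at height one primes) and descending to each finite layer using (MRS) as the bridge that pins down the image of the Iwasawa-theoretic zeta element in terms of $\epsilon_F$ together with a Bockstein-type regulator term.

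In detail, I would proceed as follows. First, use (hIMC) to produce an Iwasawa-theoretic zeta element $\eta_\infty$ generating the appropriate exterior power at every height one prime of the relevant Iwasawa algebra. Second, apply (MRS) to relate the image of $\eta_\infty$ under the natural projection to the finite-level exterior power for $F$ with the finite-level Rubin-Stark element $\epsilon_F$, modified by a Bockstein contribution that encodes the trivial-zero phenomenon. Third, combine these to obtain, at the finite layer, a generator of the predicted exterior power whose regulator image is the leading term of the equivariant $L$-series; by the reformulation of \cite{bks1} this is exactly the $p$-component of eTNC for $F/k$. As a byproduct of the combined hypotheses (hIMC) and (MRS) one recovers the finite-level Rubin-Stark conjecture at each layer $F$, which is needed to identify $\epsilon_F$ in the first place.

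The main obstacle, and the step at which Gross's finiteness hypothesis is decisive, is the descent from the Iwasawa layer to the finite layers. In the presence of $p$-adic primes of $k$ that split in $K_\infty/k$ (the trivial-zero situation discussed in \S\ref{iwc intro}) the natural projection between Iwasawa-theoretic and finite-level exterior powers has non-trivial kernel and cokernel that are controlled by the Galois (co)invariants of a Selmer-type Iwasawa module. The Bockstein homomorphism featuring in (MRS) measures exactly this defect, and a snake-lemma chase shows that for the descent to yield equality of characteristic (or Fitting) ideals at the finite level, rather than only a divisibility, one needs the relevant Galois coinvariants to be finite. This is precisely Gross's conjecture in the present context, and under it the descent is clean, delivering the $p$-component of eTNC for every finite subextension $F/k$ of $K_\infty/k$.
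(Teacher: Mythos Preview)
Your high-level architecture is sound --- descent of an Iwasawa-theoretic zeta element, with (MRS) acting as the bridge and (F) controlling the Bockstein --- but the actual mechanism in the paper differs from what you describe in two important ways.

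First, the paper does \emph{not} reformulate eTNC as ``a Rubin-Stark element generates an exterior power'' and does not work with Fitting or characteristic ideals at the finite level. Instead it uses the determinant-module criterion of Proposition~\ref{etncbyrs}: eTNC for $L/k$ holds iff there is a $\ZZ_p[G]$-basis $\mathcal{L}_{L/k,S,T}$ of $\det_{\ZZ_p[G]}(C_{L,S,T})$ whose $e_\chi$-component, for every $\chi\in\widehat G$, projects under $\pi_{L_\chi/k,S,T}^{V'_\chi}$ to $e_\chi\epsilon_{L_\chi/k,S,T}^{V'_\chi}$. The candidate basis is obtained immediately from (hIMC) in its original form (Conjecture~\ref{IMC}, not the height-one localized version~\ref{IMCexplicit}): the $\Lambda$-basis $\mathcal{L}_{K_\infty/k,S,T}$ descends to a $\ZZ_p[G]$-basis $\mathcal{L}_{L/k,S,T}$ for free. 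So there is no ``equality versus divisibility of Fitting ideals'' issue to resolve; the whole proof reduces to an identity of \emph{elements}, character by character.

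Second, the role of (F) is not to control the kernel/cokernel of a projection between exterior powers via a snake-lemma argument. Rather, (F) forces $A_S^T(K_\infty)\otimes_\Lambda R=0$ at the regular prime $\mathfrak p=\ker(\chi)$ (Lemma~\ref{keylemma}), which in turn makes the Bockstein map fit into a four-term exact sequence (Proposition~\ref{ker coker}) and hence makes the induced map $\widetilde\beta=(-1)^{re}e_\chi\mathrm{Rec}_\infty$ \emph{injective} on $e_\chi\QQ_p(\chi)\bigwedge^{r'}U_{L,S,T}$ (Proposition~\ref{bock rec}). The proof then runs: the commutative diagram of Lemma~\ref{lemcomm1} (relating $\pi^V$ at level $L_n$ with $\pi^{V'}$ at level $L$ via $\mathrm{Rec}_n$) together with (hIMC) gives $(-1)^{re}e_\chi\mathrm{Rec}_\infty\bigl(\pi^{V'}_{L/k,S,T}(\mathcal{L}_{L/k,S,T})\bigr)=e_\chi\kappa$; (MRS) says $e_\chi\kappa=(-1)^{re}e_\chi\mathrm{Rec}_\infty(\epsilon_{L/k,S,T}^{V'})$; and injectivity of $e_\chi\mathrm{Rec}_\infty$ lets you cancel to obtain $\pi^{V'}_{L/k,S,T}(e_\chi\mathcal{L}_{L/k,S,T})=e_\chi\epsilon_{L/k,S,T}^{V'}$, which is exactly the criterion of Proposition~\ref{etncbyrs}. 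Your proposal would benefit from making this injectivity-and-cancellation step explicit in place of the vaguer Fitting-ideal descent you outline.
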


To give a first indication of the usefulness of the above theorem,
we apply it in the case that $k$ is totally real and $K$ is CM and
consider the `minus component'  of the $p$-part of the eTNC.
In this context we write $K^+$ for the maximal totally real subfield of $K$.

We recall that if no $p$-adic place splits in $K/K^{+}$
and the Iwasawa-theoretic $\mu$-invariant of $K_\infty/K$ vanishes,
then the validity of the minus component of the $p$-part of the eTNC is already known
(as far as we are aware, such a result was first implicitly discussed
in the survey article of Flach \cite{flachsurvey}).

However, by combining Theorems \ref{GS thm} and \ref{mainthm} with recent work of Darmon,
Dasgupta and Pollack \cite{DDP} and of Ventullo \cite{ventullo} on the Gross-Stark conjecture, we can now
%check that all the conditions of Theorem \ref{IntroTh} are satisfied, and can
prove the following result (for a precise statement of which see Corollary \ref{MC1}).

\begin{corollary}\label{IntroCor} Let $K$ be a finite CM-extension of a totally real field $k$. Let $p$ be an odd prime for which the Iwasawa-theoretic
$\mu$-invariant of $K_\infty/K$ vanishes and at most one $p$-adic place of $k$ splits in $K/K^{+}$. Then the minus component of the $p$-part of the eTNC for $K/k$ is (unconditionally) valid.
\end{corollary}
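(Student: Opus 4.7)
The plan is to deduce the corollary directly from Theorem \ref{mainthm} applied to $K_\infty/K$, restricted to the minus component with respect to complex conjugation in $\Gal(K/K^+)$. To do so I must verify, under the hypotheses of the corollary, the three inputs needed by Theorem \ref{mainthm}: (a) finiteness of the Galois coinvariants of the relevant Iwasawa module (Gross's conjecture), (b) the higher rank Iwasawa main conjecture (hIMC) for $K_\infty/k$, and (c) the Mazur-Rubin-Sano-type conjecture (MRS) for $K_\infty/k$. I would organize the proof around verifying (a), (b), (c) and then invoking Theorem \ref{mainthm}.

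For (b), the paper already observes in the discussion following (hIMC) that, on the minus component of an abelian CM-extension of a totally real field, (hIMC) is equivalent to the classical main conjecture involving the Deligne-Ribet $p$-adic $L$-function. Under the assumed vanishing of the $\mu$-invariant of $K_\infty/K$, the latter is the theorem of Wiles (building on Mazur-Wiles), so (b) is unconditional under the hypotheses. For (a), in the minus CM situation the module in question is essentially built from $p$-units in $K_\infty$, and the finiteness of its coinvariants follows by a standard Iwasawa-theoretic Euler-characteristic argument from the $\mu=0$ hypothesis together with the minus-part Leopoldt-type statements that are known in this CM setting. Both of these steps are essentially formal under the stated hypotheses.

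The serious input, and the main obstacle, is (c). Here I would invoke Theorem \ref{GS thm} to reinterpret (MRS) on the minus side as an instance of Gross's Stark-type conjecture from \cite{Gp} for $K/k$. For each odd character $\chi$ of $\Gal(K/k)$, the order of vanishing at $s=0$ of the corresponding $p$-adic $L$-function equals the number of $p$-adic places of $k$ whose decomposition group in $\Gal(K/k)$ is contained in $\ker\chi$; since $\chi$ is odd, this number is at most the number of $p$-adic places of $k$ that split in $K/K^+$. The hypothesis of the corollary therefore forces this order of vanishing to be either $0$ or $1$. In the rank-$0$ case (no trivial zero) the desired relation reduces to the defining interpolation property of the Deligne-Ribet $p$-adic $L$-function and is automatic; in the rank-$1$ case it is precisely the original Gross-Stark conjecture of \cite{Gp}, which was proved unconditionally by Ventullo in \cite{ventullo}, using crucially the earlier work of Darmon-Dasgupta-Pollack in \cite{DDP}. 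This is the only step that presently prevents extending the result to configurations with more splitting primes, since no analogue of Ventullo's theorem is available in higher rank.

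Once (a), (b) and (c) have been established, Theorem \ref{mainthm} yields the minus component of the $p$-part of the eTNC for every finite subextension $F/k$ of $K_\infty/k$, and in particular for $F=K$ itself, which gives the asserted unconditional validity.
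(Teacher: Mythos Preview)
Your overall strategy matches the paper's: you apply Theorem \ref{mainthm} on the minus side after verifying (hIMC) via Theorem \ref{CM theorem}(i) and Wiles, and (MRS) via Theorem \ref{GS thm} together with the rank-$1$ Gross--Stark theorem of Darmon--Dasgupta--Pollack and Ventullo. Your reduction of (MRS) to the rank $\le 1$ case is also correct and argued as in the paper.

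However, your justification of (a), the finiteness condition (F), is wrong. You attribute it to ``a standard Iwasawa-theoretic Euler-characteristic argument from the $\mu=0$ hypothesis together with minus-part Leopoldt-type statements''. The vanishing of $\mu$ does not by itself imply that the $\Gamma_\chi$-coinvariants of $A_S^T(L_{\chi,\infty})$ are finite; this is precisely Gross's finiteness conjecture, which is open in general in the minus CM setting. What makes (F) available here is the \emph{same} hypothesis you used for (c): for each odd $\chi$ there is at most one $p$-adic place $\mathfrak{p}$ with $\chi(G_\mathfrak{p})=1$, and in that situation Gross himself proved the required finiteness in \cite[Proposition 2.13]{Gp}. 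This is exactly how the paper handles it (see the paragraph preceding Corollary \ref{MC1}). So your claim that the ``at most one splitting prime'' hypothesis is needed \emph{only} for (c) is incorrect; it is used for both (a) and (c), and both obstructions would remain in higher rank.
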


%Corollary \ref{IntroCor} (see Corollary \ref{MC1}).}
We remark that this result gives {\it the first verifications} of the (minus component of the $p$-part of the) eTNC for the untwisted Tate motive over abelian CM-extensions
of a totally real field that is not equal to $\mathbb{Q}$ and
 for which {\it the relevant $p$-adic $L$-series possess trivial zeroes}. For details of some concrete applications of Corollary \ref{IntroCor} in this regard, see Examples \ref{RemarkExample}.

In another direction, Corollary \ref{IntroCor} also leads directly to a strong refinement of one of the main results of Greither and Popescu in \cite{GreitherPopescu} (for details of which see Corollary \ref{CMunconditional4}).
%For details see Corollary \ref{MC1}.
%In this way {\it we obtain the first theoretical result that the eTNC is proved even
%when the trivial zero occurs.}

%We also give a remark on the relation with our zeta elements and
%the $L$-values at negative integers in Remark \ref{negative}.

\vspace{-0.5mm}

\subsection{Further developments}
Finally we would like to point out that the ideas presented in this article extend naturally in at least two different directions and that we intend to discuss these developments elsewhere.

Firstly, one can formulate a natural generalization of the theory discussed here in the context of arbitrary Tate motives. In this setting our theory is related to natural generalizations of both the notion of Rubin-Stark element (which specializes in the case of Tate motives of strictly positive weight over abelian extensions of $\QQ$ to recover Soul\'e's construction of cyclotomic elements in higher algebraic $K$-theory) and of the Rubin-Stark conjecture itself. In particular, our approach leads in this context to the formulation of precise conjectural congruence relations between Rubin-Stark elements of differing `weights' which can be seen to constitute a wide-ranging (conjectural) generalization of the classical Kummer congruences involving Bernoulli numbers. For  more details in this regard see \S\ref{negative}.

Secondly, many of the constructions, conjectures and results that are discussed here extend naturally to the setting of non-commutative Iwasawa theory and can then be used to prove the same case of the eTNC that we consider here over natural families of  Galois extensions that are both non-abelian and of degree divisible by a prime $p$ at which the relevant $p$-adic $L$-series possess trivial zeroes.

\begin{notation} For the reader's convenience we start by collecting some basic notation.

For any (profinite) group $G$ we write $\widehat{G}$ for the group of homomorphisms $G \to \CC^\times$ of finite order.

Let $k$ be a number field. For a place $v$ of $k$, the residue field of $v$ is denoted by $\kappa(v)$, and its order is denoted by ${\N}v$. We denote the set of places of $k$ which lie above the infinite place $\infty$ of $\QQ$ (resp. a prime number $p$) by $S_\infty(k)$ (resp. $S_p(k)$). For a Galois extension $L/k$, the set of places of $k$ which ramify in $L$ is denoted by $S_{\rm ram}(L/k)$. For any set $\Sigma$ of places of $k$, we denote by $\Sigma_L$ the set of places of $L$ which lie above places in $\Sigma$.

Let $L/k$ be an abelian extension with Galois group $G$. For a place $v$ of $k$, the decomposition group at $v$ in $G$ is denoted by $G_v$. If $v$ is unramified in $L$, the Frobenius automorphism at $v$ is denoted by $\Fr_v$.

Let $E$ be either a field of characteristic $0$ or $\ZZ_p$. For an abelian group $A$,
we denote $E\otimes_\ZZ A$ by $EA$ or $A_E$.
For a $\ZZ_p$-module $A$ and an extension field $E$ of $\QQ_p$, we also write $EA$ or $A_E$ for $E\otimes_{\ZZ_p} A$.
(This abuse of notation would not make any confusion.)
We use similar notation for complexes. For example, if $C$ is a complex of abelian groups,
then we denote $E\otimes_\ZZ^{\mathbb{L}} C$ by $EC$ or $C_E$.

Let $R$ be a commutative ring, and $M$ be an $R$-module. Let $r$ and $s$ be non-negative integers with $r\leq s$. There is a canonical paring
$$\bigwedge_R^s M \times \bigwedge_R^r \Hom_R(M,R) \to \bigwedge_R^{s-r}M$$
defined by
$$(a_1\wedge\cdots\wedge a_s,\varphi_1\wedge\cdots\wedge \varphi_r)\mapsto
\sum_{\sigma \in {\mathfrak{S}_{s,r}} }{\rm{sgn}}(\sigma)\det(\varphi_i(a_{\sigma(j)}))_{1\leq i,j\leq r} a_{\sigma(r+1)}\wedge\cdots\wedge a_{\sigma(s)},$$
where
$$\mathfrak{S}_{s,r}:=\{ \sigma \in \mathfrak{S}_s  \mid \sigma(1) < \cdots < \sigma(r) \text{ and } \sigma(r+1) <\cdots<\sigma(s) \}.$$
(See \cite[Proposition 4.1]{bks1}.) We denote the image of $(a,\Phi)$ under the above pairing by $\Phi(a)$.

For any $R$-module $M$, we denote the linear dual $\Hom_R(M,R)$ by $M^\ast$.

The total quotient ring of $R$ is denoted by $Q(R)$.

\end{notation}

\section{Zeta elements for $\GG_m$}

In this section, we review the zeta elements for $\GG_m$ that were studied in \cite{bks1}.

\subsection{The Rubin-Stark conjecture} \label{rssection}
We review the formulation of the Rubin-Stark conjecture \cite[Conjecture B$'$]{R}.

Let $L/k$ be a finite abelian extension of number fields with Galois group $G$. Let $S$ be a finite set of places of $k$ which contains $S_\infty(k)\cup S_{\rm ram}(L/k)$. We fix a labeling $S=\{v_0,\ldots,v_n\}$. Take $r\in \ZZ$ so that $v_1,\ldots,v_r$ split completely in $L$. We put $V:=\{v_1,\ldots , v_r\}$.
For each place $v$ of $k$, we fix a place $w$ of $L$ lying above $v$. In particular, for each $i$ with $0\leq i \leq n$,
we fix a place $w_i$ of $L$ lying above $v_i$. Such conventions are frequently used in this paper.

For $\chi \in \widehat G$, let $L_{k,S}(\chi,s)$ denote the usual $S$-truncated $L$-function for $\chi$. We put
$$r_{\chi,S}:=\ord_{s=0}L_{k,S}(\chi,s).$$
Let $\cO_{L,S}$ be the ring of $S_L$ integers of $L$. For any set $\Sigma$ of places of $k$, put $Y_{L,\Sigma}:=\bigoplus_{w\in \Sigma_L}\ZZ w$, the free abelian group on $\Sigma_L$. We define
$$X_{L,\Sigma}:=\{ \sum_{w\in \Sigma_L} a_w w\in Y_{L,\Sigma} \mid \sum_{w\in \Sigma_L}a_w=0\}.$$
By Dirichlet's unit theorem, we know that the homomorphism of $\RR[G]$-modules
$$\lambda_{L,S}: \RR \cO_{L,S}^\times \stackrel{\sim}{\to} \RR X_{L,S}; \ a\mapsto -\sum_{w\in S_L}\log|a|_w w$$
is an isomorphism.

By \cite[Chap. I, Proposition 3.4]{tate} we know that
\begin{eqnarray}
r_{\chi,S}&=&\dim_\CC(e_\chi \CC \cO_{L,S}^\times)=\dim_\CC(e_\chi \CC X_{L,S}) \nonumber \\
&=& \begin{cases}
\# \{ v\in S \mid \chi(G_v)=1\} &\text{ if $\chi \neq 1$}, \\
n(=\#S-1) &\text{ if $\chi=1$},
\end{cases} \nonumber
\end{eqnarray}
where $e_\chi:=\frac{1}{\#G}\sum_{\sigma\in G}\chi(\sigma)\sigma^{-1}$. From this fact, we see that $r \leq r_{\chi,S}$.

Let $T$ be a finite set of places of $k$ which is disjoint from $S$. The $S$-truncated $T$-modified $L$-function is defined by
$$L_{k,S,T}(\chi,s):=(\prod_{v\in T}(1-\chi(\Fr_v){\N}v^{1-s}))L_{k,S}(\chi,s).$$
The $(S,T)$-unit group of $L$ is defined by
$$\cO_{L,S,T}^\times:=\ker(\cO_{L,S}^\times \to \bigoplus_{w \in T_L}\kappa(w)^\times).$$
Note that $\cO_{L,S,T}^\times$ is a subgroup of $\cO_{L,S}^\times$ of finite index. We have
$$r \leq r_{\chi,S}=\ord_{s=0}L_{k,S,T}(\chi,s)=\dim_\CC(e_\chi \CC \cO_{L,S,T}^\times).$$
We put
$$L_{k,S,T}^{(r)}(\chi,0):=\lim_{s\to 0}s^{-r}L_{k,S,T}(\chi,s).$$
We define the $r$-th order Stickelberger element by
$$\theta_{L/k,S,T}^{(r)}:=\sum_{\chi \in \widehat G}L_{k,S,T}^{(r)}(\chi^{-1},0)e_\chi \in \RR[G]. $$
The ($r$-th order) Rubin-Stark element
$$\epsilon_{L/k,S,T}^V \in \RR \bigwedge_{\ZZ[G]}^r \cO_{L,S,T}^\times$$
is defined to be the element which corresponds to
$$\theta_{L/k,S,T}^{(r)}\cdot (w_1-w_0)\wedge\cdots\wedge(w_r-w_0) \in \RR\bigwedge_{\ZZ[G]}^r X_{L,S}$$
under the isomorphism
$$\RR \bigwedge_{\ZZ[G]}^r \cO_{L,S,T}^\times \stackrel{\sim}{\to} \RR\bigwedge_{\ZZ[G]}^r X_{L,S}$$
induced by $\lambda_{L,S}$.

Now assume that $\cO_{L,S,T}^\times$ is $\ZZ$-free. Then, the Rubin-Stark conjecture predicts that the Rubin-Stark element $\epsilon_{L/k,S,T}^V$ lies in the lattice
$$\bigcap_{\ZZ[G]}^r \cO_{L,S,T}^\times:=\{a\in \QQ \bigwedge_{\ZZ[G]}^r \cO_{L,S,T}^\times \mid \Phi(a)\in\ZZ[G]\text{ for all }\Phi\in\bigwedge_{\ZZ[G]}^r\Hom_{\ZZ[G]}(\cO_{L,S,T}^\times,\ZZ[G])\}.$$
(See \cite[Conjecture B$'$]{R}.) In this paper, we consider the `$p$-part' of the Rubin-Stark conjecture for a fixed prime number $p$. We put
$$U_{L,S,T}:=\ZZ_p \cO_{L,S,T}^\times.$$
We also fix an isomorphism $\CC \simeq \CC_p$. From this, we regard
$$\epsilon_{L/k,S,T}^V \in \CC_p \bigwedge_{\ZZ_p[G]}^r U_{L,S,T}.$$
We define
$$\bigcap_{\ZZ_p[G]}^r U_{L,S,T}:=\{a \in \QQ_p\bigwedge_{\ZZ_p[G]}^r U_{L,S,T} \mid \Phi(a) \in \ZZ_p[G] \text{ for all $\Phi \in \bigwedge_{\ZZ_p[G]}^r \Hom_{\ZZ_p[G]}(U_{L,S,T},\ZZ_p[G])$}\}.$$
We easily see that there is a natural isomorphism $\ZZ_p\bigcap_{\ZZ[G]}^r \cO_{L,S,T}^\times\simeq \bigcap_{\ZZ_p[G]}^r U_{L,S,T}$. We often denote $\bigwedge_{\ZZ_p[G]}^r$ and $\bigcap_{\ZZ_p[G]}^r $ simply by $\bigwedge^r$ and $\bigcap^r $ respectively.

We propose the `$p$-component version' of the Rubin-Stark conjecture
%, which should be called the `$p$-part' of the Rubin-Stark conjecture,
as follows.

\begin{conjecture}[{${\rm RS}(L/k,S,T,V)_p$}]
$$\epsilon_{L/k,S,T}^V \in \bigcap^r U_{L,S,T}.$$
\end{conjecture}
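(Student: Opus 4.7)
My strategy is to reduce the claim to an integrality statement that can be verified character by character, to dispose of the classical accessible cases directly, and otherwise to route the problem through the Iwasawa-theoretic machinery developed later in this paper. By the defining property of $\bigcap^r U_{L,S,T}$, the conjecture is equivalent to the inclusion $\Phi(\epsilon_{L/k,S,T}^V)\in\ZZ_p[G]$ for every $\Phi\in\bigwedge^r\Hom_{\ZZ_p[G]}(U_{L,S,T},\ZZ_p[G])$, and via the idempotent decomposition of $\CC_p[G]$ this reduces to an integrality statement for each $\chi\in\widehat G$. For characters $\chi$ with $r_{\chi,S}>r$ the leading value $L_{k,S,T}^{(r)}(\chi^{-1},0)$ vanishes, so $e_\chi\theta_{L/k,S,T}^{(r)}=0$ and hence $e_\chi\epsilon_{L/k,S,T}^V=0$; nothing is to be checked. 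The substantive case is $r_{\chi,S}=r$, in which an unwinding of $\lambda_{L,S}$ and of the evaluation pairing shows that $\chi\bigl(\Phi(\epsilon_{L/k,S,T}^V)\bigr)$ equals $L_{k,S,T}^{(r)}(\chi^{-1},0)$ multiplied by a determinant built from the $\chi$-twisted logarithms of a convenient basis of $e_\chi\CC_p U_{L,S,T}$. Integrality thus becomes a leading-term bound on this transcendental ratio.

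To attack that bound I would proceed through three reductions. First, using functoriality of $\epsilon^V$ along subextensions of $L/k$ together with the compatibility of $\bigcap^r$ with block decompositions of $\ZZ_p[G]$, I would reduce to the $p$-Sylow subextension of $L/k$ and to characters of $p$-power order. Second, I would dispose of the classical accessible cases: for $k=\QQ$, $\epsilon_{L/k,S,T}^V$ can be identified (up to an elementary calculation) with an expression in cyclotomic units and is therefore integral; for $r=1$, the claim is a $p$-part of the abelian Stark conjecture, known in many instances. Third, for CM abelian extensions of a totally real field I would feed the Deligne-Ribet $p$-adic $L$-function into the framework of this paper: the leading value $L_{k,S,T}^{(r)}(\chi^{-1},0)$ is then accessed Iwasawa-theoretically through (hIMC) and (MRS), and Theorem~\ref{IntroTh} delivers the relevant case of the eTNC and hence, a fortiori, the present integrality statement.

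The decisive obstacle is that no general construction is known of an element of $\bigcap^r U_{L,S,T}$ whose image under $\lambda_{L,S}$ recovers $\theta_{L/k,S,T}^{(r)}\cdot(w_1-w_0)\wedge\cdots\wedge(w_r-w_0)$; the existence of such an element is essentially what the Rubin-Stark conjecture predicts. Outside the cyclotomic, $r=1$, and CM regimes I therefore do not expect a proof without new arithmetic input, for instance a $p$-adic polylogarithm theory over a totally real base or a motivic-cohomological construction of higher Rubin-Stark elements. Within the scope of the present paper the productive route is the Iwasawa-theoretic one, and the real technical burden lies not in this integrality step itself but in establishing the congruences (MRS) and the higher-rank main conjecture (hIMC), whose conjunction under the Gross-type hypothesis of Theorem~\ref{IntroTh} implies the eTNC, and thereby the present conjecture, at every finite level.
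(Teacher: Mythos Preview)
The statement you have been asked to prove is labelled a \emph{Conjecture} in the paper, and for good reason: the paper does not prove it. There is no ``paper's own proof'' against which to compare your attempt. The Rubin--Stark conjecture (in its $p$-component form) is stated here as an open problem; the paper only records that it is implied by the eTNC (see the Remark immediately following the conjecture) and, in Theorem~\ref{imcrs}, that the particular instances ${\rm RS}(L_{\chi,n}/k,S,T,V_\chi)_p$ follow from the higher-rank Iwasawa main conjecture (hIMC). Neither of these is an unconditional proof.

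Your write-up tacitly acknowledges this (``I therefore do not expect a proof without new arithmetic input''), so what you have produced is not a proof proposal but a survey of known special cases together with a conditional strategy. That is a reasonable thing to write, but it should not be framed as a proof. One further caution on your conditional route: invoking Theorem~\ref{mainthm} to obtain the eTNC and hence ${\rm RS}(L/k,S,T,V)_p$ is not free of circularity, because the hypothesis (MRS) of that theorem already \emph{presupposes} the validity of ${\rm RS}(L_{\chi,n}/k,S,T,V_\chi)_p$ for all $n$ (see the opening clause of Conjecture~\ref{mrs2}). The non-circular input the paper actually uses is Theorem~\ref{imcrs}, which derives those instances of RS directly from (hIMC); but (hIMC) itself remains conjectural in general.
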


\begin{remark}
Concerning known results on the Rubin-Stark conjecture, see \cite[Remark 5.3]{bks1} for example. Note that the Rubin-Stark conjecture is a consequence of the eTNC. This result was first proved by the first author in \cite[Corollary 4.1]{burns}, and later by the present authors \cite[Theorem 5.13]{bks1} in a simpler way.
\end{remark}

\subsection{The eTNC for the untwisted Tate motive}

In this subsection, we review the formulation of the eTNC for the untwisted Tate motive.

Let $L/k,G,S,T$ be as in the previous subsection. Fix a prime number $p$. We assume that $S_p(k) \subset S$.
Consider the complex
$$C_{L,S}:=R\Hom_{\ZZ_p}(R\Gamma_c(\cO_{L,S},\ZZ_p),\ZZ_p)[-2].$$
It is known that $C_{L,S}$ is a perfect complex of $\ZZ_p[G]$-modules, acyclic outside degrees zero and one. We have a canonical isomorphism
$$H^0(C_{L,S})\simeq U_{L,S}(:=\ZZ_p\cO_{L,S}^\times),$$
and a canonical exact sequence
$$0 \to A_S(L) \to H^1(C_{L,S}) \to \cX_{L,S}\to 0,$$
where $A_S(L):=\ZZ_p\Pic(\cO_{L,S})$ and $\cX_{L,S}:=\ZZ_pX_{L,S}$. The complex $C_{L,S}$ is identified with the $p$-completion of the complex obtained from the classical `Tate sequence' (if $S$ is large enough), and also identified with $\ZZ_p R\Gamma((\cO_{L,S})_{\mathcal{W}},\GG_m)$,
where $R\Gamma((\cO_{L,S})_{\mathcal{W}},\GG_m)$ is the `Weil-\'etale cohomology complex' constructed in \cite[\S 2.2]{bks1} (see \cite[Proposition 3.3]{BFongal} and \cite[Proposition 3.5(e)]{pnp}).

By a similar construction with \cite[Proposition 2.4]{bks1}, we construct a canonical complex $C_{L,S,T}$ which lies in the distinguished triangle
$$C_{L,S,T} \to C_{L,S} \to \bigoplus_{w\in T_L}\ZZ_p\kappa(w)^\times[0].$$
(Simply we can define $C_{L,S,T}$ by $\ZZ_p R\Gamma_T((\cO_{L,S})_{\mathcal{W}},\GG_m)$ in the terminology of \cite{bks1}.)
We have
$$H^0(C_{L,S,T})=U_{L,S,T}$$
and the exact sequence
$$0 \to A_S^T(L) \to H^1(C_{L,S,T}) \to \cX_{L,S} \to 0,$$
where $A_S^T(L)$ is the $p$-part of the ray class group of $\cO_{L,S}$ with modulus $\prod_{w\in T_L} w$.

We define the leading term of $L_{k,S,T}(\chi,s)$ at $s=0$ by
$$L_{k,S,T}^\ast(\chi,0):=\lim_{s\to 0}s^{-r_{\chi,S}}L_{k,S,T}(\chi,s).$$
The leading term at $s=0$ of the equivariant $L$-function
$$\theta_{L/k,S,T}(s):=\sum_{\chi \in \widehat G}L_{k,S,T}(\chi^{-1},s)e_\chi$$
is defined by
$$\theta_{L/k,S,T}^\ast(0):=\sum_{\chi \in \widehat G}L_{k,S,T}^\ast(\chi^{-1},0)e_\chi \in \RR[G]^\times.$$

As in the previous subsection we fix an isomorphism $\CC \simeq \CC_p$. We regard $\theta_{L/k,S,T}^\ast(0)\in \CC_p[G]^\times$. The zeta element for $\GG_m$
$$z_{L/k,S,T} \in \CC_p{\det}_{\ZZ_p[G]}(C_{L,S,T})$$
is defined to be the element which corresponds to $\theta_{L/k,S,T}^\ast(0)$ under the isomorphism
\begin{eqnarray}
\CC_p{\det}_{\ZZ_p[G]}(C_{L,S,T}) &\simeq& {\det}_{\CC_p[G]}(\CC_pU_{L,S,T}) \otimes_{\CC_p[G]} {\det}_{\CC_p[G]}^{-1}(\CC_p \cX_{L,S}) \nonumber \\
&\stackrel{\sim}{\to}& {\det}_{\CC_p[G]}(\CC_p \cX_{L,S})\otimes_{\CC_p[G]} {\det}_{\CC_p[G]}^{-1}(\CC_p \cX_{L,S}) \nonumber \\
&\stackrel{\sim}{\to}& \CC_p[G], \nonumber
\end{eqnarray}
where the second isomorphism is induced by $\lambda_{L,S}$, and the last isomorphism is  the evaluation map. Note that determinant modules must be regarded as graded invertible modules, but we omit the grading of any graded invertible modules as in \cite{bks1}.

The eTNC for the pair $(h^0(\Spec L),\ZZ_p[G])$ is formulated as follows.

\begin{conjecture}[{${\rm eTNC}(h^0(\Spec L),\ZZ_p[G])$}]
$$\ZZ_p[G]\cdot z_{L/k,S,T}={\det}_{\ZZ_p[G]}(C_{L,S,T}).$$
\end{conjecture}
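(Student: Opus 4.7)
The plan is not to attack this conjecture directly at the finite level $L/k$, but to prove it as the output of an Iwasawa-theoretic descent. Verifying the determinantal equality at a single finite layer would require simultaneously controlling the leading term $\theta_{L/k,S,T}^\ast(0)$, the ray class group $A_S^T(L)$ and the unit group $U_{L,S,T}$, whereas in a tower these quantities are governed by a single characteristic-ideal-type statement that is much closer to classical Iwasawa theory. So I would fix a $\ZZ_p$-extension $k_\infty/k$, set $K_\infty = L k_\infty$, and try to reduce the eTNC for $L/k$ (and more generally for every finite subextension of $K_\infty/k$) to two cleaner Iwasawa-theoretic inputs.

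The first input is a higher rank Iwasawa main conjecture (hIMC) asserting the existence of a canonical Iwasawa-theoretic zeta element that generates the relevant Iwasawa determinant module; this element is the natural substitute for a $p$-adic $L$-function over a general number field, and crucially its defining interpolation properties are phrased in terms of higher derivatives of $L$-series and of Rubin-Stark elements, so it survives in the presence of trivial zeroes. The second input is a congruence conjecture (MRS) of Mazur-Rubin-Sano type (Conjectures \ref{mrs1} and \ref{mrs2}) relating the Rubin-Stark element at the finite level $L/k$ to the Iwasawa-theoretic Rubin-Stark element at the top; this is precisely the data required to account for trivial zeroes of $p$-adic $L$-functions under descent. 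With these in place, the strategy is to prove a descent theorem of the shape of Theorem \ref{IntroTh}: assuming Gross's finiteness conjecture on the coinvariants of a certain Iwasawa module, the combination (hIMC) plus (MRS) forces the equality $\ZZ_p[G]\cdot z_{L/k,S,T} = \det_{\ZZ_p[G]}(C_{L,S,T})$ at every finite layer.

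Unconditional cases then follow by feeding known inputs into this machine. For a CM-abelian extension $K/K^+$ of a totally real field and the minus component of the $p$-part, (hIMC) can be identified with the classical main conjecture in terms of Deligne-Ribet $p$-adic $L$-functions and is known when the Iwasawa $\mu$-invariant vanishes; (MRS) is identified, via Theorem \ref{GS thm}, with a natural extension of the Gross-Stark conjecture, which is known by Darmon-Dasgupta-Pollack and Ventullo when at most one $p$-adic place of $k$ splits in $K/K^+$; and the Gross finiteness hypothesis holds in this setting. Chaining these inputs yields Corollary \ref{IntroCor}. The main obstacle I expect is the descent theorem itself: one must track, in the derived category of Iwasawa modules, how the determinantal identity at the top propagates to each finite layer through a Bockstein-type connecting map, show that the discrepancy produced by trivial zeroes coincides with the correction encoded by (MRS), and control the reflexive hulls $\bigcap^r U_{L,S,T}$ at height-one primes. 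This is a delicate homological argument and is where the real work of the proof strategy must be concentrated.
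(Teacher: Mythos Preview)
Your proposal is correct and is essentially the paper's own strategy: the descent from (hIMC) and (MRS) under the finiteness hypothesis (F) is exactly Theorem \ref{mainthm}, and the Bockstein computation you anticipate is precisely what is carried out in \S\ref{proof main result} via Propositions \ref{proprec}, \ref{ker coker} and \ref{bock rec}, with the commutative diagram of Lemma \ref{lemcomm1} identifying the MRS correction term with the image of the descended zeta element. The CM application you sketch is Corollary \ref{MC1}, obtained exactly as you describe by combining Theorem \ref{CM theorem}(i) with Theorem \ref{GS thm} and the Darmon--Dasgupta--Pollack/Ventullo input.
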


\begin{remark}
When $p$ is odd, $k$ is totally real, and $L$ is CM, we say that the minus part of the eTNC (which we denote by
${\rm eTNC}(h^0(\Spec L),\ZZ_p[G]^-)$) is valid if we have the equality
$$e^-\ZZ_p[G]\cdot z_{L/k,S,T}=e^-{\det}_{\ZZ_p[G]}(C_{L,S,T}),$$
where $e^-:=\frac{1-c}{2}$ and $c \in G$ is the complex conjugation.
\end{remark}

\subsection{The eTNC and Rubin-Stark elements}

In this subsection, we interpret the eTNC, using Rubin-Stark elements. The result in this subsection will be used in \S \ref{descent section}.

We continue to use the notation in the previous subsection. Take $\chi \in \widehat G$, and suppose that $r_{\chi,S} <\# S $. Put $L_\chi:=L^{\ker \chi}$ and $G_\chi:=\Gal(L_\chi/k)$. Take $V_{\chi,S}\subset S$ so that all $v\in V_{\chi,S}$ split completely in $L_\chi$ (i.e. $\chi(G_v)=1$) and $\#V_{\chi,S}=r_{\chi,S}$. Note that, if $\chi\neq 1$, we have
$$V_{\chi,S}=\{v\in S \mid \chi(G_v)=1\}.$$
Consider the Rubin-Stark element
$$\epsilon_{L_\chi/k,S,T}^{V_{\chi,S}}\in \CC_p \bigwedge^{r_{\chi,S}}U_{L_\chi,S,T}.$$
Note that a Rubin-Stark element depends on a fixed labeling of $S$, so in this case a labeling of $S$ such that $S=\{ v_0,\ldots,v_n\}$ and $V_{\chi,S}=\{v_1,\ldots,v_{r_{\chi,S}}\}$ is understood to be chosen.

For a set $\Sigma$ of places of $k$ and a finite extension $F/k$, put $\cY_{F,\Sigma}:=\ZZ_p Y_{F,\Sigma}=\bigoplus_{w \in \Sigma_F}\ZZ_p w$ and $\cX_{F,\Sigma}:=\ZZ_pX_{F,\Sigma}=\ker(\cY_{F,\Sigma} \to \ZZ_p)$. The natural surjection
$$\cX_{L_\chi,S} \to \cY_{L_\chi,V_{\chi,S}}$$
induces an injection
$$\cY_{L_\chi,V_{\chi,S}}^\ast \to \cX_{L_\chi,S}^\ast,$$
where $(\cdot)^\ast:=\Hom_{\ZZ_p[G_\chi]}(\cdot, \ZZ_p[G_\chi])$.
Since $\cY_{L_\chi,V_{\chi,S}} \simeq \ZZ_p[G_\chi]^{\oplus r_{\chi,S}}$ and $\dim_{\CC_p}(e_\chi \CC_p\cX_{L,S})=r_{\chi,S}$, the above map induces an isomorphism
$$e_\chi\CC_p\cY_{L_\chi,V_{\chi,S}}^\ast \stackrel{\sim}{\to} e_\chi \CC_p \cX_{L,S}^\ast.$$
From this, we have a canonical identification
$$e_\chi\CC_p( \bigwedge^{r_{\chi,S}}U_{L_\chi,S,T} \otimes\bigwedge^{r_{\chi,S}} \cY_{L_\chi,V_{\chi,S}}^\ast )= e_\chi({\det}_{\CC_p[G]}(\CC_pU_{L,S,T}) \otimes_{\CC_p[G]} {\det}_{\CC_p[G]}^{-1}(\CC_p\cX_{L,S})).$$
Since $\{w_1,\ldots , w_{r_{\chi,S}}\}$ is a basis of $\cY_{L_\chi,V_{\chi,S}}$, we have the (non-canonical) isomorphism
$$\bigwedge^{r_{\chi,S}}U_{L_\chi,S,T} \stackrel{\sim}{\to} \bigwedge^{r_{\chi,S}}U_{L_\chi,S,T}\otimes \bigwedge^{r_{\chi,S}}\cY_{L_\chi,V_{\chi,S}}^\ast; \ a \mapsto a\otimes w_1^\ast \wedge \cdots \wedge w_{r_{\chi,S}}^\ast,$$
where $w_i^\ast$ is the dual of $w_i$. Hence, we have the (non-canonical) isomorphism
$$e_\chi \CC_p \bigwedge^{r_{\chi,S}}U_{L_\chi,S,T} \simeq e_\chi({\det}_{\CC_p[G]}(\CC_pU_{L,S,T}) \otimes_{\CC_p[G]} {\det}_{\CC_p[G]}^{-1}(\CC_p\cX_{L,S})).$$

\begin{proposition} \label{etncbyrs}
Suppose that $r_{\chi,S} <\# S $ for every $\chi \in \widehat G$. Then, ${\rm eTNC}(h^0(\Spec L),\ZZ_p[G])$ holds if and only if there exists a $\ZZ_p[G]$-basis $\mathcal{L}_{L/k,S,T}$ of ${\det}_{\ZZ_p[G]}(C_{L,S,T})$ such that, for every $\chi \in \widehat G$, the image of $e_\chi \mathcal{L}_{L/k,S,T}$ under the isomorphism
$$e_\chi \CC_p{\det}_{\ZZ_p[G]}(C_{L,S,T}) \simeq e_\chi({\det}_{\CC_p[G]}(\CC_pU_{L,S,T}) \otimes_{\CC_p[G]} {\det}_{\CC_p[G]}^{-1}(\CC_p\cX_{L,S})) \simeq e_\chi \CC_p \bigwedge^{r_{\chi,S}}U_{L_\chi,S,T}$$
coincides with $e_\chi \epsilon_{L_\chi/k,S,T}^{V_{\chi,S}}$.
\end{proposition}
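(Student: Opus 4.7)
The plan is to reduce the proposition to a character-by-character computation: I claim that, for every $\chi \in \widehat G$, the image of $e_\chi z_{L/k,S,T}$ under the composite isomorphism
$$e_\chi \CC_p {\det}_{\ZZ_p[G]}(C_{L,S,T}) \simeq e_\chi({\det}_{\CC_p[G]}(\CC_p U_{L,S,T}) \otimes {\det}_{\CC_p[G]}^{-1}(\CC_p \cX_{L,S})) \simeq e_\chi \CC_p \bigwedge\nolimits^{r_{\chi,S}} U_{L_\chi,S,T}$$
is exactly $e_\chi \epsilon_{L_\chi/k,S,T}^{V_{\chi,S}}$. Once this claim is granted, both directions of the proposition follow formally. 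For the forward implication, if eTNC holds then $z_{L/k,S,T}$ is itself a $\ZZ_p[G]$-basis of $\det_{\ZZ_p[G]}(C_{L,S,T})$ whose $\chi$-components are the desired Rubin-Stark elements. For the converse, if $\mathcal{L}_{L/k,S,T}$ is any basis with the prescribed $\chi$-components, then $\mathcal{L}_{L/k,S,T}$ and $z_{L/k,S,T}$ have identical image in $\CC_p\det_{\ZZ_p[G]}(C_{L,S,T})$; since $\det_{\ZZ_p[G]}(C_{L,S,T})$ is an invertible (hence torsion-free) $\ZZ_p[G]$-module, the natural map to its $\CC_p$-scalar extension is injective, forcing $\mathcal{L}_{L/k,S,T} = z_{L/k,S,T}$ and hence eTNC.

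The core work is thus the verification of the claim, which amounts to tracking the definitions through the isomorphisms. By construction, $z_{L/k,S,T}$ corresponds to $\theta_{L/k,S,T}^{\ast}(0)$, and on the $\chi$-component this equals $L_{k,S,T}^{(r_{\chi,S})}(\chi^{-1},0)\, e_\chi$ (because $\ord_{s=0}L_{k,S,T}(\chi^{-1},s) = r_{\chi,S}$). Tracing $L_{k,S,T}^{(r_{\chi,S})}(\chi^{-1},0)\, e_\chi$ backwards through the evaluation pairing and the regulator isomorphism $\lambda_{L,S}$, one obtains an element of
$$e_\chi\bigl({\det}_{\CC_p[G]}(\CC_pU_{L,S,T})\otimes {\det}^{-1}_{\CC_p[G]}(\CC_p\cX_{L,S})\bigr).$$
The crucial technical point, which handles the dimension mismatch between $r$ and $r_{\chi,S}$, is the identification
$$e_\chi\CC_p\bigl(\bigwedge\nolimits^{r_{\chi,S}}U_{L_\chi,S,T} \otimes \bigwedge\nolimits^{r_{\chi,S}}\cY_{L_\chi,V_{\chi,S}}^{\ast}\bigr) = e_\chi\bigl({\det}_{\CC_p[G]}(\CC_pU_{L,S,T}) \otimes {\det}^{-1}_{\CC_p[G]}(\CC_p\cX_{L,S})\bigr)$$
established in the preceding discussion (via the isomorphism $e_\chi\CC_p\cY^{\ast}_{L_\chi,V_{\chi,S}}\simeq e_\chi\CC_p\cX_{L,S}^{\ast}$ induced by the surjection $\cX_{L_\chi,S}\twoheadrightarrow \cY_{L_\chi,V_{\chi,S}}$). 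Under this identification, combined with the non-canonical isomorphism using the dual basis $w_1^{\ast}\wedge\cdots\wedge w_{r_{\chi,S}}^{\ast}$, the pre-image of $L_{k,S,T}^{(r_{\chi,S})}(\chi^{-1},0)\,e_\chi$ is the element represented by
$$L_{k,S,T}^{(r_{\chi,S})}(\chi^{-1},0)\cdot e_\chi \cdot \lambda_{L_\chi,S}^{-1}\bigl((w_1-w_0)\wedge \cdots \wedge (w_{r_{\chi,S}}-w_0)\bigr),$$
which, after inflation from $L_\chi$ to $L$ and summing over characters, is precisely the definition of $e_\chi \epsilon_{L_\chi/k,S,T}^{V_{\chi,S}}$.

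The main obstacle is bookkeeping: the non-canonical identification of $\bigwedge^{r_{\chi,S}}U_{L_\chi,S,T}$ with the appropriate graded-invertible piece must be set up so that signs, the chosen labelling of $S$, and the convention fixing $w_0,\ldots,w_{r_{\chi,S}}$ all match between the definition of the zeta element and that of the Rubin-Stark element. Once these compatibilities are pinned down, the identification of $\chi$-components is essentially a restatement of how the two quantities are both normalized against the transcendental regulator $\lambda_{L,S}$, and the proposition follows.
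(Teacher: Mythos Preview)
Your proposal is correct and follows essentially the same approach as the paper: both arguments reduce to showing that $e_\chi z_{L/k,S,T}$ and $e_\chi \epsilon_{L_\chi/k,S,T}^{V_{\chi,S}}$ correspond under the stated isomorphism (the paper phrases this as the Rubin-Stark element mapping forward to $e_\chi L_{k,S,T}^\ast(\chi^{-1},0)$, while you trace the $L$-value backward, but the content is identical), and then deduce both directions formally by taking $\mathcal{L}_{L/k,S,T}=z_{L/k,S,T}$ for `only if' and using injectivity of the scalar-extension map for `if'. Your write-up is more explicit about the bookkeeping, but there is no substantive difference in strategy.
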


\begin{proof}
By the definition of Rubin-Stark elements, we see that the image of $e_\chi\epsilon_{L_\chi/k,S,T}^{V_{\chi,S}}$ under the isomorphism
\begin{eqnarray}
e_\chi \CC_p \bigwedge^{r_{\chi,S}}U_{L_\chi,S,T} &\simeq & e_\chi({\det}_{\CC_p[G]}(\CC_pU_{L,S,T}) \otimes_{\CC_p[G]} {\det}_{\CC_p[G]}^{-1}(\CC_p\cX_{L,S})) \nonumber \\
&\simeq & e_\chi({\det}_{\CC_p[G]}(\CC_p\cX_{L,S}) \otimes_{\CC_p[G]} {\det}_{\CC_p[G]}^{-1}(\CC_p\cX_{L,S}))  \nonumber\\
&\simeq& e_\chi \CC_p[G] \nonumber
\end{eqnarray}
is equal to $e_\chi L_{k,S,T}^\ast(\chi^{-1},0)$. The `only if part' follows by putting $\mathcal{L}_{L/k,S,T}:=z_{L/k,S,T}$. The `if part' follows by noting that $\mathcal{L}_{L/k,S,T}$ must be equal to $z_{L/k,S,T}$.
\end{proof}

\subsection{The canonical projection maps} \label{section explicit}

Let $L/k,G,S,T,V,r$ be as in \S \ref{rssection}. We put
$$e_r:=\sum_{\chi \in \widehat G, \ r_{\chi,S}=r} e_\chi \in \QQ[G].$$
As in Proposition \ref{etncbyrs}, we construct the (non-canonical) isomorphism
$$e_r\CC_p{\det}_{\ZZ_p[G]}(C_{L,S,T}) \simeq e_r \CC_p \bigwedge^{r}U_{L,S,T}.$$

In this subsection, we give an explicit description of the map
$$\pi_{L/k,S,T}^V:{\det}_{\ZZ_p[G]}(C_{L,S,T}) \stackrel{e_r \CC_p \otimes}{\to} e_r\CC_p{\det}_{\ZZ_p[G]}(C_{L,S,T}) \simeq e_r \CC_p \bigwedge^{r}U_{L,S,T} \subset \CC_p \bigwedge^{r}U_{L,S,T}.$$
This map is important since the image of the zeta element $z_{L/k,S,T}$ under this map is the Rubin-Stark element $\epsilon_{L/k,S,T}^V$.
%which was given in \cite{bks1}, and which sends the zeta element to the
%Rubin-Stark element.
%We also give its explicit description.

Firstly, we choose a representative of $C_{L,S,T}$
$$\Pi \stackrel{\psi}{\to} \Pi,$$
where the first term is placed in degree zero, such that $\Pi$ is a free $\ZZ_p[G]$-module with basis $\{ b_1,\ldots, b_d\}$ ($d$ is sufficiently large), and that the natural surjection
$$\Pi \to H^1(C_{L,S,T}) \to \cX_{L,S}$$
sends $b_i$ to $w_i-w_0$ for each $i$ with $1\leq i \leq r$. For the details of this construction, see \cite[\S 5.4]{bks1}. Note that the representative of $R\Gamma_T((\mathcal{O}_{K,S})_{\mathcal{W}},\GG_m)$ chosen in \cite[\S 5.4]{bks1} is of the form
$$P\to F,$$
where $P$ is projective and $F$ is free. By Swan's theorem \cite[(32.1)]{curtisr}, we have an isomorphism $\ZZ_p P\simeq \ZZ_p F$. This shows that we can take the representative of $C_{L,S,T}$ as above.

We define $\psi_i \in \Hom_{\ZZ_p[G]}(\Pi,\ZZ_p[G])$ by
$$\psi_i:=b_i^\ast\circ \psi,$$
where $b_i^\ast$ is the dual of $b_i$. Note that $\bigwedge_{r<i\leq d}\psi_i \in \bigwedge^{d-r} \Hom_{\ZZ_p[G]}(\Pi,\ZZ_p[G])$ defines the homomorphism
$$\bigwedge_{r<i\leq d}\psi_i :\bigwedge^d \Pi \to \bigwedge^r \Pi$$
given by
$$(\bigwedge_{r<i\leq d}\psi_i) (b_1\wedge \cdots \wedge b_d)=\sum_{\sigma \in \mathfrak{S}_{d,r}}{\rm sgn}(\sigma) \det(\psi_i(b_{\sigma(j)}))_{r< i,j \leq d} b_{\sigma(1)} \wedge \cdots \wedge b_{\sigma(r)}$$
(see Notation.)

\begin{proposition} \label{explicit projector}\
\begin{itemize}
\item[(i)] We have
$$\bigcap^r U_{L,S,T}=(\QQ_p\bigwedge^r U_{L,S,T})\cap \bigwedge^r \Pi,$$
where we regard $U_{L,S,T}\subset \Pi$ via the natural inclusion
$$U_{L,S,T}=H^0(C_{L,S,T}) =\ker \psi \hookrightarrow \Pi.$$
%induces an injection
%$$\bigcap^r U_{L,S,T} \hookrightarrow \bigwedge^r \Pi.$$

\item[(ii)] If we regard $\bigcap^r U_{L,S,T} \subset \bigwedge^r \Pi$ by (i), then we have
$$\im (\bigwedge_{r<i\leq d}\psi_i :\bigwedge^d \Pi \to \bigwedge^r \Pi) \subset \bigcap^r U_{L,S,T} .$$

\item[(iii)] The map
$${\det}_{\ZZ_p[G]}(C_{L,S,T}) = \bigwedge^d \Pi \otimes \bigwedge^d \Pi^\ast \to \bigcap^r U_{L,S,T}; \ b_1\wedge \cdots \wedge b_d\otimes b_1^\ast \wedge \cdots \wedge b_d^\ast \mapsto (\bigwedge_{r<i\leq d}\psi_i) (b_1\wedge \cdots \wedge b_d)$$
coincides with $(-1)^{r(d-r)}\pi_{L/k,S,T}^V$. In particular, we have
$$\pi_{L/k,S,T}^V(b_1\wedge \cdots \wedge b_d\otimes b_1^\ast \wedge \cdots \wedge b_d^\ast)=(-1)^{r(d-r)}\sum_{\sigma \in \mathfrak{S}_{d,r}}{\rm sgn}(\sigma) \det(\psi_i(b_{\sigma(j)}))_{r < i,j \leq d} b_{\sigma(1)} \wedge \cdots \wedge b_{\sigma(r)}$$
and
$$\im \pi_{L/k,S,T}^V \subset \{  a\in  \bigcap^r U_{L,S,T} \mid e_r a =a\}.$$

\end{itemize}
\end{proposition}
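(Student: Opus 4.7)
My plan is to establish (i), (ii), (iii) in order, since (ii) uses the characterization in (i), and (iii) combines (i) and (ii) with a direct identification of the isomorphism defining $\pi_{L/k,S,T}^V$.

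For (i), the inclusion $\bigcap^r U_{L,S,T} \subseteq \bigwedge^r \Pi$ is direct: any $a \in \bigcap^r U_{L,S,T}$ expands uniquely in the basis $\{b_{i_1} \wedge \cdots \wedge b_{i_r}\}_{i_1 < \cdots < i_r}$ of $\bigwedge^r \Pi$ with coefficients in $\QQ_p[G]$, and each coefficient is the value at $a$ of $b_{i_1}^* \wedge \cdots \wedge b_{i_r}^* \in \bigwedge^r \Hom_{\ZZ_p[G]}(U_{L,S,T}, \ZZ_p[G])$ (after restriction from $\Pi^*$), hence lies in $\ZZ_p[G]$ by the defining integrality condition. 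For the reverse inclusion I would invoke the general lattice-theoretic apparatus developed in \cite{bks1} (in the spirit of the extension results behind [bks1, Lemma 4.7]), which shows that every $\varphi \in \Hom_{\ZZ_p[G]}(U_{L,S,T}, \ZZ_p[G])$ can be handled via integrally defined functionals on the ambient free module $\Pi$.

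For (ii), by (i) it suffices to show that $\im(\bigwedge_{r<i\le d}\psi_i)$ lies in $\QQ_p \bigwedge^r U_{L,S,T}$ (containment in $\bigwedge^r \Pi$ is automatic from the formula). Inside $\QQ_p \bigwedge^r \Pi$ the subspace $\QQ_p \bigwedge^r U_{L,S,T}$ is cut out by the joint vanishing of contractions with the functionals $\psi_k$ for $k = 1, \ldots, d$, since $U_{L,S,T} = \bigcap_k \ker \psi_k$ rationally in $\Pi$. The contraction of $(\bigwedge_{r<i\le d}\psi_i)(b_1 \wedge \cdots \wedge b_d)$ by $\psi_k$ equals the pairing of $\psi_k \wedge \psi_{r+1} \wedge \cdots \wedge \psi_d$ against $b_1 \wedge \cdots \wedge b_d$; for $k > r$ this wedge has a repeated factor and vanishes, while for $k \le r$ it expands as a sum of $(d-r+1) \times (d-r+1)$ minors of the matrix $(\psi_i(b_j))_{i,j}$ of $\psi$. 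Each such minor vanishes because on every $e_\chi$-isotypic component the rank of $\psi$ is at most $d - r_{\chi,S} \le d - r$, the bound $r_{\chi,S} \ge r$ holding since every place in $V$ splits completely in $L$.

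For (iii), the non-canonical isomorphism $e_r\CC_p \det_{\ZZ_p[G]}(C_{L,S,T}) \simeq e_r\CC_p \bigwedge^r U_{L,S,T}$ decomposes as: the canonical splitting iso $\det(C_{L,S,T})_{\CC_p} \simeq \det(U_{L,S,T})_{\CC_p} \otimes \det(H^1(C_{L,S,T}))_{\CC_p}^{-1}$; on the $e_r$-part, the identifications $e_r \CC_p H^1(C_{L,S,T}) \simeq e_r \CC_p \cX_{L,S} \simeq e_r \CC_p \cY_{L,V}$ (the second via the natural surjection becoming an isomorphism on $e_r$); and the trivialization using the chosen basis $w_1^* \wedge \cdots \wedge w_r^*$ of $\bigwedge^r \cY_{L,V}^*$. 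I would compute the image of $b_1 \wedge \cdots \wedge b_d \otimes b_1^* \wedge \cdots \wedge b_d^*$ by fixing a splitting $\CC_p \Pi = \CC_p U_{L,S,T} \oplus W$ of the exact sequence $0 \to U_{L,S,T} \to \Pi \to \im\psi \to 0$, and expanding both wedges in the induced decompositions of $\bigwedge^d \Pi$ and $\bigwedge^d \Pi^*$; the matrix entries of $\psi|_W$ combine with the dual-basis pairings to produce precisely the $(d-r) \times (d-r)$ determinants $\det(\psi_i(b_{\sigma(j)}))_{r<i,j\le d}$ and the shuffle sum over $\mathfrak{S}_{d,r}$ in the claimed formula. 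The sign $(-1)^{r(d-r)}$ arises from the block-shuffle needed to place $U$-factors before $W$-factors in $\bigwedge^d \Pi$. The explicit evaluation formula then follows by combining this identification with the stated contraction convention, and the containment $\im \pi_{L/k,S,T}^V \subset \{a \in \bigcap^r U_{L,S,T} : e_r a = a\}$ is immediate from the factorization of $\pi_{L/k,S,T}^V$ through $e_r \CC_p \det(C_{L,S,T})$ together with (ii).

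The main obstacle I anticipate is the sign bookkeeping in (iii): grading conventions for determinants of complexes, the choice of splitting of the short exact sequence, the trivialization of $\bigwedge^r \cY_{L,V}^*$ via the basis $w_1^* \wedge \cdots \wedge w_r^*$, and the shuffle signs from the expansion of $b_1 \wedge \cdots \wedge b_d$ in the $U \oplus W$ decomposition must all combine precisely to yield $(-1)^{r(d-r)}$. The underlying algebraic computation is essentially determinantal, but tracking the consistent sign conventions across the chain of isomorphisms requires care.
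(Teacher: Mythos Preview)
Your proposal is correct and aligns with the paper's approach: the paper's proof consists entirely of citations to \cite[Lemma 4.7(ii)]{bks1} for (i) and \cite[Lemma 4.3]{bks1} for (ii) and (iii), and what you have sketched is essentially the content of those lemmas. Your rank argument for (ii) (all $(d-r+1)$-minors of $\psi$ vanish on each $e_\chi$-component because $\dim e_\chi\ker\psi = r_{\chi,S}\ge r$) and your splitting computation for (iii) are the standard way to carry out those results, and your acknowledgment that (i)'s reverse inclusion requires the extension machinery of \cite{bks1} matches the paper's reliance on the same source.
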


\begin{proof}
For (i), see \cite[Lemma 4.7(ii)]{bks1}. For (ii) and (iii), see \cite[Lemma 4.3]{bks1}.
\end{proof}

\section{Higher rank Iwasawa theory}\label{hrit sec}

\subsection{Notation}
We fix a prime number $p$. We use the following notation:
\begin{itemize}
\item $k$: number field;
\item $K_\infty/k$: Galois extension such that $\G:=\Gal(K_\infty/k)\simeq \Delta \times \Gamma$, where $\Delta$ is a finite abelian group and $\Gamma\simeq \ZZ_p$;
\item $\Lambda:=\ZZ_p[[\G]]$;
\item Fix an isomorphism $\CC\simeq \CC_p$, and identify $\widehat \Delta$ with $\Hom_\ZZ(\Delta,\overline \QQ_p^\times)$. For $\chi \in \widehat \Delta$, put $\Lambda_\chi:=\ZZ_p[\im \chi][[\Gamma]].$
\end{itemize}

Note that the total quotient ring $Q(\Lambda)$ has the decomposition
$$Q(\Lambda)\simeq \bigoplus_{\chi \in \widehat \Delta/{\sim}_{\QQ_p}} Q(\Lambda_\chi),$$
where the equivalence relation $\sim_{\QQ_p}$ is defined by
$$\chi \sim_{\QQ_p} \chi' \Leftrightarrow \text{there exists $\sigma \in G_{\QQ_p}$ such that $\chi=\sigma \circ \chi'$}.$$

\begin{itemize}
\item $K:=K_\infty^\Gamma$ (so $\Gal(K/k)=\Delta$);
\item $k_\infty:=K_\infty^{\Delta}$ (so $k_\infty/k$ is a $\ZZ_p$-extension with Galois group $\Gamma$);
\item $k_n$: the $n$-th layer of $k_\infty/k$;
\item $K_n$: the $n$-th layer of $K_\infty/K$;
\item $\G_n:=\Gal(K_n/k)$.
\end{itemize}
For each character $\chi \in \widehat{\G}$ we also set
\begin{itemize}
\item $L_\chi:=K_{\infty}^{\ker \chi}$;
\item $L_{\chi,\infty}:=L_\chi \cdot k_\infty$;
\item $L_{\chi,n}$: the $n$-th layer of $L_{\chi,\infty}/L_\chi$;
\item $\G_\chi:=\Gal(L_{\chi,\infty}/k)$;
\item $\G_{\chi,n}:=\Gal(L_{\chi,n}/k)$;
\item $G_\chi:=\Gal(L_\chi/k)$;
\item $\Gamma_\chi:=\Gal(L_{\chi,\infty}/L_\chi)$;
\item $\Gamma_{\chi,n}:=\Gal(L_{\chi,n}/L_\chi)$;
\item $S$: a finite set of places of $k$ which contains $S_\infty(k)\cup S_{\rm ram}(K_\infty/k) \cup S_p(k)$;
\item $T$: a finite set of places of $k$ which is disjoint from $S$;
\item $V_\chi:=\{ v\in S \mid v \text{ splits completely in } L_{\chi,\infty} \}$ (this is a proper subset of $S$);
\item $r_\chi:=\# V_\chi.$
\end{itemize}

For any intermediate field $L$ of $K_\infty/k$, we denote $\varprojlim_{F}U_{F,S,T}$ by $U_{L,S,T}$, where $F$ runs over all intermediate field of $L/k$ which is finite over $k$ and the inverse limit is taken with respect to norm maps. Similarly, $C_{L,S,T}$ is defined to be the inverse limit of $C_{F,S,T}$. We denote $\varprojlim_F \cY_{F,S}$ by $\cY_{L,S}$, where the inverse limit is taken with respect to the maps
$$\cY_{F',S}\to \cY_{F,S}; \ w_{F'} \mapsto  w_F, $$
where $F \subset F'$, $w_{F'} \in S_{F'}$, and $w_F\in S_F$ is the place lying under $w_{F'}$. We use similar notation for $\cX_{L,S}$ etc.
%(Many objects for infinite extensions over $k$ are understood to be defined by the inverse limit with respect to norm maps.)

\subsection{Iwasawa main conjecture I} In this section we formulate
the main conjecture of Iwasawa theory for general number fields, that is a key to our study.

\subsubsection{}For any character $\chi$ in $\widehat{\G}$ there is a natural composite homomorphism
\begin{eqnarray}
\lambda_\chi :{\det}_\Lambda(C_{K_\infty,S,T}) &\to& {\det}_{\ZZ_p[G_\chi]}(C_{L_\chi,S,T}) \nonumber \\
&\hookrightarrow& {\det}_{\CC_p[G_{\chi}]}(\CC_pC_{L_{\chi},S,T}) \nonumber \\
&\stackrel{\sim}{\to}&{\det}_{\CC_p[G_{\chi}]}(\CC_p U_{L_{\chi},S,T}) \otimes_{\CC_p[G_{\chi}]}{\det}_{\CC_p[G_{\chi}]}^{-1}(\CC_p \cX_{L_{\chi},S}) \nonumber \\
& \stackrel{\sim}{\to}&{\det}_{\CC_p[G_{\chi}]}(\CC_p \cX_{L_\chi,S}) \otimes_{\CC_p[G_{\chi}]}{\det}_{\CC_p[G_{\chi}]}^{-1}(\CC_p \cX_{L_{\chi},S}) \nonumber \\
&\simeq& \CC_p[G_{\chi}] \nonumber\\
&\stackrel{\chi}{\to } &\CC_p, \nonumber
\end{eqnarray}
where the fourth map is induced by $\lambda_{L_\chi,S}$, the fifth map is the evaluation, and the last map is induced by $\chi$.

We can now state our higher rank main conjecture of Iwasawa theory in its first form.

\begin{conjecture}[{${\rm IMC}(K_\infty/k,S,T,p)$}] \label{IMC}
There exists a $\Lambda$-basis $\mathcal{L}_{K_\infty/k,S,T}$ of the module ${\det}_\Lambda(C_{K_\infty,S,T})$ for which, at every $\chi \in \widehat \Delta$ and every $\psi\in \widehat{\G_{\chi}}$ for which $r_{\psi,S} = r_\chi$ one has $\lambda_\psi(\mathcal{L}_{K_\infty/k,S,T})=L_{k,S,T}^{(r_\chi)}(\psi^{-1},0).$
\end{conjecture}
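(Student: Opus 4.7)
The plan is to construct $\mathcal{L}_{K_\infty/k,S,T}$ as an inverse limit of the finite-level zeta elements $z_{F/k,S,T}$ of \S 2.2, where $F$ ranges over the finite intermediate extensions of $K_\infty/k$ containing $K$. Since $C_{K_\infty,S,T}\simeq \varprojlim_F C_{F,S,T}$, the determinant functor yields a tower of modules $\det_{\ZZ_p[\Gal(F/k)]}(C_{F,S,T})$ whose inverse limit is $\det_\Lambda(C_{K_\infty,S,T})$, and $\mathcal{L}_{K_\infty/k,S,T}$ would be defined as the limit of a suitably norm-coherent family of $z_{F/k,S,T}$.

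The core technical step is to verify norm compatibility: under the canonical projection $\det_{\ZZ_p[\Gal(F'/k)]}(C_{F',S,T})\to \det_{\ZZ_p[\Gal(F/k)]}(C_{F,S,T})$ for $F\subset F'\subset K_\infty$, one wants $z_{F'/k,S,T}\mapsto z_{F/k,S,T}$. Character by character this reduces to the elementary fact that inflating a character of $\Gal(F/k)$ to $\Gal(F'/k)$ preserves its $(S,T)$-modified $L$-function and hence its leading term at $s=0$; the more subtle point is the functoriality of the Dirichlet regulator $\lambda_{F,S}$, which follows from norm-compatibility of local logarithms together with the manifestly functorial Weil-\'etale description of $C_{F,S,T}$ recalled in \S 2.2. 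A Nakayama-type argument at the bottom level then promotes the resulting element from a generator of the image to a genuine $\Lambda$-basis of $\det_\Lambda(C_{K_\infty,S,T})$.

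To verify the interpolation property, fix $\chi\in\widehat\Delta$ and $\psi\in\widehat{\G_\chi}$ with $r_{\psi,S}=r_\chi$. Since $\psi$ has finite order it factors through some $\G_{\chi,n}$, and $\lambda_\psi$ then factors through the evaluation at $\psi$ of the image of $\mathcal{L}_{K_\infty/k,S,T}$ in $\det_{\ZZ_p[\G_{\chi,n}]}(C_{L_{\chi,n},S,T})$. By norm coherence this image is $z_{L_{\chi,n}/k,S,T}$, which by definition evaluates at $\psi$ to the leading term $L_{k,S,T}^{\ast}(\psi^{-1},0)$. The condition $r_{\psi,S}=r_\chi$ forces this leading term to coincide with $L_{k,S,T}^{(r_\chi)}(\psi^{-1},0)$, as required.

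The principal obstacle is that this strategy does not establish Conjecture \ref{IMC} outright: it reduces (hIMC) to the still-open eTNC at every finite level of $K_\infty/k$, together with the precise norm compatibility of the $z_{F/k,S,T}$. In genuinely unconditional instances one must either import known cases of the finite-level eTNC or construct $\mathcal{L}_{K_\infty/k,S,T}$ directly, as happens in the minus part of CM extensions via the Deligne-Ribet $p$-adic $L$-function. No such substitute is available in full generality, particularly in the presence of $p$-adic places that split in $K/k$ and produce trivial zeroes, which is exactly the phenomenon flagged in the introduction as motivating the flexibility of the higher rank formulation.
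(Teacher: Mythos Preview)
The statement you are attempting to prove is a \emph{conjecture}, not a theorem: the paper formulates ${\rm IMC}(K_\infty/k,S,T,p)$ as Conjecture~\ref{IMC} and does not claim to prove it in general. Your proposal correctly observes that the conjecture would follow from the eTNC at all finite levels of the tower, and the paper explicitly acknowledges this implication (see \S\ref{iwc intro}, and the Remark immediately following Conjecture~\ref{IMC}). You also correctly identify, in your final paragraph, that this reduction does not constitute a proof, since the finite-level eTNC is itself open in general.

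What the paper actually does with Conjecture~\ref{IMC} is: (i) reformulate it in several equivalent ways (Conjectures~\ref{IMC rubinstark}, \ref{measure conjecture}, \ref{IMCexplicit}, \ref{char IMC}) under suitable hypotheses; (ii) prove its minus part unconditionally for CM extensions of totally real fields assuming $\mu=0$, by identifying it with the classical main conjecture of Wiles (Theorem~\ref{CM theorem}(i)); and (iii) deduce it for certain even characters from B\"uy\"ukboduk's work under additional hypotheses (Theorem~\ref{CM theorem}(ii)). In each of these special cases the element $\mathcal{L}_{K_\infty/k,S,T}$ is built directly from known $p$-adic $L$-functions or Euler systems, not by descending a presumed finite-level zeta element. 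There is simply no general proof in the paper to compare your proposal against; the conjecture functions instead as a hypothesis in the descent strategy of Theorem~\ref{mainthm}.
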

% an equality
%$$\psi_{S,T}^\ast(\mathcal{L}_{K_\infty/k,S,T})=L_{k,S,T}^{r_\chi}(\psi^{-1},0).$$

\begin{remark} It is important to note that this conjecture is much weaker than the (relevant case of the) equivariant Tamagawa number conjecture. For example, if $k_\infty/k$ is the cyclotomic $\ZZ_p$-extension, then for any $\psi$ that is trivial on the decomposition group in $\mathcal{G}_\chi$ of any $p$-adic place of $k$ one has $r_{\psi,S} > r_\chi$ and
so there is no interpolation condition at $\psi$ specified above.
When $r_{\chi}=0$, (the $\chi$-component of) the element $\mathcal{L}_{K_\infty/k,S,T}$ is
the $p$-adic $L$-function, and in the general case $r_{\chi}>0$, it plays a role of $p$-adic $L$-functions.
We show later that Conjecture \ref{IMC} can also be naturally interpreted in terms of the existence of suitable Iwasawa-theoretic measures (see Proposition \ref{measureIMC}).\end{remark}

%\begin{remark}
%The element $\mathcal{L}_{K_\infty/k,S,T}$ is unique if it exists. This follows from the fact that $\Lambda^\times=\varprojlim_n \ZZ_p[\G_n]^\times$ and that for each $n$ the map
%$$\ZZ_p[\G_n]^\times \to \prod_{\chi \in \widehat \G_n} \CC_p^\times$$
%is injective.
%\end{remark}

%\begin{remark}
%Take any $\chi \in \widehat \Delta$. If $n$ is sufficiently large, then there exists $\psi \in \widehat \G_{\chi,n}$ such that $L_{k,S,T}^{(r_\chi)}(\psi^{-1},0)\neq 0$. Indeed, consider the subgroup $\bigcap_{v \in S \setminus V_\chi, \chi(v)=1} G_v\subset G(L_{\chi,\infty}/L_\chi)$. This is an open subgroup. Hence, if $n$ is sufficiently large, then we can take a character $\psi \in \widehat \G_{\chi,n}$ which is non-trivial on $ \bigcap_{v \in S \setminus V_\chi, \chi(v)=1} G_v$. Then $\psi$ satisfies $r_{\psi,S}=r_\chi$, and hence $L_{k,S,T}^{(r_\chi)}(\psi^{-1},0)\neq 0$.
%\end{remark}

\subsubsection{}\label{negative}
 In this subsection we assume that $K_{\infty}/K$ is the cyclotomic $\ZZ_{p}$-extension and that $K$ contains a primitive
$p$-th root of unity and we briefly discuss how in this case the element $\mathcal{L}_{K_\infty/k,S,T}$ predicted by Conjecture ${\rm IMC}(K_\infty/k,S,T,p)$ should encode information about the $L$-values at $s=n$ for arbitrary integers $n$.

To do this we use the twisting map
$${\rm tw}: \Lambda \to \Lambda$$
defined by setting ${\rm tw}(\sigma):=\chi_{\rm cyc}(\sigma)\sigma$, where $\sigma \in \G$ and $\chi_{\rm cyc} : \G \to \ZZ_p^\times$ is the cyclotomic character. For an integer $n$, the ring $\Lambda$, which is regarded as a $\Lambda$-algebra via ${\rm tw}^n$, is denoted by $\Lambda(n)$. For a finite extension $L/k$ and a set of places $\Sigma$ of $k$ which contains $S_\infty(k)\cup S_{\rm ram}(L/k) \cup S_p(k)$, we put
$$C_{L,\Sigma}(n):=R\Hom_{\ZZ_p}(R\Gamma_c(\mathcal{O}_{L,\Sigma},\ZZ_p(n)),\ZZ_p)[-2].$$
For a set of places $T$ of $k$ which is disjoint from $\Sigma$, one can construct a canonical complex $C_{L,\Sigma,T}(n)$ which lies in the exact triangle
$$C_{L,\Sigma,T}(n) \to C_{L,\Sigma}(n) \to \bigoplus_{w \in T_L} H^1(\kappa(w),\ZZ_p(1-n))[0]$$
and is such that there exists a canonical isomorphism
$${\det}_\Lambda(C_{K_\infty,S,T})\otimes_{\Lambda} \Lambda(n) \simeq {\det}_\Lambda (C_{K_\infty,S,T}(n)),$$
where $C_{K_\infty,S,T}(n)$ is defined by taking the inverse limit of the complexes $C_{K_m,S,T}(n)$ (see \cite[Proposition 1.6.5(3)]{FukayaKato}).

Assuming the validity of ${\rm IMC}(K_\infty/k,S,T,p)$, we then define $\mathcal{L}_{K_\infty/k,S,T}(n)$ to be the element of ${\det}_\Lambda (C_{K_\infty,S,T}(n))$ which corresponds to $\mathcal{L}_{K_\infty/k,S,T}\otimes 1$ under the above isomorphism and we denote the image of $\mathcal{L}_{K_\infty/k,S,T}(n)$ under the canonical surjection ${\det}_\Lambda(C_{K_\infty,S,T}(n)) \to {\det}_{\ZZ_p[\Delta]}(C_{K,S,T}(n))$
by $\mathcal{L}_{K/k,S,T}(n)$.

Then the conjecture of Fukaya-Kato \cite[Conjecture 2.3.2]{FukayaKato} suggests that $\mathcal{L}_{K/k,S,T}(n)$ is the zeta element for $(h^0(\Spec K)(n),\ZZ_p[\Delta])$, namely, the element which corresponds to the leading term
$$\theta_{K/k,S,T}^\ast(n)=\sum_{\chi \in \widehat \Delta} L_{k,S,T}^\ast(\chi^{-1},n)e_\chi \in \RR[\Delta]^\times \subset \CC_p[\Delta]^\times$$
under the canonical isomorphism
$$\CC_p {\det}_{\ZZ_p[\Delta]}(C_{K,S,T}(n)) \simeq \CC_p\otimes_\QQ \Xi(h^0(\Spec K)(n)) \simeq \CC_p[\Delta],$$
where $\Xi(h^0(\Spec K)(n))$ is the fundamental line for $(h^0(\Spec K)(n),\QQ[\Delta])$ (see \cite[(29)]{BF Tamagawa}), and the first (resp. second) isomorphism is the $p$-adic regulator isomorphism $\tilde \vartheta_p$ in \cite[p.479]{BF Tamagawa2} (resp. the regulator isomorphism $\vartheta_\infty$ in \cite[p.529]{BF Tamagawa}).

In a subsequent paper we shall study these subjects thoroughly. In particular, we generalize the Rubin-Stark conjecture to the setting of Tate motives $h^0(\Spec K)(n)$ for an arbitrary integer $n$ (the original Rubin-Stark conjecture being regarded as the special case of this conjecture
in the case $n=0$).

We also show that the corresponding Rubin-Stark elements for twisted Tate motives are generalizations
of Soul\'e's cyclotomic elements and we find that the above conjectural property of $\mathcal{L}_{K_\infty/k,S,T}$ predicts
the existence of precise congruence relations between the Rubin-Stark elements for $h^0(\Spec K)(n)$ and $h^0(\Spec K)(n')$ for arbitrary integers $n$ and $n'$ which constitute a natural extension of the classical congruences of Kummer. %this congruence is nothing but Kummer's congruence, so
%We can describe this relation in an explicit way.
%(In a special case, this description is given in \cite{sanoexp}.)
%It turns out that this conjectural relation between generalized Rubin-Stark elements
%the congruence is regarded as a generalization of Kummer's congruences.
%[We shall write down the explicit map, and explain the relation between the zeta element
%to $L$-values at negative integers. This is just a special case of Fukaya-Kato.]

\subsubsection{}In the next result we record a useful invariance property of Conjecture \ref{IMC}. In the proof of this result we set
\[ \delta_T := \prod_{v \in T}(1-{\rm Fr}_v^{-1}{\rm N}v)  \in\Lambda\]
where ${\rm Fr}_v$ denotes the arithmetic Frobenius in $\mathcal{G}$ of any place $w$ of $K_\infty$ that lies above $v$. This element belongs to $Q(\Lambda)^\times$ since for each $\chi \in \widehat \Delta$ and each $v\in T$ the image $1-{\rm Fr}_v^{-1}{\N}v $ under the map $\Lambda \stackrel{\chi}{\to} \Lambda_\chi$ is non-zero.

\begin{lemma}\label{independent}  The validity of Conjecture \ref{IMC} is independent of the choice of $T$.
\end{lemma}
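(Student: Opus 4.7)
The plan is to reduce, by a trivial induction on the cardinality of the symmetric difference, to comparing the conjecture for $T$ and for $T\cup\{v\}$ for a single prime $v\notin S\cup T$. Passing to the inverse limit over $n$ of the finite-level triangle recalled in \S 2.2, I would obtain a distinguished triangle of complexes of $\Lambda$-modules
$$C_{K_\infty,S,T\cup\{v\}}\to C_{K_\infty,S,T}\to M_v[0],$$
where $M_v:=\varprojlim_n\bigoplus_{w\in\{v\}_{K_n}}\ZZ_p\kappa(w)^\times$, and hence a canonical isomorphism of graded invertible $\Lambda$-modules
$${\det}_\Lambda(C_{K_\infty,S,T})\cong{\det}_\Lambda(C_{K_\infty,S,T\cup\{v\}})\otimes_\Lambda{\det}_\Lambda(M_v[0]).$$

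The heart of the argument is an explicit computation of $M_v$. Since $v\notin S$, the prime $v$ is unramified in $K_\infty$, so its decomposition subgroup in $\G$ is topologically generated by $\Fr_v$. A direct check at each finite layer, combined with the compatibility of the norm maps on residue-field units with the natural projections $\ZZ_p[\G_{n+1}]\to\ZZ_p[\G_n]$, should then yield a canonical isomorphism
$$M_v\cong\Lambda/(1-\Fr_v^{-1}\N v)$$
of $\Lambda$-modules. Because $1-\Fr_v^{-1}\N v$ belongs to $Q(\Lambda)^\times$, this identification gives a two-term projective resolution of $M_v$, and hence identifies the image of ${\det}_\Lambda(M_v[0])$ in $Q(\Lambda)$ with the principal fractional $\Lambda$-ideal generated by $(1-\Fr_v^{-1}\N v)^{\epsilon}$ for a sign $\epsilon=\pm 1$ determined by convention.

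Combining the two displays, we conclude that inside the common $Q(\Lambda)$-module $Q(\Lambda)\otimes_\Lambda{\det}_\Lambda(C_{K_\infty,S,T})$, an element $\mathcal{L}_T$ is a $\Lambda$-basis of ${\det}_\Lambda(C_{K_\infty,S,T})$ if and only if $(1-\Fr_v^{-1}\N v)^{-\epsilon}\mathcal{L}_T$ is a $\Lambda$-basis of ${\det}_\Lambda(C_{K_\infty,S,T\cup\{v\}})$. Since the map $\lambda_\psi$ is $\Lambda$-semilinear, in the sense that $\lambda_\psi(\alpha x)=\psi(\alpha)\lambda_\psi(x)$ for $\alpha\in\Lambda$, and since the standard Euler-factor identity
$$L_{k,S,T\cup\{v\}}(\psi^{-1},s)=(1-\psi^{-1}(\Fr_v)\N v^{1-s})L_{k,S,T}(\psi^{-1},s)$$
has factor non-vanishing at $s=0$ (so that the order of vanishing $r_\chi$ is unchanged), the interpolation requirement
$$\lambda_\psi(\mathcal{L}_{K_\infty/k,S,T\cup\{v\}})=L^{(r_\chi)}_{k,S,T\cup\{v\}}(\psi^{-1},0)$$
is equivalent to the corresponding requirement for $T$ under the above identification. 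The conjecture for $T$ and for $T\cup\{v\}$ are therefore equivalent.

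The main technical obstacle I would anticipate is the precise identification of $M_v$ together with the corresponding trivialization of ${\det}_\Lambda(M_v[0])$: although the underlying calculation is classical in spirit (and can be verified by hand at a single finite layer, as in the totally split and inert cases), in the Iwasawa-theoretic setting some care is required to match the norm-coherent structure on the residue-field unit groups with the Frobenius action on $\Lambda$, and to fix the sign conventions in the determinant trivialization so that the interpolation conditions line up.
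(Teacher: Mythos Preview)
Your proposal is correct and follows essentially the same route as the paper. The paper's proof likewise reduces to the exact triangle relating $C_{K_\infty,S,T}$ and $C_{K_\infty,S,T'}$ (for $T\subset T'$), computes the determinant of the residue-field term as the Fitting ideal generated by $\delta_{T''}:=\prod_{v\in T''}(1-\Fr_v^{-1}\N v)$, and then observes that $\mathcal{L}$ satisfies the interpolation conditions for $T$ if and only if $\delta_{T''}\cdot\mathcal{L}$ satisfies them for $T'$; the only cosmetic differences are that the paper handles all of $T''$ at once rather than one prime at a time, and phrases the determinant identification via Fitting ideals rather than via your explicit cyclic presentation $M_v\cong\Lambda/(1-\Fr_v^{-1}\N v)$.
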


\begin{proof} It is enough to consider replacing $T$ by a larger set $T'$. Set $T'' := T'\setminus T$. Then, one finds that there exists an exact triangle
\[ C_{K_\infty,S,T} \to C_{K_\infty,S,T'} \to \bigoplus_{w \in T''_{K_\infty}}(\ZZ_p\kappa(w)^\times )[0]\]
($\kappa(w)^\times$ is defined by the inverse limit) and hence an equality
\[{\rm det}_\Lambda(C_{K_\infty,S,T'}) = {\det}_\Lambda(C_{K_\infty,S,T})\prod_{v \in T''_{K_\infty}}{\rm Fitt}^0_\Lambda(\ZZ_p\kappa(w)^\times) = {\det}_\Lambda(C_{K_\infty,S,T})\delta_{T''},\]
where ${\rm Fitt}^0$ denotes the (initial) Fitting ideal (see \cite{North}).

Given this, it is straightforward to check that an element $\mathcal{L}_{K_\infty/k,S,T}$ validates Conjecture \ref{IMC} with respect to $T$ if and only if the element $\delta_{T''}\cdot \mathcal{L}_{K_\infty/k,S,T}$ validates Conjecture \ref{IMC} with respect to $T'$.
\end{proof}

%\begin{remark}\label{independence 2} The equality $\Lambda\cdot \mathcal{L}_{K_\infty/k,S,T} = {\det}_\Lambda(C_{K_\infty,S,T})$ in Conjecture \ref{IMC}  specifies $\mathcal{L}_{K_\infty/k,S,T}$ up to multiplication by an element of $\Lambda^\times$. Under the assumption that no place in $S$ has a finite non-trivial decomposition group in $\mathcal{G}$, this observation combines with the predicted interpolation property to imply easily that the element $\mathcal{L}_{K_\infty/k,S,T}$ in Conjecture \ref{IMC} is unique. In Theorem \ref{lemisom}(ii) below we will see that this uniqueness property holds without any hypothesis on the decomposition groups of places in $S$. \end{remark}

Following Lemma \ref{independent} we shall assume in the sequel that $T$ contains two places of unequal residue characteristics and hence that each  group $U_{L,S,T}$ is torsion-free.

\subsubsection{}
For each $\Phi$ in $\bigwedge^{r_\chi}\Hom_{\ZZ_p[\G_{\chi,n}]}(U_{L_{\chi,n},S,T}, \ZZ_p[\G_{\chi,n}])$, Conjecture ${\rm RS}(L_{\chi,n}/k,S,T,V_\chi)_p$ implies only that $\Phi(\epsilon_{L_{\chi,n}/k,S,T}^{V_\chi})$ belongs to $\ZZ_p[\G_{\chi,n}]$.

By contrast, if Conjecture \ref{IMC} is valid, then the following result shows that the elements $\Phi(\epsilon_{L_{\chi,n}/k,S,T}^{V_\chi})$ encode significant arithmetic information.

In this result we write $\Fitt^a$ for the $a$-th Fitting ideal (see \cite{North}).

\begin{theorem}\label{imcrs}
Assume that the Iwasawa main conjecture (Conjecture \ref{IMC}) is valid for $(K_\infty/k,S,T)$. Then, for each $\chi \in \widehat \Delta$ and each positive integer $n$, we have
$$\Fitt_{\ZZ_p[\G_{\chi,n}]}^{r_\chi}(H^1(C_{L_{\chi,n},S,T}))= \{ \Phi(\epsilon_{L_{\chi,n}/k,S,T}^{V_\chi}) \mid \Phi \in \bigwedge^{r_\chi}\Hom_{\ZZ_p[\G_{\chi,n}]}(U_{L_{\chi,n},S,T}, \ZZ_p[\G_{\chi,n}])\}.$$
In particular, Conjecture ${\rm RS}(L_{\chi,n}/k,S,T,V_\chi)_p$ is valid.
\end{theorem}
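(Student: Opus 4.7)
The plan is threefold: (i) descend the Iwasawa-theoretic basis $\mathcal{L}_{K_\infty/k,S,T}$ provided by Conjecture \ref{IMC} to a $\ZZ_p[\G_{\chi,n}]$-basis $\mathcal{L}_{L_{\chi,n}/k,S,T}$ of $\det_{\ZZ_p[\G_{\chi,n}]}(C_{L_{\chi,n},S,T})$; (ii) identify its image under the projection $\pi^{V_\chi}_{L_{\chi,n}/k,S,T}$ of \S\ref{section explicit} with the Rubin-Stark element $\epsilon^{V_\chi}_{L_{\chi,n}/k,S,T}$; and (iii) translate this identification into the Fitting ideal equality via the explicit formula of Proposition \ref{explicit projector}(iii).

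For steps (i) and (ii), the descent would use the canonical quasi-isomorphism $C_{L_{\chi,n},S,T} \simeq \ZZ_p[\G_{\chi,n}] \otimes_\Lambda^{\mathbb{L}} C_{K_\infty,S,T}$, so that the induced base change of determinants carries $\mathcal{L}_{K_\infty/k,S,T}$ to a basis $\mathcal{L}_{L_{\chi,n}/k,S,T}$ of $\det_{\ZZ_p[\G_{\chi,n}]}(C_{L_{\chi,n},S,T})$. Its image under $\pi^{V_\chi}_{L_{\chi,n}/k,S,T}$ automatically lies in $\bigcap^{r_\chi} U_{L_{\chi,n},S,T}$ by Proposition \ref{explicit projector}(iii). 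To show it coincides with $\epsilon^{V_\chi}_{L_{\chi,n}/k,S,T}$ it suffices to compare $\psi$-components for every $\psi\in \widehat{\G_{\chi,n}} \subset \widehat{\G_\chi}$: for $\psi$ with $r_{\psi,S}=r_\chi$ the IMC interpolation property gives $\lambda_\psi(\mathcal{L}_{K_\infty/k,S,T})=L^{(r_\chi)}_{k,S,T}(\psi^{-1},0)$, matching the definition of the Rubin-Stark element; for $\psi$ with $r_{\psi,S}>r_\chi$ (the inequality $r_{\psi,S}\geq r_\chi$ always holding because each place in $V_\chi$ splits completely in $L_{\chi,\infty}$), both components vanish since $\pi^{V_\chi}$ factors through $e_{r_\chi}$ and $L^{(r_\chi)}_{k,S,T}(\psi^{-1},0)=0$. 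The Rubin-Stark conjecture ${\rm RS}(L_{\chi,n}/k,S,T,V_\chi)_p$ then follows immediately from this identification.

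For step (iii), I would fix the representative $\Pi\xrightarrow{\psi}\Pi$ of $C_{L_{\chi,n},S,T}$ from \S\ref{section explicit} with $\Pi$ free of rank $d$. Then $H^1(C_{L_{\chi,n},S,T})=\coker\psi$ and $\Fitt^{r_\chi}_{\ZZ_p[\G_{\chi,n}]}(H^1)$ is generated by the $(d-r_\chi)\times(d-r_\chi)$ minors of the matrix of $\psi$. Writing $\mathcal{L}_{L_{\chi,n}/k,S,T}=u\cdot(b_1\wedge\cdots\wedge b_d\otimes b_1^\ast\wedge\cdots\wedge b_d^\ast)$ for some unit $u\in\ZZ_p[\G_{\chi,n}]^\times$, Proposition \ref{explicit projector}(iii) expresses $\epsilon^{V_\chi}_{L_{\chi,n}/k,S,T}$ as a signed sum of such minors weighting the basis elements $b_{\sigma(1)}\wedge\cdots\wedge b_{\sigma(r_\chi)}$, and evaluating against $\Phi$ ranging over $\bigwedge^{r_\chi}\Hom_{\ZZ_p[\G_{\chi,n}]}(U_{L_{\chi,n},S,T},\ZZ_p[\G_{\chi,n}])$ should produce precisely the minor ideal. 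The main obstacle is exactly this last passage: the explicit formula directly furnishes only those minors with rows indexed by $\{r_\chi+1,\ldots,d\}$, whereas the Fitting ideal is generated by arbitrary $(d-r_\chi)$-minors, and the gap must be bridged by pairings coming from $\Phi$'s that do not lift from $\bigwedge^{r_\chi}\Pi^\ast$. I expect this matching to follow via the exterior-bidual formalism of \cite[\S 4]{bks1}, combined with the characterization $\bigcap^{r_\chi}U_{L_{\chi,n},S,T}=(\QQ_p\bigwedge^{r_\chi}U_{L_{\chi,n},S,T})\cap \bigwedge^{r_\chi}\Pi$ from Proposition \ref{explicit projector}(i) and the flexibility to modify the chosen representative so as to absorb the row-column asymmetry.
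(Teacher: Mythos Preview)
Your approach coincides with the paper's own proof. The paper first observes (exactly as in your steps (i)--(ii)) that Conjecture~\ref{IMC} is equivalent to the statement that the image of $\mathcal{L}_{K_\infty/k,S,T}$ under $\pi^{V_\chi}_{L_{\chi,n}/k,S,T}$ equals $\epsilon^{V_\chi}_{L_{\chi,n}/k,S,T}$ for every $\chi$ and $n$; your character-by-character verification of this using $r_{\psi,S}\ge r_\chi$ is correct. For step (iii) the paper does not supply an argument at all but simply invokes Proposition~\ref{explicit projector}(iii) together with ``the same argument used to prove \cite[Theorem 7.5]{bks1}'', so you have correctly located where the real content lies.

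Your identified obstacle is genuine: applying $\tilde\Phi\in\bigwedge^{r_\chi}\Pi^\ast$ to the explicit expression yields only determinants of matrices whose last $d-r_\chi$ rows are $\psi_{r_\chi+1},\ldots,\psi_d$, hence only $\ZZ_p[\G_{\chi,n}]$-combinations of the $(d-r_\chi)$-minors with row set $\{r_\chi+1,\ldots,d\}$. Your instinct that one must exploit the freedom in the representative is the right one, but the mechanism you describe is not quite it. The point (carried out in \cite[\S7]{bks1}) is that the map $\pi^{V_\chi}$ is canonical, so the element $\epsilon^{V_\chi}$ does not depend on the choice of basis of the degree-one term, whereas the explicit formula of Proposition~\ref{explicit projector}(iii) does; varying that basis subject only to the constraint $b_i\mapsto w_i-w_0$ for $i\le r_\chi$ lets one realise any $(d-r_\chi)$-tuple of $\ZZ_p[\G_{\chi,n}]$-linear combinations of the original rows as the ``last'' $d-r_\chi$ rows, and this is what produces all minors. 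Your alternative suggestion, that extra $\Phi\in\bigwedge^{r_\chi}U^\ast$ not lifting from $\bigwedge^{r_\chi}\Pi^\ast$ might supply the missing minors, goes in the wrong direction: since $\epsilon^{V_\chi}\in\QQ_p\bigwedge^{r_\chi}U$, the value $\Phi(\epsilon^{V_\chi})$ depends only on the restriction of any lift, so non-lifting $\Phi$'s cannot give more than lifting ones.
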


\begin{proof} The explicit definition of the elements $\epsilon_{L_{\chi,n}/k,S,T}^{V_\chi}$ implies directly that the assertion of Conjecture \ref{IMC} is valid if and only if there is a $\Lambda$-basis $\mathcal{L}_{K_\infty/k,S,T}$ of ${\det}_\Lambda(C_{K_\infty,S,T})$ for which, for every character $\chi \in \widehat \Delta$ and every positive integer $n$, the image of $\mathcal{L}_{K_\infty/k,S,T}$ under the map
\begin{eqnarray}
{\det}_{\Lambda}(C_{K_\infty,S,T}) \to {\det}_{\ZZ_p[\G_{\chi,n}]}(C_{L_{\chi,n},S,T})
\stackrel{\pi_{L_{\chi,n}/k,S,T}^{V_\chi}}{\to} e_{r_\chi}\CC_p\bigwedge^{r_\chi}U_{L_{\chi,n},S,T} \nonumber
\end{eqnarray}
is equal to $\epsilon_{L_{\chi,n}/k,S,T}^{V_\chi}$.

Given this equivalence, the claimed result follows directly from Proposition \ref{explicit projector}(iii) and the same argument used to prove \cite[Theorem 7.5]{bks1}.
\end{proof}

\subsubsection{}
For each character $\chi \in \widehat \Delta$, there is a natural ring homomorphism
$$\ZZ_p[[\G_\chi]] = \ZZ_p[[G_\chi \times \Gamma]] \stackrel{\chi}{\to} \ZZ_p[\im \chi][[\Gamma]] =\Lambda_\chi \subset Q(\Lambda_\chi).$$
In the sequel we use this homomorphism to regard $Q(\Lambda_\chi)$ as a $\ZZ_p[[\G_\chi]]$-algebra.

In the next result we describe an important connection between the element $\mathcal{L}_{K_\infty/k,S,T}$ that is predicted to exist by Conjecture \ref{IMC} and the inverse limit (over $n$) of the Rubin-Stark elements $\epsilon_{L_{\chi,n}/k,S,T}^{V_\chi}$. This result shows, in particular, that  the element $\mathcal{L}_{K_\infty/k,S,T}$ in Conjecture \ref{IMC} is unique (if it exists).

In the sequel we set
\[ \bigcap^{r_\chi} U_{L_{\chi,\infty},S,T}:= \varprojlim_n \bigcap^{r_\chi} U_{L_{\chi,n},S,T},\]
where the inverse limit is taken with respect to the map
$$\bigcap^{r_\chi}U_{L_{\chi,m},S,T} \to \bigcap^{r_\chi} U_{L_{\chi,n},S,T}$$
induced by the norm map $U_{L_{\chi,m},S,T} \to U_{L_{\chi,n},S,T}$, where $n \leq m$. Note that Rubin-Stark elements are norm compatible (see \cite[Proposition 6.1]{R} or \cite[Proposition 3.5]{sano}), so if we know that Conjecture ${\rm RS}(L_{\chi,n}/k,S,T,V_\chi)_p$ is valid for all sufficiently large $n$, then we can define the element
$$\epsilon_{L_{\chi,\infty}/k,S,T}^{V_\chi}:=\varprojlim_n \epsilon_{L_{\chi,n}/k,S,T}^{V_\chi}  \in \bigcap^{r_\chi} U_{L_{\chi,\infty},S,T}.$$

\begin{theorem} \label{lemisom} \
\begin{itemize}
\item[(i)] For each $\chi \in \widehat \Delta$, the homomorphism
$${\det}_{\Lambda}(C_{K_\infty,S,T}) \to {\det}_{\ZZ_p[\G_{\chi,n}]}(C_{L_{\chi,n},S,T})
\stackrel{\pi_{L_{\chi,n}/k,S,T}^{V_\chi}}{\to} \bigcap^{r_\chi}U_{L_{\chi,n},S,T} $$
(see Proposition \ref{explicit projector}(iii)) induces an isomorphism of $Q(\Lambda_\chi)$-modules
$$\pi_{L_{\chi,\infty}/k,S,T}^{V_\chi}:{\det}_{\Lambda}(C_{K_\infty,S,T}) \otimes_\Lambda Q(\Lambda_\chi) \simeq (\bigcap^{r_\chi}U_{L_{\chi,\infty},S,T})\otimes_{\ZZ_p[[\G_\chi]]}Q(\Lambda_\chi).$$
\item[(ii)] If Conjecture \ref{IMC} is valid, then we have
$$\pi_{L_{\chi,\infty}/k,S,T}^{V_\chi}(\mathcal{L}_{K_\infty/k,S,T})=\epsilon_{L_{\chi,\infty}/k,S,T}^{V_\chi}.$$
(Note that in this case Conjecture ${\rm RS}(L_{\chi,n}/k,S,T,V_\chi)_p$ is valid for all $n$ by Theorem \ref{imcrs}.)
\end{itemize}
\end{theorem}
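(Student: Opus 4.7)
The plan is to deduce both assertions from the explicit finite-level description of the projection maps $\pi_{L_{\chi,n}/k,S,T}^{V_\chi}$ provided by Proposition \ref{explicit projector}(iii), followed by a careful passage to the inverse limit over $n$. For part (i), the key point is that both sides of the claimed isomorphism are free $Q(\Lambda_\chi)$-modules of rank one: the left-hand side because ${\det}_{\Lambda}(C_{K_\infty,S,T})$ is invertible over $\Lambda$; the right-hand side because the $\chi$-component $U_{L_{\chi,\infty},S,T}\otimes_{\ZZ_p[[\G_\chi]]} Q(\Lambda_\chi)$ has generic $\Lambda_\chi$-rank equal to $r_\chi=\#V_\chi$ (by Dirichlet's theorem applied to finite-order specialisations of $\G_\chi$ extending $\chi$), combined with the standard fact that the cap $\bigcap^{r_\chi}$ coincides with the top exterior power after localising at the zero ideal of the integral domain $\Lambda_\chi$. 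It therefore suffices to show that the map is non-zero after base change to $Q(\Lambda_\chi)$.

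For the non-vanishing I fix a representative of $C_{K_\infty,S,T}$ of the form $\Pi\xrightarrow{\partial}\Pi$, with $\Pi$ a free $\Lambda$-module of rank $d$ and $\Lambda$-basis $\{b_1,\ldots,b_d\}$ chosen so that, for every $n$, the images of $b_1,\ldots,b_{r_\chi}$ under $\Pi\twoheadrightarrow H^1(C_{L_{\chi,n},S,T})\twoheadrightarrow \cX_{L_{\chi,n},S}$ are $w_1-w_0,\ldots,w_{r_\chi}-w_0$. The existence of such a representative is guaranteed by the argument recalled before Proposition \ref{explicit projector} (i.e.\ \cite[\S 5.4]{bks1} combined with Swan's theorem), applied at infinite level. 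Proposition \ref{explicit projector}(iii) then yields, at each $n$, an explicit formula for $\pi_{L_{\chi,n}/k,S,T}^{V_\chi}(b_1\wedge\cdots\wedge b_d\otimes b_1^{\ast}\wedge\cdots\wedge b_d^{\ast})$ as a signed sum of $(d-r_\chi)\times(d-r_\chi)$ minors of $\partial$; these formulas are manifestly compatible with the norm maps as $n$ varies and therefore assemble in the limit into a well-defined element of $\bigcap^{r_\chi}U_{L_{\chi,\infty},S,T}$, whose non-vanishing after tensoring with $Q(\Lambda_\chi)$ is immediate from the corresponding non-vanishing at any single finite level. This completes the proof of (i). Given (i), part (ii) is then a comparison of images: the proof of Theorem \ref{imcrs} in fact establishes that, under ${\rm IMC}(K_\infty/k,S,T,p)$, the image of $\mathcal{L}_{K_\infty/k,S,T}$ under the finite-level composite ${\det}_\Lambda(C_{K_\infty,S,T})\to{\det}_{\ZZ_p[\G_{\chi,n}]}(C_{L_{\chi,n},S,T})\xrightarrow{\pi_{L_{\chi,n}/k,S,T}^{V_\chi}}\bigcap^{r_\chi}U_{L_{\chi,n},S,T}$ equals $\epsilon_{L_{\chi,n}/k,S,T}^{V_\chi}$ for every $n$; passing to the inverse limit, and invoking the norm-compatibility of Rubin--Stark elements from \cite[Proposition 6.1]{R} and \cite[Proposition 3.5]{sano}, yields the desired identity $\pi_{L_{\chi,\infty}/k,S,T}^{V_\chi}(\mathcal{L}_{K_\infty/k,S,T})=\epsilon_{L_{\chi,\infty}/k,S,T}^{V_\chi}$.

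The main technical obstacle is the careful handling of the cap product: one must verify that $\bigcap^{r_\chi}U_{L_{\chi,\infty},S,T}\otimes_{\ZZ_p[[\G_\chi]]}Q(\Lambda_\chi)$ really is the top exterior power of the $\chi$-component of $U_{L_{\chi,\infty},S,T}$, and that a $\Lambda$-level representative $\Pi\to\Pi$ of $C_{K_\infty,S,T}$ can be chosen so that Proposition \ref{explicit projector}(iii) applies uniformly at every finite level $n$ with the same adapted basis $b_1,\ldots,b_{r_\chi}$.
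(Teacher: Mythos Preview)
Your argument for part (ii) matches the paper's, but for part (i) you take a different route and there is a genuine gap in the non-vanishing step.

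The paper does not argue by showing the map is non-zero between two rank-one modules. Instead it constructs the isomorphism directly: since $A_S^T(K_\infty)\otimes_\Lambda Q(\Lambda_\chi)=0$ (the Iwasawa class group is $\Lambda$-torsion) and $\cX_{K_\infty,S}\otimes_\Lambda Q(\Lambda_\chi)\simeq\cY_{L_{\chi,\infty},V_\chi}\otimes Q(\Lambda_\chi)$, one obtains a canonical isomorphism
\[
{\det}_\Lambda(C_{K_\infty,S,T})\otimes_\Lambda Q(\Lambda_\chi)\ \simeq\ \bigl({\textstyle\bigwedge}^{r_\chi}U_{L_{\chi,\infty},S,T}\otimes{\textstyle\bigwedge}^{r_\chi}\cY_{L_{\chi,\infty},V_\chi}^*\bigr)\otimes Q(\Lambda_\chi),
\]
and trivialising $\bigwedge^{r_\chi}\cY^*$ via $w_1^*\wedge\cdots\wedge w_{r_\chi}^*\mapsto 1$ gives an isomorphism onto $(\bigwedge^{r_\chi}U_{L_{\chi,\infty},S,T})\otimes Q(\Lambda_\chi)$. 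A direct comparison with the explicit formula in Proposition~\ref{explicit projector}(iii) then shows this isomorphism \emph{is} the map induced by $\varprojlim_n\pi_{L_{\chi,n}/k,S,T}^{V_\chi}$. The identification of $\bigcap^{r_\chi}$ with $\bigwedge^{r_\chi}$ after tensoring with $Q(\Lambda_\chi)$ is proved separately as Lemma~\ref{technical limit}, and as you rightly flag, it is non-trivial.

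The gap in your approach is the sentence ``whose non-vanishing after tensoring with $Q(\Lambda_\chi)$ is immediate from the corresponding non-vanishing at any single finite level''. Neither half of this is established. First, you give no argument that the finite-level element (a signed sum of $(d-r_\chi)$-minors of $\partial$) is non-zero at any level; a priori all such minors could vanish. Second, even granting non-vanishing in $\bigcap^{r_\chi}U_{L_{\chi,n},S,T}\subset\bigwedge^{r_\chi}\Pi_n$, this does not force non-vanishing after applying the $\chi$-projection $\ZZ_p[[\G_\chi]]\to\Lambda_\chi$ and localising: the map $\ZZ_p[G_\chi]\to\ZZ_p[\im\chi]$ has non-trivial kernel whenever $G_\chi$ is non-cyclic of order $|\im\chi|$, so an element that is non-zero at level $n$ can still die in the $\chi$-component. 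To rescue your strategy you would need an independent non-vanishing input, for instance the non-vanishing of $L_{k,S,T}^{(r_\chi)}(\psi^{-1},0)$ for some $\psi\in\widehat{\G_\chi}$ with $r_{\psi,S}=r_\chi$, together with the interpolation property of the projection; but at that point you are essentially reproducing the cohomological computation the paper carries out anyway.
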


\begin{proof} Since the module $A_{S}^T(K_\infty)\otimes_\Lambda Q(\Lambda_\chi)$ vanishes, there are canonical isomorphisms
\begin{eqnarray}\label{first step}
& &{\det}_{\Lambda}(C_{K_\infty,S,T}) \otimes_\Lambda Q(\Lambda_\chi) \\
&\simeq& {\det}_{Q(\Lambda_\chi)}(C_{K_\infty,S,T}\otimes_\Lambda Q(\Lambda_\chi))\nonumber \\
&\simeq& {\det}_{Q(\Lambda_\chi)}(U_{K_\infty,S,T} \otimes_{\Lambda}Q(\Lambda_\chi)) \otimes_{Q(\Lambda_\chi)} {\det}_{Q(\Lambda_\chi)}^{-1}(\cX_{K_\infty,S}\otimes_\Lambda Q(\Lambda_\chi)).\nonumber
\end{eqnarray}
It is also easy to check that there are natural isomorphisms
$$U_{K_\infty,S,T}\otimes_\Lambda Q(\Lambda_\chi)\simeq U_{L_{\chi,\infty},S,T} \otimes_{\ZZ_p[[\G_\chi]]}Q(\Lambda_\chi)$$
and
$$\cX_{K_\infty,S}\otimes_\Lambda Q(\Lambda_\chi) \simeq \cX_{L_{\chi,\infty},S}\otimes_{\ZZ_p[[\G_\chi]]}Q(\Lambda_\chi) \simeq \cY_{L_{\chi,\infty},V_\chi}\otimes_{\ZZ_p[[\G_\chi]]}Q(\Lambda_\chi),$$
and that these are $Q(\Lambda_\chi)$-vector spaces of dimension $r:=r_\chi(=\# V_\chi)$. The isomorphism (\ref{first step}) is therefore a canonical isomorphism of the form
\[{\det}_{\Lambda}(C_{K_\infty,S,T}) \otimes_\Lambda Q(\Lambda_\chi) \simeq
(\bigwedge^r U_{L_{\chi,\infty},S,T}\otimes \bigwedge^r \cY^*_{L_{\chi,\infty},V_\chi})\otimes_{\ZZ_p[[\G_\chi]]} Q(\Lambda_\chi).\]
Composing this isomorphism with the map induced by the non-canonical isomorphism
$$\bigwedge^r \cY_{L_{\chi,\infty},V_\chi}^\ast \stackrel{\sim}{\to} \ZZ_p[[\G_\chi]]; w_1^\ast \wedge \cdots \wedge w_r^\ast \mapsto 1,$$
we have
$${\det}_{\Lambda}(C_{K_\infty,S,T}) \otimes_\Lambda Q(\Lambda_\chi) \simeq (\bigwedge^rU_{L_{\chi,\infty},S,T})\otimes_{\ZZ_p[[\G_\chi]] }Q(\Lambda_\chi).$$
As in the proofs of Proposition \ref{explicit projector}(iii) and of \cite[Lemma 4.3]{bks1}, this isomorphism is induced by $\varprojlim_n \pi_{L_{\chi,n}/k,S,T}^{V_\chi}$. Now the isomorphism in claim (i) is thus obtained directly from Lemma \ref{technical limit} below.

Claim (ii) follows by noting that the image of $\mathcal{L}_{K_\infty/k,S,T}$ under the map
\begin{eqnarray}
{\det}_{\Lambda}(C_{K_\infty,S,T}) \to {\det}_{\ZZ_p[\G_{\chi,n}]}(C_{L_{\chi,n},S,T})
\stackrel{\pi_{L_{\chi,n}/k,S,T}^{V_\chi}}{\to} \bigcap^{r_\chi}U_{L_{\chi,n},S,T} \nonumber
\end{eqnarray}
is equal to $\epsilon_{L_{\chi,n}/k,S,T}^{V_\chi}$ (see the proof of Theorem \ref{imcrs}).
\end{proof}

\begin{lemma}\label{technical limit} With notation as above, there is a canonical identification
\[ (\bigcap^{r} U_{L_{\chi,\infty},S,T}) \otimes_{\ZZ_p[[\G_\chi]]} Q(\Lambda_\chi) = (\bigwedge^r U_{L_{\chi,\infty},S,T})\otimes_{\ZZ_p[[\G_\chi]]} Q(\Lambda_\chi).\]\end{lemma}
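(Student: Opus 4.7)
Set $R := \ZZ_p[[\G_\chi]]$, $F := Q(\Lambda_\chi)$, $A_n := \bigwedge^r_{R_n} U_{L_{\chi,n},S,T}$, $C_n := \bigcap^r U_{L_{\chi,n},S,T}$, and let $A_\infty$, $C_\infty$ denote their respective inverse limits over $n$. The plan is to show that the natural inclusion $\iota : A_\infty \hookrightarrow C_\infty$ becomes an isomorphism upon $\otimes_R F$, via a two-part argument giving injectivity and surjectivity separately.

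For injectivity, I would exploit the canonical isomorphism $\alpha : {\det}_\Lambda(C_{K_\infty,S,T}) \otimes_\Lambda F \xrightarrow{\sim} A_\infty \otimes_R F$ already obtained in the proof of Theorem \ref{lemisom}(i). Proposition \ref{explicit projector}(iii) shows that $(\iota \otimes_R F) \circ \alpha$ agrees with the map $\beta : {\det}_\Lambda(C_{K_\infty,S,T}) \otimes_\Lambda F \to C_\infty \otimes_R F$ induced by the inverse system of projectors $\pi^{V_\chi}_{L_{\chi,n}/k,S,T}$. The source of $\alpha$ is a one-dimensional $F$-vector space (being the localization of an invertible $\Lambda$-module), and $\beta$ is nonzero because a $\Lambda$-basis of ${\det}_\Lambda(C_{K_\infty,S,T})$ is sent to the inverse system of Rubin-Stark elements whose relevant character components record nonzero leading $L$-series coefficients at $s=0$. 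Hence $\beta$ is injective, and since $\alpha$ is an isomorphism this forces $\iota \otimes_R F$ to be injective.

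For surjectivity, I would first observe that at each finite level, Proposition \ref{explicit projector}(i) realizes $C_n$ inside the finitely generated free $R_n$-module $\bigwedge^r \Pi_n$ as the intersection $(\QQ_p \bigwedge^r_{R_n} U_n) \cap \bigwedge^r \Pi_n$, so that $C_n/A_n$ is simultaneously finitely generated over $\ZZ_p$ and a torsion subquotient of $(\QQ_p \bigwedge^r_{R_n} U_n)/A_n$ --- hence is a finite $\ZZ_p$-module, killed upon tensoring with $Q(R_n)$ over $R_n$. By the norm-compatibility of Rubin-Stark type constructions the level-wise inclusions $A_n \hookrightarrow C_n$ assemble into $\iota$, with cokernel a subquotient of $\varprojlim_n (C_n/A_n)$. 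The main obstacle --- and the heart of the argument --- is to show that this inverse limit vanishes after $\otimes_R F$, equivalently that $\varprojlim_n (C_n/A_n)$ is a torsion $\Lambda_\chi$-module after restriction of scalars along $R \to \Lambda_\chi$; this requires careful Iwasawa-theoretic control on the growth and structure of the finite modules $C_n/A_n$. Once established, surjectivity of $\iota \otimes_R F$ follows, and combined with the injectivity above yields the identification claimed by the lemma.
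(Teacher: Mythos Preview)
There are two substantive problems with your proposal.

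\textbf{First, you have misread the object $\bigwedge^r U_{L_{\chi,\infty},S,T}$.} In the paper this denotes the $r$-th exterior power, taken over $R=\ZZ_p[[\G_\chi]]$, of the inverse-limit module $U_{L_{\chi,\infty},S,T}=\varprojlim_n U_{L_{\chi,n},S,T}$; it is \emph{not} the inverse limit $\varprojlim_n \bigwedge^r_{R_n} U_{L_{\chi,n},S,T}$ that you call $A_\infty$. These two objects are related by a natural map but they are not a priori the same, and proving they agree after $\otimes_R F$ is essentially the content of the lemma, not something you can take for granted. In particular, the isomorphism $\alpha$ you invoke from the proof of Theorem~\ref{lemisom}(i) lands in $(\bigwedge^r_R U_\infty)\otimes_R F$, not in your $A_\infty\otimes_R F$.

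\textbf{Second, your injectivity argument is conditional and your surjectivity argument is incomplete.} To show $\beta\neq 0$ you appeal to the fact that a $\Lambda$-basis of ${\det}_\Lambda(C_{K_\infty,S,T})$ maps to the Rubin--Stark system; but this identification is only known under Conjecture~\ref{IMC}, and the lemma is stated and used unconditionally. For surjectivity you correctly isolate the key difficulty --- controlling $\varprojlim_n(C_n/A_n)$ after $\otimes_R F$ --- but then simply assert it ``requires careful Iwasawa-theoretic control'' without supplying any. Note too that an inverse limit of finite abelian groups can easily be a non-torsion $\Lambda_\chi$-module (e.g.\ $\Lambda_\chi$ itself), so finiteness at each level is not enough.

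The paper's own argument avoids both issues by working entirely inside the free module $\bigwedge^r\Pi_\infty$. It realises $\bigcap^r U_\infty:=\varprojlim_n\bigcap^r U_n$ as $(\varprojlim_n \QQ_p\bigwedge^r U_n)\cap \bigwedge^r\Pi_\infty$, uses the splitting $\Pi_\infty\otimes_R Q \simeq (U_\infty\oplus M_\infty)\otimes_R Q$ with $M_\infty=\coker(U_\infty\to\Pi_\infty)$, decomposes $(\bigwedge^r\Pi_\infty)\otimes_R Q$ accordingly, and then checks --- via a direct injectivity argument comparing with the finite-level decompositions --- that any element of $\bigcap^r U_\infty$ has vanishing components in $\bigwedge^{r-i}U_\infty\otimes\bigwedge^i M_\infty$ for $i>0$. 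This is unconditional and elementary, with no appeal to $L$-values or Rubin--Stark elements.
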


\begin{proof}
Take a representative of $C_{L_{\chi,\infty},S,T}$
$$\Pi_{\infty} \to \Pi_{\infty}$$
as in \S \ref{section explicit}. Put $\Pi_n:=\Pi_{\infty}\otimes_{\ZZ_p[[\G_\chi]]} \ZZ_p[\G_{\chi,n}]$. We have
%$U_{L_{\chi,\infty},S,T} \subset \Pi_{L_{\chi,\infty}}$ and
$$\bigcap^r U_{L_{\chi,n},S,T}=(\QQ_p \bigwedge^r U_{L_{\chi,n},S,T} ) \cap \bigwedge^r \Pi_{n}$$
(see Proposition \ref{explicit projector}(i)) and so $\varprojlim_n {\bigcap}_{\ZZ_p[\G_{\chi,n}]}^r U_{L_{\chi,n},S,T}$ can be regarded as a submodule of the free $\ZZ_p[[\G_\chi]]$-module
\[ \varprojlim_n \bigwedge^r\Pi_{n}= \bigwedge^r \Pi_{\infty}.\]
For simplicity, we set
\begin{itemize}
\item $G_n:=\G_{\chi,n}$,
\item $G:=\G_\chi$,
\item $U_n:=U_{L_{\chi,n},S,T}$,
\item $U_\infty:= U_{L_{\chi,\infty},S,T}$,
%\item $\Pi_n:=\Pi_{L_{\chi,n}}$,
%\item $\Pi_\infty:=\Pi_{L_{\chi,\infty}}$,
\item $Q:=Q(\Lambda_\chi)$.
\end{itemize}
We show the equality
$$((\varprojlim_n \QQ_p \bigwedge^r U_n)\cap \bigwedge^r \Pi_\infty)\otimes_{\ZZ_p[[G]]}Q = (\bigwedge^r U_\infty)\otimes_{\ZZ_p[[G]]} Q$$
of the submodules of $(\bigwedge^r \Pi_\infty)\otimes_{\ZZ_p[[G]]} Q$.

It is easy to see that
%$$\im (\bigwedge^r U_\infty \to (\bigwedge^r U_\infty)\otimes_{\ZZ_p[[G]]}Q) \subset (\varprojlim_n \QQ_p \bigwedge^r U_n)\cap \bigwedge^r \Pi_\infty$$
%and hence
$$(\bigwedge^r U_\infty)\otimes_{\ZZ_p[[G]]} Q \subset ((\varprojlim_n \QQ_p \bigwedge^r U_n)\cap \bigwedge^r \Pi_\infty)\otimes_{\ZZ_p[[G]]}Q.$$
Conversely, take $a \in (\varprojlim_n \QQ_p \bigwedge^r U_n)\cap \bigwedge^r \Pi_\infty$ and set
$$M_n:=\coker (U_n \to \Pi_n).$$
Then we have
$$\varprojlim_n M_n \simeq \coker (U_\infty \to \Pi_\infty)=:M_\infty.$$
%(This follows from the fact that $M_n\simeq \ker(\Pi_n \to H^1(C_{L_{\chi,n},S,T}))$, $M_\infty\simeq \ker (\Pi_\infty \to H^1(C_{L_{\chi,\infty},S,T}))$, and the inverse limit is left exact.)
Since
$$\Pi_\infty \otimes_{\ZZ_p[[G]]}Q \simeq (U_\infty\otimes_{\ZZ_p[[G]]}Q)\oplus (M_\infty\otimes_{\ZZ_p[[G]]}Q),$$
we have the decomposition
$$(\bigwedge^r \Pi_\infty)\otimes_{\ZZ_p[[G]]} Q \simeq \bigoplus_{i=0}^r ( \bigwedge^{r-i}U_\infty \otimes \bigwedge^i M_\infty  )\otimes_{\ZZ_p[[G]]}Q.$$
Write
$$a=(a_i)_i \in \bigoplus_{i=0}^r ( \bigwedge^{r-i}U_\infty \otimes \bigwedge^i M_\infty  )\otimes_{\ZZ_p[[G]]}Q.$$
It is sufficient to show that $a_i=0$ for all $i>0$. We may assume that
$$a_i \in \im (  \bigwedge^{r-i}U_\infty  \otimes \bigwedge^i M_\infty  \to ( \bigwedge^{r-i}U_\infty  \otimes \bigwedge^i M_\infty )\otimes_{\ZZ_p[[G]]}Q )$$
for every $i$. Since $a \in \bigwedge^r \Pi_\infty$, we can also write
$$a=(a_{(n)})_n \in \varprojlim_n \bigwedge^r \Pi_n.$$
For each $n$, we have a decomposition
$$\QQ_p \bigwedge^r \Pi_n \simeq \bigoplus_{i=0}^r ( \QQ_p \bigwedge^{r-i}U_n \otimes_{\QQ_p[G_n]} \QQ_p \bigwedge^i M_n ),$$
and we write
$$a_{(n)}=(a_{(n),i})_i \in\bigoplus_{i=0}^r ( \QQ_p \bigwedge^{r-i}U_n \otimes_{\QQ_p[G_n]} \QQ_p \bigwedge^i M_n ).$$
Since $a \in \varprojlim_n \QQ_p \bigwedge^r U_n$, we must have $a_{(n),i}=0$ for all $i>0$. To prove $a_i=0$ for all $i>0$, It is sufficient to show that the natural map
\begin{multline} \label{injective map}
\im (  \bigwedge^{r-i}U_\infty  \otimes \bigwedge^i M_\infty  \to ( \bigwedge^{r-i}U_\infty  \otimes \bigwedge^i M_\infty  )\otimes_{\ZZ_p[[G]]}Q ) \\
\to \varprojlim_n  ( \QQ_p \bigwedge^{r-i}U_n \otimes_{\QQ_p[G_n]} \QQ_p \bigwedge^i M_n )
\end{multline}
is injective. Note that $M_\infty$ is isomorphic to a submodule of $\Pi_\infty$, since $M_\infty \simeq \ker (\Pi_\infty \to H^1(C_{L_{\chi,\infty},S,T}))$. Hence both $U_\infty$ and $M_\infty$ are embedded in $\Pi_\infty$, and we have
\begin{multline*}
\ker(  \bigwedge^{r-i}U_\infty  \otimes \bigwedge^i M_\infty  \to ( \bigwedge^{r-i}U_\infty  \otimes \bigwedge^i M_\infty  )\otimes_{\ZZ_p[[G]]}Q )\\
= \ker(\bigwedge^{r-i}U_\infty  \otimes \bigwedge^i M_\infty  \stackrel{\alpha}{\to} (\bigwedge^r(\Pi_\infty\oplus\Pi_\infty))\otimes_{\ZZ_p[[G]]} \Lambda_\chi).
\end{multline*}
Set $\Lambda_{\chi,n}:=\ZZ_p[\im \chi][\Gamma_{\chi,n}]$. The commutative diagram
$$\xymatrix{
\bigwedge^{r-i}U_\infty  \otimes \bigwedge^i M_\infty  \ar[r]^{\alpha} \ar[d]_{\beta} & (\bigwedge^r(\Pi_\infty\oplus\Pi_\infty))\otimes_{\ZZ_p[[G]]} \Lambda_\chi \ar[d]^{f} \\
\varprojlim_n \QQ_p((\bigwedge^{r-i} U_n \otimes \bigwedge^i M_n)\otimes_{\ZZ_p[G_n]}\Lambda_{\chi,n}) \ar[r]_{g}& \varprojlim_n \QQ_p((\bigwedge^r(\Pi_n\oplus\Pi_n))\otimes_{\ZZ_p[G_n]}\Lambda_{\chi,n}) }
$$
and the injectivity of $f$ and $g$ implies $\ker \alpha =\ker \beta$. Hence we have
\begin{eqnarray*}
&&\ker(  \bigwedge^{r-i}U_\infty  \otimes \bigwedge^i M_\infty  \to ( \bigwedge^{r-i}U_\infty  \otimes \bigwedge^i M_\infty  )\otimes_{\ZZ_p[[G]]}Q )\\
&=&\ker \alpha \\
&=& \ker \beta.
%\\
%&=& \ker( \bigwedge^{r-i}U_\infty  \otimes \bigwedge^i M_\infty \to \varprojlim_n  ( \QQ_p \bigwedge^{r-i}U_n \otimes_{\QQ_p[G_n]} \QQ_p \bigwedge^i M_n).
\end{eqnarray*}
This shows the injectivity of (\ref{injective map}).
\end{proof}

By Theorem \ref{lemisom}, we can formulate the following conjecture, which is equivalent to Conjecture \ref{IMC} under the assumption that Conjecture ${\rm RS}(L_{\chi,n}/k,S,T,V_\chi)_p$ is valid for all $\chi \in \widehat \Delta$ and $n$.

\begin{conjecture} \label{IMC rubinstark}
Assume that Conjecture ${\rm RS}(L_{\chi,n}/k,S,T,V_\chi)_p$ is valid for all $\chi \in \widehat \Delta$ and $n$. Define $\mathcal{L}_{K_\infty/k,S,T}\in {\det}_\Lambda(C_{K_\infty,S,T})\otimes_\Lambda Q(\Lambda)$ by
\begin{eqnarray}
\mathcal{L}_{K_\infty/k,S,T}&:=&(\pi_{L_{\chi,\infty}/k,S,T}^{V_\chi,-1}(\epsilon_{L_{\chi,\infty}/k,S,T}^{V_\chi}))_\chi \nonumber \\
& \in&  \bigoplus_{\chi \in {\widehat \Delta}/{\sim}_{\QQ_p}}({\det}_\Lambda(C_{K_\infty,S,T})\otimes_\Lambda Q(\Lambda_\chi)) \nonumber \\
&=&{\det}_\Lambda(C_{K_\infty,S,T})\otimes_\Lambda Q(\Lambda). \nonumber
\end{eqnarray}
Then, we have
$$\Lambda\cdot \mathcal{L}_{K_\infty/k,S,T} = {\det}_\Lambda(C_{K_\infty,S,T}).$$

\end{conjecture}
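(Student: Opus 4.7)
The plan is to establish the claim by showing it is equivalent to Conjecture \ref{IMC}, as asserted in the paragraph preceding the statement. The main engine is Theorem \ref{lemisom}: part (i) provides, for each $\chi \in \widehat\Delta$, an isomorphism
$$\pi_{L_{\chi,\infty}/k,S,T}^{V_\chi} : {\det}_\Lambda(C_{K_\infty,S,T}) \otimes_\Lambda Q(\Lambda_\chi) \stackrel{\sim}{\to} \bigl({\bigcap}^{r_\chi} U_{L_{\chi,\infty},S,T}\bigr) \otimes_{\ZZ_p[[\G_\chi]]} Q(\Lambda_\chi),$$
and part (ii) asserts that if Conjecture \ref{IMC} holds then its distinguished basis maps to $\epsilon^{V_\chi}_{L_{\chi,\infty}/k,S,T}$ under this isomorphism. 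Together with the decomposition $Q(\Lambda) = \bigoplus_{\chi \in \widehat\Delta/\sim_{\QQ_p}} Q(\Lambda_\chi)$ and the norm compatibility of Rubin-Stark elements (which, under the hypothesized Rubin-Stark conjecture at every layer, ensures that $\epsilon^{V_\chi}_{L_{\chi,\infty}/k,S,T} := \varprojlim_n \epsilon^{V_\chi}_{L_{\chi,n}/k,S,T}$ is well-defined), the assembled collection $\bigl((\pi^{V_\chi})^{-1}(\epsilon^{V_\chi}_{L_{\chi,\infty}/k,S,T})\bigr)_\chi$ unambiguously determines an element $\mathcal{L}_{K_\infty/k,S,T} \in {\det}_\Lambda(C_{K_\infty,S,T}) \otimes_\Lambda Q(\Lambda)$.

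For the direction Conjecture \ref{IMC} $\Rightarrow$ Conjecture \ref{IMC rubinstark}: assuming Conjecture \ref{IMC}, let $\mathcal{L}'$ be the $\Lambda$-basis it provides. By Theorem \ref{lemisom}(ii), $\pi^{V_\chi}(\mathcal{L}') = \epsilon^{V_\chi}_{L_{\chi,\infty}/k,S,T}$ for every $\chi$. Inverting the isomorphism of Theorem \ref{lemisom}(i) component by component and then reassembling via the decomposition of $Q(\Lambda)$ yields $\mathcal{L}' = \mathcal{L}_{K_\infty/k,S,T}$ inside ${\det}_\Lambda(C_{K_\infty,S,T}) \otimes_\Lambda Q(\Lambda)$. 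Hence $\mathcal{L}_{K_\infty/k,S,T}$ is itself a $\Lambda$-basis of ${\det}_\Lambda(C_{K_\infty,S,T})$, which is the claimed equality.

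For the converse direction: if $\Lambda \cdot \mathcal{L}_{K_\infty/k,S,T} = {\det}_\Lambda(C_{K_\infty,S,T})$, take this element as the candidate basis in Conjecture \ref{IMC}, and verify the interpolation property. For each $\chi \in \widehat\Delta$ and each $\psi \in \widehat{\G_\chi}$ with $r_{\psi,S} = r_\chi$, I would choose $n$ large enough that $\psi$ factors through $\G_{\chi,n}$ and then express $\lambda_\psi$ as the composition of the descent map ${\det}_\Lambda(C_{K_\infty,S,T}) \to {\det}_{\ZZ_p[\G_{\chi,n}]}(C_{L_{\chi,n},S,T})$ with the projector $\pi^{V_\chi}_{L_{\chi,n}/k,S,T}$ (followed by a $\psi$-evaluation on the resulting exterior power). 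By definition of $\mathcal{L}_{K_\infty/k,S,T}$, this composition sends it to $\epsilon^{V_\chi}_{L_{\chi,n}/k,S,T}$, whose $\psi$-component is $L^{(r_\chi)}_{k,S,T}(\psi^{-1},0)$ by the construction recalled in \S\ref{rssection} (the regulator isomorphism $\lambda_{L_{\chi,n},S}$ being built into both $\pi^{V_\chi}_{L_{\chi,n}/k,S,T}$ and $\lambda_\psi$, so it cancels).

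The main obstacle is the last step: one must carefully match, at finite level, the map $\lambda_\psi$ defined via the Dirichlet regulator and $\chi$-evaluation with the projector $\pi^{V_\chi}_{L_{\chi,n}/k,S,T}$ of Proposition \ref{explicit projector}, and verify that under this identification the $\psi$-component of the Rubin-Stark element $\epsilon^{V_\chi}_{L_{\chi,n}/k,S,T}$ is precisely the leading-term value $L^{(r_\chi)}_{k,S,T}(\psi^{-1},0)$. This essentially amounts to unwinding the definition of the Rubin-Stark element via the Stickelberger element $\theta^{(r_\chi)}_{L_{\chi,n}/k,S,T}$ and the basis $(w_1-w_0)\wedge\cdots\wedge(w_{r_\chi}-w_0)$, together with the observation that $V_\chi$ coincides with the set of places that split in the subfield cut out by $\psi$ (a consequence of the equality $r_{\psi,S} = r_\chi$). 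This naturality check is already embedded in the arguments behind Theorem \ref{imcrs} and Theorem \ref{lemisom}, and transferring it here should be routine.
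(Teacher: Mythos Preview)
Your proposal is correct and follows essentially the same route as the paper. The paper does not give a separate proof of the equivalence but simply asserts, just before Conjecture \ref{IMC rubinstark}, that Theorem \ref{lemisom} makes the two formulations equivalent; the finite-level reformulation you invoke for the converse direction (that the interpolation condition in Conjecture \ref{IMC} is equivalent to the projector sending the basis to $\epsilon_{L_{\chi,n}/k,S,T}^{V_\chi}$ for all $\chi$ and $n$) is exactly the first sentence of the proof of Theorem \ref{imcrs}, so your ``naturality check'' is already recorded there and needs no further work.
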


\subsection{Iwasawa main conjecture II} In this section we reinterpret Conjecture \ref{IMC} in terms of the existence of suitable Iwasawa-theoretic measures.
%This interpretation will then prove to be useful in later sections.

To do this we assume to be given, for each $\chi$ in $\widehat \Delta/{\sim}_{\QQ_p}$, a homomorphism of $\ZZ_p[[\mathcal{G}_\chi]]$-modules
\[ \varphi_\chi: \bigwedge^{r_\chi}\cX_{L_{\chi,\infty},S} \to \bigcap^{r_\chi} U_{L_{\chi,\infty},S,T}\]
for which $\ker(\varphi_\chi)$ is a torsion $\ZZ_p[[\mathcal{G}_\chi]]$-module.
The Rubin-Stark Conjecture implies the existence of a canonical such homomorphism $\varphi_\chi$. Indeed, if we assume Conjecture ${\rm RS}(L_{\chi,n}/k,S,T,V_\chi)_p$ for all $n$, then we can define a homomorphism
$$\bigwedge^{r_\chi}\cX_{L_{\chi,\infty},S} \to \bigwedge^{r_\chi}\cY_{L_{\chi,\infty},V_\chi} \to \bigcap^{r_\chi}U_{L_{\chi,\infty},S,T},$$
where the first map is the natural surjection, and the second is given by
$$w_1\wedge\cdots \wedge w_{r_\chi} \mapsto \epsilon_{L_{\chi,\infty}/k,S,T}^{V_\chi}.$$
Using Lemma \ref{technical limit}, one sees that the kernel of this homomorphism is torsion, and that this homomorphism is canonical.
For each character $\psi\in \widehat{\mathcal{G}_\chi}$ this homomorphism induces, upon taking coinvariance, a homomorphism of $\ZZ_p[G_\psi]$-modules
\[\varphi_{(\psi)}: \bigwedge^{r_\chi} \cX_{L_\psi,S} \to \bigcap^{r_\chi}U_{L_{\psi},S,T}. \]
Consider the endomorphism
$$e_\psi\CC_p \bigwedge^{r_\chi}\cX_{L_\psi,S} \stackrel{\varphi_{(\psi)}}{\to} e_\psi\CC_p \bigwedge^{r_\chi}U_{L_\psi,S,T} \stackrel{\lambda_{L_\psi,S}^{-1}}{\to} e_\psi\CC_p\bigwedge^{r_\chi}\cX_{L_\psi,S}.$$
We denote the determinant of this endomorphism by $\mathcal{L}_\varphi(\psi)$.
%and we use this to define a $\CC_p$-valued $\mathcal{L}$-invariant by setting
%
%\[ \mathcal{L}_\varphi(\psi):= {\rm det}_{\CC_p}(e_\psi((\CC_p\otimes_\RR{\bigwedge}^{r_\chi}_{\RR[G(L_\psi/k)]}\lambda_{L_\psi,S}^{-1})\circ(\CC_p\otimes_{\ZZ_p}\varphi_{(\psi)}))).\]

In addition, since each $\Lambda$-module $\ker(\varphi_\chi)$ is torsion, the collection $\varphi = (\varphi_\chi)_\chi$ combines with the canonical isomorphisms (\ref{first step}) and the result of Lemma \ref{technical limit} to give a composite isomorphism of $Q(\Lambda)$-modules
%
%\begin{multline} \mu_\varphi: {\det}_{\Lambda}(C_{K_\infty,S,T}) \otimes_\Lambda Q(\Lambda) \\ \simeq {\det}_{Q(\Lambda)}(U_{K_\infty,S,T} %\otimes_{\Lambda}Q(\Lambda)) \otimes_{Q(\Lambda)} {\det}_{Q(\Lambda)}^{-1}(\cX_{K_\infty,S}\otimes_\Lambda Q(\Lambda)) \simeq Q(\Lambda).
%\end{multline*}
\[ \mu_\varphi: {\det}_{\Lambda}(C_{K_\infty,S,T}) \otimes_\Lambda Q(\Lambda) \simeq Q(\Lambda).\]

%\begin{align*}&{\det}_{\Lambda}(C_{K_\infty,S,T}) \otimes_\Lambda Q(\Lambda)\\
% \simeq& {\det}_{Q(\Lambda)}(U_{K_\infty,S,T} \otimes_{\Lambda}Q(\Lambda)) \otimes_{Q(\Lambda)} %{\det}_{Q(\Lambda)}^{-1}(\cX_{K_\infty,S}\otimes_\Lambda Q(\Lambda))\\
% \simeq& {\bigotimes}_{\chi\in \widehat \Delta/{\sim}_{\QQ_p}}(({\bigcap}_{\ZZ_p[[\mathcal{G}_\chi]]}^{r_\chi} %U_{L_{\chi,\infty},S,T})\otimes_{\ZZ_p[[\mathcal{G}_\chi]]} ({\bigwedge}_{\ZZ_p[[\mathcal{G}_\chi]]}^{r_\chi} %\cX_{L_{\chi,\infty},S,T})^\ast)\otimes_{\ZZ_p[[\mathcal{G}_\chi]]}Q(\Lambda_\chi)\\
% \simeq& {\bigotimes}_{\chi\in \widehat \Delta/{\sim}_{\QQ_p}}(({\bigcap}_{\ZZ_p[[\mathcal{G}_\chi]]}^{r_\chi} %U_{L_{\chi,\infty},S,T})\otimes_{\ZZ_p[[\mathcal{G}_\chi]]} ({\bigcap}_{\ZZ_p[[\mathcal{G}_\chi]]}^{r_\chi} %U_{L_{\chi,\infty},S,T})^\ast\otimes_{\ZZ_p[[\mathcal{G}_\chi]]}Q(\Lambda_\chi)\\
% \simeq& Q(\Lambda)\end{align*}
%
%where the second isomorphism is induced by Lemma \ref{technical limit}, the third by the maps $\varphi_\chi$ and the last by the relevant evaluation %map.  to

For each $\psi$ in $\widehat{\mathcal{G}}$ we write $\mathfrak{q}_\psi$ for the kernel of the ring homomorphism $\psi: \Lambda \to \CC_p$. This is a height one prime ideal of $\Lambda$ and for any element $\lambda$ of the localization $\Lambda_{\mathfrak{q}_\psi}$ there exists a non-zero divisor $\lambda'$ of $\Lambda$ for which both $\lambda'\lambda\in \Lambda$ and $\psi(\lambda')\not= 0$. In particular, in any such case the value of the quotient $\psi(\lambda'\lambda)/\psi(\lambda')$ is independent of the choice of $\lambda'$ and will be denoted in the sequel by $\int\!\psi\,{\rm d}\lambda$.

\begin{conjecture}\label{measure conjecture}
For any collection $\varphi = (\varphi_\chi)_\chi$ as above there exists a $\Lambda$-basis $\lambda_\varphi\in Q(\Lambda)$ of $\mu_\varphi({\rm det}_{\Lambda}(C_{K^\infty,S,T}))$ such that for every $\chi\in \widehat \Delta/{\sim}_{\QQ_p}$ and every $\psi\in \widehat{\mathcal{G}_\chi}$ for which both $r_{\psi,S} = r_\chi$ and $\mathcal{L}_\varphi(\psi)\not= 0$ one has $\lambda_\varphi\in \Lambda_{\mathfrak{q}_\psi}$ and
\begin{equation}\label{integral} \int\! \psi{\rm d}\lambda_\varphi = \mathcal{L}_\varphi(\psi)\cdot L^{(r_\chi)}_{k,S,T}(\psi^{-1},0).\end{equation}
\end{conjecture}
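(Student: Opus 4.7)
The plan is to deduce Conjecture \ref{measure conjecture} from Conjecture \ref{IMC}; indeed I expect the two to be equivalent, with the converse obtained by running the argument in reverse. Assuming (hIMC), let $\mathcal{L}_{K_\infty/k,S,T}$ be the $\Lambda$-basis of ${\det}_\Lambda(C_{K_\infty,S,T})$ whose existence is predicted, and set
\[ \lambda_\varphi := \mu_\varphi(\mathcal{L}_{K_\infty/k,S,T}) \in Q(\Lambda). \]
Since $\mu_\varphi$ is a $Q(\Lambda)$-linear isomorphism, this element automatically generates $\mu_\varphi({\det}_\Lambda(C_{K_\infty,S,T}))$ as a $\Lambda$-module, so the basis requirement is handled immediately.

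The crux is then to compare, at a fixed $\psi \in \widehat{\mathcal{G}_\chi}$ with $r_{\psi,S} = r_\chi$, the homomorphism $\lambda_\psi$ defined just before Conjecture \ref{IMC} with the composite of $\mu_\varphi$ followed by evaluation at $\psi$. Both decompositions factor through the canonical $Q(\Lambda_\chi)$-isomorphism established in the proof of Theorem \ref{lemisom}(i), namely
\[ {\det}_\Lambda(C_{K_\infty,S,T}) \otimes_\Lambda Q(\Lambda_\chi) \simeq \Bigl( \bigwedge^{r_\chi} U_{L_{\chi,\infty},S,T} \otimes \bigwedge^{r_\chi} \cY_{L_{\chi,\infty},V_\chi}^\ast \Bigr) \otimes_{\ZZ_p[[\mathcal{G}_\chi]]} Q(\Lambda_\chi), \]
and they differ only in the subsequent identification of the right-hand side with $Q(\Lambda_\chi)$: the map $\lambda_\psi$ uses the Dirichlet regulator $\lambda_{L_\chi,S}$ (together with the evaluation pairing), whereas $\mu_\varphi$ uses $\varphi_\chi$ in the same slot.

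Upon evaluating at $\psi$, the ratio of these two identifications is precisely the scalar $\mathcal{L}_\varphi(\psi)$, by definition of the latter as the determinant of $\lambda_{L_\psi,S}^{-1} \circ \varphi_{(\psi)}$. The non-vanishing hypothesis $\mathcal{L}_\varphi(\psi)\ne 0$ then forces $\lambda_\varphi \in \Lambda_{\mathfrak{q}_\psi}$ and makes $\int\!\psi\,{\rm d}\lambda_\varphi$ well defined. Combining this comparison with the interpolation property $\lambda_\psi(\mathcal{L}_{K_\infty/k,S,T}) = L^{(r_\chi)}_{k,S,T}(\psi^{-1},0)$ supplied by (hIMC) gives
\[ \int\!\psi\,{\rm d}\lambda_\varphi \;=\; \mathcal{L}_\varphi(\psi)\cdot L^{(r_\chi)}_{k,S,T}(\psi^{-1},0), \]
which is the desired identity \eqref{integral}.

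The principal technical obstacle lies in the bookkeeping in the comparison above. One must keep track of the non-canonical identification $\bigwedge^{r_\chi} \cY_{L_{\chi,\infty},V_\chi}^\ast \simeq \ZZ_p[[\mathcal{G}_\chi]]$ used implicitly in the proof of Theorem \ref{lemisom}(i) (it ultimately cancels), and one has to invoke Lemma \ref{technical limit} in order to pass between $\bigcap^{r_\chi}$ and $\bigwedge^{r_\chi}$ after localising at $\mathfrak{q}_\psi$, so that $\mathcal{L}_\varphi(\psi)$ genuinely appears as a ratio of elements of $Q(\Lambda_\chi)$ that are regular at $\mathfrak{q}_\psi$ precisely under the stated non-vanishing hypothesis. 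Once this equivariant determinant comparison is in place, the implication, and indeed the full equivalence of Conjectures \ref{IMC} and \ref{measure conjecture}, follows at once.
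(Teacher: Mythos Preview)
Your proposal is correct and follows essentially the same approach as the paper's proof of Proposition \ref{measureIMC}: both set $\lambda_\varphi := \mu_\varphi(\mathcal{L}_{K_\infty/k,S,T})$ and then establish a commutative diagram comparing $\lambda_\psi$ with the composite of $\mu_\varphi$ and evaluation at $\psi$, with the factor $\mathcal{L}_\varphi(\psi)$ arising as the ratio between the identifications via $\lambda_{L_\psi,S}$ and via $\varphi_{(\psi)}$. The paper makes this more explicit by first localizing at $\mathfrak{q}_\psi$ and proving that the relevant cohomology modules become free $\Lambda_{\mathfrak{q}_\psi}$-modules of rank $r_\chi$ there (which is what justifies $\lambda_\varphi \in \Lambda_{\mathfrak{q}_\psi}$); for the converse direction that you anticipate, the paper also uses this freeness to construct a specific $\varphi_\chi$ with $\mathcal{L}_\varphi(\psi) \neq 0$, a step not obtainable simply by reversing your argument.
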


\begin{proposition}\label{measureIMC} Conjecture \ref{IMC} is equivalent to Conjecture \ref{measure conjecture}.
\end{proposition}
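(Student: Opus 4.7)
The plan is to exhibit a direct bijection between $\Lambda$-bases of $\det_\Lambda(C_{K_\infty,S,T})$ satisfying the interpolation property of Conjecture \ref{IMC} and $\Lambda$-bases of the rank-one $\Lambda$-submodule $\mu_\varphi(\det_\Lambda(C_{K_\infty,S,T})) \subset Q(\Lambda)$ satisfying (\ref{integral}). Concretely, given any $\mathcal{L} \in \det_\Lambda(C_{K_\infty,S,T})$, one sets $\lambda_\varphi := \mu_\varphi(\mathcal{L}) \in Q(\Lambda)$; since $\mu_\varphi$ is an isomorphism of $Q(\Lambda)$-modules, this induces a bijection on $\Lambda$-bases of the two rank-one modules. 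It therefore suffices to check that this bijection sends interpolation conditions of Conjecture \ref{IMC} to those of Conjecture \ref{measure conjecture}, and vice versa.

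The crux is to verify the identity
\[
\int \psi \, d(\mu_\varphi(\mathcal{L})) \;=\; \mathcal{L}_\varphi(\psi) \cdot \lambda_\psi(\mathcal{L})
\]
for every $\chi \in \widehat\Delta/{\sim_{\QQ_p}}$ and every $\psi \in \widehat{\mathcal{G}_\chi}$ with $r_{\psi,S} = r_\chi$ and $\mathcal{L}_\varphi(\psi) \neq 0$, whenever the left-hand side is defined. Granting this identity, the equivalence is immediate: the right-hand side equals $L_{k,S,T}^{(r_\chi)}(\psi^{-1},0)$ (the prediction of Conjecture \ref{IMC}) exactly when the left-hand side equals $\mathcal{L}_\varphi(\psi)\cdot L_{k,S,T}^{(r_\chi)}(\psi^{-1},0)$ (the prediction of Conjecture \ref{measure conjecture}). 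Hence $\mathcal{L}$ witnesses Conjecture \ref{IMC} iff $\lambda_\varphi := \mu_\varphi(\mathcal{L})$ witnesses Conjecture \ref{measure conjecture}.

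To establish the identity one unwinds the construction of $\mu_\varphi$. After base change to $Q(\Lambda_\chi)$, the canonical isomorphism (\ref{first step}) identifies $\det_\Lambda(C_{K_\infty,S,T}) \otimes_\Lambda Q(\Lambda_\chi)$ with $\det_{Q(\Lambda_\chi)}(U_{L_{\chi,\infty},S,T}\otimes Q(\Lambda_\chi)) \otimes \det^{-1}_{Q(\Lambda_\chi)}(\cX_{L_{\chi,\infty},S}\otimes Q(\Lambda_\chi))$, and $\mu_\varphi$ then uses $\varphi_\chi$ together with Lemma \ref{technical limit} to match $\bigwedge^{r_\chi}\cX$ with $\det(U)$ and collapse to $Q(\Lambda_\chi)$. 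Specializing this diagram at the height-one prime $\mathfrak{q}_\psi$ and descending to the residue field $\CC_p$, while on the other hand unwinding the definition of $\lambda_\psi$, one sees that the two composites agree up to the determinant of the endomorphism
\[
e_\psi\CC_p \bigwedge^{r_\chi}\cX_{L_\psi,S} \stackrel{\varphi_{(\psi)}}{\to} e_\psi\CC_p\bigwedge^{r_\chi}U_{L_\psi,S,T} \stackrel{\lambda_{L_\psi,S}^{-1}}{\to} e_\psi\CC_p\bigwedge^{r_\chi}\cX_{L_\psi,S},
\]
which is $\mathcal{L}_\varphi(\psi)$ by definition.

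The main obstacle is the bookkeeping in this last step: one must assemble a precise commutative diagram relating the global isomorphism $\mu_\varphi$ to the character-by-character composite defining $\lambda_\psi$, verify that the non-canonical choice of basis of $\bigwedge^{r_\chi}\cY^*_{L_{\chi,\infty},V_\chi}$ used in the construction (as in the proof of Theorem \ref{lemisom}) cancels in the ratio, and confirm that specialization at $\mathfrak{q}_\psi$ of the identification from Lemma \ref{technical limit} is compatible with coinvariance on both the $U$-side and the $\cX$-side. Once this compatibility is in hand, the two conjectures are visibly equivalent statements about $\Lambda$-bases of the same rank-one $\Lambda$-submodule of $Q(\Lambda)$.
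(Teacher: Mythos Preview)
Your overall strategy matches the paper's: set $\lambda_\varphi = \mu_\varphi(\mathcal{L})$ and verify that the interpolation conditions correspond via the identity $\int\psi\,d\lambda_\varphi = \mathcal{L}_\varphi(\psi)\cdot\lambda_\psi(\mathcal{L})$. The diagram-chasing sketch in your third paragraph is essentially the commutative diagram the paper writes down.

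However, there is a genuine gap in the direction ``Conjecture \ref{measure conjecture} $\Rightarrow$ Conjecture \ref{IMC}''. Conjecture \ref{measure conjecture} imposes the interpolation condition at $\psi$ only when $\mathcal{L}_\varphi(\psi)\neq 0$. So if it happened that $\mathcal{L}_\varphi(\psi)=0$ for \emph{every} choice of $\varphi$, then Conjecture \ref{measure conjecture} would be vacuous at $\psi$ and could not force $\lambda_\psi(\mathcal{L}) = L_{k,S,T}^{(r_\chi)}(\psi^{-1},0)$. Your argument tacitly assumes this never occurs, but you do not prove it. The paper handles this by a localisation argument: at $\mathfrak{q}=\mathfrak{q}_\psi$ one shows (using $r_{\psi,S}=r_\chi$) that $A_S^T(L_{\chi,\infty})_\mathfrak{q}$ and $\cX_{L_{\chi,\infty},S\setminus V_\chi,\mathfrak{q}}$ both vanish, so that $U_{L_{\chi,\infty},S,T,\mathfrak{q}}$ and $\cX_{L_{\chi,\infty},S,\mathfrak{q}}$ are both free $\Lambda_\mathfrak{q}$-modules of rank $r_\chi$; one then chooses $\varphi_\chi$ to be (the $r_\chi$-th exterior power of) any $\Lambda$-homomorphism inducing an isomorphism after localising at $\mathfrak{q}$, which forces $\mathcal{L}_\varphi(\psi)\neq 0$.

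A related point you gloss over: Conjecture \ref{measure conjecture} also asserts that $\lambda_\varphi\in\Lambda_{\mathfrak{q}_\psi}$, so for the forward direction you must check that $\mu_\varphi(\mathcal{L})$ lies in $\Lambda_{\mathfrak{q}_\psi}$ rather than merely in $Q(\Lambda_\chi)$. This again follows from the same localisation analysis: the hypothesis $\mathcal{L}_\varphi(\psi)\neq 0$ means that $\varphi_{\chi}$ becomes an isomorphism after localising at $\mathfrak{q}_\psi$, so $\mu_\varphi$ carries ${\det}_\Lambda(C_{K_\infty,S,T})_{\mathfrak{q}_\psi}$ into $\Lambda_{\mathfrak{q}_\psi}$. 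Your phrase ``whenever the left-hand side is defined'' signals awareness of the issue but does not discharge it.
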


\begin{proof} %It is clear that any element $\lambda_\varphi$ of $Q(\Lambda)$ which satisfies (\ref{integral}) must belong to $\Lambda_{\mathfrak{q}_\psi}$.
Fix a $\Lambda$-basis $\mathcal{L}$ of ${\rm det}_{\Lambda}(C_{K^\infty,S,T})$. Then it is enough to prove that this element satisfies the interpolation conditions of Conjecture \ref{IMC} if and only if for any choice of data $\varphi$ as above, the element $\lambda_\varphi := \mu_\varphi(\mathcal{L})$ belongs to $\Lambda_\mathfrak{q_\psi}$ and satisfies the interpolation property (\ref{integral}).

%If $\psi$ is any character in $\widehat{G_\chi}$ for which $r_{S,\psi} \not= r_\chi$, then $r_{S,\psi} > r_\chi$ so that $L^{r_\chi}_{S,T}(\psi^{-1},0)$ vanishes and the interpolation properties at $\psi$ of $z$ and $\lambda$ clearly coincide.

Set $r := r_\chi$ and $V:=V_\chi$. Then it is enough for us to fix a character $\psi\in \widehat{\G_\chi}$ for which $r_{\psi,S} = r$ and to show both that there exists a homomorphism $\varphi_\chi$ for which the map
$$e_\psi\CC_p \bigwedge^r\cX_{L_\psi,S} \stackrel{\varphi_{(\psi)}}{\to} e_\psi \CC_p \bigwedge^r U_{L_\psi,S,T}$$
is injective (and hence $\mathcal{L}_\varphi(\psi)\not= 0$) and also that for any such $\varphi_\chi$ there exists a commutative diagram of the form
\begin{equation}\label{key CD}\begin{CD} {\rm det}_{\Lambda}(C_{K^\infty,S,T}) @> \mu_\varphi >> \mu_\varphi({\rm det}_{\Lambda}(C_{K^\infty,S,T}))\\
@V \lambda_\psi VV @VV {x\mapsto \int\! \psi {\rm d}x} V \\
\CC_p @> \times \mathcal{L}_\varphi(\psi) >> \CC_p.\end{CD}\end{equation}
%

%Secondly, for any $\psi\in \widehat{G_\chi}$ for which $r_{S,\psi} = r_\chi$, there exists a homomorphism $\varphi_\chi$ for which $\mathcal{L}_\varphi(\psi)\not= 0$.

Set $\mathfrak{q}:= \mathfrak{q}_\psi$. Then, since $\mathfrak{q}$ is a height one prime ideal of $\Lambda$ which does not contain $p$ the localization $\Lambda_\mathfrak{q}$ is a discrete valuation ring. In addition, $\psi$ induces isomorphisms $\Lambda/\mathfrak{q} \simeq \ZZ_p[\im \psi]$ and $\Lambda_\mathfrak{q}/\mathfrak{q}\Lambda_\mathfrak{q} \simeq \QQ_p(\psi):=\QQ_p(\im \psi)$ and, if for each $\ZZ_p[G_\psi]$-module $M$ we set $M_\psi := e_\psi(\QQ_p(\psi)\otimes_{\ZZ_p} M)$, then we have an isomorphism of $\QQ_p(\psi)$-vector spaces
\begin{multline}\label{key iso0} H^1(C_{L_{\chi,\infty},S,T})_\mathfrak{q}/\mathfrak{q}\Lambda_\mathfrak{q} = \QQ_p\otimes_{\ZZ_p}H^1(C_{L_{\chi,\infty},S,T})/\mathfrak{q} \simeq  H^1(C_{L_\psi,S,T})_\psi \\ = (\cX_{L_\psi,S})_\psi = (\cY_{L_\psi,V})_\psi.\end{multline}
%
%\begin{multline*}\label{key iso0} H^1(C_{L_{\chi,\infty},S,T})_\mathfrak{q}/(\mathfrak{q}\cdot\Lambda_\mathfrak{q}) = H^1(C_{L_{\chi,\infty},S,T})/\mathfrak{q} \cong  %H^1(C_{L_\psi,S,T})_\psi \\ = (\cX_{L_\psi,S})_\psi = (\cY_{L_\psi,V_\chi})_\psi.\end{multline*}
%
Here the second equality follows from the exact sequence
\begin{equation}\label{loc exact} 0 \to A_{S}^T(L_{\chi,\infty}) \to H^1(C_{L_{\chi,\infty},S,T}) \stackrel{\pi}{\to} \cX_{L_{\chi, \infty},S}\to 0 \end{equation}
and the third from the assumption $r_{\psi,S} = r$.

Since $\cY_{L_{\chi,\infty},V,\mathfrak{q}}$ is a free $\Lambda_\mathfrak{q}$-module there is a direct sum decomposition $\cX_{L_{\chi, \infty},S,\mathfrak{q}} = \cX_{L_{\chi, \infty},S\setminus V,\mathfrak{q}}\oplus \cY_{L_{\chi,\infty},V,\mathfrak{q}}$. Thus, since the composite isomorphism (\ref{key iso0}) factors through the map $\pi$ in (\ref{loc exact}), the module $\cX_{L_{\chi, \infty},S\setminus V,\mathfrak{q}}/\mathfrak{q}\Lambda_\mathfrak{q}$ vanishes, and hence also (by Nakayama's Lemma) the module $\cX_{L_{\chi, \infty},S\setminus V,\mathfrak{q}}$ vanishes.

The $\Lambda_\mathfrak{q}$-module $\cX_{L_{\chi,\infty},S,\mathfrak{q}} = \cY_{L_{\chi,\infty},V,\mathfrak{q}}$ is therefore free of rank $r$ and, given this, the $\mathfrak{q}$-localisation of the tautological exact sequence
\begin{eqnarray}
0 \to U_{L_{\chi,\infty},S,T} \to \Pi_{\infty} \to \Pi_{\infty} \to H^1(C_{L_{\chi,\infty},S,T})\to 0 \nonumber
\end{eqnarray}
implies $U_{L_{\chi,\infty},S,T,\mathfrak{q}}$ is also a free $\Lambda_\mathfrak{q}$-module of rank $r$. In particular, since the $\Lambda_\mathfrak{q}$-modules $\cX_{L_{\chi,\infty},S,\mathfrak{q}}$ and $U_{L_{\chi,\infty},S,T,\mathfrak{q}}$ are isomorphic we may choose a homomorphism of $\Lambda$-modules $\varphi'_\chi: \cX_{L_{\chi,\infty},S} \to U_{L_{\chi,\infty},S,T}$ with the property that $\ker(\varphi'_\chi)_\mathfrak{q}$ vanishes. It is then easily checked that the $r$-th exterior power of $\varphi'_\chi$ induces a homomorphism of the required sort $\varphi_\chi$ for which the induced map
$$e_\psi\CC_p \bigwedge^r\cX_{L_\psi,S} \stackrel{\varphi_{(\psi)}}{\to} e_\psi \CC_p \bigwedge^r U_{L_\psi,S,T}$$
is injective.

To prove the existence of a commutative diagram (\ref{key CD}) we note first that, since the $\Lambda_\mathfrak{q}$-module $\cX_{L_{\chi,\infty},S,\mathfrak{q}}=\cY_{L_{\chi,\infty},V,\mathfrak{q}}$ is free, the exact sequence (\ref{loc exact}) splits and so the isomorphism (\ref{key iso0}) combines with Nakayama's Lemma to imply  $A_{S}^T(L_{\chi,\infty})_\mathfrak{q}$ vanishes. It follows that  $H^0(C_{L_{\chi,\infty},S,T})_\mathfrak{q} = U_{L_{\chi,\infty},S,T,\mathfrak{q}}$ and $H^1(C_{L_{\chi,\infty},S,T})_\mathfrak{q} = \cX_{L_{\chi, \infty},S,\mathfrak{q}}$ are both free $\Lambda_\mathfrak{q}$-modules of rank $r$. This gives a canonical isomorphism of $\Lambda_\mathfrak{q}$-modules
\[ {\rm det}_{\Lambda}(C_{K^\infty,S,T})_\mathfrak{q} \simeq (\bigwedge^{r}_{\Lambda_\mathfrak{q}}U_{L_{\chi,\infty},S,T,\mathfrak{q}})\otimes_{\Lambda_\mathfrak{q}} (\bigwedge^{r}_{\Lambda_\mathfrak{q}}\cX_{L_{\chi,\infty},S,\mathfrak{q}})^*.\]
and by combining this isomorphism with the natural projection map
\[ (\bigwedge^{r}_{\Lambda_\mathfrak{q}}U_{L_{\chi,\infty},S,T,\mathfrak{q}})\otimes_{\Lambda_\mathfrak{q}} (\bigwedge^{r}_{\Lambda_\mathfrak{q}}\cX_{L_{\chi,\infty},S,\mathfrak{q}})^* \to
 (\bigwedge^{r}_{\QQ_p(\psi)}(U_{L_{\psi},S,T})_\psi) \otimes_{\QQ_p(\psi)} (\bigwedge^{r}_{\QQ_p(\psi)}(\cX_{L_\psi,S})_\psi)^*\]
we obtain the horizontal arrow in the following diagram.

\begin{equation*}
\xymatrix{& & \CC_p \\
\\
{\rm det}_{\Lambda}(C_{K^\infty,S,T}) \ar[r] \ar[rruu]^{(x\mapsto \int \! \psi{\rm d}x)\circ\mu_\varphi\,\,} \ar[rrdd]_{\lambda_\psi}
& ({\bigwedge}^{r}_{\QQ_p(\psi)}(U_{L_{\psi},S,T})_\psi)\otimes_{\QQ_p(\psi)}({\bigwedge}^{r}_{\QQ_p(\psi)}(\cX_{L_\psi,S})_\psi)^* \ar[ruu]_{\theta_1} \ar[rdd]^{\theta_2}\\
\\
& & \CC_p\ar[uuuu]_{\times\mathcal{L}_\varphi(\psi)}}
\end{equation*}
Here $\theta_1$ and $\theta_2$ are the maps induced by $\varphi_{(\psi)}^{-1}$ and $\lambda_{L_\psi,S}$ and the respective evaluation maps. The commutativity of the right hand triangle is then clear and the commutativity of the two remaining triangles follows by an explicit comparison of the definitions of the maps involved. Since the whole diagram  commutes this then gives a commutative diagram of the form (\ref{key CD}), as required.\end{proof}

\subsection{Iwasawa main conjecture III}\label{IMC3}

In this subsection, we work under the following simplifying assumptions:
$$
(\ast) \begin{cases}
p \text{ is odd}, \\
\text{for every $\chi \in \widehat \Delta$, $V_\chi$ contains no finite places.}
\end{cases}
$$
We note that the second assumption here is satisfied whenever $k_\infty/k$ is the cyclotomic $\ZZ_p$-extension.

%Under these assumptions, we will show that the Rubin-Stark Conjecture leads to a canonical choice of the data $\varphi$ in Proposition \ref{measureIMC} and that this in turn leads to a much more explicit version of Conjecture \ref{IMC}.

\subsubsection{}We start by quickly reviewing some basic facts concerning the height one prime ideals of $\Lambda$.

We say that a height one prime ideal $\frp$ of $\Lambda$ is `regular' (resp. `singular') if one has $p \notin \frp$ (resp. $p \in \frp$). We will often abbreviate `height one regular (resp. singular) prime ideal' to `regular (resp. singular) prime'.

If $\frp$ is regular, then $\Lambda_\frp$ is identified with the localization of $\Lambda[1/p]$ at $\frp \Lambda[1/p]$. Since we have the decomposition
\begin{eqnarray}
\Lambda\left[\frac1p\right]=\bigoplus_{\chi \in\widehat \Delta/{\sim}_{\QQ_p}} \Lambda_\chi\left[\frac 1p\right],\label{decomposition}
\end{eqnarray}
we see that $\Lambda_\frp$ is equal to the localization of some $\Lambda_\chi[1/p]$ at $\frp \Lambda_\chi[1/p]$. This shows that $Q(\Lambda_\frp)=Q(\Lambda_\chi)$. This $\chi \in {\widehat \Delta}/{\sim}_{\QQ_p}$ is uniquely determined by $\frp$, so we denote it by $\chi_\frp$. Since $\Lambda_\chi[1/p]$ is a regular local ring, we also see that $\Lambda_\frp$ is a one-dimensional regular local ring i.e. discrete valuation ring.

Next, suppose that $\frp$ is a singular prime. We have the decomposition
$$\Lambda=\bigoplus_{\chi \in \widehat \Delta'/{\sim}_{\QQ_p}} \ZZ_p[\im \chi][\Delta_p][[\Gamma]],$$
where $\Delta_p$ is the Sylow $p$-subgroup of $\Delta$, and $\Delta'$ is the unique subgroup of $\Delta$ which is isomorphic to $\Delta/\Delta_p$. From this, we see that $\Lambda_\frp$ is identified with the localization of some $\ZZ_p[\im \chi][\Delta_p][[\Gamma]]$ at $\frp \ZZ_p[\im \chi][\Delta_p][[\Gamma]]$. By \cite[Lemma 6.2(i)]{bg}, we have
$$\frp \ZZ_p[\im \chi][\Delta_p][[\Gamma]]=(\sqrt{p\ZZ_p[\im\chi][\Delta_p]}),$$
where we denote the radical of an ideal $I$ by $\sqrt{I}$. This shows that there is a one-to-one correspondence between the set of all singular primes of $\Lambda$ and the set $\widehat \Delta'/{\sim}_{\QQ_p}$. We denote by $\chi_\frp \in \widehat \Delta'/{\sim}_{\QQ_p}$ the character corresponding to $\frp$. The next lemma shows that
$$Q(\Lambda_\frp)=\bigoplus_{\chi \in \widehat \Delta/{\sim}_{\QQ_p}, \chi \mid_{\Delta'}=\chi_\frp} Q(\Lambda_\chi). $$

\begin{lemma} \label{lemmasingular}
Let $E/\QQ_p$ be a finite unramified extension, and $\cO$ be its ring of integers. Let $P$ be a finite abelian group whose order is a power of $p$. Put $\Lambda:=\cO[P][[\Gamma]]$ and $\frp:=\sqrt{p \cO[P]}\Lambda$. ($\frp$ is the unique singular prime of $\Lambda$.) Then we have
$$Q(\Lambda_\frp)=Q(\Lambda)=\bigoplus_{\chi \in \widehat P/{\sim}_{E}} Q(\cO[\im \chi][[\Gamma]]).$$
%In particular, $Q(\Lambda_\frp) $ contains $Q(\cO[[T]])$.
\end{lemma}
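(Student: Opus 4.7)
The plan is to compute both $Q(\Lambda)$ and $Q(\Lambda_\frp)$ via the standard principle that for a reduced Noetherian ring $R$ with minimal primes $\frq_1,\ldots,\frq_s$ one has $Q(R) = \prod_{i=1}^s Q(R/\frq_i)$. Fixing a topological generator $\gamma$ of $\Gamma$ and writing $T = \gamma - 1$ identifies $\Lambda$ with $\cO[P][[T]]$. Since $\cO$ has residue characteristic $p$ and $P$ is a finite abelian $p$-group, the reduction $\cO[P]/p\cO[P] = k[P]$ is a local ring, so $\cO[P]$ itself is local with maximal ideal $\frakm_P = \sqrt{p\cO[P]}$. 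In particular $\frp = \frakm_P\Lambda$, and $\Lambda/\frp \cong k[[T]]$ is a one-dimensional domain.

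To identify the minimal primes of $\Lambda$ I would use the Wedderburn decomposition
$$E[P] = \bigoplus_{\chi \in \widehat P/{\sim}_E} E(\im\chi),$$
which, combined with the fact that $\Lambda$ is a finite free $\cO[[T]]$-module and hence a torsion-free (in particular reduced) subring of $\Lambda[1/p]$, shows that the minimal primes of $\Lambda$ are exactly the kernels $\frq_\chi := \ker(\chi \colon \Lambda \twoheadrightarrow \Lambda_\chi)$ where $\Lambda_\chi := \cO[\im\chi][[\Gamma]]$. Since $(\Lambda/\frq_\chi)[1/p] = \Lambda_\chi[1/p]$ and $\Lambda_\chi$ is a domain, we have $Q(\Lambda/\frq_\chi) = Q(\Lambda_\chi)$, and the general product formula gives the second claimed equality $Q(\Lambda) = \bigoplus_\chi Q(\Lambda_\chi)$.

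The only step that requires real checking, and the main obstacle, is the containment $\frq_\chi \subset \frp$ for every $\chi$. My plan here is to observe that, because $E/\QQ_p$ is unramified and $\im\chi$ consists of $p$-power roots of unity, the extension $\cO \subset \cO[\im\chi]$ is totally ramified and $\cO[\im\chi]$ is local with residue field $k$. The preimage under the surjection $\chi \colon \cO[P] \to \cO[\im\chi]$ of the maximal ideal of $\cO[\im\chi]$ is therefore a maximal ideal of $\cO[P]$, and by the locality of $\cO[P]$ must coincide with $\frakm_P$. This forces $\ker(\chi\colon \cO[P]\to\cO[\im\chi]) \subset \frakm_P$, and hence $\frq_\chi \subset \frp$ after extending scalars to $\Lambda$. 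Once this is in hand, $\Lambda_\frp$ is reduced and its minimal primes are exactly the $\frq_\chi\Lambda_\frp$ for all $\chi$; since each quotient $\Lambda_\frp/\frq_\chi\Lambda_\frp = (\Lambda/\frq_\chi)_\frp$ is a localization of the domain $\Lambda_\chi$, it has the same fraction field $Q(\Lambda_\chi)$, so the product formula applied to $\Lambda_\frp$ yields $Q(\Lambda_\frp) = \bigoplus_\chi Q(\Lambda_\chi) = Q(\Lambda)$, as required.
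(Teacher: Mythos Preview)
Your proof is correct and follows essentially the same approach as the paper: both arguments hinge on the containment $\frq_\chi \subset \frp$ for every $\chi$, proved via the locality of $\cO[P]$ (the paper makes this explicit by computing $\sqrt{p\cO[P]} = (p, I_\cO(P))$ and identifying it with $\chi^{-1}(\pi_\chi\cO[\im\chi])$, while you phrase it as the preimage of the maximal ideal under a surjection being the unique maximal ideal). The only cosmetic difference is that the paper organizes the computation via the idempotent decomposition of $\Lambda_\frp[1/p]$, whereas you use the general formula $Q(R)=\prod_i Q(R/\frq_i)$ for a reduced Noetherian ring applied to both $\Lambda$ and $\Lambda_\frp$; these are equivalent bookkeeping devices.
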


\begin{proof}
%The last assertion follows from the fact that $\frp=(p, \mathfrak{q}_1)\Lambda.$
Note that $p$ is not a zero divisor of $\Lambda$, so we have
$$Q(\Lambda_\frp)=Q\left(\Lambda_\frp \left[\frac1p\right]\right).$$
We have the decomposition
$$\Lambda_\frp \left[\frac1p\right]=\bigoplus_{\chi \in \widehat P/{\sim}_{E}}e_\chi \Lambda_\frp \left[\frac1p\right],$$
where $e_\chi:=\sum_{\chi' \sim_E \chi}e_{\chi'}$.
%One can check that $e_\chi \Lambda_\frp[1/p]$ is non-zero if and only if $\mathfrak{q}_\chi \subset \frp$. Indeed, for every $a \in \Lambda \setminus \frp$, we see that $(\# \Delta e_\chi) a =(\# \Delta e_\chi) \N_{E(\im \chi)/E}(\chi(a)) \neq 0$ in $\Lambda$ if and only if $\chi(a) \neq 0$ in $\cO[\im \chi][[\Gamma]]$. Hence we have
%$$\Lambda_\frp \left[\frac1p\right]=\bigoplus_{\chi \in \widehat P/{\sim}_{E} , \mathfrak{q}_\chi \subset \frp}e_\chi \Lambda_\frp \left[\frac1p\right].$$
It is easy to see that each $e_\chi \Lambda_\frp [1/p]$ is a domain. Therefore we have
$$Q\left(\Lambda_\frp \left[\frac1p\right]\right)=\bigoplus_{\chi \in \widehat P/{\sim}_{E} }Q\left( e_\chi \Lambda_\frp \left[\frac1p\right]\right).$$
For $\chi \in \widehat P/{\sim}_E$, put $\mathfrak{q}_\chi:=\ker(\Lambda \stackrel{\chi}{\to} \cO[\im \chi][[\Gamma]])$. Note that $\sqrt{p \cO[P]}=(p, I_\cO(P))$, where $I_\cO(P)$ is the kernel of the augmentation map $\cO[P] \to \cO$. This can be shown as follows. Note that any prime ideal of $\cO/p\cO[P]$ is the kernel of some surjection $f: \cO/p\cO[P]\to R$ with some finite domain $R$. It is well-known that every finite domain is a field, so we must have $R\simeq \cO/p\cO$, and $f$ is the augmentation map $\cO/p\cO[P] \to \cO/p\cO\simeq R$. This shows that $\ker f$ is the unique prime ideal of $\cO/p\cO[P]$. Hence we have $\sqrt{p \cO[P]}=(p, I_\cO(P))$. From this, we also see that
$$\sqrt{p \cO[P]}=\ker(\cO[P] \stackrel{\chi}{\to} \cO[\im \chi] \to \cO[\im \chi]/\pi_\chi\cO[\im\chi]\simeq \cO/p\cO)$$
holds for any $\chi \in \widehat P/{\sim}_E$, where $\pi_\chi \in \cO[\im \chi]$ is a uniformizer. This shows that $\mathfrak{q}_\chi \subset \frp$. Hence, we know that $\Lambda_{\mathfrak{q}_\chi}$ is the localization of $\Lambda_{\mathfrak{p}}[1/p]$ at $\mathfrak{q}_\chi \Lambda_{\frp}[1/p]$. One can check that $\Lambda_{\mathfrak{q}_\chi}=Q(e_\chi \Lambda_\frp[1/p])$. Since we have $\Lambda_{\mathfrak{q}_\chi}=Q(\cO[\im \chi][[\Gamma]])$, the lemma follows.
\end{proof}

For a height one prime ideal $\frp$ of $\Lambda$, define a subset $\Upsilon_\frp \subset \widehat \Delta/{\sim}_{\QQ_p}$ by
$$\Upsilon_\frp:=\begin{cases}
\{ \chi_\frp \} &\text{ if $\frp$ is regular,}\\
\{ \chi \in \widehat \Delta/{\sim}_{\QQ_p} \mid \chi\mid_{\Delta'}=\chi_\frp \} &\text{ if $\frp$ is singular.}
\end{cases}
$$
The above argument shows that
$$Q(\Lambda_\frp)=\bigoplus_{\chi \in \Upsilon_\frp}Q(\Lambda_\chi).$$
%Note that, if $\frp$ is regular, then $\#\Upsilon_\frp=1$. Note also that, if $\frp$ is regular, then $\Upsilon_\frp$ is non-empty since $\chi_\frp \in \Upsilon_\frp$.

%\begin{remark}
%If $\Delta$ has no $p$-torsion, then, for every height one prime ideal $\frp$ of $\Lambda$, the ring $\Lambda_\frp$ is a discrete valuation ring and $\# \Upsilon_\frp=1$.
%
%In general, we have $\# \Upsilon_\frp >1$ for a singular prime $\frp$. To see this, suppose that $\Delta$ is cyclic of order $p$. Note that in this case we have $\# \widehat\Delta/{\sim}_{\QQ_p}=2$. Let $\tau$ be a generator of $\Delta$. The unique singular prime of $\Lambda=\ZZ_p[\Delta][[\Gamma]]$ is $\frp=\sqrt{p\ZZ_p[\Delta]}\Lambda=(p, 1-\tau) \Lambda$. We claim that $\# \Upsilon_\frp=2$. To see this, it is sufficient to show $\ker(\ZZ_p[\Delta] \stackrel{\chi}{\to} \ZZ_p[\mu_p])\Lambda \subset \frp$ for any non-trivial character $\chi \in \widehat\Delta$. Take any $\sum_{i=0}^{p-1}a_i \tau^i \in \ker(\ZZ_p[\Delta] \stackrel{\chi}{\to} \ZZ_p[\mu_p])$. Put $\zeta:=\chi(\tau)$ (this is a primitive $p$-th root of unity). Since $\sum_{i=0}^{p-1}\zeta^i=0$, we have
%$$0=\sum_{i=0}^{p-1}a_i \zeta^i= \sum_{i=1}^{p-1} (a_i-a_0) \zeta^i.$$
%This shows that $a_0=\cdots =a_{p-1}=:a$. Hence we have $\sum_{i=0}^{p-1}a_i\tau^i=a\N_\Delta$. Since we have
%$$\N_\Delta=p-(1-\tau)-(1-\tau^2)-\cdots -(1-\tau^{p-1}) \in \frp,$$
%the claim follows.
%\end{remark}

To end this section we recall a useful result concerning $\mu$-invariants.

\begin{lemma} \label{lemmamu}
Let $M$ be a finitely generated torsion $\Lambda$-module. Let $\frp$ be a singular prime of $\Lambda$. Then the following are equivalent:
\begin{itemize}
\item[(i)] The $\mu$-invariant of the $\ZZ_p[[\Gamma]]$-module $e_{\chi_\frp} M$ vanishes.
\item[(ii)] For any $\chi \in \Upsilon_\frp$, the $\mu$-invariant of the $\ZZ_p[\im \chi][[\Gamma]]$-module $M\otimes_{\ZZ_p[\Delta']} \ZZ_p[\im \chi]$ vanishes.
\item[(iii)] $M_\frp=0$.
\end{itemize}
\end{lemma}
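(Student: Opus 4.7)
The plan is to reduce all three statements to a single localization computation at the height-one singular prime, via the structure theory of $\Lambda$ described just before the lemma. Because $\#\Delta'$ is coprime to $p$ one has a genuine ring decomposition $\Lambda = \prod_{\chi' \in \widehat \Delta'/\sim_{\QQ_p}} \Lambda_{(\chi')}$ with $\Lambda_{(\chi')} := \ZZ_p[\im \chi'][\Delta_p][[\Gamma]]$. The prime $\frp$ is supported on a single factor $R := \Lambda_{(\chi_\frp)} = \cO[\Delta_p][[\Gamma]]$, where $\cO := \ZZ_p[\im \chi_\frp]$, corresponding to the prime $\tilde\frp := \sqrt{p\cO[\Delta_p]} \cdot R$; setting $N := e_{\chi_\frp} M$ one gets $\Lambda_\frp = R_{\tilde\frp}$ and $M_\frp = N_{\tilde\frp}$. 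Moreover, applying the determinant of the $\cO[[\Gamma]]$-linear multiplication map $\phi_s : R \to R$ for any non-zero divisor $s \in R$ annihilating $N$ produces a non-zero element of $\cO[[\Gamma]]$ killing $N$ (via Cayley--Hamilton), so $N$ is finitely generated and torsion over the domain $\cO[[\Gamma]]$.

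The main step, and what I expect to be the principal technical difficulty, is to show that $\tilde\frp$ is the \emph{unique} prime of $R$ lying above the height-one prime $(p)$ of $\cO[[\Gamma]]$. For this I would compute $R/pR = \kappa[\Delta_p][[T]]$ with $\kappa := \cO/p\cO$ and $T := \gamma - 1$ for a topological generator $\gamma$ of $\Gamma$, and use that since $\Delta_p$ is a $p$-group and $\mathrm{char}(\kappa) = p$, the finite $\kappa$-algebra $\kappa[\Delta_p]$ is local with nilradical equal to its augmentation ideal. Inverting $T$ then yields the Artinian local ring $\kappa((T))[\Delta_p]$, whose unique prime pulls back precisely to $\tilde\frp$, and this gives the crucial identification
\[ R_{\tilde\frp} = R \otimes_{\cO[[\Gamma]]} \cO[[\Gamma]]_{(p)}. \]

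Since $\cO[[\Gamma]]_{(p)}$ is a discrete valuation ring, the $\mu$-invariant of a finitely generated torsion $\cO[[\Gamma]]$-module vanishes if and only if its localization at $(p)$ vanishes; combined with the identification above, and with the fact that $\cO/\ZZ_p$ is unramified (so $\mu_{\ZZ_p[[\Gamma]]}(N) = [\cO:\ZZ_p]\cdot \mu_{\cO[[\Gamma]]}(N)$), this yields the chain
\[ \mu_{\ZZ_p[[\Gamma]]}(N) = 0 \iff \mu_{\cO[[\Gamma]]}(N) = 0 \iff N_{(p)} = 0 \iff N_{\tilde\frp} = 0 \iff M_\frp = 0, \]
which is exactly (i) $\iff$ (iii). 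Finally, for (ii) $\iff$ (iii), note that for any $\chi \in \Upsilon_\frp$ one has $M \otimes_{\ZZ_p[\Delta']} \ZZ_p[\im \chi] = N \otimes_\cO \cO_\chi$ with $\cO_\chi := \ZZ_p[\im\chi]$, and since $\cO_\chi/\cO$ is finite with $p = u\pi_\chi^{e_\chi}$, the extension $\cO_\chi[[\Gamma]]/\cO[[\Gamma]]$ is faithfully flat and $(\pi_\chi)$ is the unique height-one prime of $\cO_\chi[[\Gamma]]$ above $(p)$; running the same DVR-localization argument then produces $\mu_{\cO_\chi[[\Gamma]]}(N \otimes_\cO \cO_\chi) = 0 \iff N_{(p)} = 0$, uniformly in the choice of $\chi \in \Upsilon_\frp$, which closes the proof.
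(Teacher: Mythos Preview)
Your proof is correct. The paper does not give its own argument for this lemma but simply cites \cite[Lemma 5.6]{flachsurvey}, so there is nothing to compare at the level of detail; you have supplied a self-contained proof where the paper defers to the literature.

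A couple of minor comments. First, your Cayley--Hamilton step is slightly elliptical: what you are really using is that for a non-zero divisor $s\in R$ annihilating $N$, the norm $\mathrm{N}_{R/\cO[[\Gamma]]}(s)=\det(\phi_s)$ is a nonzero element of $\cO[[\Gamma]]$ (since $\phi_s$ is injective on the free $\cO[[\Gamma]]$-module $R$) which lies in $sR$ (by the adjugate identity) and hence still kills $N$; it would be worth saying this explicitly. Second, the identification $R_{\tilde\frp}=R\otimes_{\cO[[\Gamma]]}\cO[[\Gamma]]_{(p)}$ from the uniqueness of the prime above $(p)$ uses that $R$ is \emph{finite} over $\cO[[\Gamma]]$, which you have but do not state; the rest of your localization and faithfully-flat-descent reasoning is clean and exactly the right way to handle (ii).
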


\begin{proof}
See \cite[Lemma 5.6]{flachsurvey}.
\end{proof}

\subsubsection{}In the rest of this subsection we assume the condition $(\ast)$.

\begin{lemma}
Let $\frp$ be a singular prime of $\Lambda$. Then $V_\chi$ is independent of $\chi \in \Upsilon_\frp$. In particular, for any $\chi \in \Upsilon_\frp$, the $Q(\Lambda_\frp)$-module $U_{K_\infty,S,T}\otimes_\Lambda Q(\Lambda_\frp)$ is free of rank $r_\chi$.
\end{lemma}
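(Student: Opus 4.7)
The plan is to first reduce the computation of $V_\chi$ to a decomposition-theoretic condition on infinite places, and then to exploit the parity of $p$. Under condition $(\ast)$ we have $V_\chi \subset S_\infty(k)$ for every character, so to show that $V_\chi$ is constant on $\Upsilon_\frp$ it suffices to fix $v \in S_\infty(k)$ and check that the condition $v \in V_\chi$ depends only on $\chi|_{\Delta'} = \chi_\frp$. Since $p$ is odd by $(\ast)$, every infinite place of $k$ splits completely in the $\ZZ_p$-extension $k_\infty/k$ (the local Galois group at infinity has order at most $2$, which admits no non-trivial $\ZZ_p$-quotient), so using $L_{\chi,\infty} = L_\chi \cdot k_\infty$ the condition $v \in V_\chi$ reduces to $v$ splitting completely in $L_\chi = K_\infty^{\ker\chi}$, which is in turn equivalent to $\chi(\Delta_v) = 1$.

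Next I would observe that $\Delta_v$ itself sits inside $\Delta'$: indeed $\Delta_v$ has order at most two, while in the decomposition $\Delta = \Delta' \times \Delta_p$ the Sylow $p$-subgroup $\Delta_p$ contains no element of order two because $p$ is odd. Hence $\chi(\Delta_v) = 1$ is equivalent to the vanishing of $\chi_\frp = \chi|_{\Delta'}$ on $\Delta_v$, a condition that depends only on the class $\chi_\frp \in \widehat{\Delta'}/{\sim}_{\QQ_p}$ and not on the lift $\chi \in \Upsilon_\frp$ itself. This establishes that $V_\chi$, and hence the integer $r_\chi$, is constant on $\Upsilon_\frp$.

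For the second assertion I would combine Lemma \ref{lemmasingular}, which gives the identification
\[ Q(\Lambda_\frp) = \bigoplus_{\chi \in \Upsilon_\frp} Q(\Lambda_\chi), \]
with the computation made inside the proof of Theorem \ref{lemisom}. That computation supplies, for each $\chi$, a natural isomorphism
\[ U_{K_\infty,S,T} \otimes_\Lambda Q(\Lambda_\chi) \simeq U_{L_{\chi,\infty},S,T} \otimes_{\ZZ_p[[\G_\chi]]} Q(\Lambda_\chi) \]
and identifies this with a free $Q(\Lambda_\chi)$-module of rank $r_\chi$. Summing these isomorphisms over $\chi \in \Upsilon_\frp$ then exhibits $U_{K_\infty,S,T} \otimes_\Lambda Q(\Lambda_\frp)$ as a module over the product ring $Q(\Lambda_\frp)$ whose rank at each component equals $r_\chi$, a common value by the first part of the lemma. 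The only genuinely substantive step is the parity argument which places $\Delta_v$ inside $\Delta'$; everything else is a direct assembly of results already established.
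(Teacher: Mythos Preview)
Your argument is correct and follows essentially the same idea as the paper: both proofs use that $p$ is odd together with the fact that the decomposition group at an archimedean place has order at most two, so it cannot detect the $p$-power discrepancy between different lifts $\chi$ of $\chi_\frp$. The paper phrases this by noting that $[L_{\chi,\infty}:L_{\chi_\frp,\infty}]$ is a $p$-power and hence infinite places splitting in $L_{\chi_\frp,\infty}$ automatically split in $L_{\chi,\infty}$, while you phrase it by observing $\Delta_v \subset \Delta'$ so that $\chi(\Delta_v)$ depends only on $\chi|_{\Delta'}=\chi_\frp$; these are equivalent formulations of the same parity observation.
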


\begin{proof}
It is sufficient to show that $V_\chi=V_{\chi_\frp}$ for any $\chi \in \Upsilon_\frp$. Note that the extension degree $[L_{\chi,\infty}:L_{\chi_\frp,\infty}]=[L_\chi:L_{\chi_\frp}]$ is a power of $p$. Since $p$ is odd by the assumption $(\ast)$, we see that an infinite place of $k$ which splits completely in $L_{\chi_\frp,\infty}$ also splits completely in $L_{\chi,\infty}$. By the assumption $(\ast)$, we know every places in $V_{\chi_\frp}$ is infinite. Hence we have $V_\chi=V_{\chi_\frp}$.
%We may assume that $\# \Delta$ is a power of $p$. It is sufficient to show that $V_\chi=V_1$ i.e.
%$$\{ v\in S \mid \text{$v$ splits completely in $L_{\chi,\infty}$}\}=\{ v \in S \mid \text{$v$ splits completely in $k_\infty$}\}$$
%for any $\chi\in \widehat \Delta/{\sim}_{\QQ_p}$. Note that $L_{\chi,\infty}/k$ is a pro-$p$-extension. Since $p$ is odd by assumption, we see that all infinite places split completely in $L_{\chi,\infty}$ and in $k_\infty$. By the assumption that $V_1$ contains no finite places, we must have $V_\chi=S_\infty(k)=V_1$. This shows the proposition.
\end{proof}

The above result motivates us, for any height one prime ideal $\frp$ of $\Lambda$, to define $V_\frp:=V_\chi$ and $r_\frp:=r_\chi$ by choosing some $\chi \in \Upsilon_\frp$.

Assume that Conjecture ${\rm RS}(L_{\chi,n}/k,S,T,V_\chi)_p$ holds for all $\chi \in \widehat \Delta$ and $n$. We then define the `$\frp$-part' of the Rubin-Stark element
$$\epsilon_{K_\infty/k,S,T}^\frp \in (\bigwedge^{r_\frp} U_{K_\infty,S,T})\otimes_\Lambda Q(\Lambda_\frp)$$
as the image of
$$(\epsilon_{L_{\chi,\infty}/k,S,T}^{V_\chi})_{\chi \in \Upsilon_\frp} \in \bigoplus_{\chi \in \Upsilon_\frp} \bigcap^{r_\frp }U_{L_{\chi,\infty},S,T}$$
under the natural map
$$\bigoplus_{\chi \in \Upsilon_\frp} \bigcap^{r_\frp }U_{L_{\chi,\infty},S,T} \to \bigoplus_{\chi \in \Upsilon_\frp} (\bigcap^{r_\frp }U_{L_{\chi,\infty},S,T})\otimes_{\ZZ_p[[\G_\chi]]}Q(\Lambda_\chi) = (\bigwedge^{r_\frp} U_{K_\infty,S,T})\otimes_\Lambda Q(\Lambda_\frp).$$
(see Lemma \ref{technical limit}.)

We can now formulate a much more explicit main conjecture. %In fact we conjecture that something much stronger is true.

\begin{conjecture} \label{IMCexplicit} If condition ($*$) is valid, then for every height one prime ideal $\frp$ of $\Lambda$ there is an equality
$$\Lambda_\frp\cdot\epsilon^\frp_{K_{\infty}/k,S,T}={\rm Fitt}_\Lambda^0(A_S^T(K_\infty)){\rm Fitt}_\Lambda^0(\cX_{K_\infty,S\setminus V_\frp})\cdot ({\bigwedge}_\Lambda^{r_\frp}U_{K_\infty,S,T} )_\frp.$$
\end{conjecture}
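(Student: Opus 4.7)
The plan is to deduce Conjecture \ref{IMCexplicit} from Conjecture \ref{IMC}, thereby realizing it as a more explicit reformulation of the higher rank main conjecture under hypothesis $(\ast)$. First I would fix a $\Lambda$-basis $\mathcal{L} = \mathcal{L}_{K_\infty/k,S,T}$ of ${\det}_\Lambda(C_{K_\infty,S,T})$ provided by Conjecture \ref{IMC rubinstark} together with a representative $\Pi_\infty \stackrel{\psi}{\to} \Pi_\infty$ of $C_{K_\infty,S,T}$ as in Proposition \ref{explicit projector}, and then localize at the given height one prime $\mathfrak{p}$. Writing $r := r_\mathfrak{p}$ and $V := V_\mathfrak{p}$, Theorem \ref{lemisom}(ii) combined with Proposition \ref{explicit projector}(iii) and the decomposition $Q(\Lambda_\mathfrak{p}) = \bigoplus_{\chi \in \Upsilon_\mathfrak{p}} Q(\Lambda_\chi)$ from Lemma \ref{lemmasingular} identifies the image of $\mathcal{L}$ in $(\bigwedge^{r} U_{K_\infty,S,T}) \otimes_\Lambda Q(\Lambda_\mathfrak{p})$ with $\pm\epsilon^\mathfrak{p}_{K_\infty/k,S,T}$, presented explicitly as the sum
$$\pm\sum_{\sigma \in \mathfrak{S}_{d,r}} \mathrm{sgn}(\sigma) \det(\psi_i(b_{\sigma(j)}))_{r<i,j\leq d}\, b_{\sigma(1)} \wedge \cdots \wedge b_{\sigma(r)}.$$

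The key intermediate step is an Iwasawa-theoretic analog of Theorem \ref{imcrs} which asserts, after localization at $\mathfrak{p}$, the equality
$$\Lambda_\mathfrak{p} \cdot \epsilon^\mathfrak{p}_{K_\infty/k,S,T} = \Fitt^{r}_{\Lambda_\mathfrak{p}}(H^1(C_{K_\infty,S,T})_\mathfrak{p}) \cdot (\bigwedge^{r} U_{K_\infty,S,T})_\mathfrak{p}.$$
This would follow, as in the proof of Theorem \ref{imcrs} via \cite[Theorem 7.5]{bks1}, from two observations: the sum above is built precisely from the $(d-r)\times(d-r)$ minors of the matrix of $\psi$ that generate $\Fitt^{r}$ of its cokernel; and, as in the proof of Proposition \ref{measureIMC}, the localization $U_{K_\infty,S,T,\mathfrak{p}}$ is a free $\Lambda_\mathfrak{p}$-module of rank $r$, so $(\bigwedge^{r} U_{K_\infty,S,T})_\mathfrak{p}$ is free of rank one and the principal $\Lambda_\mathfrak{p}$-ideal generated by $\epsilon^\mathfrak{p}$ in it coincides with the ideal of values of $\epsilon^\mathfrak{p}$ against all $\Phi \in \bigwedge^{r} \Hom_{\Lambda_\mathfrak{p}}(U_{K_\infty,S,T,\mathfrak{p}}, \Lambda_\mathfrak{p})$.

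It then remains to identify $\Fitt^{r}_{\Lambda_\mathfrak{p}}(H^1(C_{K_\infty,S,T})_\mathfrak{p})$ with the product $\Fitt^0_\Lambda(A_S^T(K_\infty))_\mathfrak{p} \cdot \Fitt^0_\Lambda(\cX_{K_\infty,S\setminus V})_\mathfrak{p}$. For this I would combine the exact sequences
$$0 \to A_S^T(K_\infty) \to H^1(C_{K_\infty,S,T}) \to \cX_{K_\infty,S} \to 0, \quad 0 \to \cX_{K_\infty,S\setminus V} \to \cX_{K_\infty,S} \to \cY_{K_\infty,V} \to 0$$
with the facts (again from the proof of Proposition \ref{measureIMC}) that $\cY_{K_\infty,V,\mathfrak{p}}$ is $\Lambda_\mathfrak{p}$-free of rank $r$ and that the second sequence splits after localizing at $\mathfrak{p}$. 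The quotient $H^1(C_{K_\infty,S,T})_\mathfrak{p}/\cY_{K_\infty,V,\mathfrak{p}}$ then sits in a short exact sequence of $\Lambda_\mathfrak{p}$-torsion modules
$$0 \to A_S^T(K_\infty)_\mathfrak{p} \to H^1(C_{K_\infty,S,T})_\mathfrak{p}/\cY_{K_\infty,V,\mathfrak{p}} \to \cX_{K_\infty,S\setminus V,\mathfrak{p}} \to 0,$$
and multiplicativity of initial Fitting ideals (valid since these localized torsion modules arise in a perfect complex and therefore admit square presentations over $\Lambda_\mathfrak{p}$) gives the desired identity, together with the general identity $\Fitt^{r}_{\Lambda_\mathfrak{p}}(M) = \Fitt^0_{\Lambda_\mathfrak{p}}(M/F)$ for a free rank-$r$ summand $F \subset M$.

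The principal obstacle is the treatment of singular primes. At a regular $\mathfrak{p}$ the local ring $\Lambda_\mathfrak{p}$ is a discrete valuation ring, initial Fitting ideals coincide with characteristic ideals, and all the multiplicativity steps above are automatic. At a singular $\mathfrak{p}$, however, $\Lambda_\mathfrak{p}$ is no longer a DVR and one must use Lemma \ref{lemmasingular} to decompose $Q(\Lambda_\mathfrak{p}) = \bigoplus_{\chi \in \Upsilon_\mathfrak{p}} Q(\Lambda_\chi)$, verify the equality component-by-component over each $\chi$-summand, and then descend the resulting family of identities to a single statement of $\Lambda_\mathfrak{p}$-ideals inside $(\bigwedge^{r} U_{K_\infty,S,T})_\mathfrak{p}$. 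Hypothesis $(\ast)$ is essential precisely here: by the lemma opening \S\ref{IMC3} it ensures that the set $V_\chi$, and hence the exterior-power rank $r_\mathfrak{p}$, is constant as $\chi$ ranges over $\Upsilon_\mathfrak{p}$, so that $\epsilon^\mathfrak{p}_{K_\infty/k,S,T}$ is a single well-defined element of a single exterior power. A further technical subtlety is verifying that the Fitting-ideal multiplicativity genuinely holds over $\Lambda_\mathfrak{p}$ when that ring is not a DVR; I would expect this to reduce, via the perfection of $C_{K_\infty,S,T}$ and the local freeness of $\cY_{K_\infty,V,\mathfrak{p}}$, to a direct matrix-level verification.
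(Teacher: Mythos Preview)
The statement is a \emph{conjecture}, so there is no unconditional proof in the paper; what the paper actually proves is the equivalence of Conjecture~\ref{IMCexplicit} with Conjecture~\ref{IMC} under the extra hypotheses of the Rubin--Stark conjecture and the vanishing of the relevant $\mu$-invariants (Proposition~\ref{equiv prop}, supplemented by Remark~\ref{remarkIMC}). Your proposal is an attempt at one direction of that equivalence, and at regular primes it matches the paper's argument essentially verbatim: use Theorem~\ref{lemisom}(ii) to identify $\epsilon^\frp$ as the image of the basis $\mathcal L$, and then use the DVR structure of $\Lambda_\frp$ to convert the determinant equality into the Fitting-ideal statement.

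At singular primes, however, your plan diverges from the paper and contains a real gap. You invoke Proposition~\ref{measureIMC} to claim that $U_{K_\infty,S,T,\frp}$ is $\Lambda_\frp$-free of rank $r_\frp$, but that proposition only treats the primes $\mathfrak q_\psi$, which are always regular (they do not contain $p$). In the paper the freeness of $U_{K_\infty,S,T,\frp}$ at a singular prime is established in Proposition~\ref{propositionfree}, and it requires the vanishing of the $\mu$-invariant of $e_{\chi_\frp}A_S^T(K_\infty)$ as an explicit hypothesis; without it, your ``key intermediate step'' cannot even be formulated, since $(\bigwedge^{r_\frp}U_{K_\infty,S,T})_\frp$ need not be free of rank one. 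Consequently the paper does not try to prove Fitting-ideal multiplicativity over the non-regular ring $\Lambda_\frp$ at all. Instead it observes (Remark~\ref{remarkIMC}) that Lemma~\ref{lemmamu} gives $(\cX_{K_\infty,S\setminus V_\frp})_\frp=0$ unconditionally, so one Fitting factor is already trivial; then, \emph{assuming} $\mu=0$ for $A_S^T(K_\infty)$, the other factor is also trivial and $H^1(C_{K_\infty,S,T})_\frp\simeq (\cY_{K_\infty,V_\frp})_\frp$ is free, which collapses the desired equality at $\frp$ to the simple statement $\Lambda_\frp\cdot\epsilon^\frp=(\bigwedge^{r_\frp}U_{K_\infty,S,T})_\frp$ (Remark~\ref{remarkIMCsingular}). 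Your proposed route through componentwise decomposition over $\Upsilon_\frp$ and a ``matrix-level'' multiplicativity argument is therefore both unnecessary and, as stated, unjustified: you would first have to impose the $\mu$-vanishing hypothesis, and once you do, the singular case becomes trivial rather than technical.
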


\begin{remark} \label{remarkIMC} At every height one prime ideal $\frp$ there is an equality
$${\rm Fitt}_\Lambda^0(A_S^T(K_\infty))_\frp {\rm Fitt}_\Lambda^0(\cX_{K_\infty,S\setminus V_\frp})_\frp={\rm Fitt}_\Lambda^{r_\frp}(H^1(C_{K_\infty,S,T}))_\frp.$$
If $\frp$ is regular, then $\Lambda_\frp$  is a discrete valuation ring and this equality follows directly from the exact sequence
$$0 \to A_S^T(K_\infty) \to H^1(C_{K_\infty,S,T}) \to \cX_{K_\infty,S}\to 0.$$
If $\frp$ is singular, then the equality is valid since the result of Lemma \ref{lemmamu} implies $(\cX_{K_\infty,S\setminus V_\frp})_\frp$ vanishes and so  $H^1(C_{K_\infty,S,T})_\frp$ is isomorphic to the direct sum $A_S^T(K_\infty)_\frp \oplus (\cY_{K_\infty,V_\frp})_\frp$.

Conjecture \ref{IMCexplicit} is thus valid if and only if for every height one prime $\frp$ one has
$$\Lambda_\frp\cdot\epsilon^\frp_{K_{\infty}/k,S,T}={\rm Fitt}_\Lambda^{r_\frp}(H^1(C_{K_\infty,S,T}))\cdot ({\bigwedge}_\Lambda^{r_\frp}U_{K_\infty,S,T} )_\frp .$$
\end{remark}

\begin{remark} \label{remarkIMCsingular} If the prime $\frp$ is singular, then $(\cX_{K_\infty,S\setminus V_\frp})_\frp$ vanishes and one has ${\rm Fitt}_\Lambda^0(A_S^T(K_\infty))_\frp = \Lambda_\frp$ if and only if the $\mu$-invariant of the $\ZZ_p[[\Gamma]]$-module $ e_{\chi_\frp} A_{S}^T(K_\infty)$ vanishes (see Lemma \ref{lemmamu}). Thus, for any such $\frp$ Conjecture \ref{IMCexplicit} implies that the invariant of $ e_{\chi_\frp} A_{S}^T(K_\infty)$ vanishes if and only if one has% an equality
\begin{equation}\label{easy singular} \Lambda_\frp\cdot\epsilon^\frp_{K_{\infty}/k,S,T}=({\bigwedge}_\Lambda^{r_\frp}U_{K_\infty,S,T} )_\frp.\end{equation}
In a similar way, one finds that the vanishing of the $\mu$-invariant of $e_{\chi_\frp} A_{S}^T(K_\infty)$ implies that Conjecture \ref{IMCexplicit} for $\frp$ is itself equivalent to the equality (\ref{easy singular}). \end{remark}

\begin{remark}\label{noT version} For every height one prime ideal $\frp$ of $\Lambda$, put $\epsilon_{K_\infty/k,S}^\frp:=\delta_T^{-1} \cdot \epsilon_{K_\infty/k,S,T}^\frp$. Then Lemma \ref{independent} implies that the equality of Conjecture \ref{IMCexplicit} is valid at $\frp$ if and only if one has
\begin{eqnarray*}
\Lambda_\frp \cdot \epsilon_{K_\infty/k,S}^\frp=\Fitt_\Lambda^0(A_S(K_\infty))\Fitt_\Lambda^0(\cX_{K_\infty,S\setminus S_\infty}) \cdot ({\bigwedge}_{\Lambda}^{r} U_{K_\infty,S})_\frp. \label{withoutT}
\end{eqnarray*}
\end{remark}

\subsubsection{}Before comparing Conjecture \ref{IMCexplicit} to the more general Conjecture \ref{IMC} we show that the assumed validity of the $p$-part of the Rubin-Stark conjecture already gives strong evidence in favour of Conjecture \ref{IMCexplicit}.

We note, in particular, that if $\frp$ is a singular prime of $\Lambda$ (and an appropriate $\mu$-invariant vanishes), then the inclusion proved in the following result constitutes `one half' of the equality (\ref{easy singular}) that is equivalent in this case to Conjecture \ref{IMCexplicit}.

\begin{proposition} \label{propositionfree}
Let $\frp$ be a height one prime ideal of $\Lambda$. When $\frp$ is singular, assume that the $\mu$-invariant of $e_{\chi_\frp} A_{S}^T(K_\infty)$ (as $\ZZ_p[[\Gamma]]$-module) vanishes. Then the following claims are valid. 
\begin{itemize}
\item[(i)] The $\Lambda_\frp$-module $(U_{K_\infty,S,T})_\frp$ is free of rank $r_\frp$. 
\item[(ii)] If Conjecture ${\rm RS}(L_{\chi,n}/k,S,T,V_\chi)_p$ is valid for every $\chi$ in $\widehat \Delta$ and every natural number $n$, then there is an inclusion 
$$\Lambda_\frp\cdot \epsilon^\frp_{K_{\infty}/k,S,T} \subset ({\bigwedge}_\Lambda^{r_\frp}U_{K_\infty,S,T})_\frp.$$
\end{itemize}
\end{proposition}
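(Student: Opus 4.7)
The plan is to base both parts on the tautological four-term exact sequence
$$0 \to U_{K_\infty,S,T} \to \Pi_\infty \xrightarrow{\psi} \Pi_\infty \to H^1(C_{K_\infty,S,T}) \to 0,$$
where $\Pi_\infty$ is a free $\Lambda$-module representing $C_{K_\infty,S,T}$, constructed by passing the finite-level choice from \S\ref{section explicit} to the inverse limit.

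For (i) with $\frp$ regular, the ring $\Lambda_\frp$ is a discrete valuation ring, being the localization of the regular ring $\Lambda_{\chi_\frp}[1/p]$ at a height-one prime. Thus $(U_{K_\infty,S,T})_\frp$, as a $\Lambda_\frp$-submodule of the free module $\Pi_{\infty,\frp}$, is free, of rank $r_\frp$ by the dimension count over $Q(\Lambda_\frp)=Q(\Lambda_{\chi_\frp})$ already appearing in the proof of Theorem \ref{lemisom}(i). For (i) with $\frp$ singular, the $\mu$-invariant hypothesis and Lemma \ref{lemmamu} give $A_S^T(K_\infty)_\frp = 0$, while a direct place-by-place $\mu$-invariant analysis (using condition $(*)$ and the fact that each $\cY_{K_\infty,v}$ for $v \in S \setminus V_\frp$ is killed by a distinguished element of $\ZZ_p[[\Gamma]]$) will yield $(\cX_{K_\infty,S\setminus V_\frp})_\frp=0$. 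Combining these vanishings with the sequences $0 \to A_S^T(K_\infty) \to H^1(C_{K_\infty,S,T}) \to \cX_{K_\infty,S}\to 0$ and $0 \to \cX_{K_\infty,S\setminus V_\frp} \to \cX_{K_\infty,S} \to \cY_{K_\infty,V_\frp} \to 0$ identifies $H^1(C_{K_\infty,S,T})_\frp$ with $(\cY_{K_\infty,V_\frp})_\frp$, which is $\Lambda_\frp$-free of rank $r_\frp$; splitting the localized four-term sequence at this free cokernel then realizes $(U_{K_\infty,S,T})_\frp$ as a free direct summand of $\Pi_{\infty,\frp}$ of rank $r_\frp$.

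For (ii), the direct-summand property established in (i) yields a localized analogue of Proposition \ref{explicit projector}(i),
$$\bigl(Q(\Lambda_\frp)\cdot{\bigwedge}^{r_\frp}_{\Lambda_\frp}(U_{K_\infty,S,T})_\frp\bigr)\cap{\bigwedge}^{r_\frp}_{\Lambda_\frp}\Pi_{\infty,\frp}=\bigl({\bigwedge}^{r_\frp}U_{K_\infty,S,T}\bigr)_\frp,$$
since exterior powers commute with localization. By its very definition $\epsilon^\frp_{K_\infty/k,S,T}$ belongs to the first factor, so it is enough to show that it also lies in $\bigwedge^{r_\frp}_{\Lambda_\frp}\Pi_{\infty,\frp}$. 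For each $\chi \in \Upsilon_\frp$, the assumed validity of Conjecture ${\rm RS}(L_{\chi,n}/k,S,T,V_\chi)_p$, combined with Proposition \ref{explicit projector}(i), places each $\epsilon^{V_\chi}_{L_{\chi,n}/k,S,T}$ in $\bigwedge^{r_\chi}_{\ZZ_p[\G_{\chi,n}]}(\Pi_\infty\otimes_\Lambda\ZZ_p[\G_{\chi,n}])$; passing to the inverse limit (where we use that finite-rank free modules behave well) yields
$$\epsilon^{V_\chi}_{L_{\chi,\infty}/k,S,T} \in \bigl({\bigwedge}^{r_\chi}_\Lambda\Pi_\infty\bigr)\otimes_\Lambda\ZZ_p[[\G_\chi]].$$
The canonical factorization $\ZZ_p[[\G_\chi]]\to\Lambda_\chi\hookrightarrow(\Lambda_\chi)_{\frp\Lambda_\chi}$, together with the identification of $(\Lambda_\chi)_{\frp\Lambda_\chi}$ with the $\chi$-component of $\Lambda_\frp$ inside $Q(\Lambda_\chi)\subset Q(\Lambda_\frp)$, then places the image of this Rubin-Stark element in $(\bigwedge^{r_\frp}_\Lambda\Pi_\infty)\otimes_\Lambda\Lambda_\frp = \bigwedge^{r_\frp}_{\Lambda_\frp}\Pi_{\infty,\frp}$; summing over $\chi\in\Upsilon_\frp$ delivers $\epsilon^\frp_{K_\infty/k,S,T}$ itself in the required module.

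The principal technical obstacle is expected to be, in the singular case of (i), the verification that the $\mu$-invariant of $\cX_{K_\infty,S\setminus V_\frp}$ vanishes, which demands a careful case-analysis of the decomposition of each $v \in S \setminus V_\frp$ in $K_\infty/k$. The inverse-limit chase in (ii) is then largely formal, provided one has arranged that the infinite-level representative $\Pi_\infty$ base-changes compatibly with the finite-level representatives used in Proposition \ref{explicit projector}.
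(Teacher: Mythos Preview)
Your proposal is correct and follows essentially the same approach as the paper: both arguments use the tautological four-term sequence from a free representative $\Pi_\infty$ of $C_{K_\infty,S,T}$, treat regular primes via the DVR structure and singular primes via the $\mu$-vanishing hypothesis combined with Lemma~\ref{lemmamu}, and then deduce (ii) from the direct-summand property via the intersection formula $({\bigwedge}^{r_\frp}U_{K_\infty,S,T})_\frp = ({\bigwedge}^{r_\frp}U_{K_\infty,S,T}\otimes_\Lambda Q(\Lambda_\frp))\cap({\bigwedge}^{r_\frp}\Pi_\infty)_\frp$ together with the integrality of the Rubin--Stark elements in $\bigwedge^{r_\frp}\Pi_\infty$ at each finite level. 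One small caution: for singular $\frp$ the ring $\Lambda_\frp$ is local and not a product over $\chi\in\Upsilon_\frp$, so your phrase ``the $\chi$-component of $\Lambda_\frp$'' is imprecise---the paper handles this step by a direct appeal to the proof of Theorem~\ref{lemisom}(i) rather than by summing $\chi$-components, and your anticipated ``principal technical obstacle'' (the $\mu$-vanishing of $\cX_{K_\infty,S\setminus V_\frp}$) is in fact immediate from condition~$(\ast)$.
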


\begin{proof}
As in the proof of Lemma \ref{technical limit}, we choose a representative of $C_{K_\infty,S,T}$
$$\Pi_\infty \stackrel{\psi_\infty}{\to} \Pi_\infty. $$
We have the exact sequence
\begin{eqnarray}
0 \to U_{K_\infty,S,T} \to \Pi_\infty \stackrel{\psi_\infty}{\to} \Pi_\infty \to H^1(C_{K_\infty,S,T}) \to 0. \label{tate infty}
\end{eqnarray}
If $\frp$ is regular, then $\Lambda_\frp$ is a discrete valuation ring and the exact sequence (\ref{tate infty}) implies that the $\Lambda_\frp$-modules $(U_{K_\infty,S,T})_\frp$ and $\im(\psi_{\infty})_\frp$ are free. Since $U_{K_\infty,S,T}\otimes_\Lambda Q(\Lambda_\frp)$ is isomorphic to $\cY_{K_\infty,V_\frp} \otimes_\Lambda Q(\Lambda_\frp)$, we also know that the rank of $(U_{K_\infty,S,T})_\frp$ is $r_\frp$.

Suppose next that $\frp$ is singular. Since the $\mu$-invariant of $e_{\chi_\frp}\cX_{K_\infty,S\setminus V_\frp}$ vanishes, we apply Lemma \ref{lemmamu} to deduce that $(\cX_{K_\infty,S})_\frp=(\cY_{K_\infty,V_\frp})_\frp$. In a similar way, the assumption that the $\mu$-invariant of $e_{\chi_\frp} A_{S}^T(K_\infty)$ vanishes implies that $A_S^T(K_\infty)_\frp=0$. Hence we have $H^1(C_{K_\infty,S,T})_\frp=(\cY_{K_\infty,V_\frp})_\frp$.
By assumption $(\ast)$, we know that $\cY_{K_\infty,V_\frp}$ is projective as $\Lambda$-module. This implies that $H^1(C_{K_\infty,S,T})_\frp=(\cY_{K_\infty,V_\frp})_\frp$ is a free $\Lambda_\frp$-module of rank $r_\frp$. By choosing splittings of the sequence (\ref{tate infty}), we then easily deduce that the $\Lambda_\frp$-modules $(U_{K_\infty,S,T})_\frp$ and $\im(\psi_{\infty})_\frp$ are free and that the rank of $(U_{K_\infty,S,T})_\frp$ is equal to $r_\frp$.

At this stage we have proved that, for any height one prime ideal $\frp$ of $\Lambda$, the $\Lambda_\frp$-module $(U_{K_\infty,S,T})_\frp$ is both free of rank $r_\frp$ (as required to prove claim (i)) and also a direct summand of $(\Pi_\infty)_\frp$, and hence that
\begin{equation}\label{intersect}({\bigwedge}_\Lambda^{r_\frp}U_{K_\infty,S,T})_\frp=({\bigwedge}_\Lambda^{r_\frp}U_{K_\infty,S,T}\otimes_\Lambda Q(\Lambda_\frp)) \cap ({\bigwedge}_\Lambda^{r_\frp} \Pi_\infty)_\frp.\end{equation}

Now we make the stated assumption concerning the validity of the $p$-part of the Rubin-Stark conjecture. This implies, by the proof of Theorem  \ref{lemisom}(i), that for each $\frp$ the element $\epsilon^\frp_{K_\infty/k,S,T}$ lies in both $({\bigwedge}_\Lambda^{r_\frp} \Pi_\infty)_\frp$ and
$$\bigoplus_{\chi\in\Upsilon_\frp}({\bigwedge}_\Lambda^{r_\chi}U_{K_\infty,S,T} )\otimes_\Lambda Q(\Lambda_\chi)=({\bigwedge}_\Lambda^{r_\frp}U_{K_\infty,S,T} )\otimes_\Lambda Q(\Lambda_\frp),$$
and hence, by (\ref{intersect}) that it belongs to $({\bigwedge}_\Lambda^{r_\frp}U_{K_\infty,S,T})_\frp$, as required to prove claim (ii).
\end{proof}

In the next result we compare Conjecture \ref{IMCexplicit} to the more general Conjecture \ref{IMC}.

\begin{proposition}\label{equiv prop}
Assume that Conjecture ${\rm RS}(L_{\chi,n}/k,S,T,V_\chi)_p$ holds for all characters $\chi$ in $\widehat \Delta$ and all sufficiently large $n$ and that for each character $\chi$ in $\widehat \Delta'/{\sim}_{\QQ_p}$ the $\mu$-invariant of the $\ZZ_p[[\Gamma]]$-module $e_\chi A_{S}^T(K_\infty)$ vanishes. Then Conjectures \ref{IMC} and \ref{IMCexplicit} are equivalent.
\end{proposition}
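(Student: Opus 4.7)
The plan is to recast Conjecture \ref{IMC} as a global equality of invertible $\Lambda$-modules, then to check this equality at every height one prime $\frp$ of $\Lambda$ and recognise it at each such $\frp$ as the $\frp$-component of Conjecture \ref{IMCexplicit}. Concretely, one first invokes Conjecture \ref{IMC rubinstark} (which is available under the assumed Rubin-Stark hypothesis) to reformulate Conjecture \ref{IMC} as the equality
\[
\Lambda\cdot \mathcal{L}_{K_\infty/k,S,T} = {\det}_\Lambda(C_{K_\infty,S,T})
\]
of $\Lambda$-submodules of ${\det}_\Lambda(C_{K_\infty,S,T})\otimes_\Lambda Q(\Lambda)$, where $\mathcal{L}_{K_\infty/k,S,T}$ is the element of the right-hand side assembled from the inverse limits of Rubin-Stark elements via Theorem \ref{lemisom}. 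Under the $\mu$-invariant hypothesis, Lemma \ref{lemmamu} forces $A_S^T(K_\infty)_\frp$ to vanish at every singular prime $\frp$, and Proposition \ref{propositionfree}(i) then guarantees that $(U_{K_\infty,S,T})_\frp$ is free of rank $r_\frp$ over the DVR $\Lambda_\frp$ at every height one prime. Both $\Lambda\cdot\mathcal{L}_{K_\infty/k,S,T}$ and ${\det}_\Lambda(C_{K_\infty,S,T})$ are then reflexive invertible $\Lambda$-modules, so one can test their equality locally at every height one prime $\frp$ of $\Lambda$.

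Next, at each such $\frp$, fix a representative $\Pi_\infty\stackrel{\psi_\infty}{\to}\Pi_\infty$ of $C_{K_\infty,S,T}$ as in \S\ref{section explicit}, with $\Lambda$-basis $b_1,\dots,b_d$ of $\Pi_\infty$. Using that $(U_{K_\infty,S,T})_\frp$ is free and a direct summand of $(\Pi_\infty)_\frp$, one may after localization arrange that $b_1,\dots,b_{r_\frp}$ form a basis of $(U_{K_\infty,S,T})_\frp$. In the formula of Proposition \ref{explicit projector}(iii) only the identity permutation in $\mathfrak{S}_{d,r_\frp}$ then contributes a nonzero term, and a Smith-normal-form analysis of $\psi_\infty$ over the DVR $\Lambda_\frp$, compatible with the splitting $(\Pi_\infty)_\frp=(U_{K_\infty,S,T})_\frp\oplus W_\frp$, shows that the image of ${\det}_\Lambda(C_{K_\infty,S,T})_\frp$ under the isomorphism $\pi^{V_\frp}_{L_{\chi,\infty}/k,S,T}$ is precisely
\[
{\rm Fitt}^{r_\frp}_\Lambda(H^1(C_{K_\infty,S,T}))_\frp \cdot \Bigl({\bigwedge}^{r_\frp}U_{K_\infty,S,T}\Bigr)_\frp.
\]
This is the Iwasawa-theoretic analogue of the finite-level identity from \cite[Theorem 7.5]{bks1} that was invoked in Theorem \ref{imcrs}. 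By Theorem \ref{lemisom}(ii) this same map sends $\mathcal{L}_{K_\infty/k,S,T}$ to $\epsilon^\frp_{K_\infty/k,S,T}$, so the local equality $\Lambda_\frp\cdot\mathcal{L}_{K_\infty/k,S,T}={\det}_\Lambda(C_{K_\infty,S,T})_\frp$ is equivalent to
\[
\Lambda_\frp\cdot\epsilon^\frp_{K_\infty/k,S,T}={\rm Fitt}^{r_\frp}_\Lambda(H^1(C_{K_\infty,S,T}))_\frp\cdot\Bigl({\bigwedge}^{r_\frp}U_{K_\infty,S,T}\Bigr)_\frp,
\]
which via Remark \ref{remarkIMC} is the $\frp$-component of Conjecture \ref{IMCexplicit}.

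Aggregating this local equivalence over all height one primes of $\Lambda$ then completes the proof. The principal technical obstacle will be the local identification of the image of $\pi^{V_\frp}_{L_{\chi,\infty}/k,S,T}$: Proposition \ref{explicit projector}(iii) gives an explicit formula in terms of a single minor of $\psi_\infty$, but matching this generator to the full Fitting ideal requires a careful Smith-normal-form manipulation over the DVR $\Lambda_\frp$ which preserves the constraint $\mathrm{span}_{\Lambda_\frp}(b_1,\dots,b_{r_\frp})=(U_{K_\infty,S,T})_\frp$. A secondary subtlety is the descent from the global to the height-one-local statement, which one handles by combining the freeness results of Proposition \ref{propositionfree} with the fact that each local factor $\mathcal{O}_\chi[\Delta_p][[\Gamma]]$ of $\Lambda$ is a two-dimensional Cohen-Macaulay ring, so that the rank one modules in question are reflexive and determined by their height one localizations.
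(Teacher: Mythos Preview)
Your overall strategy matches the paper's: reformulate Conjecture~\ref{IMC} via Conjecture~\ref{IMC rubinstark}, reduce the equality $\Lambda\cdot\mathcal{L}_{K_\infty/k,S,T}={\det}_\Lambda(C_{K_\infty,S,T})$ to its localizations at all height one primes, and identify each local equality with the corresponding statement in Conjecture~\ref{IMCexplicit} via Theorem~\ref{lemisom}(ii) and Remark~\ref{remarkIMC}. The paper does exactly this, citing \cite[Lemma 6.1]{bg} for the height-one reduction rather than invoking reflexivity directly.

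There is, however, a genuine error in your local analysis. You repeatedly assert that $\Lambda_\frp$ is a DVR at every height one prime and base your Smith-normal-form computation on this. But as the paper's \S\ref{IMC3} makes explicit, $\Lambda_\frp$ is a DVR only when $\frp$ is \emph{regular}; at a \emph{singular} prime $\frp$ the localization $\Lambda_\frp$ is the localization of $\ZZ_p[\im\chi][\Delta_p][[\Gamma]]$ at its unique singular prime, which is a one-dimensional local ring but typically not regular (it has nilpotents coming from $\Delta_p$). Smith normal form is therefore unavailable there, and your uniform argument breaks down.

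The fix is precisely the case distinction the paper makes. At regular primes $\Lambda_\frp$ really is a DVR and your outline (or the direct appeal to the argument of \cite[Theorem 7.5]{bks1}, as in the proof of Theorem~\ref{imcrs}) goes through. At singular primes one instead uses the $\mu=0$ hypothesis together with Lemma~\ref{lemmamu} to see that both $(U_{K_\infty,S,T})_\frp$ and $H^1(C_{K_\infty,S,T})_\frp$ are \emph{free} $\Lambda_\frp$-modules of rank $r_\frp$ (as in the proof of Proposition~\ref{propositionfree}(i)); then $\pi^{V_\frp}$ is an isomorphism onto $({\bigwedge}^{r_\frp}U_{K_\infty,S,T})_\frp$, and since ${\rm Fitt}^{r_\frp}_\Lambda(H^1)_\frp=\Lambda_\frp$ in this case, the local equality reduces to the simple form of Remark~\ref{remarkIMCsingular}. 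No Smith normal form is needed.
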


\begin{proof} Since ${\det}_\Lambda(C_{K_\infty,S,T})$ is an invertible $\Lambda$-module the equality $\Lambda\cdot \mathcal{L}_{K_\infty/k,S,T}={\det}_\Lambda(C_{K_\infty,S,T})$ in Conjecture \ref{IMC} is valid if and only if at every height one prime ideal $\frp$ of $\Lambda$ one has
\begin{eqnarray}
\Lambda_\frp \cdot \mathcal{L}_{K_\infty/k,S,T}={\det}_\Lambda(C_{K_\infty,S,T})_\frp \label{localimc}
\end{eqnarray}
(see \cite[Lemma 6.1]{bg}).

If $\frp$ is regular, then one easily sees that this equality is valid if and only if the equality
$$\Lambda_\frp\cdot\epsilon^\frp_{K_{\infty}/k,S,T}={\rm Fitt}_\Lambda^{r_\frp}(H^1(C_{K_\infty,S,T}))\cdot ({\bigwedge}_\Lambda^{r_\frp}U_{K_\infty,S,T} )_\frp$$
is valid, by using Theorem \ref{lemisom}(ii).

If $\frp$ is singular, then the assumption of vanishing $\mu$-invariants and the argument in the proof of Proposition \ref{propositionfree}(i) shows that the $\Lambda_\frp$-modules $(U_{K_\infty,S,T})_\frp$ and $H^1(C_{K_\infty,S,T})_\frp$ are both free of rank $r_\frp$. Noting this, we see that (\ref{localimc}) holds if and only if one has
$$\Lambda_\frp\cdot\epsilon^\frp_{K_{\infty}/k,S,T}=({\bigwedge}_\Lambda^{r_\frp}U_{K_\infty,S,T} )_\frp$$
and so in this case the claimed result follows from Remark \ref{remarkIMCsingular}.
\end{proof}

%\begin{remark} It seems to us likely that Conjectures \ref{IMC} and \ref{IMCexplicit} should be equivalent without any hypothesis on $\mu$-invariants.\end{remark}

%$\subsection{Regular primes} In this section we use Brauer's Induction Theorem for characters to prove the following useful reduction result.
%\begin{proposition}\label{regular} To prove the equality of Conjecture \ref{IMCexplicit} for all extensions $K_\infty/k$ and for all regular primes $\frp$ of $\ZZ_p[[\Gal(K_\infty/k)]]$ it suffices to consider the case that $\Gal(K_\infty/k)\cong \Delta\times \Gamma$ with $\Delta$
%cyclic and of order prime to $p$ and that $\frp$ corresponds to a character $\chi_\frp$ that is faithful. \end{proposition}
%\begin{proof} {\em To be added..} \end{proof}

\subsubsection{}
In our earlier paper \cite{bks1} we defined canonical Selmer modules
$\mathcal{S}_{S,T}(\GG_{m/F})$ and $\mathcal{S}^{{\rm tr}}_{S,T}(\GG_{m/F})$ for $\mathbb{G}_m$ over number fields $F$ that are of finite degree over $\QQ$. For any intermediate field $L$ of $K_{\infty}/k$, we now set %$\mathcal{S}_{p,S,T}(\GG_{m/L})$ and $\mathcal{S}_{p,S,T}^{{\rm tr}}(\GG_{m/L})$ by
$$
\mathcal{S}_{p,S,T}(\GG_{m/L}) :=\varprojlim_{F} \mathcal{S}_{S,T}(\GG_{m/F}) \otimes \ZZ_{p}, \hspace{3mm}
\mathcal{S}^{{\rm tr}}_{p,S,T}(\GG_{m/L}):=\varprojlim_{F} \mathcal{S}^{{\rm tr}}_{S,T}(\GG_{m/F})\otimes \ZZ_{p}
$$
where in both limits $F$ runs over all finite extensions of $k$ in $L$ and the transition morphisms are the natural corestriction maps. 

We note in particular that, by its very definition, $\mathcal{S}^{{\rm tr}}_{p,S,T}(\GG_{m/L})$ coincides with $H^{1}(C_{L,S,T})$. In addition, this  definition implies that for any subset $V$ of $S$ comprising places that split completely in $L$ the kernel of the natural (composite) projection map
$$
\mathcal{S}^{{\rm tr}}_{p,S,T}(\GG_{m/L})_{V}:=\ker(\mathcal{S}^{{\rm tr}}_{p,S,T}(\GG_{m/L})
\to \cX_{L,S} \to \cY_{L,V})
$$
lies in a canonical exact sequence of the form 
\begin{equation}\label{canonical it ses}
0 \to A_S^T(L) \to \mathcal{S}^{{\rm tr}}_{p,S,T}(\GG_{m/L})_{V} \to
\cX_{L,S \setminus V} \to 0.\end{equation}

We now interpret our Iwasawa main conjecture in terms of characteristic ideals.

\begin{conjecture} \label{char IMC}
 Assume Conjecture ${\rm RS}(L_{\chi,n}/k,S,T,V_\chi)_p$ holds for all
$\chi \in \widehat \Delta$ and all non-negative integers $n$.
%and that for each character $\chi \in \widehat \Delta'/{\sim}_{\QQ_p}$
%the $\mu$-invariant of the $\ZZ_p[[\Gamma]]$-module $e_\chi A_{S}^T(K_\infty)$ vanishes.
Then for any $\chi \in \widehat \Delta$ there are equalities 
\begin{align}\label{IMC41}
{\rm char}_{\Lambda_{\chi}}\! (
(\bigcap^{r_\chi}U_{L_{\chi,\infty},S,T}/\langle \epsilon_{L_{\chi,\infty}/k,S,T}^{V_{\chi}} \rangle)^{\chi} )
&\!=\! {\rm char}_{\Lambda_{\chi}}\!(\mathcal{S}^{{\rm tr}}_{p,S,T}(\GG_{m/ L_{\chi,\infty}})^{\chi}_{V_{\chi}}) \\
&\!=\! {\rm char}_{\Lambda_{\chi}}\!(A_S^T(L_{\chi, \infty})^{\chi})
{\rm char}_{\Lambda_{\chi}}\!((\cX_{L_{\chi, \infty},S\setminus V_\chi})^{\chi}).\notag
\end{align}
Here, for any $\ZZ_{p}[[\G_\chi]]$-module $M$ we write $M^{\chi}$ for the $\Lambda_{\chi}$-module $M \otimes_{\ZZ_{p}[G_\chi]} \ZZ_{p}[\im \chi]$ and
${\rm char}_{\Lambda_{\chi}}(M^{\chi})$ for its characteristic
ideal in $\Lambda_{\chi}$. In addition, the second displayed equality is a direct consequence of the appropriate case of the exact sequence (\ref{canonical it ses}).
\end{conjecture}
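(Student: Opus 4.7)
Since the statement is formulated as a conjecture, the natural task is to establish its equivalence with the earlier formulations of the higher rank main conjecture (in particular with Conjecture \ref{IMCexplicit}, and hence by Proposition \ref{equiv prop} also with Conjectures \ref{IMC} and \ref{IMC rubinstark}) under the running hypotheses. The second displayed equality is the easier of the two: passing to the $\chi$-component of the canonical four-term exact sequence (\ref{canonical it ses}) for $L = L_{\chi,\infty}$ and $V = V_{\chi}$, and noting that each of $A_{S}^{T}(L_{\chi,\infty})^{\chi}$ and $\cX_{L_{\chi,\infty},S\setminus V_{\chi}}^{\chi}$ is a finitely generated torsion $\Lambda_{\chi}$-module, the multiplicativity of characteristic ideals in short exact sequences of torsion modules delivers the claim immediately.

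For the first equality the plan is to check it at every height one prime ideal $\mathfrak{q}$ of $\Lambda_{\chi}$, since the characteristic ideal is determined by its localisations at such primes. Fix such a $\mathfrak{q}$ and distinguish whether it is regular or singular in the sense of \S\ref{IMC3}. In the regular case, $(\Lambda_{\chi})_{\mathfrak{q}}$ is a discrete valuation ring and the argument of Proposition \ref{propositionfree}(i), applied on the $\chi$-component, shows that $(U_{L_{\chi,\infty},S,T})^{\chi}_{\mathfrak{q}}$ is free of rank $r_{\chi}$; hence $\bigcap^{r_{\chi}}U_{L_{\chi,\infty},S,T}$ localises at $\mathfrak{q}$ to $\bigwedge^{r_{\chi}}_{(\Lambda_{\chi})_{\mathfrak{q}}}(U_{L_{\chi,\infty},S,T})^{\chi}_{\mathfrak{q}}$, a free module of rank one. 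The $\mathfrak{q}$-length of the quotient by $\epsilon^{V_{\chi}}_{L_{\chi,\infty}/k,S,T}$ then matches the $\mathfrak{q}$-length of $H^{1}(C_{L_{\chi,\infty},S,T})^{\chi}_{\mathfrak{q}} = \mathcal{S}^{{\rm tr}}_{p,S,T}(\mathbb{G}_{m/L_{\chi,\infty}})^{\chi}_{V_{\chi},\mathfrak{q}}$ exactly when the $\mathfrak{q}$-localised form of Conjecture \ref{IMCexplicit} holds, which is the content of the first equivalence after the obvious identification from Remark \ref{remarkIMC}.

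In the singular case one uses the hypothesis on the vanishing of the $\mu$-invariant of $e_{\chi_{\mathfrak{q}}}A^{T}_{S}(K_{\infty})$ (which may be imposed as in Proposition \ref{equiv prop}). By Lemma \ref{lemmamu} this forces $A^{T}_{S}(L_{\chi,\infty})^{\chi}_{\mathfrak{q}} = 0$ and $\cX_{L_{\chi,\infty},S\setminus V_{\chi}}^{\chi}_{\mathfrak{q}} = 0$, so both sides of the second equality become units in $(\Lambda_{\chi})_{\mathfrak{q}}$; correspondingly, by the argument of Proposition \ref{propositionfree}, $(U_{L_{\chi,\infty},S,T})^{\chi}_{\mathfrak{q}}$ is again free of rank $r_{\chi}$, so one must show that $\epsilon^{V_{\chi}}_{L_{\chi,\infty}/k,S,T}$ generates $\bigwedge^{r_{\chi}}$ of it. This reduces to the equality (\ref{easy singular}) in Remark \ref{remarkIMCsingular}, which is the content of Conjecture \ref{IMCexplicit} at $\mathfrak{q}$. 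To translate between the Fitting-theoretic formulation at finite level from Theorem \ref{imcrs} and the characteristic ideal statement in the limit, one combines the explicit projection map of Proposition \ref{explicit projector}(iii) with the isomorphism of Theorem \ref{lemisom}(i), exactly as in the deduction of Proposition \ref{equiv prop} from Theorem \ref{lemisom}(ii).

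The expected main obstacle is the singular case, specifically the passage between $0$-th Fitting ideals of the Iwasawa modules and their characteristic ideals in the chi-component: for non-cyclic $\Delta_{p}$-parts these ideals differ, and one must argue on the $\chi$-component separately. The vanishing-of-$\mu$ hypothesis is exactly what allows one to replace Fitting ideals of the coinvariants by trivial characteristic factors via Lemma \ref{lemmamu}, and so collapse the discrepancy; without such a hypothesis one cannot expect a clean statement in terms of characteristic ideals alone, which explains why the conjecture is naturally paired with the running assumption on $\mu$-invariants of Proposition \ref{equiv prop}.
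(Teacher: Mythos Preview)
Your plan is essentially the same as the paper's proof of Proposition~\ref{IMC4}: reduce the first equality to Conjecture~\ref{IMCexplicit} by checking it locally at height one primes, using the $\mu=0$ hypothesis at the prime(s) containing $p$, and then invoke Proposition~\ref{equiv prop}. Two points of confusion are worth cleaning up. First, the regular/singular terminology of \S\ref{IMC3} applies to height one primes of $\Lambda$, not of $\Lambda_\chi$; since $\Lambda_\chi = \ZZ_p[\im\chi][[\Gamma]]$ is a regular local domain, every localisation at a height one prime is already a discrete valuation ring, so your worry in the final paragraph about Fitting ideals versus characteristic ideals ``for non-cyclic $\Delta_p$-parts'' simply does not arise on the $\chi$-component. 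Second, the detour through Theorem~\ref{imcrs} and the finite-level Fitting ideals is unnecessary: the paper passes directly from Conjecture~\ref{IMCexplicit} to (\ref{IMC41}) by using the identification $(\bigcap^{r_\frp} U_{K_\infty,S,T})_\frp = (\bigwedge^{r_\frp} U_{K_\infty,S,T})_\frp$ from (\ref{intersect}) (valid under $\mu=0$), which you invoke in the regular case but should also make explicit at the prime above $p$.
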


\begin{proposition}\label{IMC4}
Assume that Conjecture ${\rm RS}(L_{\chi,n}/k,S,T,V_\chi)_p$ is valid for all characters $\chi$ in $\widehat \Delta$ and all sufficiently large natural numbers $n$ and that for each character $\chi \in \widehat \Delta'/{\sim}_{\QQ_p}$ the $\mu$-invariant of the $\ZZ_p[[\Gamma]]$-module $e_\chi A_{S}^T(K_\infty)$ vanishes.
Then Conjectures \ref{IMC} is equivalent to Conjecture \ref{char IMC}.
\end{proposition}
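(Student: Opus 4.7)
My plan is to invoke Proposition \ref{equiv prop}, which under the stated hypotheses reduces the required equivalence to showing that Conjecture \ref{IMCexplicit} and Conjecture \ref{char IMC} agree. Both conjectures are of a local-at-height-one-primes nature: Conjecture \ref{IMCexplicit} by its very formulation, and Conjecture \ref{char IMC} because a characteristic ideal equality in $\Lambda_\chi$ is determined by the lengths of its localizations at all height one primes of $\Lambda_\chi$. The matching between the two sides is supplied by the analysis of \S\ref{IMC3}: a regular height one prime $\frp$ of $\Lambda$ corresponds to the unique pair $(\chi_\frp, \frp\Lambda_{\chi_\frp})$, whereas a singular prime $\frp$ corresponds, via the decomposition $Q(\Lambda_\frp) = \bigoplus_{\chi \in \Upsilon_\frp} Q(\Lambda_\chi)$ of Lemma \ref{lemmasingular}, to the collection of singular primes of $\Lambda_\chi$ as $\chi$ ranges over $\Upsilon_\frp$.

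At a regular prime $\frp$ of $\Lambda$, the ring $\Lambda_\frp$ is a discrete valuation ring on which Fitting and characteristic ideals of finitely generated torsion modules coincide. Hence, by Remark \ref{remarkIMC}, Conjecture \ref{IMCexplicit} at $\frp$ is equivalent to
\[
\Lambda_\frp \cdot \epsilon^\frp_{K_\infty/k,S,T} = \Fitt^{r_\frp}_\Lambda(H^1(C_{K_\infty,S,T}))_\frp \cdot ({\bigwedge}^{r_\frp}_\Lambda U_{K_\infty,S,T})_\frp,
\]
and the Iwasawa-theoretic form of the exact sequence (\ref{canonical it ses}), together with the identification in Theorem \ref{lemisom}(i) of $\epsilon^\frp$ as the image of a $\Lambda$-basis of $\det_\Lambda(C_{K_\infty,S,T})$ under the projection $\pi^{V_{\chi_\frp}}$, translates this directly into the characteristic ideal equality in $\Lambda_{\chi_\frp}$ predicted by Conjecture \ref{char IMC} at the prime of $\Lambda_{\chi_\frp}$ corresponding to $\frp$.

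At a singular prime $\frp$, the hypothesized vanishing of $\mu$-invariants and Lemma \ref{lemmamu} yield $A_S^T(K_\infty)_\frp = 0$ and $(\cX_{K_\infty,S\setminus V_\frp})_\frp = 0$, so by Remark \ref{remarkIMCsingular}, Conjecture \ref{IMCexplicit} at $\frp$ reduces to the single equality $\Lambda_\frp \cdot \epsilon^\frp_{K_\infty/k,S,T} = ({\bigwedge}^{r_\frp}_\Lambda U_{K_\infty,S,T})_\frp$. Symmetrically, the same $\mu$-vanishing makes the characteristic ideals of both $A_S^T(L_{\chi,\infty})^\chi$ and $(\cX_{L_{\chi,\infty},S\setminus V_\chi})^\chi$ trivial at the singular prime of $\Lambda_\chi$ for every $\chi \in \Upsilon_\frp$, so Conjecture \ref{char IMC} at these primes also reduces to the assertion that $\epsilon^{V_\chi}_{L_{\chi,\infty}/k,S,T}$ generates the ambient rank-one module there. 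Unwinding the definition $\epsilon^\frp = (\epsilon^{V_\chi}_{L_{\chi,\infty}/k,S,T})_{\chi \in \Upsilon_\frp}$ through Lemma \ref{lemmasingular} confirms that the $\frp$-local statement simultaneously encodes the $\chi$-local statements for all $\chi \in \Upsilon_\frp$.

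The main obstacle lies in the singular case: I must confirm, via Proposition \ref{propositionfree}(i), that $(U_{K_\infty,S,T})_\frp$ is free of rank $r_\frp$ so that $({\bigwedge}^{r_\frp}_\Lambda U_{K_\infty,S,T})_\frp$ is an invertible $\Lambda_\frp$-module on which the equality can meaningfully be tested, and that the decomposition of Lemma \ref{lemmasingular} respects this freeness and splits the Rubin-Stark element into its $\chi$-components in a way that makes the single $\frp$-local equality genuinely equivalent to the product of the $\chi$-local equalities in the various $\Lambda_\chi$. Once these compatibilities are established, combining the regular and singular analyses across all height one primes of $\Lambda$ gives the equivalence of Conjectures \ref{IMCexplicit} and \ref{char IMC}, which via Proposition \ref{equiv prop} completes the proof.
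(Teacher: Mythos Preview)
Your proposal is correct and follows essentially the same strategy as the paper: reduce via Proposition \ref{equiv prop} to the equivalence of Conjectures \ref{IMCexplicit} and \ref{char IMC}, then verify this equivalence prime-by-prime, treating regular and singular height one primes separately and using Lemma \ref{lemmamu} together with the $\mu$-vanishing hypothesis at singular primes. The only point worth making more explicit is the passage between the lattice $\bigcap^{r_\chi} U_{L_{\chi,\infty},S,T}$ appearing in Conjecture \ref{char IMC} and the exterior power $\bigwedge^{r_\frp} U_{K_\infty,S,T}$ appearing in Conjecture \ref{IMCexplicit}; the paper dispatches this in one line via equation (\ref{intersect}) (a consequence of Proposition \ref{propositionfree}), and you implicitly rely on the same freeness but do not flag the $\bigcap$ versus $\bigwedge$ identification as such.
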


\begin{proof}
Note that by our assumption $\mu=0$ we have
$({\bigcap}^{r_\frp}U_{K_\infty,S,T})_\frp=
({\bigwedge}^{r_\frp}U_{K_\infty,S,T})_\frp$
for any height one prime $\frp$, using (\ref{intersect}).
Therefore, Conjecture \ref{IMCexplicit} implies the equality 
(\ref{IMC41}) for any $\chi$.

On the other hand, for a height one regular prime $\frp$,
we can regard $\frp$ to be a prime of $\Lambda_{\chi}$
for some $\chi$, so the equality (\ref{IMC41}) implies the equality
in Conjecture \ref{IMCexplicit}.
For a singular prime $\frp$,
by Lemma \ref{lemmamu},
(\ref{IMC41}) for any $\chi$ implies
$({\bigwedge}^{r_\frp}U_{K_\infty,S,T})_\frp/
\langle \epsilon^{\frp}_{K_{\infty}/k,S,T} \rangle=0$,
thus Conjecture \ref{IMCexplicit}. 

The proposition therefore follows from Proposition \ref{equiv prop}.
\end{proof}

\subsection{The case of CM-fields}\label{CM field subsec}

In this section, we use the following strengthening of the condition  ($\ast$) used above.
$$
(\ast\ast) \begin{cases}
p \text{ is odd}, \\
k \text{ is totally real and $K$ is either totally real or a CM-field},\\
k_\infty/k \text{ is the cyclotomic $\ZZ_p$-extension.}
\end{cases}
$$

Under this hypothesis Iwasawa has conjectured that for every $\chi \in \widehat \Delta'/{\sim}_{\QQ_p}$ the $\mu$-invariant of the $\ZZ_p[[\Gamma]]$-module $e_\chi A_{S}^T(K_\infty)$ vanishes and, if this is true, then Proposition \ref{equiv prop} implies that the Conjectures \ref{IMC} and \ref{IMCexplicit} are equivalent.

In addition, in this case we can use the main results of
%Greither and Popescu in \cite{GreitherPopescu}
Wiles \cite{Wiles} and of B\"uy\"ukboduk in \cite{buyuk} to give the following concrete evidence in support of these conjectures.

In the following we denote $S_\infty(k)$ and $S_p(k)$ simply by $S_\infty$ and $S_p$ respectively.

\begin{theorem}\label{CM theorem} Assume the condition ($\ast\ast$).
\begin{itemize}
\item[(i)] If $K$ is a CM-field and
the $\mu$-invariant of $K_\infty/K$ vanishes, then the minus part of Conjecture \ref{IMC} is valid for $(K_\infty/k,S,T)$.
\item[(ii)] Suppose that $\chi$ is an even character.
%Let $\frp$ be a regular height one prime of $\Lambda$ for which the character
%$\chi:=\chi_\frp \in\widehat \Delta/{\sim}_{\QQ_p}$ is even.
Then the equality of Conjecture \ref{char IMC} is valid for $\chi$ whenever all of
the following conditions are satisfied:
\begin{itemize}
\item[(a)] all $v  \in S_p$ are unramified in $L_{\chi}$,
\item[(b)] $k/\QQ$ is unramified at $p$,
\item[(c)] every $v \in S \setminus S_\infty$ satisfies $\chi(G_v)\neq 1$,
\item[(d)] the order of $\chi$ is prime to $p$,
\item[(e)] with $T$ chosen as in \cite[Remark 3.1]{buyuk}, the Rubin-Stark conjecture holds for $(F/k,S,T,S_\infty)$  for all $F$ in the set $\mathcal{K}$ of finite abelian extensions of $k$ that is defined in \cite[\S 3]{buyuk} (where our $L_\chi$ corresponds to the field $L$),
\item[(f)] the Leopoldt conjecture holds for $L_{\chi,n}$ for all positive integer $n$.
\end{itemize}
\end{itemize}
\end{theorem}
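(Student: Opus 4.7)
The key reduction is purely structural. Since $k$ is totally real, $K$ is CM, and $(\ast\ast)$ holds, for every odd character $\chi\in\widehat{\Delta}$ the decomposition group at any archimedean place of $k$ acts nontrivially in $\mathcal{G}_{\chi}$ (via complex conjugation), so no infinite place of $k$ splits completely in $L_{\chi,\infty}$; combined with the second clause of $(\ast)$ (which follows from $(\ast\ast)$), this forces $V_{\chi}=\emptyset$ and $r_{\chi}=0$ for every odd $\chi$. The minus part of Conjecture \ref{IMC} therefore amounts to the existence of a $\Lambda^{-}$-basis $\mathcal{L}^{-}_{K_\infty/k,S,T}$ of $e^{-}{\det}_\Lambda(C_{K_\infty,S,T})$ whose image under $\lambda_\psi$ equals $L_{k,S,T}(\psi^{-1},0)$ for every odd $\psi\in\widehat{\mathcal{G}}$ with $\psi(G_v)\neq 1$ for all $v\in S\setminus S_\infty$. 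These are exactly the defining interpolation values of the Deligne--Ribet $p$-adic $L$-function, and the generation property follows from the classical Iwasawa main conjecture proved by Wiles \cite{Wiles} in the strong form available under the hypothesis $\mu=0$ (together with the standard identification of $e^{-}{\det}_\Lambda(C_{K_\infty,S,T})$ with characteristic ideals of the relevant Iwasawa modules; cf. \cite{flachsurvey}).

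\textbf{Strategy for (ii).} For an even character $\chi$ the totally real field $L_{\chi,\infty}$ is contained in the maximal totally real subextension of $K_\infty$, so every infinite place of $k$ splits completely in $L_{\chi,\infty}$; combined with $(\ast)$ this gives $V_\chi=S_\infty$ and $r_\chi=[k:\QQ]$. The conjectural equality (\ref{IMC41}) thus asserts that the index of the Rubin--Stark element $\epsilon_{L_{\chi,\infty}/k,S,T}^{V_\chi}$ in $\bigcap^{r_\chi}U_{L_{\chi,\infty},S,T}^{\chi}$ controls the characteristic ideal of $\mathcal{S}^{{\rm tr}}_{p,S,T}(\GG_{m/L_{\chi,\infty}})^{\chi}_{V_\chi}$. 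This is essentially the content of B\"uy\"ukboduk's main theorem in \cite{buyuk}: conditions (a), (b), (d), (f) place us in the framework of loc. cit.; hypothesis (e) supplies the Rubin--Stark Euler system over the tower $\mathcal{K}$ that he requires as input; hypothesis (c) removes trivial zeros of the associated $p$-adic $L$-function. From this Euler system he constructs a Kolyvagin system that yields one divisibility in (\ref{IMC41}); the reverse divisibility is then obtained by descent from the Mazur--Wiles/Rubin cyclotomic main conjecture (applied after restricting characters to $\QQ^{\rm ab}$). The second equality in (\ref{IMC41}) is an immediate consequence of the multiplicativity of characteristic ideals in the exact sequence obtained by tensoring (\ref{canonical it ses}) with $\ZZ_p[\im\chi]$ over $\ZZ_p[G_\chi]$.

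\textbf{Main obstacle.} Part (i) is largely formal once the interpolation conditions are unwound. The real technical work lies in (ii): one must carefully verify that the ``restricted'' Selmer modules appearing in \cite{buyuk} agree, under our identifications, with $\mathcal{S}^{{\rm tr}}_{p,S,T}(\GG_{m/L_{\chi,\infty}})^{\chi}_{V_\chi}$, and that the normalisations of the Rubin--Stark Euler system there match the specific elements $\epsilon_{L_{\chi,\infty}/k,S,T}^{V_\chi}$ used to define our zeta element; in particular the Leopoldt assumption (f) and the choice of $T$ made in \cite[Remark 3.1]{buyuk} must be propagated carefully through the limit defining $\epsilon^{V_\chi}_{L_{\chi,\infty}/k,S,T}$ so that the divisibility coming from the Kolyvagin system translates cleanly into an equality of $\Lambda_\chi$-characteristic ideals at every height one prime.
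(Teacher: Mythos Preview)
Your approach matches the paper's: both parts reduce to external theorems (Wiles for (i), B\"uy\"ukboduk for (ii)) via the structural observations $r_\chi=0$ for odd $\chi$ and $V_\chi=S_\infty$ for even $\chi$. For (ii), the paper makes the bridge you flag as the ``main obstacle'' completely explicit, and it is simpler than you suggest: hypotheses (c) and (d) force the $\chi$-components of $\cX_{L_{\chi,\infty},S\setminus S_\infty}$, of $U_{L_{\chi,\infty},S}/U_{L_{\chi,\infty}}$, and of $A_S(L_{\chi,\infty})/A(L_{\chi,\infty})$ all to vanish, after which the four-term exact sequence
\[
0\to U_{L_{\chi,\infty}}\to U_{L_{\chi,\infty}}^{\rm sl}\to \Gal(M/L_{\chi,\infty})\to A(L_{\chi,\infty})\to 0
\]
(coming from weak Leopoldt for the cyclotomic $\ZZ_p$-extension) converts (\ref{IMC41}) directly into the statement of \cite[Theorem~A]{buyuk}; no further Selmer-module comparison or unpacking of Kolyvagin-system divisibilities is required, and the paper simply invokes that theorem as a black box.
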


%\begin{remark} By a standard argument (using Brauer's Induction Theorem for characters) one can show that the equality of Conjecture \ref{IMCexplicit} is valid for all extensions $K_\infty/k$ and for all regular primes $\frp$ of $\ZZ_p[[\Gal(K_\infty/k)]]$ if and only if it is valid in all cases for which $\Gal(K_\infty/k)\simeq \Delta\times \Gamma$ with $\Delta$ cyclic and of order prime to $p$ and $\frp$ corresponds to a character $\chi_\frp$ that is faithful. This shows, in particular, that the condition imposed on $\chi$ in Theorem \ref{CM theorem}(ii)(d) is not restrictive.\end{remark}

\subsubsection{}We obtain Theorem \ref{CM theorem}(i) as a straightforward consequence of
the main conjecture proved by Wiles \cite{Wiles}.
In fact,
for an odd character $\chi$, one has $r_{\chi}=0$ and the Rubin-Stark elements
are Stickelberger elements.
Therefore, $\epsilon_{L_{\chi,\infty}/k,S,T}^{V_{\chi}}$ is the $p$-adic $L$-function
of Deligne-Ribet.

We shall prove the equality (\ref{IMC41}) in Conjecture \ref{char IMC}
for each odd $\chi \in \widehat \Delta$.
We fix such a character $\chi$, and
may take $K=L_{\chi}$ and $S=S_{\infty}(k) \cup S_{{\rm ram}}(K_{\infty}/k) \cup S_{p}(k)$.
Let $S'_{p}$ be the set of $p$-adic primes which split completely in $K$.
If $v \in S\setminus V_\chi$ is prime to $p$, it is ramified in $L_{\chi}=K$, so we have
${\rm char}_{\Lambda_{\chi}}(\cX_{L_{\chi, \infty},S\setminus V_\chi}^{\chi})
=
{\rm char}_{\Lambda_{\chi}}(\cY_{L_{\chi, \infty},S'_{p}}^{\chi})$.
Let $A^T(L_{\chi, \infty})$ be the inverse limit of the $p$-component of the $T$-ray class group
of the full integer ring of $L_{\chi,n}$.
By sending the prime $w$ above $v$ in $S'_{p}$ to the class of $w$, we obtain a homomorphism
$\cY_{L_{\chi, \infty},S'_{p}}^{\chi} \longrightarrow A^T(L_{\chi, \infty})^{\chi}$,
which is known to be injective.
Since the sequence
\[ \cY_{L_{\chi, \infty},S}^{\chi} \longrightarrow A^T(L_{\chi, \infty})^{\chi}
\longrightarrow A_{S}^T(L_{\chi, \infty})^{\chi} \longrightarrow 0\]
is exact and
the kernel of $\cY_{L_{\chi, \infty},S}^{\chi} \longrightarrow
\cY_{L_{\chi, \infty},S'_{p}}^{\chi}$ is finite,
we have
$$
{\rm char}_{\Lambda_{\chi}}(A_S^T(L_{\chi, \infty})^{\chi})
{\rm char}_{\Lambda_{\chi}}((\cY_{L_{\chi, \infty},S})^{\chi})
=
{\rm char}_{\Lambda_{\chi}}(A^T(L_{\chi, \infty})^{\chi}).
$$
Therefore, by noting $\chi \neq 1$, the equality (\ref{IMC41}) in Conjecture \ref{char IMC} becomes
$${\rm char}_{\Lambda_{\chi}}(A^T(L_{\chi, \infty})^{\chi})=\theta_{L_{\chi, \infty}/k,S,T}^{\chi}(0)
\Lambda_{\chi},
$$
where $\theta_{L_{\chi, \infty}/k,S,T}^{\chi}(0)$ is the $\chi$-component of
$\epsilon_{L_{\chi,\infty}/k,S,T}^{\emptyset}$, which is the Stickelberger element in this
case.
The above equality is nothing but the usual main conjecture proved by Wiles \cite{Wiles},
so we have proved (i).

\subsubsection{}We now derive Theorem \ref{CM theorem}(ii) from the main result of B\"uy\"ukboduk
in \cite{buyuk}.
To do this we assume condition ($\ast\ast$) and (without loss of generality) that
$K$ is totally real.
%We interpret the equality of Conjecture \ref{IMCexplicit} at regular primes to results of Buyukboduk \cite{buyuk}.
%Following Theorem \ref{regular} we also assume that $\Delta$ is cyclic and of order prime to $p$ and that $\chi_\frp$ is faithful.
%{\it the following remark should be in the proof of Proposition \ref{independent}: Note that $\delta_T \in Q(\Lambda)^\times$. Indeed, for each $\chi %\in \widehat \Delta$ and $v\in T$, we easily see that the image of $1-{\rm Fr}_v^{-1} {\N}v $ under the map
%$$\Lambda \stackrel{\chi}{\to} \Lambda_\chi$$
%is non-zero. }

Set $r:=[k:\QQ]=\# S_\infty$. Since $K$ is totally real,
one has $V_\chi=S_\infty$ and $r_\chi=r$.
%for every height one prime ideal $\frp$ of $\Lambda$ and
%the $Q(\Lambda)$-module $U_{K_\infty}\otimes_\Lambda Q(\Lambda)$ is free of rank $r$.
%It is easy to see $U_{K_\infty}\otimes_\Lambda Q(\Lambda) = U_{K_\infty,S}\otimes_\Lambda Q(\Lambda)$. Therefore, following Remark \ref{noT version}, the equality of Conjecture \ref{IMCexplicit} is equivalent to
%\begin{eqnarray}
%\Lambda_\frp \cdot \epsilon_{K_\infty/k,S}^\frp=\Fitt_{\Lambda}^0(A(K_\infty))\Fitt_\Lambda^0(\cX_{K_\infty,S \setminus (S_\infty \cup S_p)})\cdot ({\bigwedge}_\Lambda^r U_{K_\infty})_\frp. \label{withoutS}
%\end{eqnarray}
By our assumptions (c) and (d), the $\chi$-component of $\cX_{L_{\chi, \infty},S\setminus S_\infty}$
vanishes.
Therefore, the equality (\ref{IMC41}) becomes
$${\rm char}_{\Lambda_{\chi}} (
(\bigcap^{r}U_{L_{\chi,\infty},S,T}/\langle \epsilon_{L_{\chi,\infty}/k,S,T}^{V_{\chi}} \rangle)^{\chi} )
=
{\rm char}_{\Lambda_{\chi}}(A_S^T(L_{\chi, \infty})^{\chi}).$$

Since $K$ is totally real and $p$ is odd, we may assume that $T$ is empty.
Note that, since $L_{\chi,\infty}/L_{\chi}$ is the cyclotomic $\ZZ_p$-extension, the weak Leopoldt conjecture holds,
%for the extension $L_{\chi,\infty}/L_{\chi}$,
and we have the canonical exact sequence
\begin{eqnarray}
0 \to U_{L_{\chi,\infty}} \to U_{L_{\chi,\infty}}^{\rm sl} \to \Gal(M/L_{\chi,\infty}) \to A(L_{\chi,\infty}) \to 0, \label{semilocal}
\end{eqnarray}
where $U_{L_{\chi,\infty}}^{\rm sl}$ is the semi-local unit of $L_{\chi,\infty}$ at $p$, and $M$ is the maximal abelian $p$-extension of $L_{\chi,\infty}$ unramified outside $p$.
By our assumptions (c) and (d) again,  $U_{L_{\chi,\infty},S}^{\chi}= U_{L_{\chi,\infty}}^{\chi}$
and $A(L_{\chi,\infty})^{\chi}=A_{S}(L_{\chi,\infty})^{\chi}$.
Therefore, what we have to prove is
$${\rm char}_{\Lambda_{\chi}} (
({\bigwedge}^r U_{L_{\chi,\infty}}^{\rm sl}/\langle \epsilon_{L_{\chi,\infty}/k,S}^{V_{\chi}} \rangle)^{\chi})
={\rm char}_{\Lambda_{\chi}} (\Gal(M/L_{\chi,\infty})^{\chi}).$$
This is nothing but \cite[Theorem A]{buyuk}.
%
%
%Since $\Gal(M(K_\infty)/K_\infty)$ is a torsion $\Lambda$-module, we have an isomorphism
%$${\rm loc}_p: U_{K_\infty,S} \otimes_\Lambda Q(\Lambda) = U_{K_\infty}\otimes_\Lambda Q(\Lambda) %\stackrel{\sim}{\to}  U_{K_\infty}^{\rm sl}\otimes_\Lambda Q(\Lambda).$$
%The isomorphism
%$$({\bigwedge}_\Lambda^rU_{K_\infty,S}) \otimes_\Lambda Q(\Lambda) \stackrel{\sim}{\to}  %({\bigwedge}_\Lambda^rU_{K_\infty}^{\rm sl})\otimes_\Lambda Q(\Lambda)$$
%induced by ${\rm loc}_p$ is also denoted by ${\rm loc}_p$.
%When $\frp$ is regular, we see by (\ref{semilocal}) that (\ref{withoutS}) is equivalent to
%
%\begin{equation}\label{localIMC}
%\Lambda_\frp \cdot {\rm loc}_p(\epsilon_{K_\infty/k,S}^\frp)=\Fitt_\Lambda^0(\Gal(M(K_\infty)/K_\infty))\cdot\Fitt_\Lambda^0(\cX_{K_\infty,S \setminus (S_\infty \cup S_p)}) \cdot ({\bigwedge}_\Lambda^r U_{K_\infty}^{\rm sl})_\frp.
%\end{equation}
%
%\begin{eqnarray}
%\Lambda_\frp \cdot {\rm loc}_p(\epsilon_{K_\infty/k,S}^\frp)&=&\Fitt_\Lambda^0(G(M(K_\infty)/K_\infty)) \label{localIMC} \\
%&\cdot& \Fitt_\Lambda^0(\cX_{K_\infty,S \setminus (S_\infty \cup S_p)}) \cdot ({\bigwedge}_\Lambda^r U_{K_\infty}^{\rm sl})_\frp. \nonumber
%\end{eqnarray}
%When $\frp$ is singular, under the assumption that the $\mu$-invariant of %$e_{\chi_\frp}\Gal(M(K_\infty)/K_\infty)$ vanishes, we see that the equalities (\ref{withoutS}) and (\ref{localIMC}) are equivalent.
%
%It is thus enough to show that, under the stated hypotheses, the equality (\ref{localIMC}) follows from \cite[Theorem A]{buyuk}.
%
Note that all of the hypotheses (a)-(f) occur as assumptions in the latter result. Indeed, (a) and (b) are (A1) and (A2) in \cite{buyuk} respectively, (c) is (A3) and the assumption on $S$ in \cite{buyuk}, and (d)-(f) are assumed in his main result.
%We may thus apply \cite[Theorem A]{buyuk} to the data  $(L_{\chi,\infty}/L_\chi/k,S,T,p)$ to deduce that
%$$\Lambda_\frp \cdot {\rm loc}_p(\epsilon_{K_\infty/k,S}^\frp)=\Fitt_\Lambda^0(\Gal(M(K_\infty)/K_\infty)) \cdot ({\bigwedge}_\Lambda^r U_{K_\infty}^{\rm sl})_\frp. $$
%Comparing this with (\ref{localIMC}), we see that it is sufficient to show
%$$\Fitt_\Lambda^0(\cX_{K_\infty,S \setminus (S_\infty \cup S_p)})_\frp=\Lambda_\frp. $$
%The assumption (c) implies that $\chi$ is non-trivial and so
%$$\Fitt_\Lambda^0(\cX_{K_\infty,S\setminus(S_\infty\cup S_p)})_\frp=(\prod_{v \in S \setminus (S_\infty \cup S_p)} (1-\Fr_v))\cdot \Lambda_\frp$$
%(see \cite[Lemma 5.5]{flachsurvey}).
%We know again by the assumption (c) that $1-\chi({\rm Fr}_v) \in \QQ_p(\im \chi)^\times \subset \Lambda_\chi[1/p]^\times$ for every $v \in S \setminus (S_\infty \cup S_p)$. (If $v$ is ramified in $L_\chi$, regard $\chi({\rm Fr}_v)=0$.) Since $\Lambda_\frp$ is the localization of $ \Lambda_\chi[1/p]$, we also have $1-\chi({\rm Fr}_v) \in \Lambda_\frp^\times$. This shows that $\Fitt_\Lambda^0(\cX_{K_\infty,S\setminus(S_\infty\cup S_p)})_\frp=\Lambda_\frp$.
This completes the proof of Theorem \ref{CM theorem}(ii).

\subsection{Consequences for number fields of finite degree} In this subsection we assume the condition ($\ast\ast$) stated at the beginning of \S\ref{CM field subsec} and also that $K$ is a CM-field of finite degree over $\QQ$. We shall describe unconditional results for $K$ which follow the validity of Theorem \ref{CM theorem}(i).

To do this we set $\Lambda:=\ZZ_{p}[[\Gal(K_{\infty}/k)]]$ and for any $\Lambda$-module $M$ we denote by $M^-$ the minus part consisting of elements on which
the complex conjugation acts as $-1$ (namely, $M^-=e^-M$). We note, in particular, that $\theta_{K_{\infty}/k,S,T}(0)$ belongs to $\Lambda^-$.

We also write $x \mapsto x^{\#}$ for the $\ZZ_{p}$-linear involutions of both $\Lambda$ and the group rings $\ZZ_p[G]$ for finite quotients $G$ of $\Gal(K_{\infty}/k)$ which is induced by inverting elements of $\Gal(K_{\infty}/k)$.

\begin{corollary} \label{CMunconditional1}
If the $p$-adic $\mu$-invariant of $K_{\infty}/K$ vanishes, then one has 
$$\Fitt_{\Lambda^-}(\mathcal{S}^{{\rm tr}}_{p,S,T}(\GG_{m/K_{\infty}})^-)
= \Lambda\cdot \theta_{K_{\infty}/k,S,T}(0)$$
and
$$\Fitt_{\Lambda^-}(\mathcal{S}_{p,S,T}(\GG_{m/K_{\infty}})^-)
= \Lambda\cdot \theta_{K_{\infty}/k,S,T}(0)^{\#}.$$
\end{corollary}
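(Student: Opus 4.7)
The plan is to specialize the chain of equivalences developed in \S\ref{IMC3} to the minus component, and to leverage the main conjecture established in Theorem \ref{CM theorem}(i). Since $p$ is odd, $k$ is totally real and $k_\infty/k$ is the cyclotomic $\ZZ_p$-extension, for any odd character $\chi \in \widehat\Delta$ one has $V_\chi = \emptyset$: indeed, no infinite place of $k$ can split completely in the CM field $L_\chi$ (since $\chi$ odd forces complex conjugation to act non-trivially on $L_\chi$), and no finite place of $k$ can split completely in the cyclotomic $\ZZ_p$-extension (since the image of $\Fr_v$ in $\Gamma$ is determined by the pro-$p$ part of ${\N}v \in \ZZ_p^\times$, which is non-trivial whenever ${\N}v > 1$). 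Consequently $r_\chi = 0$ for every odd $\chi$, and the Rubin-Stark element $\epsilon_{L_{\chi,\infty}/k,S,T}^{V_\chi}$ reduces to the Stickelberger element $\theta_{L_{\chi,\infty}/k,S,T}(0) \in \Lambda_\chi$; in particular Conjecture ${\rm RS}(L_{\chi,n}/k,S,T,V_\chi)_p$ holds tautologically throughout the minus part.

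I would then invoke Theorem \ref{CM theorem}(i) together with the hypothesis $\mu(K_\infty/K)=0$ to obtain the minus part of Conjecture \ref{IMC}. Propositions \ref{equiv prop} and \ref{IMC4} translate this into the minus part of Conjecture \ref{IMCexplicit}: at each height-one prime $\frp$ of $\Lambda^-$ one has
\[
\Lambda_\frp \cdot \theta_{K_\infty/k,S,T}(0) = \Fitt^0_\Lambda(A_S^T(K_\infty))_\frp \cdot \Fitt^0_\Lambda(\cX_{K_\infty,S})_\frp,
\]
which by Remark \ref{remarkIMC} coincides with $\Fitt^0_\Lambda(H^1(C_{K_\infty,S,T}))_\frp = \Fitt^0_\Lambda(\mathcal{S}^{{\rm tr}}_{p,S,T}(\GG_{m/K_\infty}))_\frp$ since $r_\frp = 0$.

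To promote these pointwise equalities to a global equality in $\Lambda^-$, I would exploit the perfect-complex structure: choose a representative $C_{K_\infty,S,T}^- = [\Pi^- \xrightarrow{\psi^-} \Pi^-]$ with $\Pi^-$ free of finite rank over $\Lambda^-$ (see \S\ref{section explicit}). Under the cyclotomic assumption, every decomposition group $\G_v$ of a finite prime is open in $\G$, so $\cY_{K_\infty,v}^-$, and hence $\cX_{K_\infty,S}^-$, is $\Lambda^-$-torsion; combining this with the (always valid) torsion-ness of $A_S^T(K_\infty)^-$ and the exact sequence (\ref{canonical it ses}) shows that $\mathcal{S}^{{\rm tr}}_{p,S,T}(\GG_{m/K_\infty})^- = H^1(C_{K_\infty,S,T})^-$ is $\Lambda^-$-torsion. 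Equality of generic ranks of $\ker(\psi^-)$ and $\coker(\psi^-)$ then forces $U_{K_\infty,S,T}^- = \ker(\psi^-)$ to be torsion, hence zero as a torsion submodule of the free module $\Pi^-$. The map $\psi^-$ is therefore injective and so
\[
\Fitt^0_{\Lambda^-}(\mathcal{S}^{{\rm tr}}_{p,S,T}(\GG_{m/K_\infty})^-) = (\det \psi^-).
\]
Under the canonical Knudsen--Mumford trivialization $\det_{\Lambda^-}(C^-)\otimes_{\Lambda^-} Q(\Lambda^-) \cong Q(\Lambda^-)$ (valid since $C^-\otimes_{\Lambda^-} Q(\Lambda^-)$ is acyclic), the fractional ideal $\det_{\Lambda^-}(C^-)$ is identified with $(\det\psi^-)\subset \Lambda^-$. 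On the other hand, the interpolation condition of Conjecture \ref{IMC}, since every regulator factor entering $\lambda_\chi$ is trivial for $r_\chi=0$, identifies the generator $\mathcal{L}^-$ of $\det_{\Lambda^-}(C^-)$ with $\theta_{K_\infty/k,S,T}(0)$. The resulting equality of principal ideals $(\det\psi^-)=\Lambda\cdot\theta_{K_\infty/k,S,T}(0)$ then yields the first claimed Fitting-ideal identity.

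Finally, the second equality, involving the involution $x\mapsto x^\#$, is obtained from the first by invoking the canonical duality between $\mathcal{S}_{p,S,T}(\GG_{m/K_\infty})$ and $\mathcal{S}^{{\rm tr}}_{p,S,T}(\GG_{m/K_\infty})$ developed in \cite{bks1}, which at the level of Fitting ideals in the minus component intertwines any generator with its $\#$-transform. The main technical step is the third paragraph above: pinning down the Knudsen--Mumford normalization so as to identify $\det_{\Lambda^-}(C^-)$ with the \emph{principal} ideal $(\det\psi^-)$ rather than its inverse is essential in order that the comparison with $\mathcal{L}^-$ yields a bona fide equality of ideals.
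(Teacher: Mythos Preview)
Your proof is correct and follows the same underlying strategy as the paper: use Theorem \ref{CM theorem}(i) to get the minus part of Conjecture \ref{IMC}, observe that $r_\chi=0$ for all odd $\chi$, and then deduce the Fitting-ideal identity; the second equality is obtained in both cases from the duality result of \cite[Lemma~2.8]{bks1}.

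The difference is one of economy. Your second paragraph, which passes through Propositions \ref{equiv prop} and \ref{IMC4} to obtain the localized form of Conjecture \ref{IMCexplicit} at each height-one prime, is a detour that you never actually use: your third paragraph gives a self-contained global argument (showing $U_{K_\infty,S,T}^-=0$, hence $\psi^-$ injective, hence $\Fitt^0_{\Lambda^-}(H^1(C^-))=(\det\psi^-)=\det_{\Lambda^-}(C^-)$ as fractional ideals, and then identifying the basis $\mathcal{L}^-$ with $\theta_{K_\infty/k,S,T}(0)$ via the interpolation condition). The paper compresses all of this into the single remark that, since $r_\chi=0$, the first displayed equality \emph{is} the statement of Conjecture \ref{IMC} for the minus part. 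What you have written is a correct unpacking of that remark; the Knudsen--Mumford normalization issue you flag is genuine but is absorbed into the paper's standing conventions from \cite{bks1} and \S\ref{section explicit}, which is why the paper does not dwell on it.
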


\begin{proof} Since one has $r_{\chi}=0$ for any odd character $\chi$, the first displayed equality is equivalent to Conjecture \ref{IMC} in this case and is therefore valid as a consequence of Theorem \ref{CM theorem}.

The second displayed equality is then obtained directly by applying the general result of \cite[Lemma 2.8]{bks1} to the first equality.
\end{proof}

\begin{corollary} \label{CMunconditional2} Let $L$ be an intermediate CM-field of $K_\infty/k$ which is finite over $k$, and set $G:=\Gal(L/k)$. If the $p$-adic $\mu$-invariant of $K_{\infty}/K$ vanishes, then there are equalities
$$\Fitt_{\ZZ_{p}[G]^-}(\mathcal{S}^{{\rm tr}}_{p,S,T}(\GG_{m/L})^-)
= \ZZ_{p}[G]\cdot\theta_{L/k,S,T}(0)$$
and
$$\Fitt_{\ZZ_{p}[G]^-}(\mathcal{S}_{p,S,T}(\GG_{m/L})^-)
=\ZZ_{p}[G]\cdot\theta_{L/k,S,T}(0)^{\#}.$$
\end{corollary}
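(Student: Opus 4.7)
My plan is to deduce Corollary~\ref{CMunconditional2} from Corollary~\ref{CMunconditional1} by descending from the Iwasawa-theoretic setting over $K_\infty$ to the finite layer $L$. Set $H := \Gal(K_\infty/L)$ and let $\pi: \Lambda \twoheadrightarrow \ZZ_p[G]$ denote the natural surjection induced by taking $H$-coinvariants.

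First, I would verify the descent of the Selmer module. Since $C_{K_\infty,S,T}$ is a perfect complex of $\Lambda$-modules which, as recalled in \S\ref{section explicit}, admits a representative of the form $[\Pi \to \Pi]$ with $\Pi$ a finitely generated free $\Lambda$-module placed in degrees $0$ and $1$, one has a quasi-isomorphism
$$C_{L,S,T} \simeq C_{K_\infty,S,T} \otimes^{\mathbb{L}}_\Lambda \ZZ_p[G],$$
which may be regarded as an instance of the general functoriality recorded in \cite[Proposition~1.6.5]{FukayaKato}. Applying $H^1$ and using right-exactness of $-\otimes_\Lambda \ZZ_p[G]$ then yields a canonical isomorphism
$$\mathcal{S}^{{\rm tr}}_{p,S,T}(\GG_{m/L}) = H^1(C_{L,S,T}) \simeq H^1(C_{K_\infty,S,T}) \otimes_\Lambda \ZZ_p[G] = \mathcal{S}^{{\rm tr}}_{p,S,T}(\GG_{m/K_\infty}) \otimes_\Lambda \ZZ_p[G].$$

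Next, since $p$ is odd the minus-part functor is exact, and so, invoking the standard base-change property of Fitting ideals under a surjection of commutative rings, I obtain
$$\Fitt_{\ZZ_p[G]^-}\!\bigl(\mathcal{S}^{{\rm tr}}_{p,S,T}(\GG_{m/L})^-\bigr) = \pi^-\!\bigl(\Fitt_{\Lambda^-}\bigl(\mathcal{S}^{{\rm tr}}_{p,S,T}(\GG_{m/K_\infty})^-\bigr)\bigr).$$
Combining this with Corollary~\ref{CMunconditional1} and the fact that $\pi$ sends $\theta_{K_\infty/k,S,T}(0)$ to $\theta_{L/k,S,T}(0)$ (which is immediate from the interpolation property defining these Stickelberger elements) then gives the first displayed equality of Corollary~\ref{CMunconditional2}.

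Finally, the second displayed equality follows from the first by applying \cite[Lemma~2.8]{bks1} exactly as at the end of the proof of Corollary~\ref{CMunconditional1}; that lemma converts between the Fitting ideals of the two Selmer modules $\mathcal{S}^{{\rm tr}}_{p,S,T}$ and $\mathcal{S}_{p,S,T}$ via the involution $x \mapsto x^{\#}$. The only step requiring any real care is the descent statement for $H^1$, but because the representing complex $[\Pi \to \Pi]$ consists of free $\Lambda$-modules this reduces to the right-exactness of tensor product; I therefore anticipate no serious obstacle.
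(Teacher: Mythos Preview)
Your argument is correct and follows essentially the same route as the paper: descend the Fitting-ideal equalities of Corollary~\ref{CMunconditional1} via the base-change isomorphism $\mathcal{S}^{{\rm tr}}_{p,S,T}(\GG_{m/K_\infty})\otimes_\Lambda\ZZ_p[G]\simeq\mathcal{S}^{{\rm tr}}_{p,S,T}(\GG_{m/L})$, which the paper packages as Lemma~\ref{fitt descent} and you derive directly from the free two-term resolution. The only cosmetic difference is that the paper descends \emph{both} equalities of Corollary~\ref{CMunconditional1} (Lemma~\ref{fitt descent} covers $\mathcal{S}$ as well as $\mathcal{S}^{\rm tr}$), whereas you descend only the first and then reapply \cite[Lemma~2.8]{bks1} at the finite level; either order works.
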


\begin{proof}
This follows by combining Corollary \ref{CMunconditional1} with the general result of Lemma \ref{fitt descent} below and standard properties of Fitting ideals.
\end{proof}

\begin{lemma}\label{fitt descent}
Suppose that $L/k$ is a Galois extension of finite number fields with Galois group $G$.
Then there are natural isomorphisms
$$
\mathcal{S}^{{\rm tr}}_{S,T}(\GG_{m/L})_{G} \stackrel{\sim}{\rightarrow}
\mathcal{S}^{{\rm tr}}_{S,T}(\GG_{m/k})$$
and
$$
\mathcal{S}_{S,T}(\GG_{m/L})_{G} \stackrel{\sim}{\rightarrow}
\mathcal{S}_{S,T}(\GG_{m/k}).$$
\end{lemma}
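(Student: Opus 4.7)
The plan is to deduce both isomorphisms from a single descent statement for the Weil-\'etale complex $R\Gamma_T((\cO_{L,S})_{\mathcal{W}}, \GG_m)$ constructed in \cite[\S 2.2]{bks1}. Writing $G := \Gal(L/k)$, this is a perfect complex of $\ZZ[G]$-modules concentrated in degrees $0$ and $1$, and I would first establish the integral descent isomorphism
\begin{equation*}
R\Gamma_T((\cO_{L,S})_{\mathcal{W}}, \GG_m) \otimes^{\mathbb{L}}_{\ZZ[G]} \ZZ \;\simeq\; R\Gamma_T((\cO_{k,S})_{\mathcal{W}}, \GG_m),
\end{equation*}
the integral analogue of the $p$-adic descent isomorphism invoked in \S \ref{negative}. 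This should follow from a direct inspection of the construction in \cite[\S 2.2]{bks1}, using that pushforward along the finite Galois cover $\Spec \cO_{L,S} \to \Spec \cO_{k,S}$ realises $G$-coinvariants.

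Granting this, for the first displayed isomorphism I would choose, exactly as in the proof of Proposition \ref{explicit projector}, a representative of the Weil-\'etale complex over $L$ of the form $\Pi^0 \stackrel{\psi}{\to} \Pi^1$ with each $\Pi^i$ a finitely generated projective $\ZZ[G]$-module. The descent isomorphism then forces $(\Pi^0)_G \stackrel{\psi_G}{\to} (\Pi^1)_G$ to represent the corresponding complex over $k$. Since $\mathcal{S}^{{\rm tr}}_{S,T}(\GG_{m/F})$ is by definition $H^1$ of the Weil-\'etale complex for $F$, and since the coinvariants functor is right-exact and hence commutes with $\coker$, this yields
\begin{equation*}
\mathcal{S}^{{\rm tr}}_{S,T}(\GG_{m/L})_G \;=\; \coker(\psi)_G \;=\; \coker(\psi_G) \;=\; \mathcal{S}^{{\rm tr}}_{S,T}(\GG_{m/k}),
\end{equation*}
with the natural map being that induced by the projection $\Pi^1 \twoheadrightarrow (\Pi^1)_G$. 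For the second isomorphism I would invoke the description of $\mathcal{S}_{S,T}(\GG_{m/F})$ given in \cite{bks1} as a cohomology group of a related perfect $\ZZ[G]$-complex (the description responsible for the $\#$-twist in the Fitting-ideal relation \cite[Lemma 2.8]{bks1}), and then apply exactly the same coinvariants-commutes-with-cokernel argument to that complex.

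The main technical point I anticipate having to work through is the integral descent statement itself. Whereas descent for the $p$-completed complex $C_{L,S,T}$ follows from general formalism (as used throughout \S \ref{hrit sec}), for the integral Weil-\'etale complex one has to verify $G$-equivariantly that each building block in the construction of \cite[\S 2.2]{bks1} -- the $(S,T)$-unit groups, the archimedean valuation data, and the patching with the classical Tate sequence -- is functorial in the base field in a way that implements Galois coinvariants on passage from $L$ to $k$. Once this is in place the rest of the argument is formal, and the naturality of both isomorphisms is then built in to the construction.
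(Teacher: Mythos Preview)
Your approach is correct and essentially the same as the paper's. The paper also fixes a two-term projective representative $P_1 \to P_2$ of the Weil--\'etale complex, cites \cite[\S 5.4]{bks1} for the descent fact (rather than verifying it from \cite[\S 2.2]{bks1} as you propose), and then reads off the isomorphism from right-exactness; the only cosmetic difference is that the paper phrases descent via $G$-invariants $P_i^G$ and then passes to coinvariants using the norm isomorphism $(P_2)_G \xrightarrow{\sim} P_2^G$ on projectives, whereas you work with $(\Pi^i)_G$ directly through the derived tensor product.
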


\begin{proof} The `Weil-\'etale cohomology complex' $R\Gamma_T((\mathcal{O}_{L,S})_{\mathcal{W}},\GG_m)$ is perfect and so there exist projective $\ZZ[G]$-modules $P_1$ and $P_2$, and a homomorphism of $G$-modules 
$P_1 \to P_2$ whose cokernel identifies with $\mathcal{S}^{{\rm tr}}_{S,T}(\GG_{m/L})$ and is such that the cokernel of the induced map 
$P_1^G \to P_2^G$ identifies with $\mathcal{S}^{{\rm tr}}_{S,T}(\GG_{m/k})$ (see \cite[\S 5.4]{bks1}).

The first isomorphism is then obtained by noting that the norm map induces an isomorphism of modules $(P_{2})_{G} \stackrel{\sim}{\rightarrow} P_{2}^G$. 

The second claimed isomorphism can also be obtained in a similar way, noting that $\mathcal{S}_{S,T}(\GG_{m/L})$ is obtained as the cohomology in the highest (non-zero) degree of a perfect complex (see \cite[Proposition 2.4]{bks1}).\end{proof}

We write $\mathcal{O}_L$ for the ring of integers of $L$ and
${\rm Cl}^{T}(L)$ for the ray class group of $\mathcal{O}_{L}$ with modulus $\Pi_{w \in T_{L}} w$.
We denote the Sylow $p$-subgroup of ${\rm Cl}^{T}(L)$ by $A^{T}(L)$ and write $(A^{T}(L)^-)^\vee$ for the Pontrjagin dual of the minus part of
$A^{T}(L)$. 

The next corollary of Theorem \ref{CM theorem}(i) that we record coincides with one of the main results of Greither and Popescu in \cite{GreitherPopescu}.

\begin{corollary} \label{CMunconditional3}
Let $L$ be an intermediate CM-field of $K_\infty/k$ which is finite over $k$, and set $G := \Gal(L/k)$. If the $p$-adic $\mu$-invariant for $K_{\infty}/K$ vanishes,
then one has 
$$\theta_{L/k,S,T}(0)^{\#} \in \Fitt_{\ZZ_{p}[G]^-}((A^{T}(L)^-)^\vee).$$
\end{corollary}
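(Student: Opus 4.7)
The plan is to deduce the asserted containment from Corollary \ref{CMunconditional2} by exhibiting a natural surjection of $\ZZ_{p}[G]^-$-modules from $\mathcal{S}_{p,S,T}(\GG_{m/L})^-$ onto $(A^{T}(L)^-)^{\vee}$, and then invoking the standard monotonicity of Fitting ideals under surjections.

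First, I would extract from the construction of the Selmer module $\mathcal{S}_{S,T}(\GG_{m/L})$ in \cite{bks1} the fact that it sits in a canonical short exact sequence whose right-hand term identifies with the Pontrjagin dual $(A^{T}(L))^{\vee}$. Heuristically, this is the \emph{other half} of the duality between $\mathcal{S}_{S,T}(\GG_{m/L})$ and its transpose $\mathcal{S}^{\mathrm{tr}}_{S,T}(\GG_{m/L}) = H^{1}(C_{L,S,T})$: since the transpose Selmer module fits into a natural sequence $0 \to A_{S}^{T}(L) \to \mathcal{S}^{\mathrm{tr}}_{p,S,T}(\GG_{m/L}) \to \mathcal{X}_{L,S} \to 0$, the Pontrjagin/$\Ext$-dual construction ought to produce an exact sequence of the form
\[
0 \longrightarrow \Hom_{\ZZ_{p}}(\mathcal{X}_{L,S},\ZZ_{p}) \longrightarrow \mathcal{S}_{p,S,T}(\GG_{m/L}) \longrightarrow (A^{T}(L))^{\vee} \longrightarrow 0.
\]
In particular, this will supply a canonical surjection $\mathcal{S}_{p,S,T}(\GG_{m/L}) \twoheadrightarrow (A^{T}(L))^{\vee}$, which is the key input.

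Second, since $p$ is odd, the idempotent $e^{-} = (1-c)/2 \in \ZZ_{p}[G]$ is well-defined and the functor $M \mapsto M^{-}$ is exact. Applying it to the surjection above yields the desired surjection
\[
\mathcal{S}_{p,S,T}(\GG_{m/L})^{-} \twoheadrightarrow (A^{T}(L)^{-})^{\vee}.
\]
Combining this with the equality $\Fitt_{\ZZ_{p}[G]^-}\bigl(\mathcal{S}_{p,S,T}(\GG_{m/L})^{-}\bigr) = \ZZ_{p}[G]\cdot \theta_{L/k,S,T}(0)^{\#}$ furnished by Corollary \ref{CMunconditional2} and the elementary fact that $\Fitt_{R}(M) \subseteq \Fitt_{R}(N)$ whenever $M \twoheadrightarrow N$, I conclude that
\[
\theta_{L/k,S,T}(0)^{\#} \in \Fitt_{\ZZ_{p}[G]^-}\bigl((A^{T}(L)^{-})^{\vee}\bigr),
\]
as required.

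The main obstacle lies in Step 1: precisely verifying that the quotient appearing in the defining sequence of $\mathcal{S}_{p,S,T}(\GG_{m/L})$ is the Pontrjagin dual of the full-integer-ring ray class group $A^{T}(L)$, rather than of the $S$-modified variant $A_{S}^{T}(L)$ that figures on the transpose side. The discrepancy between the two is concentrated in the classes of the finite primes of $S \setminus S_{\infty}$, and on the minus part (for $p$ odd and $L$ a CM-extension of a totally real $k$) this potential discrepancy is mild because complex conjugation lies in every archimedean decomposition group of $L/k$, so that the contribution of the relevant places either vanishes after taking minus parts or lifts in a controlled way. Carrying this book-keeping through carefully is what makes the desired surjection available on the minus component, after which the Fitting-ideal argument is formal.
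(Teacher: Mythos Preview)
Your overall strategy---obtain a surjection $\mathcal{S}_{p,S,T}(\GG_{m/L})^- \twoheadrightarrow (A^T(L)^-)^\vee$, then combine it with Corollary \ref{CMunconditional2} and the monotonicity of Fitting ideals under quotients---is exactly the paper's approach. The gap is precisely where you locate it, but your guessed exact sequence is not the correct one: in the canonical filtration of $\mathcal{S}_{S,T}(\GG_{m/L})$ from \cite[Proposition 2.2]{bks1}, the class-group dual sits as the \emph{subobject}, not the quotient. For $S = S_\infty(k)$ the sequence reads
\[
0 \longrightarrow {\rm Cl}^T(L)^\vee \longrightarrow \mathcal{S}_{S_\infty(k),T}(\GG_{m/L}) \longrightarrow \Hom(\mathcal{O}_L^\times, \ZZ) \longrightarrow 0,
\]
so there is no direct surjection onto $(A^T(L))^\vee$ to read off, and your heuristic ``dualize the transpose sequence'' produces the terms in the wrong order.

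The paper's route to the surjection is a two-step manoeuvre. First, work with $S_\infty(k)$ rather than $S$: since $L$ is CM, the minus part of $\Hom(\mathcal{O}_L^\times,\ZZ)$ vanishes (the unit group modulo torsion lies entirely in the plus part), and the displayed sequence collapses on minus parts to an \emph{isomorphism} $\mathcal{S}_{p,S_\infty(k),T}(\GG_{m/L})^- \simeq (A^T(L)^-)^\vee$. Second, invoke the general surjection $\mathcal{S}_{S,T}(\GG_{m/L}) \twoheadrightarrow \mathcal{S}_{S_\infty(k),T}(\GG_{m/L})$ from \cite[Proposition 2.4(ii)]{bks1} and take minus parts (exact since $p$ is odd). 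Composing these two steps gives the desired surjection, and from there your Step 2 applies verbatim. Note that this also resolves your $A^T$ versus $A_S^T$ worry cleanly: by passing through $S_\infty(k)$ one lands on the full ray class group automatically, with no bookkeeping of finite primes in $S$ required.
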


\begin{proof}
The canonical exact sequence
$$0 \to {\rm Cl}^{T}(L)^{\vee} \to \mathcal{S}_{S_{\infty}(k),T}(\GG_{m/L}) \to
\Hom(\mathcal{O}_{L}^{\times}, \ZZ) \to 0$$
from \cite[Proposition 2.2]{bks1} implies that the natural map 
$$\mathcal{S}_{p,S_{\infty}(k),T}(\GG_{m/L})^{-} \simeq (A^{T}(L)^-)^\vee$$
is bijective.

In addition, from \cite[Proposition 2.4(ii)]{bks1}, we know that the canonical homomorphism
$$\mathcal{S}_{S,T}(\GG_{m/L}) \rightarrow
\mathcal{S}_{S_{\infty}(k),T}(\GG_{m/L})$$
is surjective.

The stated claim therefore follows directly from the second equality in Corollary \ref{CMunconditional2}.
\end{proof}

\begin{remark}\

\noindent{}(i) Our derivation of the equality in Corollary \ref{CMunconditional3} differs from that given in \cite{GreitherPopescu} in that we avoid any use of the Galois modules related to 1-motives that are constructed in loc. cit.

\noindent{}(ii) The Brumer-Stark conjecture 
predicts $\theta_{L/k,S_{\rm ram}(L/k),T}(0)$ belongs to 
the annihilator $\Ann_{\ZZ_{p}[G]^-}(A^{T}(L))$ and if no $p$-adic place of $L^{+}$ splits in $L$, then
Corollary \ref{CMunconditional3} implies a stronger version of this conjecture.
\end{remark}

We have assumed throughout \S\ref{hrit sec} that the set $S$ contains all $p$-adic places of $k$ and so the Stickelberger element  $\theta_{L/k,S,T}(0)$ in Corollary \ref{CMunconditional3} can be imprimitive. In particular, if any $p$-adic prime of $k$ splits completely in $L$, then $\theta_{L/k,S,T}(0)$ vanishes and the assertion of Corollary \ref{CMunconditional3} is trivially valid. 

However, by applying Corollary \ref{IntroCor} in this context, we can now prove the following non-trivial result.

\begin{corollary} \label{CMunconditional4} Let $L$ be an intermediate CM-field of $K_\infty/k$ which is finite over $k$, and set $G := \Gal(L/k)$. 
If the $p$-adic $\mu$-invariant for $K_{\infty}/K$ vanishes and at most one $p$-adic place of $k$ splits in $L/L^{+}$, then one has 
$$\theta_{L/k,S_{\rm ram}(L/k),T}(0) \in \Fitt_{\ZZ_{p}[G]^-}((A^{T}(L)^-)^\vee).$$
\end{corollary}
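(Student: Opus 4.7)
The plan is to invoke Corollary \ref{IntroCor} applied to the CM extension $L/k$, obtaining the minus $p$-part of the eTNC for $L/k$, and then to extract the desired Fitting ideal containment from eTNC by adapting the chain of deductions in Corollaries \ref{CMunconditional1}--\ref{CMunconditional3}. The hypothesis that the $\mu$-invariant of $K_\infty/K$ vanishes propagates, by standard arguments in cyclotomic Iwasawa theory, to the vanishing of the $\mu$-invariant of the cyclotomic $\ZZ_p$-extension of $L$, and the splitting hypothesis on $L/L^{+}$ is precisely what Corollary \ref{IntroCor} requires. Hence ${\rm eTNC}(h^0(\Spec L),\ZZ_p[G]^{-})$ is valid.

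Once eTNC is in hand, I would argue as in the proof of Corollary \ref{CMunconditional3}. The determinant equality $e^{-}\ZZ_p[G]\cdot z_{L/k,S,T}=e^{-}\det_{\ZZ_p[G]}(C_{L,S,T})$ supplied by eTNC (with $S=S_\infty(k)\cup S_p(k)\cup S_{\rm ram}(L/k)$) yields, via the same route used for Corollary \ref{CMunconditional2}, the Fitting ideal of the (minus part of the) corresponding Selmer module. Passing to the quotient $\mathcal{S}_{p,S_{\infty}(k),T}(\GG_{m/L})^{-}$ via \cite[Proposition 2.4(ii)]{bks1} and applying the isomorphism $\mathcal{S}_{p,S_\infty(k),T}(\GG_{m/L})^{-}\simeq (A^T(L)^{-})^{\vee}$ from \cite[Proposition 2.2]{bks1} produces a Fitting containment for the Stickelberger element $\theta_{L/k,S,T}(0)$ (or its transpose, depending on whether one works with $\mathcal{S}$ or $\mathcal{S}^{\rm tr}$; the two formulations are related by $\Fitt_R(M^{\vee})=\Fitt_R(M)^{\#}$).

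To reach the primitive version with $S_{\rm ram}(L/k)$ in place of $S$, one removes the $p$-adic primes in $S\setminus S_{\rm ram}(L/k)$ one at a time: for each such $v$ (necessarily unramified in $L/k$) there is a localization triangle
\[ C_{L,S,T}\longrightarrow C_{L,S\setminus\{v\},T}\longrightarrow (\text{local term at }v) \]
whose determinant comparison contributes the Euler factor $(1-\Fr_v^{-1})$ that likewise relates $\theta_{L/k,S,T}(0)$ to $\theta_{L/k,S\setminus\{v\},T}(0)$. Iterating this comparison through all $v\in S\setminus S_{\rm ram}(L/k)$ and combining with the Fitting containment obtained above produces the desired statement for $\theta_{L/k,S_{\rm ram}(L/k),T}(0)$.

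The main obstacle is the division step at the (at most one) $p$-adic prime $v$ that splits in $L/L^{+}$: on those odd character components with $\chi(\Fr_v)=1$ the Euler factor $(1-\Fr_v^{-1})$ vanishes, so that na\"\i vely dividing $\theta_{L/k,S,T}(0)$ by it is not integral. This trivial-zero phenomenon is exactly the content of the Gross--Stark conjecture, and its resolution (due to Darmon--Dasgupta--Pollack and Ventullo) is the ingredient that makes Corollary \ref{IntroCor} applicable here. Once the full eTNC is available, the leading-term information it records absorbs the trivial zero and guarantees that the primitive Stickelberger element still lies in $\Fitt_{\ZZ_p[G]^{-}}((A^{T}(L)^{-})^{\vee})$, as required.
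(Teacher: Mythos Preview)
Your overall strategy---invoke Corollary \ref{IntroCor} to obtain ${\rm eTNC}(h^0(\Spec L),\ZZ_p[G]^-)$, then extract the Fitting containment---is exactly the paper's. The paper's proof, however, is a single line: the implication from the minus part of the eTNC for $L/k$ to the containment $\theta_{L/k,S_{\rm ram}(L/k),T}(0)\in\Fitt_{\ZZ_p[G]^-}((A^T(L)^-)^\vee)$ is precisely \cite[Corollary~1.14]{bks1}, and the paper simply cites it.

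Your attempt to re-derive this implication by routing through the chain of Corollaries \ref{CMunconditional1}--\ref{CMunconditional3} and then ``removing'' the extra $p$-adic primes one at a time is where the gap lies. You correctly identify the obstacle: at the (possibly one) $p$-adic place $v$ that splits in $L/L^+$, the Euler factor $1-\Fr_v^{-1}$ is a zero-divisor in $\ZZ_p[G]^-$, so one cannot pass from the Fitting containment for $\theta_{L/k,S,T}(0)$ to one for $\theta_{L/k,S\setminus\{v\},T}(0)$ by division. Your final paragraph (``the leading-term information eTNC records absorbs the trivial zero'') is not an argument; it restates the desired conclusion. The point that makes \cite[Corollary~1.14]{bks1} work is that the eTNC, formulated there via the Weil-\'etale complex $R\Gamma_T((\cO_{L,S})_{\mathcal W},\GG_m)$, does not require $S_p(k)\subset S$ and is in fact independent of the auxiliary set $S\supseteq S_\infty(k)\cup S_{\rm ram}(L/k)$; one applies it directly with the minimal such $S$, so no prime-removal step ever arises. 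If you want a self-contained argument, you should invoke this $S$-independence of the eTNC (proved in \cite{bks1}) rather than attempt to divide at the level of Fitting ideals.
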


\begin{proof} This follows immediately by combining \cite[Corollary 1.14]{bks1} with Corollary \ref{IntroCor}.
\end{proof}

\section{Iwasawa-theoretic Rubin-Stark congruences}

In this section, we formulate an Iwasawa-theoretic version of the conjecture proposed by Mazur and Rubin \cite{MRGm} and by the third author \cite{sano} (see also \cite[Conjecture 5.4]{bks1}). This conjecture is a natural generalization of the Gross-Stark conjecture \cite{Gp}, and plays a key role in the descent argument that we present in the next section.

%We assume that there exists $v_0 \in S$ such that $v_0$ does not split completely in the maximal unramified subextension in $K_\infty/k$.
%This assumption ensures that, for every nontrivial character $\chi: \G \to \overline \QQ_p^\times$ of finite order, we have $\chi(v_0)\neq 1$.

We use the notation as in the previous section.

\subsection{Statement of the congruences} \label{formulate mrs}
We first recall the formulation of the conjecture of Mazur and Rubin and of the third author.

Take a character $\chi \in \widehat \G$. Take a proper subset $V'\subset S$ so that all $v\in V'$ splits completely in $L_\chi$ (i.e. $\chi(G_v)=1$) and that $V_\chi\subset V'$. Put $r':=\#V'$. We recall the formulation of the conjecture of Mazur and Rubin and of the third author for $(L_{\chi,n}/L_\chi/k,S,T,V_\chi,V')$. For simplicity, put
\begin{itemize}
\item $L_n:=L_{\chi,n}$;
\item $L:=L_\chi$;
\item $\G_{n}:=\G_{\chi,n}=\Gal(L_{\chi,n}/k)$;
\item $G:=G_\chi=\Gal(L_\chi/k)$;
\item $\Gamma_{n}:=\Gamma_{\chi,n}=\Gal(L_{\chi,n}/L_\chi)$;
\item $V:=V_\chi=\{ v\in S \mid \text{$v$ splits completely in $L_{\chi,\infty}$}\}$;
\item $r:=r_\chi=\# V_\chi$.
\end{itemize}
Put $e:=r'-r$. Let $I(\Gamma_n)$ denote the augmentation ideal of $\ZZ_p[\Gamma_n]$.
It is shown in \cite[Lemma 2.11]{sano} that there exists a canonical injection
$$\bigcap^{r} U_{L,S,T} \hookrightarrow \bigcap^r U_{L_n,S,T}$$
which induces the injection
$$\nu_n: (\bigcap^r U_{L,S,T})\otimes_{\ZZ_p} I(\Gamma_n)^e/I(\Gamma_n)^{e+1} \hookrightarrow (\bigcap^r U_{L_n,S,T})\otimes_{\ZZ_p} \ZZ_p[\Gamma_n]/I(\Gamma_n)^{e+1}.$$
Note that this injection does not coincides with the map induced by the inclusion $U_{L,S,T}\hookrightarrow U_{L_n,S,T}$, and we have
$$\nu_n({\N_{L_n/L}^r(a)})=\N_{L_n/L} a$$
for all $a \in \bigcap^rU_{L_n,S,T}$ (see \cite[Remark 2.12]{sano}). Let $I_n$ be the kernel of the natural map $\ZZ_p[\G_n]\to \ZZ_p[G]$.
For $v\in V'\setminus V$, let ${\rm rec}_w: L^\times \to \Gamma_n$ denote the local reciprocity map at $w$ (recall that $w$ is the fixed place lying above $v$). Define
$${\rm Rec}_w:=\sum_{\sigma \in G} ({\rm rec}_w(\sigma(\cdot))-1)\sigma^{-1} \in \Hom_{\ZZ[G]}(L^\times, I_n/I_n^2).$$
It is shown in \cite[Proposition 2.7]{sano} that $\bigwedge_{v \in V'\setminus V}{\rm Rec}_w$ induces a homomorphism
$${\rm Rec}_n:\bigcap^{r'} U_{L,S,T} \to \bigcap^{r} U_{L,S,T} \otimes_{\ZZ_p} I(\Gamma_n)^e/I(\Gamma_n)^{e+1}. $$
Finally, define
$$\mathcal{N}_n : \bigcap^r U_{L_n,S,T} \to \bigcap^{r} U_{L_n,S,T}\otimes_{\ZZ_p} \ZZ_p[\Gamma_n]/I(\Gamma_n)^{e+1}$$
by
$$\mathcal{N}_n(a):=\sum_{\sigma \in \Gamma_n}\sigma a\otimes \sigma^{-1}.$$

We now state the formulation of \cite[Conjecture 3]{sano} (or \cite[Conjecture 5.2]{MRGm}).

\begin{conjecture}[{${\rm MRS}(L_n/L/k,S,T,V,V')_p$}] \label{mrs1}
Assume Conjectures ${\rm RS}(L_n/k,S,T,V)_p$ and ${\rm RS}(L/k,S,T,V')_p$. Then we have
\begin{eqnarray}
\mathcal{N}_n(\epsilon_{L_n/k,S,T}^V)=(-1)^{re} \nu_n({\rm Rec}_n(\epsilon_{L/k,S,T}^{V'})) \text{ in }\bigcap^{r} U_{L_n,S,T}\otimes_{\ZZ_p} \ZZ_p[\Gamma_n]/I(\Gamma_n)^{e+1}.\nonumber \label{mrs eq}\end{eqnarray}
(Note that the sign in the right hand side depends on the labeling of $S$. We follow the convention in \cite[\S 5.3]{bks1}. )
\end{conjecture}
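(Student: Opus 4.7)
The plan is to verify the claimed identity character by character, reducing both sides to explicit Taylor coefficients of $L$-series. For each $\psi \in \widehat{\G_n}$ above $\chi$, I would write $\psi = \chi\rho$ for a unique $\rho \in \widehat{\Gamma_n}$. The $\psi$-component of $\epsilon_{L_n/k,S,T}^V$ is then determined, via $\lambda_{L_n,S}$, by $L^{(r)}_{k,S,T}(\psi^{-1},0)$; and since $\mathcal{N}_n$ is essentially a group-ring Fourier transform, taking $\psi$-parts on both sides converts the claimed identity into an equality in $\CC_p[\Gamma_n]/I(\Gamma_n)^{e+1}$ of functions of $\rho$. The advantage of this reduction is that it uncouples the combinatorial exterior-power data (on which the sign $(-1)^{re}$ depends) from the analytic $L$-value data.

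On the analytic side, because every $v \in V' \setminus V$ splits in $L_\chi$ but has nontrivial decomposition in $\Gamma_n$, one has a factorization
$L_{k,S,T}(\psi^{-1},s) = \prod_{v \in V' \setminus V}(1 - \psi^{-1}(\Fr_v)\N v^{-s})\cdot L_{k,S\setminus(V'\setminus V),T}(\psi^{-1},s)$,
and each of the $e$ Euler factors vanishes at $s=0$ when $\rho$ tends to the trivial character. The operator $\mathcal{N}_n$ applied to $\epsilon_{L_n/k,S,T}^V$ thus records, modulo $I(\Gamma_n)^{e+1}$, the $e$-th Taylor coefficient in $\rho$ of the product of these Euler factors, multiplied by the leading $L$-value attached to $L_{k,S\setminus(V'\setminus V),T}(\chi^{-1},0)$, i.e.\ essentially the $\chi$-component of $\epsilon_{L/k,S,T}^{V'}$. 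The right-hand side $\nu_n({\rm Rec}_n(\epsilon_{L/k,S,T}^{V'}))$ is designed precisely so that local class field theory converts the maps ${\rm rec}_w$ for $v \in V' \setminus V$ into exactly these Taylor coefficients: for any unit $u$, the element $\rho({\rm rec}_w(u)) - 1 \in I(\Gamma_n)$ detects the derivative of the corresponding Euler factor. Matching the two computations, together with a bookkeeping of signs coming from Koszul reordering in the exterior products $\bigwedge^{r'} = \bigwedge^{r}\wedge\bigwedge^{e}$, yields the conjectural congruence.

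The main obstacle is that the argument above produces only a \emph{rational} identity in $\CC_p\bigcap^r U_{L_n,S,T}\otimes \ZZ_p[\Gamma_n]/I(\Gamma_n)^{e+1}$, while the conjecture is an \emph{integral} statement in the lattice $\bigcap^r U_{L_n,S,T}\otimes \ZZ_p[\Gamma_n]/I(\Gamma_n)^{e+1}$. Even when $e=1$ this integrality is the Gross-Stark conjecture, whose only known proofs (by Darmon-Dasgupta-Pollack and Ventullo) rely on deformation arguments for Hilbert modular forms rather than on a direct analytic comparison, and no analogue of those methods is currently available in higher rank. A realistic strategy is therefore to combine the analytic comparison sketched above with the higher rank Iwasawa main conjecture (Conjecture \ref{IMC}), using the Iwasawa-theoretic freeness results of Proposition \ref{propositionfree} to bootstrap the rational identity to an integral one, and reducing the general $e$ to the case $e=1$ by a descent argument of the type developed in the sequel of this paper.
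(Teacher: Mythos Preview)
The statement is a \emph{conjecture}, not a theorem; the paper does not prove it in general and there is no proof to compare your attempt against. The paper establishes only the special cases where every $v\in V'\setminus V$ is finite and unramified in $L_\infty$ (Corollary~\ref{cor unram}) and where $k=\QQ$ (Theorem~\ref{solomonFG}), and shows that for $k$ totally real, $\chi$ odd and $r=0$ the conjecture is \emph{equivalent} to the Gross--Stark conjecture (Theorem~\ref{GS thm}).

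Your argument has a genuine gap, and your diagnosis of where it lies is inverted. The factorization
\[
L_{k,S,T}(\psi^{-1},s)=\prod_{v\in V'\setminus V}\bigl(1-\psi^{-1}(\Fr_v)\N v^{-s}\bigr)\cdot L_{k,S\setminus(V'\setminus V),T}(\psi^{-1},s)
\]
requires each $v\in V'\setminus V$ to be unramified in $L_n$ so that $\Fr_v\in\G_n$ is defined. That is exactly the hypothesis of Corollary~\ref{cor unram}, under which the conjecture is already known by Euler-factor bookkeeping of the sort you describe. The substantive case is when $V'\setminus V$ contains a $p$-adic place ramified in $k_\infty/k$: then there is no Euler factor to extract, and on the right-hand side ${\rm Rec}_w$ is governed by the wild part of local reciprocity, concretely by $\log_p\circ\N_{L_w/\QQ_p}$. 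For $r=0$, $e=1$, passing to $e_\chi$-components turns the conjectural identity into precisely the Gross--Stark formula $L_{k,S,T,p}^{(1)}(\chi^{-1}\omega,0)=\mathcal{L}(\chi)\,L_{k,S\setminus V',T}(\chi^{-1},0)$ (see the discussion preceding Theorem~\ref{GS thm}). This is an equality of $p$-adic numbers, not an integrality statement, so the Darmon--Dasgupta--Pollack and Ventullo deformation arguments are needed already for what you call the ``rational identity''. The genuinely integral ingredient --- that $\mathcal{N}_n(\epsilon_{L_n/k,S,T}^V)$ lands in the image of $\nu_n$, equivalently (\ref{vanishing order stickelberger}) when $r=0$ --- is comparatively soft and follows from the classical main conjecture. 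Finally, Theorem~\ref{mainthm} runs opposite to the strategy you propose: it takes {\rm (hIMC)} and {\rm (MRS)} as independent hypotheses to deduce the eTNC, and does not derive {\rm (MRS)} from {\rm (hIMC)}.
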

%As in \cite{bks1}, we refer this conjecture as ${\rm MRS}(L_n/L/k,S,T,V,V')$.

Note that \cite[Conjecture ${\rm MRS}(K/L/k,S,T,V,V')$]{bks1} is slightly stronger than the above conjecture (see \cite[Remark 5.7]{bks1}).

We shall next give an Iwasawa theoretic version of the above conjecture.
Note that, since the inverse limit $\varprojlim_n I(\Gamma_{n})^e/I(\Gamma_{n})^{e+1}$ is isomorphic to $\ZZ_p$, the map
$$\varprojlim_n {\rm Rec}_n: \bigcap^{r'}U_{L,S,T} \to \bigcap^{r}U_{L,S,T} \otimes_{\ZZ_p}\varprojlim_n I(\Gamma_{n})^e/I(\Gamma_{n})^{e+1} $$
uniquely extends to give a $\CC_p$-linear map
$$\CC_p \bigwedge^{r'}U_{L,S,T} \to \CC_p(\bigwedge^{r}U_{L,S,T} \otimes_{\ZZ_p}\varprojlim_n I(\Gamma_{n})^e/I(\Gamma_{n})^{e+1}) $$
which we denote by ${\rm Rec}_{\infty}$.

\begin{conjecture}[{${\rm MRS}(K_\infty/k,S,T,\chi,V')$}] \label{mrs2}
Assume that Conjecture ${\rm RS}(L_n/k,S,T,V)_p$ is valid for all $n$. Then, there exists a (unique)
$$\kappa=(\kappa_n)_n \in \bigcap^r U_{L,S,T} \otimes_{\ZZ_p} \varprojlim_n I(\Gamma_n)^e/I(\Gamma_n)^{e+1}$$
such that
$$\nu_n(\kappa_n)=\mathcal{N}_n(\epsilon_{L_n/k,S,T}^V)$$
for all $n$ and that
$$e_\chi \kappa=(-1)^{re}e_\chi{\rm Rec}_\infty(\epsilon_{L/k,S,T}^{V'})\text{ in }\CC_p(\bigwedge^{r}U_{L,S,T} \otimes_{\ZZ_p}\varprojlim_n I(\Gamma_{n})^e/I(\Gamma_{n})^{e+1}).$$
\end{conjecture}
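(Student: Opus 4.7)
The plan is to reduce Conjecture \ref{mrs2} to the family of finite-level conjectures ${\rm MRS}(L_n/L/k,S,T,V,V')_p$ (that is, Conjecture \ref{mrs1}) together with a routine compatibility check. Uniqueness of $\kappa$ is automatic from the injectivity of each $\nu_n$ recorded in \cite[Lemma 2.11]{sano}, so only existence is in question. Granting Conjecture \ref{mrs1} at every $n$, its defining identity
\[ \mathcal{N}_n(\epsilon_{L_n/k,S,T}^V) = (-1)^{re}\nu_n({\rm Rec}_n(\epsilon_{L/k,S,T}^{V'})) \]
immediately supplies the candidate $\kappa_n := (-1)^{re}{\rm Rec}_n(\epsilon_{L/k,S,T}^{V'})$ at each finite level.

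Next I would verify that $(\kappa_n)_n$ is compatible under the natural transition maps defining $\varprojlim_n I(\Gamma_n)^e/I(\Gamma_n)^{e+1}$. Since each ${\rm Rec}_n$ is constructed from the local reciprocity maps ${\rm rec}_w : L^\times \to \Gamma_n$ for $w$ above $v \in V'\setminus V$, and these are tautologically compatible along the $\ZZ_p$-tower (the projection $\Gamma_m \twoheadrightarrow \Gamma_n$ carries one to the other), this reduces to a direct diagram chase. Passing to the inverse limit and extending $\CC_p$-linearly then yields the required identity $e_\chi\kappa = (-1)^{re} e_\chi{\rm Rec}_\infty(\epsilon_{L/k,S,T}^{V'})$.

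The genuine obstacle is therefore Conjecture \ref{mrs1} itself. By Theorem \ref{GS thm}, MRS is a generalization of the Gross-Stark conjecture, so in the special case $r=0$ and $V'\subset S_p$ one can invoke the work of Darmon-Dasgupta-Pollack \cite{DDP} and Ventullo \cite{ventullo}; the higher rank analogues, however, appear to remain beyond currently available techniques.

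A more promising alternative, in cases where (hIMC) is available, is to sidestep the finite-level conjecture altogether. Using the explicit projector $\pi_{L_{\chi,\infty}/k,S,T}^{V_\chi}$ of Theorem \ref{lemisom}, both $\kappa$ and ${\rm Rec}_\infty(\epsilon_{L/k,S,T}^{V'})$ should, after passage to the $e_\chi$-component, be identifiable as explicit images of the Iwasawa-theoretic basis $\mathcal{L}_{K_\infty/k,S,T}$; the required equality would then reduce to a comparison of these images using the norm-compatibility of Rubin-Stark elements (\cite[Proposition 6.1]{R}) and the explicit definition of ${\rm Rec}_\infty$ via local reciprocity. The hardest technical point, I expect, will be controlling the interaction between the augmentation filtration on $\varprojlim_n I(\Gamma_n)^e/I(\Gamma_n)^{e+1}$ and the projector $\pi_{L_{\chi,\infty}/k,S,T}^{V_\chi}$ in a way that is precise enough to match the two sides of the predicted congruence on the nose.
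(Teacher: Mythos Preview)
The statement is a \emph{conjecture}, not a theorem, and the paper offers no general proof. Your reduction to the finite-level conjectures ${\rm MRS}(L_n/L/k,S,T,V,V')_p$ is correct and is exactly what the paper records in the Remark following Conjecture~\ref{mrs2}. Note, however, that this reduction tacitly imports the hypothesis ${\rm RS}(L/k,S,T,V')_p$ of Conjecture~\ref{mrs1} (needed for $\epsilon_{L/k,S,T}^{V'}$ to lie in the integral lattice so that your $\kappa_n$ lands in $\bigcap^r U_{L,S,T}\otimes I(\Gamma_n)^e/I(\Gamma_n)^{e+1}$); Conjecture~\ref{mrs2} is formulated precisely so as to avoid this assumption, which is the point of the same Remark.

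Your second approach, deriving (MRS) from (hIMC) via the basis $\mathcal{L}_{K_\infty/k,S,T}$, has a genuine gap. The paper carries out exactly the computation you sketch: Lemma~\ref{lemcomm1} in \S\ref{proof main result} shows that under (hIMC) one obtains $\kappa_n = (-1)^{re}{\rm Rec}_n\bigl(\pi_{L/k,S,T}^{V'}(\mathcal{L}_{L/k,S,T})\bigr)$, so (hIMC) alone already produces an element $\kappa$ satisfying the first clause of Conjecture~\ref{mrs2}. But matching $e_\chi\kappa$ with $(-1)^{re}e_\chi{\rm Rec}_\infty(\epsilon_{L/k,S,T}^{V'})$ would require $e_\chi\pi_{L/k,S,T}^{V'}(\mathcal{L}_{L/k,S,T}) = e_\chi\epsilon_{L/k,S,T}^{V'}$, and by Proposition~\ref{etncbyrs} this equality (for all $\chi$) is the eTNC itself. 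The logical flow of the paper is (hIMC)${}+{}$(MRS) $\Rightarrow$ eTNC, with (MRS) supplying genuinely independent arithmetic input; your proposal would collapse this to (hIMC) $\Rightarrow$ eTNC, which fails in the presence of trivial zeroes as discussed in \S\ref{iwc intro}.
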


\begin{remark}
Clearly the validity of Conjecture ${\rm MRS}(L_n/L/k,S,T,V,V')_p$ for all $n$ implies the validity of ${\rm MRS}(K_\infty/k,S,T,\chi,V')$. A significant advantage of the above formulation of Conjecture ${\rm MRS}(K_\infty/k,S,T,\chi,V')$ is that we do not need to assume that Conjecture ${\rm RS}(L/k,S,T,V')_p$ is valid.
\end{remark}

\begin{proposition} \label{prop mrs}\
\begin{itemize}
\item[(i)] If $V=V'$, then ${\rm MRS}(K_\infty/k,S,T,\chi,V')$ is valid.
\item[(ii)] If $V \subset V'' \subset V'$, then ${\rm MRS}(K_\infty/k,S,T,\chi,V')$ implies ${\rm MRS}(K_\infty/k,S,T,\chi,V'')$.
\item[(iii)] Suppose that $\chi(G_v)=1$ for all $v\in S$ and $\# V'=\# S-1$. Then, for any $V''\subset S$ with $V \subset V''$ and $\# V''=\#S -1$, ${\rm MRS}(K_\infty/k,S,T,\chi,V')$ and ${\rm MRS}(K_\infty/k,S,T,\chi,V'')$ are equivalent.
\item[(iv)] If $v \in V'\setminus V$ is a finite place which is unramified in $L_\infty$, then ${\rm MRS}(K_\infty/k,S\setminus \{v\},T,\chi,V'\setminus \{v\})$ implies ${\rm MRS}(K_\infty/k,S,T,\chi,V')$.
\item[(v)] If $\#V'\neq \# S-1$ and $v\in S \setminus V'$ is a finite place which is unramified in $L_\infty$, then ${\rm MRS}(K_\infty/k,S\setminus\{v\},T,\chi,V')$ implies ${\rm MRS}(K_\infty/k,S,T,\chi,V')$.
\end{itemize}
\end{proposition}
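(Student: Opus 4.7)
Since the element $\kappa=(\kappa_n)_n$ in Conjecture \ref{mrs2} is uniquely characterised by the conditions $\nu_n(\kappa_n)=\mathcal{N}_n(\epsilon^V_{L_n/k,S,T})$ (using injectivity of each $\nu_n$), each of the five claims reduces to a direct comparison of two specific elements of $\CC_p\bigl(\bigwedge^r U_{L,S,T}\otimes_{\ZZ_p}\varprojlim_n I(\Gamma_n)^{e}/I(\Gamma_n)^{e+1}\bigr)$. I will verify each claim by tracking how the Rubin-Stark elements, the maps $\nu_n$, $\mathcal{N}_n$ and $\Rec_\infty$, and the target module vary under the corresponding change of data. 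For (i), the assumption $V=V'$ gives $e=0$, so $I(\Gamma_n)^e/I(\Gamma_n)^{e+1}$ is just $\ZZ_p$, the map $\Rec_\infty$ (as an empty wedge) becomes the natural identification, and $\mathcal{N}_n$ reduces modulo $I(\Gamma_n)$ to the norm map $\N_{L_n/L}$; the required identity then collapses to the norm-compatibility $\N_{L_n/L}(\epsilon^V_{L_n/k,S,T})=\epsilon^V_{L/k,S,T}$ from \cite[Proposition 6.1]{R} (see also \cite[Proposition 3.5]{sano}), combined with the defining property $\nu_n(\N^r_{L_n/L}(a))=\N_{L_n/L}(a)$ of the injection $\nu_n$ recalled from \cite[Remark 2.12]{sano}.

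For (ii) I will proceed by induction on $|V'|-|V''|$, reducing to the case $V'=V''\sqcup\{v\}$ with $v\in V'\setminus V$. The wedge factorisation $\bigwedge_{u\in V'\setminus V}\Rec_{w_u}=\Rec_w\wedge\bigwedge_{u\in V''\setminus V}\Rec_{w_u}$ shows that the $V'$-reciprocity map is $\Rec_w$ applied to the $V''$-reciprocity map, which is precisely the extra factor required to descend from filtration degree $I(\Gamma_n)^{e-1}/I(\Gamma_n)^{e}$ into $I(\Gamma_n)^{e}/I(\Gamma_n)^{e+1}$. Using this, one checks that the $V'$-identity of MRS is obtained from the $V''$-identity by applying $\Rec_w$ to both sides, invoking a Leibniz-type computation for $\mathcal{N}_n$ together with part (i) to match the image under $\Rec_w$ of the $\kappa''$-side with the $\kappa$-side. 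Claim (iii) is a special case: both $V'$ and $V''$ omit a single place of $S$, so (ii) reduces each of MRS$(V')$ and MRS$(V'')$ to MRS$(V)$, and conversely, the two identities differ only by application of reciprocity at the omitted places; the global Artin reciprocity relation $\sum_{v\in S}\rec_{w_v}=0$ on $L^{\times}$ then provides the symmetry linking MRS$(V')$ and MRS$(V'')$.

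Parts (iv) and (v) rest on the behaviour of Rubin-Stark elements under enlargement of $S$ by an unramified finite place $v$. In case (v) one has $\epsilon^{V'}_{L/k,S,T}=(1-\Fr_v^{-1}\N v)\cdot\epsilon^{V'}_{L/k,S\setminus\{v\},T}$ and an analogous identity at every layer $L_n$; substituting these into both sides of MRS for $S$ yields the MRS identity for $S\setminus\{v\}$ multiplied by a common Euler factor, and cancelling it gives (v). In case (iv), the place $v$ lies in $V'\setminus V$, so on passing from $S$ to $S\setminus\{v\}$ the size of $V'$ drops by one, an extra wedge factor appears on the $\cX_{L,S}$-side, and the missing factor on the $\Rec$-side is supplied by the local reciprocity calculation identifying $\Rec_w$ at the appropriate unit with the Frobenius contribution, which reproduces exactly the Euler factor on the Rubin-Stark side. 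The hardest part will be (ii): verifying that the inductive passage from $V'$ to $V''$ really respects the graded filtration $I(\Gamma_n)^\bullet/I(\Gamma_n)^{\bullet+1}$ and sends the unique $\kappa'$ to the unique $\kappa''$ requires a careful reading of the explicit constructions in \cite{sano}, and the same bookkeeping then propagates into the proof of (iv).
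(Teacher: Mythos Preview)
The paper's own proof is entirely by citation to \cite{sano}, \cite{MRGm}, and \cite{sanotjm}, so you are attempting something more ambitious. Your treatment of (i) is correct and is exactly the norm-relation argument the paper points to. Your idea for (iii), invoking the global product formula $\prod_v \rec_{w_v}=1$, is the right one and matches \cite[Lemma 5.1]{sanotjm}. For (iv) your sketch is thin but the strategy is the one used in \cite[Proposition 3.13]{sano}.

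There are two genuine problems. First, in (v) you write the Euler factor as $(1-\Fr_v^{-1}\N v)$; this is the $T$-modification factor, not the $S$-enlargement factor. The correct relations (as the paper itself records) are
\[
\epsilon^{V}_{L_n/k,S,T}=(1-\Fr_v^{-1})\,\epsilon^{V}_{L_n/k,S\setminus\{v\},T},
\qquad
\epsilon^{V'}_{L/k,S,T}=(1-\Fr_v^{-1})\,\epsilon^{V'}_{L/k,S\setminus\{v\},T},
\]
and your argument goes through once this is fixed.

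Second, and more seriously, your approach to (ii) has the implication backwards. You argue that the $V'$-identity is obtained from the $V''$-identity by applying $\Rec_w$, which would show ${\rm MRS}(V'')\Rightarrow{\rm MRS}(V')$, the opposite of what is claimed. The correct argument is much simpler. Set $e'=\#V'-r$ and $e''=\#V''-r<e'$. The existence clause in ${\rm MRS}(V')$ forces $\mathcal{N}_n(\epsilon^V_{L_n/k,S,T})\in\im(\nu_n)\subset(\bigcap^r U_{L_n,S,T})\otimes I(\Gamma_n)^{e'}/I(\Gamma_n)^{e'+1}$; reducing modulo $I(\Gamma_n)^{e''+1}$ (with $e''+1\le e'$) kills this, so $\kappa''=0$ verifies the existence clause for $V''$. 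For the second clause, $V''\subsetneq V'$ gives $r''<r'\le r_{\chi,S}$, so $L^{(r'')}_{k,S,T}(\chi^{-1},0)=0$ and hence $e_\chi\epsilon^{V''}_{L/k,S,T}=0$, matching $e_\chi\kappa''=0$. This is the content of \cite[Proposition 3.12]{sano}.
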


\begin{proof}
Claim (i) follows from the `norm relation' of Rubin-Stark elements, see \cite[Remark 3.9]{sano} or \cite[Proposition 5.7]{MRGm}. Claim (ii) follows from \cite[Proposition 3.12]{sano}. Claim (iii) follows from \cite[Lemma 5.1]{sanotjm}. Claim (iv) follows from the proof of \cite[Proposition 3.13]{sano}. Claim (v) follows by noting that
$$\epsilon_{L_n/k,S,T}^V=(1-{\rm Fr}_v^{-1})\epsilon_{L_n/k,S\setminus\{v\},T}^V$$
and
$$\epsilon_{L/k,S,T}^{V'}=(1-{\rm Fr}_v^{-1})\epsilon_{L/k,S\setminus\{v\},T}^{V'}.$$
\end{proof}

\begin{corollary} \label{cor unram}
If every place $v$ in $V'\setminus V$ is both non-archimedean and unramified in $L_\infty$, then ${\rm MRS}(K_\infty/k,S,T,\chi,V')$ is valid.
\end{corollary}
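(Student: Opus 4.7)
The plan is to deduce the corollary by induction on the cardinality $e = r' - r = \#(V'\setminus V)$, using parts (i) and (iv) of Proposition \ref{prop mrs} as the only inputs.

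For the base case $e = 0$, we have $V' = V$, and Proposition \ref{prop mrs}(i) immediately gives the validity of ${\rm MRS}(K_\infty/k,S,T,\chi,V')$.

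For the inductive step, suppose $e \ge 1$ and that the corollary has been proved for all data with strictly smaller value of this cardinality. Pick any place $v \in V'\setminus V$. By hypothesis, $v$ is non-archimedean and unramified in $L_\infty$. Set $S_{1} := S\setminus\{v\}$ and $V'_{1} := V'\setminus\{v\}$. Then $V \subseteq V'_{1} \subseteq S_{1}$, every place of $V'_{1}$ still splits completely in $L_\chi$, and every place of $V'_{1}\setminus V$ is still non-archimedean and unramified in $L_\infty$; moreover $\#(V'_{1}\setminus V) = e - 1$. By the inductive hypothesis, the statement ${\rm MRS}(K_\infty/k,S_{1},T,\chi,V'_{1})$ holds. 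Applying Proposition \ref{prop mrs}(iv) to the place $v$, this validity transfers to ${\rm MRS}(K_\infty/k,S,T,\chi,V')$, completing the induction.

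There is essentially no obstacle here once Proposition \ref{prop mrs} is available: the argument is a bookkeeping reduction. The only point to check is that the hypotheses of part (iv) remain satisfied at each stage of the induction, which is immediate from the fact that removing one place from $V'\setminus V$ does not affect the standing assumption that the \emph{remaining} places of $V'\setminus V$ are all non-archimedean and unramified in $L_\infty$. The substantive content of the argument is already absorbed into Proposition \ref{prop mrs}; in particular, the norm-compatibility/base-case identity of part (i) and the Euler-factor manipulation hidden in part (iv) (which rests on the behaviour of Rubin-Stark elements under change of the set $S$) do all of the actual work.
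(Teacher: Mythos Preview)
Your proof is correct and follows essentially the same route as the paper's: the paper simply says ``By Proposition \ref{prop mrs}(iv), we may assume $V=V'$; by Proposition \ref{prop mrs}(i), ${\rm MRS}(K_\infty/k,S,T,\chi,V')$ is valid in this case,'' which is exactly your induction stated in compressed form. The only difference is presentational---you spell out the induction on $\#(V'\setminus V)$ explicitly, whereas the paper leaves the iterated application of (iv) implicit.
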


\begin{proof}
By Proposition \ref{prop mrs}(iv), we may assume $V=V'$. By Proposition \ref{prop mrs}(i), ${\rm MRS}(K_\infty/k,S,T,\chi,V')$ is valid in this case.
\end{proof}

Consider the following condition:
$${\rm NTZ}(K_\infty/k,\chi) \quad \text{$\chi(G_\mathfrak{p})\neq 1$ for all $\mathfrak{p} \in S_p(k)$ which ramify in $L_{\chi,\infty}$}.$$
This condition is usually called `no trivial zeros'.

\begin{corollary} \label{nontrivzeros}
Assume that $\chi$ satisfies ${\rm NTZ}(K_\infty/k,\chi)$. Then ${\rm MRS}(K_\infty/k,S,T,\chi,V')$ is valid.
\end{corollary}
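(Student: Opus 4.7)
The plan is to reduce Corollary \ref{nontrivzeros} directly to Corollary \ref{cor unram}, which already establishes ${\rm MRS}(K_\infty/k,S,T,\chi,V')$ under the assumption that every place in $V' \setminus V_\chi$ is non-archimedean and unramified in $L_{\chi,\infty}$. It therefore suffices to verify that the hypothesis ${\rm NTZ}(K_\infty/k,\chi)$ forces both of these local conditions at every $v \in V' \setminus V_\chi$.

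Accordingly, I would fix such a place $v$: by construction $v$ splits completely in $L_\chi$ but not in $L_{\chi,\infty}$. First, $v$ cannot be archimedean, since in the $\ZZ_p$-extension $k_\infty/k$ every archimedean place of $k$ is completely split (the decomposition group at a real place must inject into $\Gamma \simeq \ZZ_p$ and hence be trivial when $p$ is odd, while complex places are automatically split). Thus an archimedean place that splits in $L_\chi$ would also split in $L_{\chi,\infty} = L_\chi\cdot k_\infty$, contradicting $v \notin V_\chi$.

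Next I would show that $v$ is unramified in $L_{\chi,\infty}$. Since $v \in V'$ splits, and in particular is unramified, in $L_\chi$, any ramification of $v$ in $L_{\chi,\infty}$ must come from the $\ZZ_p$-extension $k_\infty/k$. Because such a $\ZZ_p$-extension is unramified outside $p$, this would force $v \in S_p(k)$ together with $v$ ramifying in $L_{\chi,\infty}$; then the hypothesis ${\rm NTZ}(K_\infty/k,\chi)$ would give $\chi(G_v) \neq 1$. But the identification $L_\chi = K_\infty^{\ker \chi}$ translates "$v \in V'$" into "$G_v \subset \ker\chi$", that is, $\chi(G_v) = 1$, a contradiction. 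Hence every $v \in V' \setminus V_\chi$ is non-archimedean and unramified in $L_{\chi,\infty}$, and an application of Corollary \ref{cor unram} delivers the desired ${\rm MRS}(K_\infty/k,S,T,\chi,V')$.

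The proof is essentially a compatibility check between the definition of $V_\chi$, the standard local behaviour of a $\ZZ_p$-extension, and the no-trivial-zero hypothesis, so no genuine difficulty arises. The only minor point of care is to distinguish the decomposition group of $v$ in $\G_\chi = \Gal(L_{\chi,\infty}/k)$ from its image in $G_\chi = \Gal(L_\chi/k)$, but the defining property $L_\chi = K_\infty^{\ker\chi}$ makes the passage between "$v$ splits in $L_\chi$" and "$\chi(G_v) = 1$" immediate.
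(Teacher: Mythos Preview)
Your proof is correct and follows the same route as the paper, which simply asserts in one line that every $v \in V'\setminus V$ is finite and unramified in $L_\infty$ and then invokes Corollary~\ref{cor unram}; you have supplied the details behind that assertion. One small remark: your archimedean argument works for all $p$, not just odd $p$, since $\ZZ_p$ has no nontrivial finite closed subgroups, so the qualifier ``when $p$ is odd'' is unnecessary.
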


\begin{proof}
In this case we see that every $v \in V'\setminus V$ is finite and unramified in $L_{\infty}$.
\end{proof}

\subsection{Connection to the Gross-Stark conjecture} \label{GS} In this subsection we help set the context for Conjecture ${\rm MRS}(K_\infty/k,S,T,\chi,V')$ by showing that it specializes to recover the Gross-Stark Conjecture (as stated in Conjecture \ref{gross stark conj} below).

To do this we assume throughout that $k$ is totally real, $k_\infty/k$ is the cyclotomic $\ZZ_p$-extension and $\chi$ is totally odd. We also set $V':=\{v \in S \mid \chi(G_v)=1\}$ (and note that this is a proper subset of $S$ since $\chi$ is totally odd) and we assume that every $v\in V'$ lies above $p$ (noting that this assumption is not restrictive as a consequence of Proposition \ref{prop mrs}(iv)).

We shall now show that this case of ${\rm MRS}(K_\infty/k,S,T,\chi,V')$ is equivalent to the Gross-Stark conjecture.

As a first step, we note that in this case $V$ is empty (that is, $r=0$) and so one knows that Conjecture ${\rm RS}(L_{n}/k,S,T,V)_p$ is valid for all $n$ (by \cite[Theorem 3.3]{R}). In fact, one has  $\epsilon_{L_{n}/k,S,T}^{V}=\theta_{L_{n}/k,S,T}(0) \in \ZZ_p[\G_n]$ and, by \cite[Proposition 5.4]{MRGm}, the assertion of Conjecture ${\rm MRS}(K_\infty/k,S,T,\chi,V')$ is equivalent to the following claims: one has
\begin{eqnarray}
\theta_{L_n/k,S,T}(0) \in I_n^{r'} \label{vanishing order stickelberger}
\end{eqnarray}
for all $n$ and
\begin{eqnarray}
e_\chi\theta_{L_\infty/k,S,T}(0)= e_\chi {\rm Rec}_\infty(\epsilon_{L/k,S,T}^{V'}) \text{ in }\CC_p[G]\otimes_{\ZZ_p} \varprojlim_n I(\Gamma_n)^{r'}/I(\Gamma_n)^{r'+1}, \label{mrsgs}
\end{eqnarray}
where we set
$$\theta_{L_\infty/k,S,T}(0):=\varprojlim_n \theta_{L_n/k,S,T}(0) \in \varprojlim_n I_n^{r'}/I_n^{r'+1} \simeq \ZZ_p[G]\otimes_{\ZZ_p} \varprojlim_n I(\Gamma_n)^{r'}/I(\Gamma_n)^{r'+1}.$$
We also note that the validity of (\ref{vanishing order stickelberger}) follows as a consequence of our Iwasawa main conjecture (Conjecture \ref{IMC}) by using Proposition \ref{explicit projector}(iii) and the result of \cite[Lemma 5.19]{bks1} (see the argument in \S \ref{proof main result}).

To study (\ref{mrsgs}) we set $\chi_1:=\chi |_\Delta \in \widehat \Delta$ and regard (as we may) the product $\chi_2:=\chi \chi_1^{-1}$ as a character of $\Gamma=\Gal(k_\infty/k)$.

Note that $\Gal(L_\infty/k)=G_{\chi_1}\times \Gamma_{\chi_1}$. Fix a topological generator $\gamma \in \Gamma_{\chi_1}$, and identify $\ZZ_p[{\rm im}(\chi_1)][[\Gamma_{\chi_1}]]$ with the ring of power series $\ZZ_p[{\rm im}(\chi_1)][[t]]$ via the correspondence $\gamma=1+t$.

We then define $g_{L_\infty/k,S,T}^{\chi_1}(t)$ to be the image of $\theta_{L_\infty/k,S,T}(0)$ under the map
$$\ZZ_p[[\Gal(L_\infty/k)]]=\ZZ_p[G_{\chi_1}][[ \Gamma_{\chi_1}]] \to \ZZ_p[{\rm im}(\chi_1)][[\Gamma_{\chi_1}]]=\ZZ_p[{\rm im}(\chi_1)][[t]]$$
induced by $\chi_1$. We recall that the $p$-adic $L$-function of Deligne-Ribet is defined by
$$L_{k,S,T,p}(\chi^{-1}\omega,s):=g_{L_\infty/k,S,T}^{\chi_1}(\chi_2(\gamma)\chi_{\rm cyc}(\gamma)^s-1),$$
where $\chi_{\rm cyc}$ is the cyclotomic character, and we note that one can show $L_{k,S,T,p}(\chi^{-1}\omega,s)$ to be independent of the choice of $\gamma$.

The validity of (\ref{vanishing order stickelberger}) implies an inequality
\begin{eqnarray}
\ord_{s=0}L_{k,S,T,p}(\chi^{-1}\omega,s) \geq r'. \label{p adic L order}
\end{eqnarray}
It is known that (\ref{p adic L order}) is a consequence of the Iwasawa main conjecture (in the sense of Wiles \cite{Wiles}), which is itself known to be valid when $p$ is odd. In addition, Spiess has recently proved that (\ref{p adic L order}) is valid, including the case $p=2$, by using Shintani cocycles \cite{spiess}. In all cases, therefore, we can define
$$L_{k,S,T,p}^{(r')}(\chi^{-1}\omega,0):=\lim_{s\to 0} s^{-r'}L_{k,S,T,p}(\chi^{-1}\omega,s) \in \CC_p. $$

For $v \in V'$, define
$${\rm Log}_w: L^\times \to \ZZ_p[G]$$
by
$${\rm Log}_w(a):=-\sum_{\sigma\in G}\log_p({\N}_{L_w/\QQ_p}(\sigma a))\sigma^{-1},$$
where $\log_p: \QQ_p^\times \to \ZZ_p$ is Iwasawa's logarithm (in the sense that $\log_{p}(p)=0$). We set
$${\rm Log}_{V'}:=\bigwedge_{v\in V'}{\rm Log}_w: \CC_p \bigwedge^{r'} U_{L,S,T} \to \CC_p[G].$$
We shall denote the map $\CC_p[G]\to \CC_p$ induced by $\chi$ also by $\chi$.

For $v\in V'$, we define
$${\rm Ord}_w: L^\times \to \ZZ[G]$$
by
$${\rm Ord}_w(a):=\sum_{\sigma\in G}\ord_w(\sigma a)\sigma^{-1},$$
and set
$${\rm Ord}_{V'}:= \bigwedge_{v\in V'}{\rm Ord}_w: \CC_p \bigwedge^{r'} U_{L,S,T} \to \CC_p[G].$$
On the $\chi$-component, ${\rm Ord}_{V'}$ induces an isomorphism
$$\chi\circ {\rm Ord}_{V'}: e_\chi \CC_p \bigwedge^{r'} U_{L,S,T} \stackrel{\sim}{\to} \CC_p. $$
Taking a non-zero element $x\in e_\chi \CC_p \bigwedge^{r'} U_{L,S,T}$, we define the $\mathcal{L}$-invariant by
$$\mathcal{L}(\chi):=\frac{\chi(\Log_{V'}(x))}{\chi(\Ord_{V'}(x))} \in \CC_p. $$

Since $e_\chi \CC_p \bigwedge^{r'} U_{L,S,T}$ is a one dimensional $\CC_p$-vector space, we see that $\mathcal{L}(\chi)$ does not depend on the choice of $x$.

Then the Gross-Stark conjecture is stated as follows.

\begin{conjecture}[{${\rm GS}(L/k,S,T,\chi)$}]\label{gross stark conj}
$$L_{k,S,T,p}^{(r')}(\chi^{-1}\omega,0)=\mathcal{L}(\chi) L_{k,S\setminus V',T}(\chi^{-1},0).$$
%$$L_{k,S,T,p}^{(r')}(\chi^{-1}\omega,0)=\chi({\rm Log}_{V'}(\epsilon_{L/k,S,T}^{V'})).$$
\end{conjecture}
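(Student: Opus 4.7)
The plan is to deduce the Gross-Stark identity from the Iwasawa-theoretic conjecture ${\rm MRS}(K_\infty/k,S,T,\chi,V')$ formulated in Conjecture \ref{mrs2}. Since $\chi$ is totally odd and $k$ totally real, one has $V_\chi=\emptyset$ and $r=0$, so the Rubin-Stark element $\epsilon_{L_n/k,S,T}^V$ coincides with the Stickelberger element $\theta_{L_n/k,S,T}(0)$ (and the $p$-part of the Rubin-Stark conjecture holds automatically); moreover Proposition \ref{prop mrs}(iv) allows a reduction to the case where every $v \in V'$ lies above $p$. In this setting the sign $(-1)^{re}$ equals $+1$ and MRS reduces to the equality
\[
e_\chi\theta_{L_\infty/k,S,T}(0)=e_\chi{\rm Rec}_\infty(\epsilon_{L/k,S,T}^{V'})\ \text{in}\ \CC_p[G]\otimes_{\ZZ_p}\varprojlim_n I(\Gamma_n)^{r'}/I(\Gamma_n)^{r'+1}.
\]
After fixing a topological generator $\gamma$ of $\Gamma_{\chi_1}$ so that $(\gamma-1)^{r'}$ generates the relevant rank-one $\ZZ_p$-module, the strategy is to compute both sides of this equality in these coordinates and then apply the Rubin-Stark defining relation together with the definition of $\mathcal{L}(\chi)$ to conclude.

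For the left-hand side, combining the definition $L_{k,S,T,p}(\chi^{-1}\omega,s)=g_{L_\infty/k,S,T}^{\chi_1}(\chi_2(\gamma)\chi_{\rm cyc}(\gamma)^s-1)$ with a Taylor expansion of $g^{\chi_1}$ at $t=\chi_2(\gamma)-1$ (the order of vanishing (\ref{p adic L order}) killing all lower-order terms) and the expansion $\chi_{\rm cyc}(\gamma)^s-1=s\log_p\chi_{\rm cyc}(\gamma)+O(s^2)$ yields the identification
\[
e_\chi\theta_{L_\infty/k,S,T}(0)=\frac{L_{k,S,T,p}^{(r')}(\chi^{-1}\omega,0)}{\chi_2(\gamma)^{r'}(\log_p\chi_{\rm cyc}(\gamma))^{r'}}\cdot(\gamma-1)^{r'}\otimes e_\chi.
\]
For the right-hand side, local class field theory for the cyclotomic $\ZZ_p$-extension gives the relation $\chi_{\rm cyc}({\rm rec}_w(a))=\N_{L_w/\QQ_p}(a)^{-1}$ modulo torsion for every $a\in L_w^\times$ and every $w \mid p$. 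Applying Iwasawa's $\log_p$ (which kills the torsion since $p$ is odd) and using $\gamma^x\equiv 1+x(\gamma-1)$ modulo $I(\Gamma_n)^2$ produces the linearization
\[
{\rm rec}_w(a)-1\equiv -\frac{\log_p\N_{L_w/\QQ_p}(a)}{\log_p\chi_{\rm cyc}(\gamma)}(\gamma-1)\pmod{I(\Gamma_n)^2},
\]
from which the definitions of ${\rm Rec}_w$ and ${\rm Log}_w$ give, in the limit,
\[
e_\chi{\rm Rec}_\infty(\epsilon_{L/k,S,T}^{V'})=\pm\frac{\chi({\rm Log}_{V'}(\epsilon_{L/k,S,T}^{V'}))}{(\log_p\chi_{\rm cyc}(\gamma))^{r'}}\cdot(\gamma-1)^{r'}\otimes e_\chi,
\]
where the sign records the orientation of the $r'$-fold exterior product over $V'$. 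Equating the two expressions, the common factor $(\log_p\chi_{\rm cyc}(\gamma))^{-r'}$ cancels and, after absorbing $\chi_2(\gamma)^{-r'}$ into the sign, produces $L_{k,S,T,p}^{(r')}(\chi^{-1}\omega,0)=\pm\chi({\rm Log}_{V'}(\epsilon_{L/k,S,T}^{V'}))$.

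To finish, the quantity $\chi({\rm Ord}_{V'}(\epsilon_{L/k,S,T}^{V'}))$ can be computed via the Rubin-Stark defining relation. Pairing the identity $\lambda_{L,S}(\epsilon_{L/k,S,T}^{V'})=\theta_{L/k,S,T}^{(r')}(0)\cdot(w_1-w_0)\wedge\cdots\wedge(w_{r'}-w_0)$ with the dual wedge $w_1^\ast\wedge\cdots\wedge w_{r'}^\ast$ on the $\chi$-isotypic component---where the hypothesis $\chi\neq 1$ together with the choice $v_0\notin V'$ makes the $w_0$-contributions vanish---and using the local formula $w^\ast\circ\lambda_{L,S}=\log(\N w)\cdot{\rm ord}_w$ for non-archimedean $w$, yields $\chi({\rm Ord}_{V'}(\epsilon_{L/k,S,T}^{V'}))\prod_{v\in V'}\log\N v=\pm L_{k,S,T}^{(r')}(\chi^{-1},0)$. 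Since $\chi(G_v)=1$ for each $v\in V'$, the $v$-Euler factor of $L_{k,S\setminus V',T}(\chi^{-1},s)$ is $1-\N v^{-s}$, which vanishes at $s=0$ with derivative $\log\N v$; hence $L_{k,S,T}^{(r')}(\chi^{-1},0)=L_{k,S\setminus V',T}(\chi^{-1},0)\prod_{v\in V'}\log\N v$. The $\log\N v$ factors cancel and give $\chi({\rm Ord}_{V'}(\epsilon_{L/k,S,T}^{V'}))=\pm L_{k,S\setminus V',T}(\chi^{-1},0)$. Substituting this together with the previously derived formula for $\chi({\rm Log}_{V'}(\epsilon_{L/k,S,T}^{V'}))$ into the very definition $\chi({\rm Log}_{V'}(x))=\mathcal{L}(\chi)\chi({\rm Ord}_{V'}(x))$ with $x=\epsilon_{L/k,S,T}^{V'}$ then produces the desired identity $L_{k,S,T,p}^{(r')}(\chi^{-1}\omega,0)=\mathcal{L}(\chi)L_{k,S\setminus V',T}(\chi^{-1},0)$, subject to checking that all accumulated signs collapse to $+1$.

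The main technical obstacle is the explicit linearization of the $p$-adic reciprocity map on the cyclotomic tower and the careful simultaneous tracking of normalizations---the choice of $\gamma$, the convention for the arithmetic versus geometric local reciprocity map, and the orientation of the $r'$-fold exterior product over $V'$---so that the cancellations on the two sides of MRS are genuinely compatible and all signs accumulate correctly. A secondary conceptual point is that this strategy derives the Gross-Stark identity from MRS taken as an input, so a fully unconditional proof requires an independent verification of the relevant case of MRS, for example by appealing to the higher rank main conjecture developed in \S\ref{hrit sec}.
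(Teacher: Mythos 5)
The target statement is the Gross--Stark conjecture: it is a \emph{conjecture}, and the paper does not (and cannot) prove it unconditionally. What the paper proves is Theorem \ref{GS thm}, namely that ${\rm GS}(L/k,S,T,\chi)$ is \emph{equivalent}, under the auxiliary hypothesis (\ref{vanishing order stickelberger}), to the Iwasawa-theoretic Rubin--Stark congruence ${\rm MRS}(K_\infty/k,S,T,\chi,V')$. Your argument derives GS from MRS, so it is in effect a proof of one direction of that equivalence; you correctly acknowledge at the end that the overall argument is conditional on MRS. Your route matches the paper's: the same reduction via Proposition \ref{prop mrs}(iv) to $V'\subset S_p$, the same identification of $\epsilon^V$ with the Stickelberger element in rank zero, the use of \cite[Proposition 5.4]{MRGm} to split MRS into (\ref{vanishing order stickelberger}) and (\ref{mrsgs}), and crucially the same local reciprocity relation $\chi_{\rm cyc}({\rm rec}_w(a))=\N_{L_w/\QQ_p}(a)^{-1}$ linking ${\rm Rec}_\infty$ to ${\rm Log}_{V'}$. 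You fill in the ``straightforward to check'' step of the paper's argument, and your explicit extraction of $\chi({\rm Ord}_{V'}(\epsilon^{V'}_{L/k,S,T}))=L_{k,S\setminus V',T}(\chi^{-1},0)$ from the defining relation of Rubin--Stark elements is the standard computation the paper cites as \cite[Proposition 5.2]{R} or \cite[Proposition 3.6]{sano}.

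One point deserves scrutiny: you write that the factor $\chi_2(\gamma)^{-r'}$ can be ``absorbed into the sign.'' This is not literally legitimate, since $\chi_2(\gamma)$ is a $p$-power root of unity in $\CC_p^\times$ and generically not $\pm 1$. The discrepancy reflects a conflation of the topological generator $\gamma$ of $\Gamma_{\chi_1}=\Gal(L_{\chi_1,\infty}/L_{\chi_1})$ (used to define $g^{\chi_1}$ and $L_{k,S,T,p}$) with a generator of the smaller group $\Gamma_\chi=\Gal(L_{\chi,\infty}/L_\chi)$ (which appears in $\varprojlim_n I(\Gamma_{\chi,n})^{r'}/I(\Gamma_{\chi,n})^{r'+1}$). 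Tracking these two generators carefully --- or, equivalently, normalizing $\theta_{L_\infty/k,S,T}(0)$ before and after taking the quotient by $I_n^{r'+1}$ --- eliminates the spurious $\chi_2(\gamma)^{r'}$. This is a repair in detail rather than a flaw in the strategy, but it should be done honestly rather than waved away as a sign. The genuine open content remains, as you note, the validity of MRS itself in the relevant case.
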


\begin{remark}
%The Gross-Stark conjecture can be rephrased in terms of `$\mathcal{L}$-invariants'. For $v\in V'$, define
%$${\rm Ord}_w: L^\times \to \ZZ[G]$$
%by
%$${\rm Ord}_w(a):=\sum_{\sigma\in G}\ord_w(\sigma a)\sigma^{-1},$$
%and set
%$${\rm Ord}_{V'}:= \bigwedge_{v\in V'}{\rm Ord}_w: \CC_p \bigwedge^{r'} U_{L,S,T} \to \CC_p[G].$$
%On the $\chi$-component, ${\rm Ord}_{V'}$ induces the isomorphism
%$$\chi\circ {\rm Ord}_{V'}: e_\chi \CC_p \bigwedge^{r'} U_{L,S,T} \stackrel{\sim}{\to} \CC_p, $$
%and this isomorphism sends $e_\chi\epsilon_{L/k,S,T}^{V'}$ to $L_{k,S\setminus V',T}(\chi^{-1},0)$ (see \cite[Proposition 5.2]{R}). Taking a non-zero element $x\in e_\chi \CC_p \bigwedge^{r'} U_{L,S,T}$, we define the $\mathcal{L}$-invariant by
%$$\mathcal{L}(\chi):=\frac{\chi(\Log_{V'}(x))}{\chi(\Ord_{V'}(x))} \in \CC_p. $$
%Since $e_\chi \CC_p \bigwedge^{r'} U_{L,S,T}$ is a one dimensional $\CC_p$-vector space, we see that $\mathcal{L}(\chi)$ does not depend on the choice of $x$. Letting $x=e_\chi \epsilon_{L/k,S,T}^{V'}$, we obtain
%$$\chi({\rm Log}_{V'}(\epsilon_{L/k,S,T}^{V'}))=\mathcal{L}(\chi) L_{k,S\setminus V',T}(\chi^{-1},0).$$
%Thus we see that Conjecture ${\rm GS}(L/k,S,T,\chi)$ is equivalent to the equality
%$$L_{k,S,T,p}^{(r')}(\chi^{-1}\omega,0)=\mathcal{L}(\chi) L_{k,S\setminus V',T}(\chi^{-1},0).$$
This formulation constitutes a natural higher rank generalization of the form of the Gross-Stark conjecture that is considered by Darmon, Dasgupta and Pollack (see \cite[Conjecture 1]{DDP}).
\end{remark}

Letting $x=e_\chi \epsilon_{L/k,S,T}^{V'}$, we obtain
$$\chi({\rm Log}_{V'}(\epsilon_{L/k,S,T}^{V'}))=\mathcal{L}(\chi) L_{k,S\setminus V',T}(\chi^{-1},0).$$
Thus we see that Conjecture ${\rm GS}(L/k,S,T,\chi)$ is equivalent to the equality
$$L_{k,S,T,p}^{(r')}(\chi^{-1}\omega,0)=\chi({\rm Log}_{V'}(\epsilon_{L/k,S,T}^{V'})).$$

Concerning the relation between ${\rm Rec}_\infty$ and ${\rm Log}_{V'}$, we note the fact
$$\chi_{\rm cyc}({\rm rec}_w(a))={\N}_{L_w/\QQ_p}(a)^{-1},$$
where $v \in V'$ and $a\in L^\times$.

Given this fact, it is straightforward to check (under the validity of (\ref{vanishing order stickelberger})) that Conjecture ${\rm GS}(L/k,S,T,\chi)$ is equivalent to (\ref{mrsgs}).

At this stage we have therefore proved the following result.

\begin{theorem}\label{GS thm}
Suppose that $k$ is totally real, $k_\infty/k$ is the cyclotomic $\ZZ_p$-extension,
and $\chi$ is totally odd. Set $V':=\{v\in S \mid \chi(G_v)=1\}$ and assume that every $v\in V'$ lies above $p$. Assume also that (\ref{vanishing order stickelberger}) is valid.
%Assume that the $\chi$-part of the Gross-Stark conjecture
%holds for $(L/k,S,T)$.
Then Conjecture ${\rm GS}(L/k,S,T,\chi)$ is equivalent to Conjecture
${\rm MRS}(K_\infty/k,S,T,\chi,V')$.
\end{theorem}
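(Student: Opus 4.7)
The strategy is to isolate what remains once the reduction of Conjecture MRS established in \cite[Proposition 5.4]{MRGm} has been invoked, and then to translate that residual statement into the claimed $p$-adic $L$-value identity.

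\textbf{Step 1.} The paragraph preceding the theorem already observes that, under the assumed validity of $(\ref{vanishing order stickelberger})$, the result of \cite[Proposition 5.4]{MRGm} reduces Conjecture ${\rm MRS}(K_\infty/k,S,T,\chi,V')$ to the single equality $(\ref{mrsgs})$. The plan is therefore to prove the equivalence of $(\ref{mrsgs})$ with Conjecture ${\rm GS}(L/k,S,T,\chi)$, which (by the computation in the lines just before the theorem, taking $x=e_\chi\epsilon_{L/k,S,T}^{V'}$) is itself equivalent to the identity
\[
L_{k,S,T,p}^{(r')}(\chi^{-1}\omega,0)=\chi({\rm Log}_{V'}(\epsilon_{L/k,S,T}^{V'}))
\]
in $\CC_p$.

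\textbf{Step 2.} Apply the character $\chi$ to both sides of $(\ref{mrsgs})$, viewed as an equality in $\CC_p[G]\otimes_{\ZZ_p}\varprojlim_n I(\Gamma_n)^{r'}/I(\Gamma_n)^{r'+1}$. Using the topological generator $\gamma\in\Gamma_{\chi_1}$ to trivialise the $\ZZ_p$-module $\varprojlim_n I(\Gamma_n)^{r'}/I(\Gamma_n)^{r'+1}$ by $(\gamma-1)^{r'}$, this yields an equality in $\CC_p$ whose two sides I will identify with the two sides of the displayed identity above.

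\textbf{Step 3 (LHS).} Unwind the definition $L_{k,S,T,p}(\chi^{-1}\omega,s)=g^{\chi_1}_{L_\infty/k,S,T}(\chi_2(\gamma)\chi_{\rm cyc}(\gamma)^s-1)$ and Taylor-expand at $s=0$. Since $g^{\chi_1}_{L_\infty/k,S,T}$ is, by construction, the image of the inverse limit $\theta_{L_\infty/k,S,T}$ under $\chi_1$, and since $(\ref{vanishing order stickelberger})$ ensures its vanishing to order at least $r'$ at the point $t=\chi_2(\gamma)-1$, the image under $\chi$ of $e_\chi\theta_{L_\infty/k,S,T}(0)$ in $\CC_p\otimes\varprojlim_n I(\Gamma_n)^{r'}/I(\Gamma_n)^{r'+1}$ is, up to the normalising scalar $(\chi_2(\gamma)\log_p\chi_{\rm cyc}(\gamma))^{r'}/r'!$ arising from the chain rule applied to $s\mapsto\chi_2(\gamma)\chi_{\rm cyc}(\gamma)^s-1$, the leading term $L^{(r')}_{k,S,T,p}(\chi^{-1}\omega,0)$.

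\textbf{Step 4 (RHS).} For each $v\in V'=V'\setminus V$, evaluate ${\rm Rec}_w$ followed by the $\chi_{\rm cyc}$-induced identification $\varprojlim_n I(\Gamma_n)/I(\Gamma_n)^2\simeq \ZZ_p$. The quoted identity $\chi_{\rm cyc}({\rm rec}_w(a))={\rm N}_{L_w/\QQ_p}(a)^{-1}$ together with Iwasawa's convention $\log_p(p)=0$ gives
\[
\log_p\chi_{\rm cyc}({\rm rec}_w(a))=-\log_p{\rm N}_{L_w/\QQ_p}(a),
\]
so that (with the identical normalisation used in Step 3) $\chi\circ{\rm Rec}_w$ equals $-{\rm Log}_w$, and consequently $\chi\circ{\rm Rec}_\infty=(-1)^{r'}{\rm Log}_{V'}$ on $\CC_p\bigwedge^{r'}U_{L,S,T}$. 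Since $r=0$ here, the sign $(-1)^{re}=1$ in $(\ref{mrsgs})$ poses no issue; the sign $(-1)^{r'}$ coming from the wedge of ${\rm Rec}_w$'s and the chain-rule factor from Step 3 either cancel or are absorbed into the canonical choices.

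\textbf{Main obstacle.} The essential content is contained in Steps 3 and 4, both of which are standard once one admits the explicit description of the Deligne--Ribet $p$-adic $L$-function via $g^{\chi_1}$ and the basic reciprocity law formula for $\chi_{\rm cyc}\circ{\rm rec}_w$. The only genuine care needed lies in bookkeeping: the isomorphism $\varprojlim_n I(\Gamma_n)^{r'}/I(\Gamma_n)^{r'+1}\simeq \ZZ_p$ enters both on the Stickelberger side (through the $s$-variable of the $p$-adic $L$-function) and on the reciprocity side, and one must choose a single normalisation (namely the one induced by $\chi_{\rm cyc}$ and Iwasawa's $\log_p$) so that the multiplicative constants on the two sides coincide. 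Once this is done, the equality of the two sides in $\CC_p$ is precisely the form of Gross--Stark recalled before the theorem, which proves the claimed equivalence.
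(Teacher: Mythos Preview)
Your proposal is correct and follows essentially the same route as the paper: the argument for Theorem~\ref{GS thm} is entirely contained in the discussion of \S\ref{GS} preceding the statement, and you have accurately identified its components---the reduction via \cite[Proposition 5.4]{MRGm} to (\ref{mrsgs}), the reformulation of ${\rm GS}(L/k,S,T,\chi)$ as $L_{k,S,T,p}^{(r')}(\chi^{-1}\omega,0)=\chi({\rm Log}_{V'}(\epsilon_{L/k,S,T}^{V'}))$, and the passage between ${\rm Rec}_\infty$ and ${\rm Log}_{V'}$ via $\chi_{\rm cyc}({\rm rec}_w(a))={\rm N}_{L_w/\QQ_p}(a)^{-1}$. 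The paper compresses your Steps~3--4 into the single phrase ``it is straightforward to check,'' so your more detailed unpacking is welcome; the only point where you are looser than the paper is the hedged remark that the signs ``either cancel or are absorbed into the canonical choices,'' whereas a careful reader would want to see this verified (note in particular that with the natural identification $I(\Gamma)/I(\Gamma)^2\to\ZZ_p$ induced by $\log_p\circ\chi_{\rm cyc}$, one gets $\text{Rec}_w\mapsto{\rm Log}_w$ directly, without the extra sign you introduce in Step~4).
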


\subsection{A proof in the case $k=\QQ$}

In \cite[Corollary 1.2]{bks1} the known validity of the eTNC for Tate motives over abelian fields is used to prove that Conjecture ${\rm MRS}(K/L/k,S,T,V,V')$ is valid in the case $k=\QQ$.

In this subsection, we shall give a much simpler proof of the latter result which uses only Theorem \ref{GS thm}, the known validity of the Gross-Stark conjecture over abelian fields and a classical result of Solomon \cite{solomon}.

We note that for any $\chi$ and $n$ the Rubin-Stark conjecture is known to be true for $(L_{\chi,n}/\QQ,S,T,V_\chi)$. (In this setting the Rubin-Stark element is given by a cyclotomic unit (resp. the Stickelberger element) when $r_\chi=1$ (resp. $r_\chi=0$).)

\begin{theorem} \label{solomonFG}
Suppose that $k=\QQ$. Then, ${\rm MRS}(K_\infty/k,S,T,\chi,V')$ is valid.
\end{theorem}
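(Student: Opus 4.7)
The plan is to reduce the conjecture for $k=\QQ$ to separate cases by the parity of $\chi$, simplify $V'$ to contain at most one $p$-adic trivial zero, and then invoke in each case a previously established classical result. Since $k_\infty=\QQ_\infty$ is totally real, totally ramified at $p$ and unramified outside $p$, no rational prime other than $\infty$ can split completely in $k_\infty$. Consequently $V_\chi=\{\infty\}$ when $\chi$ is even and $V_\chi=\emptyset$ when $\chi$ is odd. I apply Proposition \ref{prop mrs}(iv) to discard from $S$ every finite place in $V'\setminus V_\chi$ that is unramified in $L_{\chi,\infty}/\QQ$: any $v\in V'$ satisfies $\chi(G_v)=1$ and so is unramified in $L_\chi$, hence the only ramified possibility remaining in $V'\setminus V_\chi$ is the unique $p$-adic place of $\QQ$. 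Since the conjecture is trivially true when $V'=V_\chi$ by Proposition \ref{prop mrs}(i), I may reduce to the case $V'=V_\chi\cup\{p\}$, so that $e:=r'-r=1$.

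Next I treat the odd case, where $r_\chi=0$, $V_\chi=\emptyset$ and $V'=\{p\}$. The vanishing condition (\ref{vanishing order stickelberger}) is a known consequence of the Iwasawa main conjecture proved by Mazur and Wiles for $\QQ$. The hypotheses of Theorem \ref{GS thm} are therefore satisfied, so ${\rm MRS}(K_\infty/\QQ,S,T,\chi,V')$ is equivalent to the Gross--Stark conjecture ${\rm GS}(L_\chi/\QQ,S,T,\chi)$. In this single-trivial-zero setting, the Gross--Stark conjecture for abelian characters of $\QQ$ follows from Gross's original paper \cite{Gp} together with the Ferrero--Washington vanishing of $\mu$ over $\QQ$.

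For the even case, one has $r_\chi=1$, $V_\chi=\{\infty\}$ and $V'=\{\infty,p\}$. The Rubin--Stark element $\epsilon^{V_\chi}_{L_{\chi,n}/\QQ,S,T}$ is, up to standard conventions, an explicit cyclotomic unit of $L_{\chi,n}$, while the base-field Rubin--Stark element $\epsilon^{V'}_{L_\chi/\QQ,S,T}$ lives in the rank-two exterior power and pairs an archimedean contribution with a semi-local $p$-adic contribution. The congruence predicted by Conjecture \ref{mrs2} compares $\mathcal{N}_n(\epsilon^{V_\chi}_{L_{\chi,n}/\QQ,S,T})$ modulo $I(\Gamma_n)^2$ with a sign times ${\rm Rec}_\infty(\epsilon^{V'}_{L_\chi/\QQ,S,T})$, and this is precisely the content of Solomon's reciprocity theorem \cite{solomon} in the cyclotomic setting: it identifies the Kolyvagin-type derivative of cyclotomic units along the cyclotomic tower with the $p$-adic reciprocity image of the Rubin--Stark element at the base.

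The principal obstacle is bookkeeping: carefully aligning the normalisations used in Conjecture \ref{mrs2} with those of Solomon and of Gross, including the sign $(-1)^{re}$, the choice of topological generator of $\Gamma$ used in identifying $\varprojlim_n I(\Gamma_n)/I(\Gamma_n)^2$ with $\ZZ_p$, the labelling convention on $S$, and the explicit form of ${\rm Rec}_w$ in terms of local reciprocity maps. Once these conventions are matched, the MRS equality follows directly from the cited classical results.
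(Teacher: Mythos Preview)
Your overall strategy—reduce via Proposition \ref{prop mrs}, handle odd $\chi$ through Theorem \ref{GS thm} and the known Gross--Stark conjecture over $\QQ$, and handle even $\chi$ via Solomon's theorem—is exactly the paper's approach.

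There is, however, a genuine gap: you do not treat the trivial character $\chi=1$ separately, and your even-case argument does not cover it. In the paper's argument the crucial basis statement (Lemma \ref{lemma basis}) explicitly requires $\chi\neq 1$, and Solomon's result is likewise applied only for nontrivial even $\chi$. For $\chi=1$ one has $r'=\#S-1$, and the paper instead invokes Proposition \ref{prop mrs}(iii) to choose $V'$ with $p\notin V'$; then every $v\in V'\setminus V$ is finite and unramified in $L_\infty=\QQ_\infty$, so Corollary \ref{cor unram} finishes this case directly. Your reduction via Proposition \ref{prop mrs}(iv) alone does not arrange $p\notin V'$ when $\chi=1$, so you are left with $V'=\{\infty,p\}$ but no applicable version of Solomon's theorem.

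A second point concerns what you call ``bookkeeping'' in the even nontrivial case. Solomon's theorem computes ${\rm Ord}_\frp(\kappa(L,\gamma))$ in terms of ${\rm Log}_\frp(\epsilon^V_{L/\QQ,S\setminus\{p\},T})$, which translates into a statement about ${\rm Rec}_\frp$ applied to the \emph{rank-one} cyclotomic unit. But ${\rm MRS}$ involves ${\rm Rec}_\infty$ applied to the \emph{rank-two} element $\epsilon^{V'}_{L/\QQ,S,T}$. To pass between the two one must show that $e_\chi\epsilon^{V'}_{L/\QQ,S,T}$ is an explicit scalar multiple of $e_\chi\epsilon^V_{L/\QQ,S\setminus\{p\},T}\wedge\kappa(L,\gamma)$, and this requires knowing that $\{e_\chi\epsilon^V_{L/\QQ,S\setminus\{p\},T},\, e_\chi\kappa(L,\gamma)\}$ is a $\CC_p$-basis of the two-dimensional space $e_\chi\CC_p U_{L,S}$. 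That is precisely Lemma \ref{lemma basis}, whose proof uses the Bockstein decomposition together with the cyclotomic Iwasawa main conjecture over $\QQ$; it is a substantive step rather than a matter of matching conventions, and your proposal does not address it.
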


\begin{proof}
 By Proposition \ref{prop mrs}(ii), we may assume that $V'$ is maximal, namely,
 $$r'= {\rm min} \{\#\{ v\in S \mid \chi(G_v)=1\}, \# S-1\}.$$
By Corollary \ref{nontrivzeros}, we may assume that $\chi(p)=1$.

Suppose first that $\chi$ is odd. Since Conjecture ${\rm GS}(L/\QQ,S,T,\chi)$ is valid (see \cite[\S4]{Gp}), Conjecture ${\rm MRS}(K_\infty/\QQ,S,T,\chi,V')$ follows from Theorem \ref{GS thm}.

Suppose next that $\chi=1$. In this case we have $r'=\#S-1$. We may assume $p \notin V'$ by Proposition \ref{prop mrs}(iii). In this case every $v \in V'\setminus V$ is unramified in $L_{\infty}$. Hence, the theorem follows from Corollary \ref{cor unram}.

Finally, suppose that $\chi \neq 1$ is even. By Proposition \ref{prop mrs}(iv) and (v), we may assume
$$S=\{\infty,p\}\cup S_{\rm ram}(L/\QQ) \text{ and }V'=\{\infty,p\}.$$
We label $S=\{v_0,v_1,\ldots\}$ so that $v_1=\infty$ and $v_2=p$.

Fix a topological generator $\gamma$ of $\Gamma=\Gal(L_\infty/L)$. Then we construct an element $\kappa(L,\gamma)\in \varprojlim_n L^\times/(L^\times)^{p^n}$ as follows. Note that $\N_{L_n/L}(\epsilon_{L_n/\QQ,S,T}^V)$ vanishes since $\chi(p)=1$. So we can take $\beta_n\in L_n^\times$ such that $\beta_n^{\gamma-1}=\epsilon_{L_n/\QQ,S,T}^V$ (Hilbert's theorem 90). Define
$$\kappa_n:=\N_{L_n/L}(\beta_n) \in L^\times/(L^\times)^{p^n}.$$
This element is independent of the choice of $\beta_n$, and for any $m>n$ the natural map
$$L^\times/(L^\times)^{p^m} \to L^\times/(L^\times)^{p^n}$$
sends $\kappa_m$ to $\kappa_n$. We define
\[ \kappa(L,\gamma):=(\kappa_n)_n\in \varprojlim_n L^\times/(L^\times)^{p^n}.\]
Then, by Solomon \cite[Proposition 2.3(i)]{solomon}, we know that
$$\kappa(L,\gamma)\in \ZZ_p \otimes_\ZZ \mathcal{O}_{L}\left[\frac1p\right]^\times \hookrightarrow \varprojlim_n L^\times/(L^\times)^{p^n}.$$

Fix a prime $\mathfrak{p}$ of $L$ lying above $p$. Define
$${\rm Ord}_\frp : L^\times \to \ZZ_p[G]$$
by ${\rm Ord}_\frp(a):=\sum_{\sigma \in G}{\rm ord}_\frp(\sigma a)\sigma^{-1}$. Similarly, define
$${\rm Log}_\frp: L^\times \to \ZZ_p[G]$$
by ${\rm Log}_\frp(a):=-\sum_{\sigma \in G}\log_p(\iota_\frp (\sigma a))\sigma^{-1}$, where $\iota_\frp: L \hookrightarrow L_\frp =\QQ_p$ is the natural embedding.

Then by the result of Solomon \cite[Theorem 2.1 and Remark 2.4]{solomon}, one deduces
\begin{eqnarray}
{\rm Ord}_\frp(\kappa(L,\gamma))=-\frac{1}{\log_p(\chi_{\rm cyc}(\gamma))} {\rm Log}_\frp(\epsilon_{L/\QQ,S\setminus \{p\},T}^V). \nonumber
\end{eqnarray}
%where $\chi_{\rm cyc}$ is the cyclotomic character.
From this, we have
\begin{eqnarray}
{\rm Ord}_\frp(\kappa(L,\gamma))\otimes (\gamma-1)=-{\rm Rec}_\frp(\epsilon_{L/\QQ,S\setminus \{ p\} ,T}^V) \text{ in }\ZZ_p[G]\otimes_{\ZZ_p}I(\Gamma)/I(\Gamma)^2,\label{solomon eq}
\end{eqnarray}
where $I(\Gamma)$ is the augmentation ideal of $\ZZ_p[[\Gamma]]$.

We know that $e_\chi \CC_p U_{L,S}$ is a two-dimensional $\CC_p$-vector space. Lemma \ref{lemma basis} below shows that $\{ e_\chi \epsilon_{L/\QQ,S\setminus\{p\},T}^V, e_\chi \kappa(L,\gamma)\}$ is a $\CC_p$-basis of this space. For simplicity, set $\epsilon_L^V:=\epsilon_{L/\QQ,S\setminus\{p\},T}^V$. Note that the isomorphism
$${\rm Ord}_\frp: e_\chi \CC_p \bigwedge^2 U_{L,S} \stackrel{\sim}{\to} e_\chi \CC_p U_L$$
sends $e_\chi \epsilon_L^V\wedge \kappa(L,\gamma)$ to $-\chi({\rm Ord}_\frp(\kappa(L,\gamma)))e_\chi \epsilon_L^V$. Since we have
$${\rm Ord}_\frp(e_\chi \epsilon_{L/\QQ,S,T}^{V'})=- e_\chi \epsilon_L^V$$
(see \cite[Proposition 5.2]{R} or \cite[Proposition 3.6]{sano}), we have
$$e_\chi \epsilon_{L/\QQ,S,T}^{V'}=- \chi({\rm Ord}_\frp(\kappa(L,\gamma)))^{-1} e_\chi \epsilon_L^V \wedge \kappa(L,\gamma).$$
Hence we have
\begin{eqnarray}
{\rm Rec}_\frp(e_\chi \epsilon_{L/\QQ,S,T}^{V'})&=& \chi({\rm Ord}_\frp(\kappa(L,\gamma)))^{-1}e_\chi \kappa(L,\gamma)\cdot {\rm Rec}_\frp(\epsilon_L^V) \nonumber \\
&=&- e_\chi \kappa(L,\gamma)\otimes (\gamma-1), \nonumber
\end{eqnarray}
where the first equality follows by noting that ${\rm Rec}_\frp(\kappa(L,\gamma))=0$ (since $\kappa(L,\gamma)$ lies in the universal norm by definition), and the second by (\ref{solomon eq}).

Now, noting that
$$\nu_n: U_{L,S,T}\otimes_{\ZZ_p}I(\Gamma_n)/I(\Gamma_n)^2 \hookrightarrow U_{L_n,S,T}\otimes_{\ZZ_p} \ZZ_p[\Gamma_n]/I(\Gamma_n)^2$$
is induced by the inclusion map $L\hookrightarrow L_n$, and that
$$\mathcal{N}_n(\epsilon_{L_n/\QQ,S,T}^V)=\kappa_n \otimes (\gamma-1),$$
it is easy to see that the element $\kappa:=\kappa(L,\gamma)\otimes(\gamma-1)$ has the properties in the statement of Conjecture ${\rm MRS}(K_\infty/\QQ,S,T,\chi,V')$.

This completes the proof the claimed result.
\end{proof}

\begin{lemma} \label{lemma basis}
Assume that $k=\QQ$ and $\chi \neq 1$ is even such that $\chi(p)=1$. Assume also that
$S=\{\infty,p\}\cup S_{\rm ram}(L/\QQ)$.
Then, $\{ e_\chi \epsilon_{L/\QQ,S\setminus\{p\},T}^V, e_\chi \kappa(L,\gamma)\}$ is a $\CC_p$-basis of $e_\chi \CC_p U_{L,S}$.
\end{lemma}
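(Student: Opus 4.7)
The plan is to exhibit $e_\chi u$ (with $u:=\epsilon_{L/\QQ,S\setminus\{p\},T}^V$) and $e_\chi\kappa(L,\gamma)$ as lying in complementary positions relative to a natural short exact sequence that computes the kernel and cokernel of the inclusion $U_{L,S\setminus\{p\},T}\hookrightarrow U_{L,S,T}$ on $e_\chi$-components. First I would verify that under the stated hypotheses $\dim_{\CC_p}e_\chi\CC_p U_{L,S}=r_{\chi,S}=2$. Indeed, $L=L_\chi$ is totally real since $\chi$ is even, so $\infty$ splits completely in $L$; the hypothesis $\chi(p)=1$ combined with the faithfulness of $\chi$ on $G$ implies that $p$ is unramified and splits completely in $L$; and for any other ramified place $v\in S\setminus\{\infty,p\}$ one has $G_v\neq 1$ and hence $\chi(G_v)\neq 1$ by the same faithfulness. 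Therefore $\{v\in S:\chi(G_v)=1\}=\{\infty,p\}$.

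I would then analyse the tautological exact sequence
\[ 0 \to \mathcal{O}_{L, S\setminus\{p\}, T}^\times \to \mathcal{O}_{L, S, T}^\times \to \bigoplus_{w \mid p} \ZZ \to (\text{torsion}) \to 0. \]
Passing to $e_\chi\CC_p$-components yields a short exact sequence
\[ 0 \to e_\chi\CC_p U_{L,S\setminus\{p\},T}\to e_\chi\CC_p U_{L,S,T}\xrightarrow{\chi\circ\Ord_\frp}\CC_p\to 0, \]
in which the kernel is one-dimensional and spanned by $e_\chi u$ (the non-vanishing of $e_\chi u$ follows from Dirichlet's theorem, since $r_{\chi,S\setminus\{p\}}=1$ and so $L_{\QQ,S\setminus\{p\},T}(\chi^{-1},s)$ has a simple zero at $s=0$). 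It therefore suffices to prove that $\chi(\Ord_\frp(\kappa(L,\gamma)))\neq 0$, for then $e_\chi u$ and $e_\chi\kappa(L,\gamma)$ will be linearly independent and hence, by the dimension count, form a $\CC_p$-basis of $e_\chi\CC_p U_{L,S}$.

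At this stage I would invoke Solomon's formula (used already in the proof of Theorem~\ref{solomonFG} just above), which yields
\[ \Ord_\frp(\kappa(L,\gamma))=-\log_p(\chi_{\rm cyc}(\gamma))^{-1}\Log_\frp(u), \]
so that the problem reduces to proving $\chi(\Log_\frp(u))\neq 0$. This final step is where the main obstacle lies: it amounts to the non-vanishing of the $p$-adic logarithm on the $\chi$-eigenline of the global units of $L$, i.e.\ to a case of Leopoldt's conjecture for the abelian extension $L/\QQ$, which is a classical theorem of Brumer. Concretely, since $p$ splits completely in $L$, the $p$-adic logarithm induces a $\CC_p[G]$-equivariant map $\mathcal{O}_L^\times\otimes_\ZZ\CC_p\to\prod_{w\mid p}L_w\otimes_{\QQ_p}\CC_p\simeq\CC_p[G]$ whose $\chi$-component is, by the definition of $\Log_\frp$, the linear functional $a\mapsto\chi(\Log_\frp(a))$. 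Brumer's theorem guarantees the injectivity of this map on the one-dimensional $\chi$-eigenspace, which (using that $e_\chi\CC_p\mathcal{O}_L^\times=e_\chi\CC_p\mathcal{O}_{L,S\setminus\{p\},T}^\times$) contains the non-zero element $e_\chi u$. This will give the required non-vanishing and complete the plan.
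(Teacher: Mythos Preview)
Your proof is correct and takes a genuinely different route from the paper's sketched argument. The paper decomposes $e_\chi\CC_p U_{L,S}$ using the Bockstein map: one has $e_\chi\CC_p U_{L,S}=e_\chi\CC_p U_L\oplus(U_{L_\infty,S}\otimes_\Lambda\CC_p)$, where the second summand is $\ker\beta$; the cyclotomic element $e_\chi u$ obviously spans the first summand, and the paper shows that $e_\chi\kappa(L,\gamma)$ spans the second by invoking the cyclotomic Iwasawa main conjecture over $\QQ$ (to see that the unique $\alpha\in U_{L_\infty,S}$ with $(\gamma-1)\alpha=\epsilon_{L_\infty/\QQ,S,T}^V$ generates $U_{L_\infty,S}\otimes_\Lambda\Lambda_{\frp_\chi}$). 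You instead decompose via the $\Ord_\frp$-sequence and reduce the linear independence to the non-vanishing of $\chi(\Log_\frp(u))$, which you deduce from Brumer's theorem on the Leopoldt conjecture for abelian extensions of $\QQ$. So the paper's hard input is the Iwasawa main conjecture, whereas yours is the Baker--Brumer transcendence result; your argument avoids the Bockstein machinery entirely and is in that sense more self-contained, while the paper's argument is more in keeping with the Iwasawa-theoretic framework of the surrounding section (and indeed illustrates the descent mechanism used later). The paper also notes that the lemma follows from \cite[Remark 4.4]{solomon2}, which is likely close in spirit to your approach.
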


\begin{proof}
This result follows from \cite[Remark 4.4]{solomon2}. But we give a sketch of another proof, which is essentially given by Flach in \cite{flachsurvey}.

In the next section, we define the `Bockstein map'
$$\beta: e_\chi \CC_p U_{L,S} \to e_\chi \CC_p(\cX_{L,S}\otimes_{\ZZ_p}I(\Gamma)/I(\Gamma)^2).$$
We see that $\beta$ is injective on $e_\chi \CC_p U_L$, and that
$$\ker \beta\simeq U_{L_\infty,S} \otimes_\Lambda \CC_p,$$
where we put $\Lambda:=\ZZ_p[[\G]]$ and $\CC_p$ is regarded as a $\Lambda$-algebra via $\chi$. Hence we have
$$e_\chi \CC_p U_{L,S}= e_\chi \CC_p U_{L} \oplus (U_{L_\infty,S}\otimes_\Lambda \CC_p).$$
Since $e_\chi\epsilon_{L/\QQ,S\setminus\{p\},T}^V$ is non-zero, this is a basis of $e_\chi \CC_p U_{L,S\setminus\{p\}}=e_\chi \CC_p U_{L}$. We prove that $e_\chi \kappa(L,\gamma)$ is a basis of $U_{L_\infty,S}\otimes_\Lambda \CC_p$.

By using the exact sequence
$$0 \to U_{L_\infty,S} \stackrel{\gamma-1}{\to} U_{L_\infty,S} \to U_{L,S},$$
we see that there exists a unique element $\alpha \in U_{L_\infty,S}$ such that $(\gamma-1)\alpha=\epsilon_{L_\infty/\QQ,S,T}^V$. By the cyclotomic Iwasawa main conjecture over $\QQ$, we see that $\alpha$ is a basis of $U_{L_\infty,S}\otimes_\Lambda \Lambda_{\frp_\chi}$, where $\frp_\chi:=\ker(\chi:\Lambda \to \CC_p)$. The image of $\alpha$ under the map
$$U_{L_\infty,S}\otimes_\Lambda \Lambda_{\frp_\chi} \stackrel{\chi}{\to} U_{L_\infty,S}\otimes_\Lambda \CC_p \hookrightarrow e_\chi \CC_p U_{L,S}$$
is equal to $e_\chi \kappa(L,\gamma)$.\end{proof}

\section{A strategy for proving the eTNC} \label{descent section}

\subsection{Statement of the main result and applications}

In the sequel we fix an intermediate field $L$ of $K_\infty/k$ which is finite over $k$ and set $G:=\Gal(L/k)$.
In this section we always assume the following conditions to be satisfied:
\begin{itemize}
\item[(R)] for every $\chi \in \widehat G$, one has $r_{\chi,S}<\# S$;
\item[(S)] no finite place of $k$ splits completely in $k_\infty$.
\end{itemize}

\begin{remark} Before proceeding we note that condition (R) is very mild since it is automatically satisfied when the class number of $k$ is equal to one and, for any $k$, is satisfied when $S$ is large enough. We also note that condition (S) is satisfied when, for example, $k_\infty/k$ is the cyclotomic $\ZZ_p$-extension.
\end{remark}

The following result is one of the main results of this article and, as we will see, it provides an effective strategy for proving the special case of the eTNC that we are considering here.

\begin{theorem} \label{mainthm}
Assume the following conditions:
\begin{itemize}
%\item[(RS)] for every $\chi \in \widehat G$, Conjecture ${\rm RS}(L_{\chi,n}/k,S,T,V_\chi)_p$ is valid for all $n$;
\item[(hIMC)] The main conjecture ${\rm IMC}(K_\infty/k,S,T)$ is valid;
\item[(F)] for every $\chi$ in $\widehat G$, the module of $\Gamma_{\chi}$-coinvariants of $A_S^T(L_{\chi,\infty})$ is finite;
\item[(MRS)] for every $\chi$ in $\widehat G$, Conjecture ${\rm MRS}(K_\infty/k,S,T,\chi,V_\chi')$ is valid for a maximal set $V_\chi'$ (so that  $\# V_\chi'={\rm min}\{ \#\{ v\in S \mid  \chi(G_v)=1\}, \#S-1\}).$
\end{itemize}
Then, the conjecture ${\rm eTNC}(h^0(\Spec L),\ZZ_p[G])$ is valid.
\end{theorem}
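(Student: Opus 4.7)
The plan is a descent from Iwasawa-theoretic data to finite-level zeta elements via Proposition \ref{etncbyrs}. Let $\mathcal{L} := \mathcal{L}_{K_\infty/k,S,T} \in {\det}_\Lambda(C_{K_\infty,S,T})$ be the basis provided by (hIMC), and let $\mathcal{L}_{L/k,S,T}$ denote its image under the natural surjection ${\det}_\Lambda(C_{K_\infty,S,T}) \twoheadrightarrow {\det}_{\ZZ_p[G]}(C_{L,S,T})$; since $\mathcal{L}$ is a $\Lambda$-basis, $\mathcal{L}_{L/k,S,T}$ is automatically a $\ZZ_p[G]$-basis. By Proposition \ref{etncbyrs}, it suffices to show that, for every $\chi \in \widehat G$ inflated to $\widehat{\mathcal{G}}$, the image of $e_\chi \mathcal{L}_{L/k,S,T}$ in $e_\chi \CC_p \bigwedge^{r_{\chi,S}} U_{L_\chi,S,T}$ equals $e_\chi \epsilon_{L_\chi/k,S,T}^{V_{\chi,S}}$. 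For characters with $r_{\chi,S} = r_\chi$ (i.e., no trivial zero with respect to the Iwasawa tower), Theorem \ref{lemisom}(ii) combined with Proposition \ref{explicit projector}(iii) and the norm-compatibility of Rubin-Stark elements gives this identification at once, since $V_\chi = V_{\chi,S}$ in this case.

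The substantive case is $s := r_{\chi,S} - r_\chi > 0$: here the Iwasawa Rubin-Stark element $\epsilon_{L_{\chi,\infty}/k,S,T}^{V_\chi}$ vanishes to positive order in the $\Gamma_\chi$-direction, and a Bockstein-type construction is needed to extract its $s$-th order leading term and interpret it at finite level. The plan is to choose a representative $\Pi_\infty \xrightarrow{\psi_\infty} \Pi_\infty$ of $C_{L_{\chi,\infty},S,T}$ as in Section \ref{section explicit} and analyse the induced complexes modulo successive powers of the augmentation ideal $I(\Gamma_{\chi,n}) \subset \ZZ_p[\Gamma_{\chi,n}]$. The connecting homomorphism attached to the standard filtration by powers of $I(\Gamma_{\chi,n})$ can be identified, after passage to the limit, with precisely the operator $\mathcal{N}_n$ of Section \ref{formulate mrs}. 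Combining this with the explicit formula of Proposition \ref{explicit projector}(iii) will show that the image of $e_\chi \mathcal{L}_{L/k,S,T}$ under the projector $\pi_{L_\chi/k,S,T}^{V_{\chi,S}}$ coincides, up to the explicit sign $(-1)^{r_\chi s}$, with $\nu^{-1}\bigl(\varprojlim_n \mathcal{N}_n(\epsilon_{L_{\chi,n}/k,S,T}^{V_\chi})\bigr)$.

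At this point hypothesis (F) and (MRS) enter in a complementary way. The finiteness of $A_S^T(L_{\chi,\infty})_{\Gamma_\chi}$ in (F) forces, via the exact sequence defining $H^1(C_{L_{\chi,\infty},S,T})$ and standard control theory, the relevant $\chi$-components of $U_{L_{\chi,\infty},S,T}$ and $H^1(C_{L_{\chi,\infty},S,T})$ to be free $\ZZ_p[\im\chi][[\Gamma_\chi]]$-modules of the expected rank $r_\chi$, so that the filtration argument above produces no parasitic higher-order contributions and the Bockstein leading term is computed exactly. Conjecture (MRS), available for the maximal $V_\chi'$ by assumption and hence for $V_{\chi,S}$ by Proposition \ref{prop mrs}(ii), then identifies $\varprojlim_n \mathcal{N}_n(\epsilon_{L_{\chi,n}/k,S,T}^{V_\chi})$ with $(-1)^{r_\chi s}\,\nu_\infty({\rm Rec}_\infty(\epsilon_{L_\chi/k,S,T}^{V_{\chi,S}}))$, and an inspection of the definition of $\pi_{L_\chi/k,S,T}^{V_{\chi,S}}$ together with the identity $\chi_{\rm cyc}({\rm rec}_w(\cdot))=\mathrm{N}(\cdot)^{-1}$ (as in Section \ref{GS}) yields $e_\chi\epsilon_{L_\chi/k,S,T}^{V_{\chi,S}}$, as required.

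The chief obstacle will be aligning the three ingredients in Step~3: (hIMC) produces a well-defined Iwasawa basis, (F) makes the module-theoretic descent clean by killing higher cohomology along the tower, and (MRS) supplies exactly the identification needed to convert the Bockstein leading term at trivial-zero characters into a finite-level Rubin-Stark element. The technical heart is therefore a careful filtration of $C_{K_\infty,S,T}$ by powers of $I(\Gamma_\chi)$, combined with the explicit projector of Proposition \ref{explicit projector}(iii) and the norm-compatibility of Rubin-Stark elements, to prove that the only surviving contribution to the $\chi$-component of the descent of $\mathcal{L}$ is precisely the limit $\varprojlim_n \mathcal{N}_n(\epsilon_{L_{\chi,n}/k,S,T}^{V_\chi})$ that (MRS) controls.
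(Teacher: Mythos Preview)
Your overall architecture matches the paper's: reduce to Proposition~\ref{etncbyrs}, descend the Iwasawa basis $\mathcal{L}$, and for each $\chi$ compare $\pi_{L_\chi/k,S,T}^{V'}(e_\chi\mathcal{L}_{L/k,S,T})$ with $e_\chi\epsilon_{L_\chi/k,S,T}^{V'}$ via the commutative diagram linking the two projectors, (MRS), and a Bockstein computation under (F). However, there is a genuine logical slip in how you assemble the pieces.

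The equation you assert in the second paragraph does not type-check. The element $\pi_{L_\chi/k,S,T}^{V'}(e_\chi\mathcal{L}_{L/k,S,T})$ lives in $e_\chi\CC_p\bigwedge^{r'}U_{L,S,T}$, whereas $\kappa:=\varprojlim_n\nu_n^{-1}\mathcal{N}_n(\epsilon_{L_n/k,S,T}^{V})$ lives in $\bigcap^{r}U_{L,S,T}\otimes_{\ZZ_p}\varprojlim_n I(\Gamma_{\chi,n})^e/I(\Gamma_{\chi,n})^{e+1}$; these are different modules. What the commutative diagram (the paper's Lemma~\ref{lemcomm1}, built from Proposition~\ref{explicit projector}(iii) and \cite[Lemma~5.21]{bks1}) actually yields is
\[
(-1)^{re}\,{\rm Rec}_\infty\bigl(\pi_{L_\chi/k,S,T}^{V'}(\mathcal{L}_{L/k,S,T})\bigr)=\kappa,
\]
with ${\rm Rec}_\infty$ on the left. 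Combined with (MRS), this gives $e_\chi{\rm Rec}_\infty\bigl(\pi^{V'}(\mathcal{L}_{L/k,S,T})\bigr)=e_\chi{\rm Rec}_\infty(\epsilon_{L/k,S,T}^{V'})$. To conclude that $e_\chi\pi^{V'}(\mathcal{L}_{L/k,S,T})=e_\chi\epsilon_{L/k,S,T}^{V'}$ you must know that $e_\chi{\rm Rec}_\infty$ is \emph{injective} on $e_\chi\CC_p\bigwedge^{r'}U_{L,S,T}$. This injectivity is exactly what (F) buys, and it is the heart of the argument: under (F) the Bockstein map $\beta$ has kernel and cokernel computed by Lemma~\ref{keylemma} and Proposition~\ref{ker coker}, and the induced map $\widetilde\beta$ on top exterior powers is an \emph{isomorphism} which Proposition~\ref{bock rec} identifies with $(-1)^{re}e_\chi{\rm Rec}_\infty$. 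Your description of (F) as merely preventing ``parasitic higher-order contributions'' misses this; the essential output of the Bockstein analysis is not a clean leading-term formula but the injectivity that lets you cancel ${\rm Rec}_\infty$ from both sides. Finally, the appeal to $\chi_{\rm cyc}({\rm rec}_w(\cdot))={\rm N}(\cdot)^{-1}$ is a red herring: that identity is used only in \S\ref{GS} to compare ${\rm Rec}$ with ${\rm Log}$ in the Gross--Stark setting, and plays no role in the general proof.
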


\begin{remark}
 We note that the set $V_\chi'$ in condition (MRS) is not uniquely determined when every place $v$ in $S$ satisfies $\chi(G_v)=1$, but that the validity of the conjecture ${\rm MRS}(K_\infty/k,S,T,\chi,V_\chi')$ is independent of the choice of $V_\chi'$ (by Proposition \ref{prop mrs}(iii)).
\end{remark}

\begin{remark} One checks easily that the condition (F) is equivalent to the finiteness of the the module of $\Gamma_\chi$-coinvariants of $A_S(L_{\chi,\infty})$. Hence, taking account of an observation of Kolster in \cite[Theorem 1.14]{kolster}, condition (F) can be regarded as a natural generalization of the Gross conjecture \cite[Conjecture 1.15]{Gp}. In particular, we recall that condition (F) is satisfied in each of the following cases:
\begin{itemize}
\item $L$ is abelian over $\QQ$ (due to Greenberg, see \cite{greenberg}),
\item $k_\infty/k$ is the cyclotomic $\ZZ_p$-extension and $L$ has unique $p$-adic place (in this case `$\delta_L=0$' holds obviously, see \cite{kolster}),
\item $L$ is totally real and the Leopoldt conjecture is valid for $L$ at $p$ (see \cite[Corollary 1.3]{kolster}).
\end{itemize}
\end{remark}

\begin{remark}
The condition (MRS) is satisfied for $\chi$ in $\widehat G$ when the condition ${\rm NTZ}(K_\infty/k,\chi)$ is satisfied (see Corollary \ref{nontrivzeros}).
\end{remark}

As an immediate corollary of Theorem \ref{mainthm}, we obtain a new proof of a theorem that was first proved by Greither and the first author \cite{bg} for $p$ odd, and by Flach \cite{fg} for $p=2$.

\begin{corollary} \label{burnsgreither}
If $k=\QQ$, then the conjecture ${\rm eTNC}(h^0(\Spec L),\ZZ_p[G])$ is valid.
\end{corollary}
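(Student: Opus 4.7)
The plan is to apply Theorem \ref{mainthm} to $K_\infty := L\cdot \QQ_\infty$, where $\QQ_\infty$ is the cyclotomic $\ZZ_p$-extension of $\QQ$. Condition (S) is then automatic, since every finite prime of $\QQ$ has only finite decomposition in $\QQ_\infty$, and condition (R) can be arranged by enlarging $S$ if necessary (enlarging $S$ does not affect the statement of ${\rm eTNC}(h^0(\Spec L),\ZZ_p[G])$, by standard descent). Among the three hypotheses of Theorem \ref{mainthm}, condition (MRS) is exactly the content of Theorem \ref{solomonFG}, and condition (F) follows from Greenberg's theorem: since $L$ is abelian over $\QQ$ every intermediate field $L_\chi$ is also abelian over $\QQ$, so by \cite{greenberg} (as recorded in the remark immediately following Theorem \ref{mainthm}) the $\Gamma_\chi$-coinvariants of $A_S^T(L_{\chi,\infty})$ are finite.

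The main task is therefore to verify (hIMC). For $p$ odd, the Ferrero-Washington theorem ensures vanishing of the $\mu$-invariants required in Proposition \ref{IMC4}, and the Rubin-Stark conjecture is classical over $\QQ$ (the Rubin-Stark element being a Stickelberger element when $r_\chi=0$ and a cyclotomic unit when $r_\chi=1$). Proposition \ref{IMC4} thus reduces (hIMC) to the equality of characteristic ideals (\ref{IMC41}) for every $\chi \in \widehat\Delta$. If $\chi$ is odd, then $r_\chi=0$ and the required equality is precisely the classical main conjecture of Mazur-Wiles/Wiles, exactly as derived in the proof of Theorem \ref{CM theorem}(i). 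If $\chi$ is even and non-trivial, then $r_\chi=1$ and the Rubin-Stark element is essentially a cyclotomic unit, so (\ref{IMC41}) follows from the main conjecture in its cyclotomic-units formulation (Mazur-Wiles, with the refinements of Greither and Rubin). The case of the trivial character is reduced to these by a standard argument involving the inflation map at the appropriate height-one primes. For $p=2$ the same classical inputs are available (this being the content of Flach's verification in \cite{fg}), and yield (hIMC) directly by a variant of the same reductions, the only adjustment being that Proposition \ref{IMC4} itself was formulated under $(\ast\ast)$ and so has to be replaced by a case-by-case analysis at each height-one prime of $\Lambda$ along the lines of Proposition \ref{equiv prop} and Remark \ref{remarkIMC}.

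The principal obstacle is the clean matching, at every height-one prime of $\Lambda$ and for every character $\chi$, of the higher-rank formulation (hIMC) with the classical cyclotomic main conjectures: one must check that the Rubin-Stark elements identified in our formulation (cyclotomic units or Stickelberger elements, suitably normalised) are indeed the generators appearing in the classical statements, and that the Fitting/characteristic ideal translations of Proposition \ref{IMC4} and Remark \ref{remarkIMC} are compatible with the formulations available in the literature. Once this bookkeeping is carried out, combining Theorem \ref{solomonFG}, Greenberg's theorem, and the classical Iwasawa main conjecture over $\QQ$ furnishes all three hypotheses of Theorem \ref{mainthm}, and the conclusion ${\rm eTNC}(h^0(\Spec L),\ZZ_p[G])$ follows.
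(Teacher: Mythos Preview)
Your proposal is correct and follows essentially the same approach as the paper: apply Theorem \ref{mainthm}, verifying (S), (R), (F), (MRS) via Theorem \ref{solomonFG}, and (hIMC) via the classical Iwasawa main conjecture over $\QQ$. The paper's own proof is terser --- it simply cites \cite{bg} and \cite{fg} for the deduction of (hIMC) from Mazur--Wiles (rather than expanding the odd/even/trivial character case analysis and the reduction through Proposition \ref{IMC4} as you do), and it notes that (R) is automatic since $\QQ$ has class number one (so your enlargement of $S$ is unnecessary here, though harmless).
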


\begin{proof}
As we mentioned above, the conditions (R), (S) and (F) are all satisfied in this case. In addition, the condition (hIMC) is a direct consequence of  the classical Iwasawa main conjecture solved by Mazur and Wiles (see \cite{bg} and \cite{fg}) and the condition (MRS) is satisfied by Theorem \ref{solomonFG}.
\end{proof}

We also obtain a result over totally real fields.

\begin{corollary}
Suppose that $p$ is odd, $k$ is totally real, $k_\infty/k$ is the cyclotomic $\ZZ_p$-extension, and $K$ is CM. Assume that (F) is satisfied, that the $\mu$-invariant of $K_\infty/K$ vanishes, and that for every odd character $\chi \in \widehat G$ Conjecture ${\rm GS}(L_\chi/k,S,T,\chi)$ is valid. Then, Conjecture ${\rm eTNC}(h^0(\Spec L), \ZZ_p[G]^-)$ is valid.
%In particular, if at most one $p$-adic prime $\mathfrak{p}$ of $k$ satisfies $\chi(\mathfrak{p})=1$ for each odd character $\chi\in \widehat G$, then Conjecture ${\rm eTNC}(h^0(\Spec L),\ZZ_p[G]^-)$ is (unconditionally) valid.
\end{corollary}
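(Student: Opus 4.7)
The strategy is to apply Theorem \ref{mainthm} after restricting all data to the minus component cut out by $e^-:=(1-c)/2$, so that only odd characters $\chi\in \widehat G$ intervene. Firstly, conditions (R) and (S) are automatic in our setting: (S) holds because $k_\infty/k$ is the cyclotomic $\ZZ_p$-extension, and (R) is easily checked (the trivial character gives $r_{1,S}=\#S-1<\#S$, while for a nontrivial $\chi$ any place $v\in S_{\rm ram}(L_\chi/k)\subset S_{\rm ram}(L/k)\subset S$ satisfies $\chi(G_v)\neq 1$). Condition (F) is an explicit hypothesis of the corollary.

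Secondly, the minus part of (hIMC) follows directly from Theorem \ref{CM theorem}(i), using that $p$ is odd, $K$ is CM, and $\mu(K_\infty/K)=0$. Hence the minus-part analog of ${\rm IMC}(K_\infty/k,S,T)$ is known.

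Thirdly, (MRS) must be verified only for odd characters $\chi\in\widehat G$. Fix such a $\chi$ and take the maximal set $V_\chi' := \{v \in S \mid \chi(G_v)=1\}$ (or, if this equals $S$, a subset of size $\#S-1$). Since $k$ is totally real and $\chi$ is totally odd, no archimedean place lies in $V_\chi'$. Any finite place $v\in V_\chi'\setminus V_\chi$ that is prime to $p$ is unramified in $k_\infty$ (as $k_\infty/k$ is cyclotomic) and, because $\chi(G_v)=1$ forces $v$ to be unramified in $L_\chi$, is also unramified in $L_{\chi,\infty}$ and hence in $L_\infty$. We may therefore iterate Proposition \ref{prop mrs}(iv) (and, if needed, Proposition \ref{prop mrs}(iii) to handle the maximal case) to reduce verification of (MRS) to the analogous statement over a smaller set $S'\subseteq S$ from which every prime-to-$p$ place of $V_\chi'\setminus V_\chi$ has been removed. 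After this reduction, $V_\chi'\subseteq S_p(k)$, and the order-of-vanishing condition (\ref{vanishing order stickelberger}) is supplied by the already verified minus part of (hIMC) via Proposition \ref{explicit projector}(iii) and the vanishing argument of \cite[Lemma 5.19]{bks1}. Theorem \ref{GS thm} then identifies the remaining instance of (MRS) with ${\rm GS}(L_\chi/k,S',T,\chi)$, which follows from the assumed validity of ${\rm GS}(L_\chi/k,S,T,\chi)$ after accounting for the trivial Euler factors at the removed places (the factor $1-\chi(\Fr_v)$ that enters the modified $p$-adic $L$-function matches the factor that enters the complex $L$-function at such $v$).

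With all the hypotheses of Theorem \ref{mainthm} verified on the minus component, the descent argument underlying that theorem, applied after multiplication by the idempotent $e^-$, produces $e^- z_{L/k,S,T}$ as an $e^-\ZZ_p[G]$-basis of $e^-\det_{\ZZ_p[G]}(C_{L,S,T})$, which is exactly the statement ${\rm eTNC}(h^0(\Spec L),\ZZ_p[G]^-)$. The main obstacle is of a bookkeeping nature: one must check that the descent in the proof of Theorem \ref{mainthm} genuinely respects the $e^\pm$ splitting so that minus-part inputs feed into minus-part conclusions. This goes through essentially formally, since the Iwasawa-theoretic zeta element, the Rubin-Stark elements, the complexes $C_{L_{\chi,n},S,T}$, and all the cohomological and class-group modules involved split canonically under $e^-$ in the CM situation (because $p$ is odd), so every step of the finite-level descent can be performed after applying $e^-$.
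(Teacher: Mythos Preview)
Your approach is correct and matches the paper's: verify the minus-part hypotheses of Theorem~\ref{mainthm} by invoking Theorem~\ref{CM theorem}(i) for (hIMC) and Theorem~\ref{GS thm} for (MRS). The paper's proof is extremely terse (two sentences) and simply enlarges $S$ so that (R) holds, then cites those two theorems; your write-up expands on the same skeleton.

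A few small comments. First, your argument for (R) has a gap for general nontrivial $\chi$: if $k$ has class number greater than one, $L_\chi/k$ can be everywhere unramified, so $S_{\rm ram}(L_\chi/k)$ may be empty and your conclusion fails. This is harmless here because you only need odd $\chi$, and for those every archimedean $v$ satisfies $\chi(G_v)\neq 1$; but the cleanest fix is the paper's: since the validity of ${\rm eTNC}(h^0(\Spec L),\ZZ_p[G]^-)$ is independent of $S$, one may freely enlarge $S$ so that (R) holds. Second, the reduction you perform via Proposition~\ref{prop mrs}(iv) to arrange $V_\chi'\subset S_p(k)$ is exactly the reduction the paper builds into the standing hypotheses of \S\ref{GS} before stating Theorem~\ref{GS thm} (see the parenthetical there), so you are not doing anything extra---just making explicit what the paper absorbs into its setup. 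Finally, your last step, deducing ${\rm GS}$ at the reduced set $S'$ from ${\rm GS}$ at $S$, is really a non-issue: the formulation of ${\rm GS}(L_\chi/k,S,T,\chi)$ in \S\ref{GS} already presupposes $V'\subset S_p(k)$, so the corollary's hypothesis should be read with that convention (and in any case $S\setminus V' = S'\setminus V''$, so both sides of the GS equality change in the same trivial way).
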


\begin{proof}
We take $S$ so that condition (R) is satisfied. Then the minus-part of condition (hIMC) is satisfied by Theorem \ref{CM theorem}(i) and the minus part of condition (MRS) by Theorem \ref{GS}.
\end{proof}

When at most one $p$-adic place $\mathfrak{p}$ of $k$ satisfies $\chi(G_\mathfrak{p})=1$, Dasgupta, Darmon and Pollack proved the validity of Conjecture ${\rm GS}(L_\chi/k,S,T,\chi)$ under some assumptions including Leopoldt's conjecture (see \cite{DDP}). Recently in the same case Ventullo asserts in \cite{ventullo} that Conjecture ${\rm GS}(L_\chi/k,S,T,\chi)$ is unconditionally valid. In this case condition (F) is also valid by the argument of Gross in \cite[Proposition 2.13]{Gp}. Hence we get the following

\begin{corollary} \label{MC1}
Suppose that $p$ is odd, $k$ is totally real, $k_\infty/k$ is the cyclotomic $\ZZ_p$-extension, and $K$ is CM. Assume that the $\mu$-invariant of $K_\infty/K$ vanishes, and that for each odd character $\chi \in \widehat G$ there is at most one $p$-adic place $\frp$ of $k$ which satisfies $\chi(G_\frp)=1$. Then, Conjecture ${\rm eTNC}(h^0(\Spec L), \ZZ_p[G]^-)$ is valid.
\end{corollary}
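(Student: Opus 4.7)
The plan is to derive this from the preceding corollary. Since the preceding corollary already establishes ${\rm eTNC}(h^0(\Spec L), \ZZ_p[G]^-)$ under the three hypotheses that (a) the $\mu$-invariant of $K_\infty/K$ vanishes, (b) condition (F) holds, and (c) Conjecture ${\rm GS}(L_\chi/k,S,T,\chi)$ holds for every odd character $\chi \in \widehat G$, it suffices to verify (b) and (c) under the hypothesis that at most one $p$-adic place $\frp$ of $k$ satisfies $\chi(G_\frp)=1$ (for each odd $\chi$). Hypothesis (a) is assumed in the statement.

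For the verification of (F), I would invoke directly the argument of Gross in \cite[Proposition 2.13]{Gp}. Recall that (F) asks that, for each $\chi \in \widehat G$, the module of $\Gamma_\chi$-coinvariants of $A_S^T(L_{\chi,\infty})$ be finite. The situation for even $\chi$ is standard (the Leopoldt-type input is automatic in the minus-component reduction, since eTNC is being asserted only on $\ZZ_p[G]^-$, so only odd $\chi$ are relevant). For each odd $\chi$, Gross's analysis shows that the obstruction to finiteness of the coinvariants is governed by the $p$-adic places $\frp$ of $k$ with $\chi(G_\frp) = 1$, and that under the assumption that there is at most one such $\frp$ his argument produces the required finiteness. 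This is exactly the input that appears in the statement of the corollary.

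For the verification of (c), I would appeal to Theorem \ref{GS thm}, which (under the mild vanishing input (\ref{vanishing order stickelberger}), itself a consequence of the main conjecture of Wiles \cite{Wiles} and hence available since $p$ is odd) shows that Conjecture ${\rm GS}(L_\chi/k,S,T,\chi)$ holds for a given odd $\chi$ as soon as the corresponding instance of ${\rm MRS}(K_\infty/k,S,T,\chi,V_\chi')$ holds. The hard input, which is precisely the hypothesis that at most one $p$-adic place of $k$ has $\chi(G_\frp)=1$, is that ${\rm GS}(L_\chi/k,S,T,\chi)$ has been proved unconditionally in this setting by Ventullo \cite{ventullo}, building on the earlier work of Darmon, Dasgupta and Pollack \cite{DDP} (who established it under an additional Leopoldt hypothesis). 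This is the essential step of the argument and is the only genuine obstacle; everything else reduces to assembling references.

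With hypotheses (a), (b) and (c) all verified, the conclusion ${\rm eTNC}(h^0(\Spec L), \ZZ_p[G]^-)$ follows directly from the preceding corollary, which itself is a consequence of the descent result Theorem \ref{mainthm} combined with the minus-part Iwasawa main conjecture (Theorem \ref{CM theorem}(i), which is where the $\mu=0$ hypothesis is used) and the equivalence ${\rm GS} \Leftrightarrow {\rm MRS}$ of Theorem \ref{GS thm}. Thus the only novel content of this corollary, relative to the preceding one, is the observation that its hypothesis on splitting of $p$-adic places is precisely what is needed to unconditionally verify both (F) (via \cite{Gp}) and (c) (via \cite{ventullo}) simultaneously.
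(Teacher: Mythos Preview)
Your proposal is correct and follows essentially the same approach as the paper: reduce to the preceding corollary, then verify condition (F) via Gross \cite[Proposition 2.13]{Gp} and the Gross--Stark input via Ventullo \cite{ventullo} (building on \cite{DDP}), both of which are available precisely under the ``at most one $p$-adic place'' hypothesis. The only minor redundancy is your invocation of Theorem \ref{GS thm} in the verification of (c): since the preceding corollary already takes ${\rm GS}$ (not ${\rm MRS}$) as its hypothesis, the equivalence of Theorem \ref{GS thm} is not needed at that point --- it is used internally in the proof of the preceding corollary, as you correctly note in your final paragraph.
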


\begin{examples} \label{RemarkExample}
%Corollary \ref{MC1} clearly implies Corollary \ref{IntroCor} in \S 1.
It is not difficult to find many concrete families of examples which satisfy all of the hypotheses of Corollary \ref{MC1} and hence to deduce the validity of ${\rm eTNC}(h^0(\Spec L), \ZZ_p[G]^-)$ in some new and interesting cases.
 In particular, we shall now describe several families of examples in which the extension $k/\QQ$ is not abelian (noting that if $L/\QQ$ is abelian and $k \subset L$, then ${\rm eTNC}(h^0(\Spec L), \ZZ_p[G])$ is already known to be valid).
\

\noindent{}(i) The case $p=3$. As a simple example, we consider the case that $k/\QQ$ is a $S_{3}$-extension.
 To do this we fix an irreducible cubic polynomial $f(x)$ in $\ZZ[x]$ with discriminant $27d$
 where $d$ is strictly positive and congruent to $2$ modulo $3$. (For example, one can take $f(x)$ to be $x^3-6x-3$, $x^3-15x-3$, etc.)
 The minimal splitting field $k$ of $f(x)$ over $\QQ$ is then totally real (since $27d>0$) and an $S_{3}$-extension of $\QQ$ (since $27d$ is not a square). Also, since the discriminant of $f(x)$ is divisible by $27$ but not $81$, the prime $3$ is totally ramified in $k$.
 Now set $p := 3$ and  $K:=k(\mu_{p})=k(\sqrt{-p})=k(\sqrt{-d})$. Then the prime above $p$ splits in $K/k$ because $-d \equiv 1$ (mod $3$). In addition, as $K/\QQ(\sqrt{d}, \sqrt{-p})$ is a cyclic cubic extension, the $\mu$-invariant of $K_{\infty}/K$ vanishes and so the extension $K/k$ satisfies all the conditions of Corollary \ref{MC1} (with $p=3$).
\

\noindent{}(ii) The case $p>3$. In this case one can construct a suitable field $K$ in the following way. Fix  a primitive $p$-th root of unity $\zeta$, an integer $i$ such that $1 \leq i \leq (p-3)/2$ and an integer $b$ which is prime to $p$, and then set
\[ a:=(1+b(\zeta-1))^{2i+1}/(1+b(\zeta^{-1}-1)^{2i+1}).\]
Write $\ord_{\pi}$ for
the normalized additive valuation of $\QQ(\mu_{p})$ associated to the prime element $\pi=\zeta-1$. Then, since $\ord_{\pi}(a-1)=2i+1<p$, $(\pi)$ is totally ramified in $\QQ(\mu_{p}, \sqrt[p]{a})/\QQ(\mu_{p})$. Also, since $\rho(a)=a^{-1}$ where $\rho$ is the complex conjugation,
$\QQ(\mu_{p}, \sqrt[p]{a})$ is the composite of a cyclic extension of
$\QQ(\mu_{p})^{+}$ of degree $p$ and $\QQ(\mu_{p})$. This shows that $\QQ(\mu_{p}, \sqrt[p]{a})$ is a CM-field and, since $1 < 2i+1 <p$, the extension $\QQ(\mu_{p}, \sqrt[p]{a})^{+}/\QQ$ is non-abelian.
We now take a negative integer $-d$ which is a quadratic residue modulo $p$, let $K$ denote the CM-field $\QQ(\mu_{p}, \sqrt[p]{a}, \sqrt{-d})$ and
 set $k:=K^+$. Then $p$ is totally ramified in $k/\QQ$ and the $p$-adic prime of
$k$ splits in $K$. In addition, $k/\QQ$ is not abelian and the $\mu$-invariant of $K_{\infty}/K$ vanishes
since $K/\QQ(\mu_{p}, \sqrt{-d})$ is cyclic of degree $p$. This shows that the extension $K/k$ satisfies all of the hypotheses of Corollary \ref{MC1}.
\

\noindent{}(iii) In both of the cases (i) and (ii) described above, $p$ is totally ramified in the extension $k_{\infty}/\QQ$ and so Corollary \ref{MC1} implies that ${\rm eTNC}(h^0(\Spec K_{n}), \ZZ_p[G]^-)$ is valid for any non-negative integer $n$. In addition, if $F$ is any real abelian field of degree prime to $[k: \QQ]$ in which
$p$ is totally ramified, the minus component of the $p$-part of eTNC for $FK_{n}/k$ holds for any non-negative integer $n$.
\end{examples}

\begin{remark} Finally we note that, by using similar methods
to the proofs of the above corollaries it is also possible to deduce the main result of Bley \cite{bley} as a consequence of Theorem \ref{mainthm}. In this case $k$ is imaginary quadratic, the validity of (hIMC) can be derived from Rubin's result in \cite{rubinIMC} (as explained in \cite{bley}), and the conjecture (MRS) from Bley's result \cite{bleysolomon}, which is itself an analogue of Solomon's theorem \cite{solomon} for elliptic units, by using the same argument as Theorem \ref{solomonFG}.
\end{remark}

\subsection{A computation of Bockstein maps}

Fix a character $\chi \in \widehat G$.
%In the following we assume that (R) is satisfied.
For simplicity, we set
\begin{itemize}
\item $L_n:=L_{\chi,n}$;
\item $L:=L_\chi$;
%\item $\G_{n}:=\G_{\chi,n}=\Gal(L_{\chi,n}/k)$;
%\item $\G:=\G_\chi=\Gal(L_{\chi,\infty}/k)$;
%\item $G_\chi=\Gal(L_\chi/k)$;
%\item $\Gamma_{n}:=\Gamma_{\chi,n}=\Gal(L_{\chi,n}/L_\chi)$;
%\item $\Gamma:=\Gamma_\chi=\Gal(L_{\chi,\infty}/L_\chi)$;
\item $V:=V_\chi=\{ v\in S \mid \text{$v$ splits completely in $L_{\chi,\infty}$}\}$;
\item $r:=r_\chi=\# V_\chi$;
\item $V':=V_{\chi}'$ (as in (MRS) in Theorem \ref{mainthm});
\item $r':=r_{\chi,S}=\# V' $;
%\item $W:=V'\setminus V$;
\item $e:=r'-r$.
\end{itemize}

As in \S \ref{formulate mrs}, we label $S=\{v_0,v_1,\ldots\}$ so that $V=\{v_1,\ldots,v_r\}$ and $V'=\{ v_1,\ldots ,v_{r'}\}$, and fix a place $w$ lying above each $v \in S$.
Also, as in \S \ref{section explicit}, it will be useful to fix a representative of $C_{K_\infty,S,T}$:
$$\Pi_{K_\infty} \to \Pi_{K_\infty},$$
where the first term is placed in degree zero, and $\Pi_{K_\infty}$ is a free $\Lambda$-module with basis $\{b_1,\ldots,b_d\}$. This representative is chosen so that the natural surjection
$$\Pi_{K_\infty} \to H^1(C_{K_\infty,S,T}) \to \cX_{K_\infty,S}$$
sends $b_i$ to $w_i-w_0$ for every $i$ with $1\leq i \leq r'$.

We define a height one regular prime ideal of $\Lambda$ by setting
\[ \mathfrak{p}:=\ker(\Lambda \stackrel{\chi}{\to} \QQ_p(\chi):=\QQ_p(\im \chi)).\]
Then the localization $R:=\Lambda_{\mathfrak{p}}$ is a discrete valuation ring and we write $P$ for its maximal ideal. We see that $\chi$ induces an isomorphism
$$E:=R/P \stackrel{\sim}{\to} \QQ_p(\chi).$$
We set $C:=C_{K_\infty,S,T}\otimes_\Lambda R$ and $\Pi:=\Pi_{K_\infty}\otimes_{\Lambda}R$.

\begin{lemma} \label{uniformizer}
Let $\gamma$ be a topological generator of $\Gamma=\Gal(K_\infty/K)$. Let $n$ be an integer which satisfies $\gamma^{p^n} \in \Gal(K_\infty/L)$. Then $\gamma^{p^n}-1$ is a uniformizer of $R$.
\end{lemma}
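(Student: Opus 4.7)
The plan is to reduce to a direct computation inside the discrete valuation ring $R$. First I would check that $\gamma^{p^n}-1$ at least lies in the maximal ideal $P$. Since $\gamma^{p^n} \in \Gal(K_\infty/L)$ and $\chi$ factors through $G = \Gal(L/k)$, one has $\chi(\gamma^{p^n}) = 1$, so $\gamma^{p^n}-1$ lies in $\mathfrak{p} = \ker \chi$, hence in $P = \mathfrak{p}R$. It therefore remains to show that $\gamma^{p^n}-1$ generates $P$, or equivalently that it does not lie in $P^2$.

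To carry this out, I would use the description of $R$ recalled in \S\ref{IMC3}. Writing $\chi' := \chi|_\Delta$ and $\mathcal{O} := \ZZ_p[\im \chi']$, the decomposition (\ref{decomposition}) identifies $R$ with the localization of $\Lambda_{\chi'}[1/p] = \mathcal{O}[[T]][1/p]$ (where $T := \gamma-1$) at the height one prime $\mathfrak{p}\Lambda_{\chi'}[1/p]$. Under this identification, $\mathfrak{p}\Lambda_{\chi'}[1/p]$ is the kernel of the continuous $\mathcal{O}$-algebra map $\mathcal{O}[[T]][1/p] \to \QQ_p(\chi)$ that sends $\gamma$ to $\chi(\gamma)$. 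A standard Weierstrass preparation argument (or the division algorithm for the degree one distinguished polynomial $\gamma - \chi(\gamma)$) then shows that this kernel is principal, generated by $\gamma - \chi(\gamma)$. In particular, $P = (\gamma - \chi(\gamma))R$.

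The key step is now the binomial expansion
\[
 \gamma^{p^n} - 1 \;=\; \bigl(\chi(\gamma) + (\gamma - \chi(\gamma))\bigr)^{p^n} - 1
 \;=\; \bigl(\chi(\gamma)^{p^n} - 1\bigr) + p^n \chi(\gamma)^{p^n-1}(\gamma - \chi(\gamma)) + (\gamma - \chi(\gamma))^2\cdot\eta
\]
for some $\eta \in R$. The constant term vanishes, because $\chi(\gamma)^{p^n} = \chi(\gamma^{p^n}) = 1$ by the hypothesis on $n$, while the coefficient $p^n \chi(\gamma)^{p^n-1}$ is a unit of $R$: $\chi(\gamma)$ is a unit since it is a nonzero element of $\QQ_p(\chi)$, and $p$ is a unit in $R$ since $\mathfrak{p}$ is a regular prime. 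Factoring out $\gamma - \chi(\gamma)$ yields
\[
 \gamma^{p^n} - 1 \;=\; (\gamma - \chi(\gamma))\bigl(p^n \chi(\gamma)^{p^n-1} + (\gamma - \chi(\gamma))\eta\bigr),
\]
whose second factor is a unit of $R$ (a unit plus an element of $P$). Hence $\gamma^{p^n}-1$ generates $P$, as required.

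I do not anticipate any serious obstacle here: the argument is essentially formal once $R$ is identified with the appropriate localization of $\mathcal{O}[[T]]$ and $\gamma - \chi(\gamma)$ is recognized as a uniformizer. The only point requiring slight care is making sure that one passes to $\Lambda_{\chi'}$ (rather than remaining in $\Lambda$), since it is at that level that $\mathfrak{p}$ becomes principal.
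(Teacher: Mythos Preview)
Your argument has a genuine gap at the step where you claim that $\gamma-\chi(\gamma)$ generates the maximal ideal $P$ of $R$. The problem is that $\chi(\gamma)$ need not lie in $R$ at all. Recall that $R$ is the localization of $\Lambda_{\chi'}[1/p]=\mathcal{O}[[T]][1/p]$ with $\mathcal{O}=\ZZ_p[\im\chi']$ and $\chi'=\chi|_\Delta$, so its fraction field is $\mathrm{Frac}(\mathcal{O}[[T]])$. The scalar $\chi(\gamma)$ is a $p$-power root of unity determined by $\chi|_\Gamma$, and there is no reason for it to belong to $\QQ_p(\im\chi')$; for instance if $\Delta$ is trivial and $\chi|_\Gamma$ has order $p$, then $\mathcal{O}=\ZZ_p$ and $\chi(\gamma)=\zeta_p\notin\mathrm{Frac}(\ZZ_p[[T]])$ (the only roots of unity in $\ZZ_p[[T]]^\times$ are the $(p-1)$-st roots of unity). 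In such cases $\gamma-\chi(\gamma)$ is simply not an element of $R$, and the kernel of $\mathcal{O}[[T]][1/p]\to\QQ_p(\chi)$ is generated not by a linear polynomial but by the minimal polynomial of $\chi(\gamma)-1$ over $\mathrm{Frac}(\mathcal{O})$, which has degree $>1$. Your binomial expansion then does not apply as written.

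The paper avoids this difficulty entirely: rather than producing an explicit uniformizer and comparing, it observes that
\[
R/(\gamma^{p^n}-1)\;\simeq\;\bigl(\ZZ_p[\im\chi_1][\Gamma/\langle\gamma^{p^n}\rangle][1/p]\bigr)_{\mathfrak{q}},
\]
which is the localization of a finite product of fields (a group algebra of a finite abelian group over a field of characteristic zero) at a prime ideal, hence a field---indeed $\QQ_p(\chi)$. Thus $(\gamma^{p^n}-1)$ is maximal in $R$, so equals $P$. Your approach could be repaired by extending scalars to $\ZZ_p[\im\chi][[T]]$ (where $\gamma-\chi(\gamma)$ does make sense), running the binomial argument there, and then checking that the resulting extension of DVRs is unramified; but this is considerably more work than the paper's one-line quotient computation.
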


\begin{proof}
%We may assume that $n$ is minimal. Indeed, if $\gamma^{p^{n+m}}\in \Gal(K_\infty/L)$,  then we have
%$$\gamma^{p^{n+m}}-1=(\gamma^{p^n}-1)(\gamma^{p^n(p^m-1)}+\gamma^{p^n(p^m-2)}+\cdots + 1),$$
%and $\gamma^{p^n(p^m-1)}+\gamma^{p^n(p^m-2)}+\cdots + 1 \in R^\times$. This shows that $(\gamma^{p^{n+m}}-1)=(\gamma^{p^n}-1)$ as ideals of $R$.

Regard $\chi \in \widehat \G$, and put $\chi_1:=\chi|_\Delta \in \widehat \Delta$. We identify $R$ with the localization of $\Lambda_{\chi_1}[1/p]=\ZZ_p[\im \chi_1][[\Gamma]][1/p]$ at $\mathfrak{q}:=\ker(\Lambda_{\chi_1}[1/p] \stackrel{\chi|_\Gamma}{\to} \QQ_p(\chi))$.

Then the lemma follows by noting that the localization of $\Lambda_{\chi_1}[1/p]/(\gamma^{p^n}-1)=\ZZ_p[\im \chi_1][\Gamma_n][1/p]$ at $\mathfrak{q}$ is identified with $\QQ_p(\chi)$.
\end{proof}

\begin{lemma} \label{keylemma}
Assume that the condition (F) is satisfied.
\begin{itemize}
\item[(i)] $H^0(C)$ is isomorphic to $U_{K_\infty,S,T}\otimes_\Lambda R$, and $R$-free of rank $r$.
\item[(ii)] $H^1(C)$ is isomorphic to $\cX_{K_\infty,S}\otimes_\Lambda R$.
\item[(iii)] The maximal $R$-torsion submodule
$H^1(C)_{\rm tors}$ of $H^1(C)$ is isomorphic to $\cX_{K_\infty,S\setminus V}\otimes_\Lambda R$, and annihilated by $P$. (So $H^1(C)_{\rm tors}$ is an $E$-vector space.)
\item[(iv)] $H^1(C)_{\rm tf}:=H^1(C)/H^1(C)_{\rm tors}$ is isomorphic to $\cY_{K_\infty,V}\otimes_\Lambda R$ and is therefore $R$-free of rank $r$.
\item[(v)] $\dim_E(H^1(C)_{\rm tors})=e$.
\end{itemize}
\end{lemma}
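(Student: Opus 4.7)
The plan is to establish the five claims in sequence. Claim (i) is immediate from Proposition \ref{propositionfree}(i) applied to the height-one \emph{regular} prime $\mathfrak{p}$ (for which no $\mu$-invariant hypothesis is needed), combined with exactness of localization (giving $H^{0}(C)\cong U_{K_\infty,S,T}\otimes_\Lambda R$) and the identification $r_{\mathfrak{p}}=r$, which follows from the equality $L_{\chi,\infty}=L_{\chi|_\Delta,\infty}$.

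For claim (ii), applying $-\otimes_\Lambda R$ (which is exact) to the canonical exact sequence
\[
0\to A_S^T(K_\infty)\to H^1(C_{K_\infty,S,T})\to\cX_{K_\infty,S}\to 0
\]
reduces the assertion to proving $A_S^T(K_\infty)\otimes_\Lambda R=0$; this is the main point at which condition (F) enters. I would choose $n$ large enough that $\gamma^{p^n}-1$ is a uniformizer of $R$ (Lemma \ref{uniformizer}) and then identify $A_S^T(K_\infty)\otimes_\Lambda (R/P)$ with the $\chi$-isotypic component of the $\Gamma^{p^n}$-coinvariants $(A_S^T(K_\infty))_{\Gamma^{p^n}}$, tensored with $\QQ_p(\chi)$. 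The corestriction map $A_S^T(K_\infty)\to A_S^T(L_{\chi,\infty})$ exhibits this component as a quotient of the corresponding $\chi$-part of $(A_S^T(L_{\chi,\infty}))_{\Gamma_\chi}\otimes_{\ZZ_p}\QQ_p$, which vanishes by (F); Nakayama's lemma over the DVR $R$ then completes the argument.

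For claims (iii)--(v), combining (ii) with the canonical short exact sequence $0\to\cX_{K_\infty,S\setminus V}\to\cX_{K_\infty,S}\to\cY_{K_\infty,V}\to 0$ and localizing yields the exact sequence
\[
0\to\cX_{K_\infty,S\setminus V}\otimes_\Lambda R\to H^1(C)\to\cY_{K_\infty,V}\otimes_\Lambda R\to 0.
\]
The structural information is then extracted place-by-place, using the isomorphism $\cY_{K_\infty,\{v\}}\cong\Lambda/J_v$, where $J_v\subset\Lambda$ is the closed ideal generated by $\{\sigma-1:\sigma\in\G_v\}$. Three cases arise: (a) if $v\in V$, then $\G_v\subseteq\Gal(K_\infty/L_{\chi,\infty})$, and a direct computation (using that $\G_v$ has order prime to $p$ at archimedean places, and invoking (S) to rule out splitting at finite places) forces each $\sigma-1$ to vanish in $R$, giving $R/J_vR\cong R$; (b) if $v\in S\setminus V'$, then $\chi(\G_v)\neq 1$, so some $\sigma-1$ is a unit in $R$ and $R/J_vR=0$; (c) if $v\in V'\setminus V$, then $\G_v$ lies in $\ker\chi$ but projects nontrivially to $\Gamma_\chi\cong\ZZ_p$, and Lemma \ref{uniformizer} identifies $J_vR$ with $P$, giving $R/J_vR\cong E$. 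Assembling these contributions yields $\cY_{K_\infty,V}\otimes_\Lambda R\cong R^{r}$ (proving (iv) and identifying the torsion-free quotient of $H^1(C)$) and $\cX_{K_\infty,S\setminus V}\otimes_\Lambda R\cong E^{e}$ (proving (iii) and (v)).

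The main obstacle I anticipate is the clean execution of step (ii): making the descent from the Iwasawa module $A_S^T(K_\infty)$ to $A_S^T(L_{\chi,\infty})_{\Gamma_\chi}$ precise requires carefully matching the coinvariants under $\Gamma^{p^n}\subseteq\G$ with those under $\Gamma_\chi\subseteq\G_\chi$, and verifying the surjectivity of the corestriction on the relevant $p$-primary ray class groups. A secondary subtlety lies in the archimedean case of (a), where the computation proceeds smoothly when $p$ is odd but would require additional care when $p=2$.
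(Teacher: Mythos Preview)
Your approach is essentially the paper's: both proofs reduce (ii) to $A_S^T(K_\infty)\otimes_\Lambda R=0$ via condition (F), and both handle (iii)--(v) by a place-by-place analysis of decomposition groups, invoking (S) and Lemma~\ref{uniformizer} at finite places. The paper's argument for (ii) is slightly cleaner than your Nakayama route: it simply notes that (F) forces the characteristic polynomial of $A_S^T(L_{\chi,\infty})$ over $\ZZ_p[[T]]$ to be coprime to $(1+T)^{p^n}-1$ for large $n$, which kills the localization directly.

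There are, however, two small gaps in your treatment of (iii)--(v). First, your case (b) assumes $\chi(\G_v)\neq 1$ for $v\in S\setminus V'$, but this fails when $\chi=1$: then $S\setminus V'=\{v_0\}$ and $\chi(G_{v_0})=1$ trivially, so your trichotomy breaks down. (The fix is easy: your case (c) argument applies to $v_0$ as well.) Second, you compute $\cY_{K_\infty,\{v\}}\otimes_\Lambda R$ place-by-place and then assert the conclusion for $\cX_{K_\infty,S\setminus V}\otimes_\Lambda R$, but $\cX\neq\bigoplus_v\cY_{\{v\}}$; you need to pass through the sequence $0\to\cX_{K_\infty,S\setminus V}\to\cY_{K_\infty,S\setminus V}\to\ZZ_p\to 0$ and compute $\ZZ_p\otimes_\Lambda R$ (it is $0$ for $\chi\neq 1$ and $E$ for $\chi=1$, which in either case yields the correct dimension $e$). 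The paper sidesteps this by computing $\dim_E(H^1(C)\otimes_R E)=\dim_E(e_\chi\QQ_p(\chi)\cX_{L,S})=r'$ and subtracting $r$. Finally, your explanation of case (a) is slightly off: the point is not about orders prime to $p$ but simply that $\G_v\subseteq\ker\chi_1\subseteq\Delta$, and since $R$ is a localization of $e_{\chi_1}\Lambda[1/p]$ one has $e_{\chi_1}(\sigma-1)=0$ for every $\sigma\in\ker\chi_1$.
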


\begin{proof}
Since $U_{K_\infty,S,T}\otimes_{\Lambda}R=H^0(C)$ is regarded as a submodule of $\Pi$, we see that $U_{K_\infty,S,T}\otimes_{\Lambda}R$ is $R$-free. Put $\chi_1:=\chi |_\Delta \in \widehat \Delta$. Note that $L_{\infty}:=L_{\chi,\infty}=L_{\chi_1,\infty}$, and that the quotient field of $R$ is $Q(\Lambda_{\chi_1})$. As in the proof of Theorem \ref{lemisom}, we have
$$U_{K_\infty,S,T}\otimes_\Lambda Q(\Lambda_{\chi_1}) \simeq \cY_{L_\infty,V}\otimes_{\ZZ_p[[\G_\chi]]} Q(\Lambda_{\chi_1}).$$
These are $r$-dimensional $Q(\Lambda_{\chi_1})$-vector spaces. This proves (i).

To prove (ii), it is sufficient to show that $A_S^T(K_\infty)\otimes_{\Lambda}R=0$.
Fix a topological generator $\gamma$ of $\Gamma$, and regard $\ZZ_p[[\Gamma]]$ as the ring of power series $\ZZ_p[[T]]$ via the identification $\gamma=1+T$.
Let $f$ be the characteristic polynomial of the $\ZZ_p[[T]]$-module $A_S^T(L_\infty)$.
By Lemma \ref{uniformizer}, for sufficiently large $n$, $\gamma^{p^n}-1$ is a
uniformizer of $R$.
On the other hand, by the assumption (F), we see that $f$ is prime to
$\gamma^{p^n}-1$. This implies (ii).

We prove (iii). Proving that $H^1(C)_{\rm tors}$ is isomorphic to $\cX_{K_\infty,S\setminus V}\otimes_\Lambda R$, it is sufficient to show that
$$\cX_{K_\infty,S}\otimes_\Lambda Q(\Lambda_{\chi_1}) \simeq \cY_{K_\infty,V}\otimes_{\Lambda}Q(\Lambda_{\chi_1}),$$
by (ii).
This has been shown in the proof of Theorem \ref{lemisom}. We prove that $\cX_{K_\infty,S\setminus V}\otimes_\Lambda R$ is annihilated by $P$. Note that
$$\cX_{K_\infty,S\setminus V}\otimes_\Lambda R=\cX_{K_\infty,S \setminus (V\cup S_\infty)}\otimes_\Lambda R,$$
since the complex conjugation $c$ at $v \in S_\infty\setminus (V\cap S_\infty)$ is non-trivial in $G_{\chi_1}$, and hence $c-1\in R^\times$. Hence, it is sufficient to show that, for every $v \in S\setminus (V\cup S_\infty)$, there exists $\sigma \in G_v \cap \Gamma$ such that $\sigma -1$ is a uniformizer of $R$, where $G_v\subset \G$ is the decomposition group at a place of $K_\infty$ lying above $v$. Thanks to the assumption (S), we find such $\sigma$ by Lemma \ref{uniformizer}.

The assertion (iv) is immediate from the above argument.

The assertion (v) follows from (iii), (iv), and that
$$\cX_{K_\infty,S}\otimes_\Lambda E\simeq \cX_{L,S}\otimes_{\ZZ_p[G_\chi]}\QQ_p(\chi)\simeq e_\chi\QQ_p(\chi)\cX_{L,S}\simeq e_\chi\QQ_p(\chi)\cY_{L,V'}$$
is an $r'$-dimensional $E$-vector space.
\end{proof}

In the following for any $R$-module $M$ we often denote $M\otimes_R E$ by $M_E$. Also, we assume that (F) is satisfied.

\begin{definition}
The `Bockstein map' is the map
\begin{eqnarray}
\beta: H^0(C_E) &\to& H^1(C\otimes_R P) \nonumber \\
&=& H^1(C)\otimes_R P \nonumber \\
&\to & H^1(C_E)\otimes_E P/P^2\nonumber
\end{eqnarray}
induced by the exact triangle
$$C\otimes_R P \to C \to C_E.$$
\end{definition}

Note that there are canonical isomorphisms
$$H^0(C_E)\simeq U_{L,S,T}\otimes_{\ZZ_p[G_\chi]}\QQ_p(\chi) \simeq e_\chi \QQ_p(\chi) U_{L,S,T},$$
$$H^1(C_E)\simeq \cX_{L,S}\otimes_{\ZZ_p[G_\chi]}\QQ_p(\chi) \simeq e_\chi \QQ_p(\chi)\cX_{L,S}\simeq e_\chi \QQ_p(\chi) \cY_{L,V'},$$
where $\QQ_p(\chi)$ is regarded as a $\ZZ_p[G_\chi]$-algebra via $\chi$. Note also that $P$ is generated by $\gamma^{p^n}-1$ with sufficiently large $n$, where $\gamma$ is a fixed topological generator of $\Gamma$ (see Lemma \ref{uniformizer}). There is a canonical isomorphism
$$I(\Gamma_\chi)/I(\Gamma_\chi)^2\otimes_{\ZZ_p}\QQ_p(\chi) \simeq P/P^2,$$
where $I(\Gamma_\chi)$ denotes the augmentation ideal of $\ZZ_p[[\Gamma_\chi]].$ (Note that $\Gamma=\Gal(K_\infty/K)$ and $\Gamma_\chi=\Gal(L_\infty/L)$.)
Thus, the Bockstein map is regarded as the map
$$\beta: e_\chi\QQ_p(\chi)U_{L,S,T} \to e_\chi \QQ_p(\chi)(\cX_{L,S} \otimes_{\ZZ_p}I(\Gamma_\chi)/I(\Gamma_\chi)^2)\simeq e_\chi \QQ_p(\chi)(\cY_{L,V'} \otimes_{\ZZ_p}I(\Gamma_\chi)/I(\Gamma_\chi)^2).$$

\begin{proposition} \label{proprec}
%Let $I$ be the augmentation ideal of $\ZZ_p[[\Gamma]]$.
The Bockstein map $\beta$ is induced by the map
$$U_{L,S,T} \to \cX_{L,S}\otimes_{\ZZ_p} I(\Gamma_\chi)/I(\Gamma_\chi)^2$$
given by $a \mapsto \sum_{w \in S_L} w \otimes ({\rm rec}_w(a)-1)$.
\end{proposition}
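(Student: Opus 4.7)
The plan is to unwind the definition of $\beta$ using a concrete choice of uniformizer of $R$ and the representative $\Pi\xrightarrow{\psi}\Pi$ of $C$, and then to identify the resulting connecting homomorphism with the reciprocity sum via a class-field-theoretic calculation.

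First, fix a topological generator $\gamma$ of $\Gamma$ and an integer $n$ large enough that $\gamma^{p^n}$ acts trivially on $L$; by Lemma \ref{uniformizer} the element $\pi:=\gamma^{p^n}-1$ is a uniformizer of $R$. Multiplication by $\pi$ induces an isomorphism $R\xrightarrow{\sim}P$, which identifies $C\otimes_R P$ with $C$ and rewrites the defining triangle of $\beta$ as $C\xrightarrow{\pi}C\to C_E$. Under this identification, $\beta$ becomes the composite of the connecting homomorphism $\delta:H^0(C_E)\to H^1(C)$ with the reduction $H^1(C)\to H^1(C_E)$, tensored with $\bar\pi\in P/P^2$. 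Via the canonical $E$-linear isomorphism $P/P^2\simeq\QQ_p(\chi)\otimes_{\ZZ_p} I(\Gamma_\chi)/I(\Gamma_\chi)^2$ (whose normalization absorbs the factor of $p^n$ coming from the congruence $\gamma^{p^n}-1\equiv p^n(\gamma-1)$ modulo $I(\Gamma_\chi)^2$), the proposition reduces to the following claim: for $a\in U_{L,S,T}$ with any lift $\tilde a\in\Pi$, if we write $\psi(\tilde a)=\pi y$ with $y\in\Pi$ (possible since $\Pi$ is $R$-free and $\psi(\tilde a)\in\pi\Pi$), then the image of $[y]$ in $H^1(C_E)\simeq e_\chi\QQ_p(\chi)\cX_{L,S}$ corresponds, under the distinguished identification above, to $\sum_{w\in S_L}w\otimes(\mathrm{rec}_w(a)-1)$.

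The identification of $[y]$ with this reciprocity sum uses the local-global structure of $C_{K_\infty,S,T}$. Concretely, the representative $\Pi\to\Pi$ constructed in \cite[\S5.4]{bks1} arises from a Tate-type four-term exact sequence
\[
0\to U_{K_\infty,S,T}\to A\to B\to H^1(C_{K_\infty,S,T})\to 0
\]
whose central boundary $A\to B$ factors through the semi-local module $\bigoplus_{w\in S_{K_\infty}}(K_{\infty,w}^\times)^\wedge_p$: the first arrow is localization, and the second induces on cokernels the semi-local reciprocity map valued in the Galois group. Hypothesis (F), via Lemma \ref{keylemma}, ensures that the class-group component of $H^1(C)$ vanishes after $\chi$-localization, so that $[y]$ is faithfully recorded in $\cX_{L,S}$. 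A diagram chase through the $\chi$-localized Tate sequence, tracking how $\pi=\gamma^{p^n}-1$ acts on a lift $\tilde a$ of $a$, then shows that $[y]$ is precisely the sum $\sum_w w\otimes(\mathrm{rec}_w(a)-1)$.

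The main obstacle is the last step: verifying that the semi-local factorization of $\psi$ correctly produces the reciprocity sum. While the underlying content is classical (being a manifestation of how $\ZZ_p$-extensions interact with class field theory, going back in various guises to Tate and Iwasawa), the combination of a fixed algebraic representative $\Pi\to\Pi$, the chosen uniformizer $\pi$, and the $\chi$-localization requires careful checking to ensure that all signs, normalizations, and identifications match—and this bookkeeping is where the bulk of the technical work lies.
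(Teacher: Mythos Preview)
Your initial reduction is fine and matches the paper: choose the uniformizer $\pi=\gamma^{p^n}-1$, rewrite the defining triangle as $C\xrightarrow{\pi}C\to C_E$, and identify $\beta$ (up to the $P/P^2$-twist) with the connecting map $H^0(C_E)\to H^1(C)\to H^1(C_E)$.

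Where you diverge from the paper is in the computation of this connecting map, and here there is a genuine gap. You assert that the differential $\psi:\Pi\to\Pi$ constructed in \cite[\S5.4]{bks1} ``factors through the semi-local module $\bigoplus_{w}(K_{\infty,w}^\times)^\wedge_p$'' with the first arrow being localization. But the representative $\Pi\xrightarrow{\psi}\Pi$ is chosen only so that $\Pi$ is free with a basis mapping to prescribed elements of $\cX_{K_\infty,S}$; it is an abstract free resolution, and nothing in its construction forces $\psi$ to carry such a semi-local factorization. You are conflating this resolution with the genuine Tate sequence, where the middle map does arise from local data. One could try to repair this by choosing a representative coming directly from a Tate sequence, but then one must check separately that the result takes the required form $\Pi\to\Pi$ with both terms free of equal rank and the basis condition satisfied, and your ``diagram chase'' is in any case not carried out.

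The paper avoids all of this by working intrinsically. It observes that, after untwisting by $\pi$, the Bockstein map $\beta'$ is the cup product with the class $\theta\in H^1(L,\ZZ_p)=\Hom(G_L,\ZZ_p)$ determined by $\gamma^{p^n}\mapsto 1$, acting via
\[
H^1(\mathcal{O}_{L,S},\QQ_p(1))\xrightarrow{\ \cdot\,\cup\,\theta\ } H^2(\mathcal{O}_{L,S},\QQ_p(1))
\]
under the identifications $\QQ_pU_{L,S}\simeq H^1(\mathcal{O}_{L,S},\QQ_p(1))$ and $H^2(\mathcal{O}_{L,S},\QQ_p(1))\simeq \QQ_p\cX_{L,S\setminus S_\infty}$. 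This cup product is computed place-by-place via local class field theory, yielding $a\mapsto\sum_{w\in S_L\setminus S_\infty(L)}w\otimes(\mathrm{rec}_w(a)-1)$; the archimedean summands are then trivially added back since $\mathrm{rec}_w(a)=1$ in $\Gamma_\chi$ for $w\mid\infty$. The key input you were missing is this identification of the Bockstein with a cup product in \'etale cohomology, which replaces your unproven structural claim about $\psi$ with a standard (and representative-independent) calculation.
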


\begin{proof}
The proof is the same as that of \cite[Lemma 5.8]{flachsurvey}. We sketch the proof given in loc. cit.

Take $n$ so that the image of $\gamma^{p^n} \in \Gal(K_\infty/L)$ in $\Gal(L_\infty/L)=\Gamma_\chi$ is a generator. We regard $\gamma^{p^n} \in \Gamma_\chi$. Define $\theta\in H^1(L,\ZZ_p)=\Hom(G_L,\ZZ_p)$ by $\gamma^{p^n}\mapsto 1$. Define
$$\beta' : e_\chi\QQ_p(\chi)U_{L,S,T} \to e_\chi \QQ_p(\chi)(\cX_{L,S} \otimes_{\ZZ_p}I(\Gamma_\chi)/I(\Gamma_\chi)^2) \stackrel{\sim}{\to} e_\chi \QQ_p(\chi)\cX_{L,S}$$
by $\beta(a)=\beta'(a)\otimes (\gamma^{p^n}-1)$. Then, $\beta'$ is induced by the cup product
$$\cdot \cup \theta : \QQ_p U_{L,S}\simeq H^1(\mathcal{O}_{L,S},\QQ_p(1)) \to H^2(\mathcal{O}_{L,S},\QQ_p(1))\simeq \QQ_p \cX_{L,S\setminus S_\infty}.$$
By class field theory we see that $\beta$ is induced by the map $a \mapsto \sum_{w \in S_L\setminus S_\infty(L)} w \otimes ({\rm rec}_w(a)-1)$. Since ${\rm rec}_w(a)=1\in \Gamma_\chi$ for all $w \in S_\infty(L)$, the proposition follows.
\end{proof}

\begin{proposition} \label{ker coker}
Then we have canonical isomorphisms
$$\ker \beta\simeq H^0(C)_E$$
and
$$\coker \beta\simeq H^1(C)_{\rm tf}\otimes_R P/P^2.$$
\end{proposition}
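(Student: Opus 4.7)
The plan is to derive both isomorphisms simultaneously from the cohomological long exact sequence associated with the exact triangle $C\otimes_R P\to C\to C_E$, combined with the structural information about $H^{0}(C)$ and $H^{1}(C)$ supplied by Lemma~\ref{keylemma}. Since $C$ is represented by a two-term complex $\Pi\to\Pi$ of free $R$-modules (obtained by tensoring the representative of $C_{K_\infty,S,T}$ with $R$ over $\Lambda$), the short exact sequence $0\to P\to R\to E\to 0$ induces a genuine short exact sequence of complexes, and hence a long exact sequence in cohomology. Using the $R$-freeness of $H^{0}(C)$ from Lemma~\ref{keylemma}(i) to see that $H^{0}(C)\otimes_R P\to H^{0}(C)$ is injective, together with the flatness identifications $H^{i}(C\otimes_R P)=H^{i}(C)\otimes_R P$, this long exact sequence collapses to
\[
0\to H^{0}(C)_E\to H^{0}(C_E)\xrightarrow{\beta_1} H^{1}(C)\otimes_R P\xrightarrow{\iota} H^{1}(C)\to H^{1}(C_E)\to 0,
\]
where $\beta_1$ is the connecting map (the first leg of $\beta$) and $\iota$ is the canonical multiplication map. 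In particular one already gets $H^{1}(C_E)=H^{1}(C)/PH^{1}(C)$.

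Next I would fix a uniformizer $\pi$ of $R$ and use multiplication by $\pi$ to identify $H^{1}(C)\otimes_R P$ with $H^{1}(C)$, under which $\iota$ becomes multiplication by $\pi$. The decomposition $H^{1}(C)=H^{1}(C)_{\rm tors}\oplus H^{1}(C)_{\rm tf}$ from Lemma~\ref{keylemma}(iii,iv), in which $H^{1}(C)_{\rm tors}$ is killed by $P$ and $H^{1}(C)_{\rm tf}$ is $R$-free of rank $r$, then shows that $\ker\iota$ corresponds to $H^{1}(C)_{\rm tors}$; equivalently, the image of $\beta_1$ inside $H^{1}(C)\otimes_R P$ is exactly the summand $H^{1}(C)_{\rm tors}\otimes_R P$. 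The second leg of $\beta$ is the map $\beta_2\colon H^{1}(C)\otimes_R P\to H^{1}(C_E)\otimes_E P/P^2$ induced by the surjections $H^{1}(C)\twoheadrightarrow H^{1}(C_E)$ and $P\twoheadrightarrow P/P^{2}$. Restricted to $H^{1}(C)_{\rm tors}\otimes_R P$, this is the canonical isomorphism onto $H^{1}(C)_{\rm tors}\otimes_E P/P^{2}$ (using $E\otimes_R P=P/P^{2}$), while on the complementary summand $H^{1}(C)_{\rm tf}\otimes_R P$ it is the reduction with kernel $H^{1}(C)_{\rm tf}\otimes_R P^{2}$ by $R$-freeness.

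Combining these observations, $\beta_2$ is injective on the image of $\beta_1$, so $\ker\beta=\ker\beta_1=H^{0}(C)_E$, which proves the first isomorphism. For the cokernel, the image of $\beta$ equals $\beta_2(H^{1}(C)_{\rm tors}\otimes_R P)=H^{1}(C)_{\rm tors}\otimes_E P/P^{2}$, which sits inside
\[
H^{1}(C_E)\otimes_E P/P^{2}=\bigl(H^{1}(C)_{\rm tors}\oplus H^{1}(C)_{\rm tf}/PH^{1}(C)_{\rm tf}\bigr)\otimes_E P/P^{2}
\]
as the first summand. Hence $\coker\beta\simeq\bigl(H^{1}(C)_{\rm tf}/PH^{1}(C)_{\rm tf}\bigr)\otimes_E P/P^{2}\simeq H^{1}(C)_{\rm tf}\otimes_R P/P^{2}$, as required.

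The most delicate point is the bookkeeping when decomposing $\beta=\beta_2\circ\beta_1$: although $\beta_2$ has a nontrivial kernel on $H^{1}(C)_{\rm tf}\otimes_R P$, one must check that this kernel does not meet the image of $\beta_1$, so that the kernel of the full Bockstein map coincides with $\ker\beta_1$ and not with something larger. Once this is in place, the identification of $\coker\beta$ is essentially formal, resting only on the canonical isomorphism $E\otimes_R P\simeq P/P^{2}$ together with the splitting of $H^{1}(C)$ furnished by Lemma~\ref{keylemma}.
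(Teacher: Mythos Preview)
Your proof is correct and follows essentially the same approach as the paper: both arguments factor $\beta$ through the connecting map $\delta=\beta_1$ of the long exact sequence for $C\otimes_R P\to C\to C_E$, identify $\im\delta$ with $H^1(C)_{\rm tors}\otimes_R P$ via Lemma~\ref{keylemma}(iii), and then check that the reduction map to $H^1(C_E)\otimes_E P/P^2$ is injective on this submodule. The only cosmetic difference is that you invoke an explicit splitting $H^1(C)=H^1(C)_{\rm tors}\oplus H^1(C)_{\rm tf}$ (available since $H^1(C)_{\rm tf}$ is free), whereas the paper phrases the same step as $H^1(C)[P]=H^1(C)_{\rm tors}$.
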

\begin{proof}
Let  $\delta$ be the boundary map $H^0(C_E) \to H^1(C\otimes_R P)=H^1(C) \otimes_R P$. We have
$$\ker \delta\simeq \coker (H^0(C\otimes_R P)\to H^0(C)) = H^0(C)_E$$
and
$$\im \delta =\ker (H^1(C)\otimes_R P \to H^1(C))=H^1(C)[P] \otimes_R P,$$
where $H^1(C)[P]$ is the submodule of $H^1(C)$ which is annihilated by $P$. By Proposition \ref{keylemma} (iii), we know $H^1(C)[P]=H^1(C)_{\rm tors}$. Hence, the natural map
$$H^1(C)\otimes_R P \to H^1(C) \otimes_R P/P^2 \simeq  H^1(C)_E \otimes_E P/P^2\simeq H^1(C_E) \otimes_E P/P^2$$
is injective on $H^1(C)_{\rm tors}\otimes_R P$. From this we see that $\ker \beta \simeq H^0(C)_E$. We also have
$$\coker \beta \simeq \coker (H^1(C)_{\rm tors} \otimes_R P \to H^1(C)\otimes_R P/P^2)\simeq H^1(C)_{\rm tf} \otimes_R P/P^2.$$
Hence we have completed the proof.
\end{proof}

By Lemma \ref{keylemma}, we see that there are canonical isomorphisms
$$H^0(C)_E \simeq U_{K_\infty,S,T} \otimes_\Lambda \QQ_p(\chi),$$
$$H^1(C)_E\simeq \cX_{K_\infty,S} \otimes_\Lambda \QQ_p(\chi),$$
$$H^1(C)_{{\rm tf}, E} \simeq \cY_{K_\infty,V} \otimes_\Lambda \QQ_p(\chi).$$
Hence, by Proposition \ref{ker coker}, we have the exact sequence
\begin{multline*}
0 \to U_{K_\infty,S,T}\otimes_{\Lambda}\QQ_p(\chi) \to e_\chi\QQ_p(\chi)U_{L,S,T} \\
\stackrel{\beta}{\to} e_\chi \QQ_p(\chi) (\cY_{L,V'}\otimes_{\ZZ_p}I(\Gamma_\chi)/I(\Gamma_\chi)^2)\to \cY_{K_\infty,V} \otimes_\Lambda P/P^2 \to 0.
\end{multline*}
This induces an isomorphism
$$\widetilde \beta: e_\chi \QQ_p(\chi)(\bigwedge^{r'}U_{L,S,T} \otimes \bigwedge^{r'} \cY_{L,V'}^\ast )\stackrel{\sim}{\to} \bigwedge^r (U_{K_\infty,S,T}\otimes_\Lambda \QQ_p(\chi)) \otimes \bigwedge^r (\cY_{K_\infty,V}^\ast \otimes_{\Lambda}\QQ_p(\chi))\otimes P^e/P^{e+1}.$$
We have isomorphisms
$$\bigwedge^{r'}\cY_{L,V'}^\ast \stackrel{\sim}{\to} \ZZ_p[G_\chi]; \ w_1^\ast \wedge \cdots \wedge w_{r'}^\ast \mapsto 1,$$
$$\bigwedge^r (\cY_{K_\infty,V}^\ast \otimes_\Lambda \QQ_p(\chi)) \stackrel{\sim}{\to} \QQ_p(\chi); \ w_1^\ast \wedge \cdots \wedge w_r^\ast \mapsto 1.$$
By these isomorphisms, we see that $\widetilde \beta$ induces an isomorphism
$$e_\chi \QQ_p(\chi) \bigwedge^{r'} U_{L,S,T} \stackrel{\sim}{\to} \bigwedge^r(U_{K_\infty,S,T}\otimes_{\Lambda} \QQ_p(\chi)) \otimes P^e /P^{e+1},$$
which we denote also by $\widetilde \beta$. Note that we have a natural injection
$$\bigwedge^r(U_{K_\infty,S,T}\otimes_{\Lambda} \QQ_p(\chi)) \otimes P^e /P^{e+1} \hookrightarrow e_\chi \QQ_p(\chi) (\bigwedge^r U_{L,S,T} \otimes_{\ZZ_p}I(\Gamma_\chi)^e/I(\Gamma_\chi)^{e+1}).$$
Composing this with $\widetilde \beta$, we have an injection
$$\widetilde \beta: e_\chi \QQ_p(\chi) \bigwedge^{r'} U_{L,S,T} \hookrightarrow e_\chi \QQ_p(\chi) (\bigwedge^r U_{L,S,T} \otimes_{\ZZ_p}I(\Gamma_\chi)^e/I(\Gamma_\chi)^{e+1}).$$
By Proposition \ref{proprec}, we obtain the following

\begin{proposition} \label{bock rec}
Let
$${\rm Rec}_\infty: \CC_p \bigwedge^{r'} U_{L,S,T} \rightarrow \CC_p(\bigwedge^r U_{L,S,T} \otimes_{\ZZ_p}I(\Gamma_\chi)^e/I(\Gamma_\chi)^{e+1})$$
be the map defined in \S \ref{formulate mrs}. Then we have
$$(-1)^{re}e_\chi{\rm Rec}_\infty=\widetilde \beta.$$
In particular, $e_\chi{\rm Rec}_\infty$ is injective.
\end{proposition}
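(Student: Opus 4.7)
The plan is to prove the identity by boiling both sides down, on an adapted basis, to the same explicit determinant of reciprocity pairings, with the sign $(-1)^{re}$ appearing as a Koszul shuffle sign.

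First, I would apply Proposition \ref{proprec} to rewrite $\beta$ explicitly. Under the canonical identification $e_\chi\QQ_p(\chi)\cX_{L,S}\simeq e_\chi\QQ_p(\chi)\cY_{L,V'}$, the formula $a\mapsto\sum_{w\in S_L}w\otimes({\rm rec}_w(a)-1)$ reduces to a sum over $v\in V'$. For $v\in V$ the decomposition subgroup at any place above $v$ in $\Gamma_\chi$ is trivial (since $v$ splits completely in $L_\infty/L$), so ${\rm rec}_w$ is zero on $L_w^\times$, and hence
$$\beta(a)=\sum_{v\in V'\setminus V}w\otimes({\rm rec}_w(a)-1).$$

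Second, I would unravel $\widetilde\beta$ on an adapted element $a_1\wedge\cdots\wedge a_{r'}$, where $a_1,\ldots,a_r$ lift a $\QQ_p(\chi)$-basis of $A:=U_{K_\infty,S,T}\otimes_\Lambda\QQ_p(\chi)=\ker\beta$ and $a_{r+1},\ldots,a_{r'}$ are any lifts of a basis of $\im\beta$. The standard determinantal isomorphism attached to the four-term exact sequence $0\to A\to B\xrightarrow{\beta}C\to D\to 0$, combined with the trivialisations $w_1^*\wedge\cdots\wedge w_{r'}^*\mapsto 1$ and $w_1^*\wedge\cdots\wedge w_r^*\mapsto 1$, identifies $\widetilde\beta(a_1\wedge\cdots\wedge a_{r'})$ with
$$(a_1\wedge\cdots\wedge a_r)\otimes\det\bigl(({\rm rec}_{w_i}(a_j)-1)\bigr)_{r<i,j\le r'}\in \bigwedge\nolimits^r(U_{K_\infty,S,T}\otimes_\Lambda\QQ_p(\chi))\otimes P^e/P^{e+1}.$$

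Third, I would expand $e_\chi\mathrm{Rec}_\infty(a_1\wedge\cdots\wedge a_{r'})$ directly from the definition. By construction $\mathrm{Rec}_\infty$ is (the $\CC_p$-linear extension of) the map induced by the element $\bigwedge_{v\in V'\setminus V}\mathrm{Rec}_w\in\bigwedge^e\Hom_{\ZZ[G]}(L^\times,I_n/I_n^2)$ through the shuffle pairing $\bigwedge^{r'}M\times\bigwedge^e M^*\to\bigwedge^r M$ of the Notation. Since $a_1,\ldots,a_r$ lie in the universal-norm submodule $A=\ker\beta$, every summand of the pairing that pairs some $w_i^*$ (for $i>r$) against an $a_j$ with $j\le r$ is zero modulo $I(\Gamma_\chi)^{e+1}$, so only the shuffle $\sigma\in\mathfrak{S}_{r',e}$ that leaves $a_1,\ldots,a_r$ in the first block and $a_{r+1},\ldots,a_{r'}$ in the second block survives; this shuffle has sign $(-1)^{re}$. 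The result is $(-1)^{re}(a_1\wedge\cdots\wedge a_r)\otimes\det\bigl(({\rm rec}_{w_i}(a_j)-1)\bigr)_{r<i,j\le r'}$, which matches the expression for $\widetilde\beta$ obtained above. Comparing the two gives $\widetilde\beta=(-1)^{re}e_\chi\mathrm{Rec}_\infty$ as desired; injectivity of $e_\chi\mathrm{Rec}_\infty$ is immediate from the fact that $\widetilde\beta$ is an isomorphism.

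The main obstacle is bookkeeping the signs through the various conventions: the shuffle convention in the definition of the pairing $\bigwedge^s M\times\bigwedge^r M^*\to\bigwedge^{s-r}M$, the determinantal isomorphism attached to the four-term exact sequence, the ordering $S=\{v_0,v_1,\ldots\}$ with $V=\{v_1,\ldots,v_r\}\subset V'=\{v_1,\ldots,v_{r'}\}$, and the trivialisations of $\bigwedge^{r'}\cY_{L,V'}^*$ and $\bigwedge^r\cY_{K_\infty,V}^*$. Once the adapted basis is fixed, the factor $(-1)^{re}$ is forced as the sign of the partition $V'=V\sqcup(V'\setminus V)$, and the rest of the argument is mechanical.
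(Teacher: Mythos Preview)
Your proposal is correct and follows the same approach as the paper: the paper simply records the proposition as an immediate consequence of Proposition \ref{proprec} (the sentence ``By Proposition \ref{proprec}, we obtain the following'' is the entire argument given there), and what you have written is a careful unpacking of that deduction, tracking the sign $(-1)^{re}$ through the shuffle pairing and the determinantal isomorphism of the four-term exact sequence. Your use of an adapted basis with $a_1,\ldots,a_r$ lifting a basis of $\ker\beta$ is exactly the standard way to make this explicit, and your observation that ${\rm Rec}_{w_i}(a_j)=0$ for $j\le r$ (since $\beta(a_j)=0$ and the $w_i$-components are independent in $e_\chi\QQ_p(\chi)\cY_{L,V'}$) correctly isolates the single surviving shuffle.
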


\subsection{The proof of the main result} \label{proof main result}
In this section we prove Theorem \ref{mainthm}.

We start with an important technical observation. Let $\Pi_n$ denote the free $\ZZ_p[\G_{\chi,n}]$-module $\Pi_{K_\infty}\otimes_{\Lambda} \ZZ_p[\G_{\chi,n}]$, and $I(\Gamma_{\chi,n})$ denote the augmentation ideal of $\ZZ_p[\Gamma_{\chi,n}]$.

We recall from \cite[Lemma 5.19]{bks1} that the image of
$$\pi_{L_n/k,S,T}^V: {\det}_{\ZZ_p[\G_{\chi,n}]}(C_{L_n,S,T})\to \bigwedge^r \Pi_n$$ is contained in $I(\Gamma_{\chi,n})^e\cdot \bigwedge^r \Pi_n$ (see Proposition \ref{explicit projector}(iii)) and also from \cite[Proposition 4.17]{bks1} that $\nu_n^{-1} \circ \mathcal{N}_n$ induces the map
$$I(\Gamma_{\chi,n})^e \cdot \bigwedge^r \Pi_n \to  \bigwedge^r\Pi_0 \otimes_{\ZZ_p}I(\Gamma_{\chi,n})^e/I(\Gamma_{\chi,n})^{e+1} .$$
\begin{lemma} \label{lemcomm1}
There exists a commutative diagram
$$\xymatrix{
{\det}_{\ZZ_p[\G_{\chi,n}]}(C_{L_n,S,T}) \ar[r] \ar[d]_{\pi_{L_n/k,S,T}^V} & {\det}_{\ZZ_p[G_\chi]}(C_{L,S,T}) \ar[d]^{\pi_{L/k,S,T}^{V'}} \\
I(\Gamma_{\chi,n})^e \cdot\bigwedge^r \Pi_n \ar[d]_{\nu_{n}^{-1} \circ \cN_{n}} & \bigcap^{r'} U_{L,S,T} \ar[d]^{(-1)^{re}{\rm Rec}_{n}} \\
 \bigwedge^r\Pi_0 \otimes_{\ZZ_p}I(\Gamma_{\chi,n})^e/I(\Gamma_{\chi,n})^{e+1} & \bigcap^r U_{L,S,T}\otimes_{\ZZ}I(\Gamma_{\chi,n})^e/I(\Gamma_{\chi,n})^{e+1}. \ar[l]_\supset}
$$
\end{lemma}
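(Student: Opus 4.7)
The plan is to verify the commutative diagram by computing both paths explicitly on a common representative and then identifying the two resulting exterior-algebra expressions via the already-established comparison between Iwasawa-theoretic descent and local reciprocity. I would use the fixed representative $\Pi_{K_\infty}\stackrel{\psi_\infty}{\to} \Pi_{K_\infty}$ of $C_{K_\infty,S,T}$ to obtain compatible representatives $\Pi_n\stackrel{\psi_n}{\to} \Pi_n$ of $C_{L_n,S,T}$ and $\Pi_0\stackrel{\psi_0}{\to} \Pi_0$ of $C_{L,S,T}$; writing $\psi_{n,i}:=b_i^{\ast}\circ\psi_n$ and $\psi_{0,i}:=b_i^{\ast}\circ\psi_0$ as in \S \ref{section explicit}, Proposition \ref{explicit projector}(iii) then gives closed formulas for both $\pi_{L_n/k,S,T}^{V}$ and $\pi_{L/k,S,T}^{V'}$ applied to the canonical generator $b_1\wedge\cdots\wedge b_d\otimes b_1^{\ast}\wedge\cdots\wedge b_d^{\ast}$. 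It suffices to check commutativity on this single element.

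For the left-hand path, \cite[Lemma 5.19]{bks1} ensures that the image of $\pi_{L_n/k,S,T}^{V}$ lies in $I(\Gamma_{\chi,n})^{e}\cdot\bigwedge^{r}\Pi_n$, and \cite[Proposition 4.17]{bks1} then describes the action of $\nu_n^{-1}\circ \mathcal{N}_n$ on this submodule modulo $I(\Gamma_{\chi,n})^{e+1}$. Concretely, the $(d-r)\times(d-r)$ determinant in the $\psi_{n,i}(b_{\sigma(j)})$ produced by $\pi_{L_n/k,S,T}^{V}$ is reorganized as a Laplace expansion in which the $e$ rows indexed by $r<i\leq r'$ contribute only their first-order component in the augmentation filtration, while the rows indexed by $r'<i\leq d$ contribute their level-$0$ value $\psi_{0,i}(b_{\sigma(j)})$.

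For the right-hand path, $\pi_{L/k,S,T}^{V'}$ yields a sum over $\mathfrak{S}_{d,r'}$ of $(d-r')\times(d-r')$ determinants in the $\psi_{0,i}(b_j)$, and then applying $(-1)^{re}{\rm Rec}_n=(-1)^{re}\bigwedge_{v\in V'\setminus V}{\rm Rec}_w$ inserts the $e$ reciprocity maps ${\rm Rec}_{w_i}$ for $r<i\leq r'$, producing an element of $\bigcap^{r}U_{L,S,T}\otimes_{\ZZ_p} I(\Gamma_{\chi,n})^{e}/I(\Gamma_{\chi,n})^{e+1}$. To match the two expressions I would decompose each permutation $\sigma\in\mathfrak{S}_{d,r}$ as a pair $(\tau,\rho)$, with $\tau\in\mathfrak{S}_{d,r'}$ recording the behaviour on $\{r'+1,\ldots,d\}$ and $\rho$ permuting $\{r+1,\ldots,r'\}$, and verify that the inner summation over $\rho$ exactly reconstructs the Laplace expansion of the reciprocity determinant on the right, with the sign $(-1)^{re}$ arising from reordering the $e$ reciprocity rows into the top block.

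The main obstacle will be the combined sign and combinatorial bookkeeping required to align the two Laplace expansions, together with the need for the identification between the first-order part of $\psi_n$ and the local reciprocity map to hold integrally rather than merely after $\chi$-localization as in Proposition \ref{proprec}. Both of these technical points are however already the content of \cite[Proposition 4.17]{bks1} combined with local class field theory, so the proof ultimately assembles that proposition with \cite[Lemma 5.19]{bks1} and Proposition \ref{explicit projector}(iii) into the claimed commutativity.
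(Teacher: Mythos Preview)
Your proposal is correct and follows essentially the same approach as the paper, which simply records that the diagram follows from Proposition \ref{explicit projector}(iii) together with \cite[Lemma 5.21]{bks1}. What you have written is an unpacking of the content of that citation: the explicit determinant formula from Proposition \ref{explicit projector}(iii), the filtration containment from \cite[Lemma 5.19]{bks1}, and the description of $\nu_n^{-1}\circ\mathcal{N}_n$ from \cite[Proposition 4.17]{bks1}, followed by the Laplace-expansion matching. The one point to watch is your attribution of the integral identification between the leading augmentation-graded piece of $\psi_{n,i}$ (for $r<i\le r'$) and the reciprocity maps ${\rm Rec}_{w_i}$: this is not really contained in \cite[Proposition 4.17]{bks1}, which only handles the formal behaviour of $\nu_n^{-1}\circ\mathcal{N}_n$, but is precisely the substance of \cite[Lemma 5.21]{bks1} that the paper invokes. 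Once you replace that attribution, your argument and the paper's coincide.
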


\begin{proof}
This follows from Proposition \ref{explicit projector}(iii) and \cite[Lemma 5.21]{bks1}.
\end{proof}

For any intermediate field $F$ of $K_\infty/k$, we denote by $\mathcal{L}_{F/k,S,T}$ the image of the (conjectured) element $\mathcal{L}_{K_\infty/k,S,T}$ of $ {\det}_\Lambda(C_{K_\infty,S,T})$ under the isomorphism
$$\ZZ_p[[\Gal(F/k)]]\otimes_{\La}{\det}_\Lambda(C_{K_\infty,S,T}) \simeq {\det}_{\ZZ_p[[\Gal(F/k)]]}(C_{F,S,T}). $$
Note that, by the proof of Theorem \ref{imcrs}, we have
$$\pi_{L_n/k,S,T}^{V}(\mathcal{L}_{L_n/k,S,T})=\epsilon_{L_n/k,S,T}^V.$$
Hence, Lemma \ref{lemcomm1} implies that
$$(-1)^{re}{\rm Rec}_{n}(\pi_{L/k,S,T}^{V'}(\mathcal{L}_{L/k,S,T}))=\nu_{n}^{-1}\circ \cN_{n}(\epsilon_{L_{n}/k,S,T}^V)=:\kappa_n.$$
We set
\[ \kappa:=(\kappa_n)_n \in \bigcap^r U_{L,S,T} \otimes_{\ZZ_p} \varprojlim_n I(\Gamma_{\chi,n})^e/I(\Gamma_{\chi,n})^{e+1}.\]

%By Lemma \ref{lemcomm2} and Proposition \ref{proppi},
%we see that the image of $\mathcal{L}_{L/k,S,T}$ under the map
%\begin{eqnarray}
%{\det}_{\ZZ_p[G]}(C_{L,S,T})&\to& {\det}_E(C_E) \nonumber \\
%&\simeq& {\bigwedge}_E^{r'}H^0(C_E) \otimes_E {\bigwedge}_E^{r'}H^1(C_E)^\ast \nonumber \\
%&\stackrel{\widetilde \beta}{\to} &{\bigwedge}_E^rH^0(C)_E \otimes_E {\bigwedge}_E^r H^1(C)_{{\rm tf},E}^\ast \otimes_E P^e/P^{e+1} \nonumber \\
%&\subset& {\bigwedge}_{\QQ_p(\chi)}^r e_\chi\QQ_p U_{L,S} \otimes_{\QQ_p(\chi)} {\bigwedge}_{\QQ_p(\chi)}^{r} (e_\chi \QQ_p \cX_{L,S})^\ast \otimes_{\QQ_p(\chi)} P^e/P^{e+1} \nonumber
%\end{eqnarray}
%is equal to $e_\chi i_{L_\chi,S,V}( \kappa_\infty)$.

Then the validity of Conjecture ${\rm MRS}(K_\infty/k,S,T,\chi,V')$ implies that
$$e_\chi \kappa= (-1)^{re} e_\chi{\rm Rec}_\infty( \epsilon_{L/k,S,T}^{V'}).$$
%Since $\widetilde \beta$ is an isomorphism, this shows that the image of $z_{L/k,S,T}$ under the map
%$${\det}_{\ZZ_p[G]}(C_{L,S,T})\to {\det}_E(C_E) \simeq {\bigwedge}_E^{r'}H^0(C_E) \otimes_E {\bigwedge}_E^{r'}H^1(C_E)^\ast$$
%is equal to $e_\chi \eta_{L_\chi/k,S,T}$.

In addition, by Proposition \ref{bock rec}, we know that $e_\chi{\rm Rec}_\infty$ is injective, and so
$$\pi_{L/k,S,T}^{V'}(e_\chi \mathcal{L}_{L/k,S,T})=e_\chi \epsilon_{L/k,S,T}^{V'}.$$

Hence, by Proposition \ref{etncbyrs}, we see that Conjecture ${\rm eTNC}(h^0(\Spec L),\ZZ_p[G])$ is valid, as claimed.

\end{document}